\documentclass[11pt]{amsart}
\usepackage{etex}

\usepackage[margin=1in]{geometry}

\usepackage{amscd,latexsym,amsthm,amsfonts,amssymb,amsmath,amsxtra}
\usepackage[mathscr]{eucal}
\usepackage{pictexwd,dcpic}
\usepackage{xcolor}
\usepackage[normalem]{ulem}
\usepackage{hyperref}
\usepackage[all,cmtip]{xy}
\usepackage{enumitem}

\renewcommand\theequation{\thesection.\arabic{equation}}

\newcommand{\BA}{{\mathbb {A}}}

\newcommand{\BC}{{\mathbb {C}}}

\newcommand{\BR}{{\mathbb {R}}}

\newcommand{\CA}{{\mathcal {A}}}

\newcommand{\CF}{{\mathcal {F}}}

\newcommand{\CL}{{\mathcal {L}}}

\newcommand{\CP}{{\mathcal {P}}}

\newcommand{\CT}{{\mathcal {T}}}

\newcommand{\Fa}{{\mathfrak {a}}}

\newcommand{\Fl}{{\mathfrak {l}}}

\newcommand{\Fo}{{\mathfrak {o}}}

\newcommand{\Fs}{{\mathfrak {s}}}

\newcommand{\RU}{{\mathrm {U}}}

\newcommand{\GL}{{\mathrm{GL}}}

\newcommand{\GSp}{{\mathrm{GSp}}}

\newcommand{\GO}{{\mathrm{GO}}}
\newcommand{\GSO}{{\mathrm{GSO}}}
\newcommand{\GSpin}{{\mathrm{GSpin}}}
\newcommand{\Spin}{{\mathrm{Spin}}}

\newcommand{\mm}[4]{\left(\begin{smallmatrix} #1 & #2\\ #3 & #4\end{smallmatrix}\right)}

\newcommand{\Hom}{{\mathrm{Hom}}}

\newcommand{\PGL}{{\mathrm{PGL}}}

\newcommand{\SL}{{\mathrm{SL}}}

\newcommand{\GE}{{\mathrm{GE}}}

\newcommand{\SO}{{\mathrm{SO}}}

\newcommand{\Sp}{{\mathrm{Sp}}}

\newcommand{\Span}{{\mathrm{Span}}}

\newcommand{\tr}{{\mathrm{tr}}}

\newcommand{\ul}{\underline}

\newcommand{\back}{\backslash}

\newcommand{\calF}{\mathcal{F}}

\newcommand{\ago}{\mathfrak{a}}
\newcommand{\hDelta}{\widehat \Delta}
\newcommand{\htau}{\widehat \tau}
\newcommand{\sbs}{\subset}

\newcommand{\smin}{\smallsetminus}
\newcommand{\al}{\alpha}

\newcommand{\La}{\Lambda}
\newcommand{\bsl}{\backslash}
\newcommand{\calA}{\mathcal{A}}

\def\C{{\mathbb C}}

\def\R{{\mathbb R}}
\def\Z{{\mathbb Z}}
\def\Q{{\mathbb Q}}
\def\A{{\mathbb A}}
\def\BA{{\mathbb A}}
\def\BR{{\mathbb R}}
\def\BC{{\mathbb C}}

\def\back{{\backslash}}

\newtheorem{thm}{Theorem}[section]
\newtheorem{cor}[thm]{Corollary}
\newtheorem{lem}[thm]{Lemma}
\newtheorem{claim}[thm]{Claim}
\newtheorem{prop}[thm]{Proposition}

\newtheorem {assu}[thm]{Assumption}
\newtheorem {ques/conj}[thm]{Question/Conjecture}

\newtheorem{defn}[thm]{Definition}
\newtheorem{rmk}[thm]{Remark}

\makeatletter

\newcommand{\Rmnum}[1]{\expandafter\@slowromancap\romannumeral #1@}
\makeatother


\begin{document}
\renewcommand{\theequation}{\arabic{equation}}
\numberwithin{equation}{section}

\title{On the residue method for period integrals}

\author{Aaron Pollack}
\address{Department of Mathematics\\
Duke University\\
Durham, NC 27708, USA}

\email{apollack@math.duke.edu}

\author{Chen Wan}
\address{Department of Mathematics\\
Massachusetts Institute of Technology\\
Cambridge, MA 02139, USA}
\email{chenwan@mit.edu}

\author{Micha\l{} Zydor}
\address{Department of Mathematics\\
University of Michigan\\
Ann Arbor, MI 48109, USA}
\email{zydor@umich.edu}

\begin{abstract}
By applying the residue method for period integrals and Langlands-Shahidi's theory for residues of Eisenstein series, we study the period integrals for six spherical varieties. For each spherical variety, we prove a relation between the period integrals and certain automorphic L-functions. In some cases, we also study the local multiplicity of the spherical varieties.
\end{abstract}

\maketitle

\section{Introduction and main results}
Let $k$ be a number field, and $\BA$ its ring of adeles. Let $G$ be a reductive group defined over $k$, and let $H$ be a closed subgroup of $G$. Assume that $X=H\back G$ is spherical variety of $G$ (i.e. the Borel subgroup $B\subset G$ acts with a Zariski dense orbit). Let $A_G$ be the maximal split torus of the center of $G$ and $A_{G,H}=A_G\cap H$. Let $\pi$ be a cuspidal automorphic representation of $G(\BA)$ whose central character is trivial on $A_{G,H}(\BA)$. For $\phi\in \pi$, we define the period integral $\CP_{H}(\phi)$ to be
$$\CP_{H}(\phi):=\int_{H(k)A_{G,H}(\BA)\back H(\BA)} \phi(h)dh.$$

One of the most fundamental problems in the relative Langlands program is to find the relation between the period integral $\CP_{H,\chi}(\phi)$ and some automorphic L-function $L(s,\pi,\rho_X)$. Here $\rho_X:{}^LG\rightarrow \GL_n(\BC)$ is a finite dimensional representation of the L-group ${}^LG$ of $G$.

In this paper, by applying the residue method for period integrals and Langlands-Shahidi's theory for residues of Eisenstein series, we study the period integrals for six spherical varieties. For each spherical variety, we prove a relation between the period integrals and some automorphic L-functions. The L-functions that are related to these spherical varieties include the standard L-functions of the general linear group, orthogonal group, unitary group and $\mathrm{GE_6}$, the exterior square L-function of $\GL_{2n}$, and a degree 12 L-function of $\GL_4\times \GL_2$. In some cases, we also study the local multiplicity of the spherical varieties.

\subsection{The main results}
In this subsection, we are going to summarize the main results of this paper. For simplicity, we will only state the main results when $G$ is split (except for the model related to unitary group which is only quasisplit). We refer the readers to later sections for details about the quasi-split and non-split cases. All the L-functions that show up in this paper are the completed Langlands-Shahidi L-functions; in particular, they include local factors from the archimedean places.

\begin{rmk}\label{reductive vs non-reductive}
In this paper, we only consider the case when $H$ is reductive. However, our method can also be applied to the non-reductive case. For instance, in our previous paper \cite{AWZ18}, we studied the period integral for the Ginzburg-Rallis model, which is non-reductive.
\end{rmk}

\subsubsection{The model $(\SO_{2n+1},\SO_{n+1}\times \SO_n)$}
Let $G=\SO_{2n+1}$ be the split odd orthogonal group, $\rho_X$ be the standard representation of ${}^LG=\Sp_{2n}(\BC)$, and $H=\SO_{n+1}\times \SO_{n}$ be a closed subgroup of $G$ ($H$ does not need to be split or quasi-split).

\begin{thm}\label{SO(2n+1) main}
Let $\pi$ be a cuspidal generic automorphic representation of $G(\BA)$. If the period integral $\CP_{H}(\phi)$ is nonzero for some $\phi\in \pi$, then the L-function $L(s,\pi,\rho_X)$ is nonzero at $s=1/2$.
\end{thm}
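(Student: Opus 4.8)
The strategy is the standard "residue method for period integrals": realize the period $\CP_H(\phi)$ over $\SO_{n+1}\times\SO_n$ as the residue (or special value) of an integral that unfolds to an Eisenstein series, then identify that Eisenstein series residue with a Langlands--Shahidi L-function. Concretely, I would embed the problem in a larger group $\wt G$ — here the natural choice is $\SO_{2n+2}$ with the "doubled" situation $\SO_{2n+1}\times\SO_{2n+1}\hookrightarrow\SO_{2n+2}\times\SO_{2n+1}$, or alternatively work with the Gelfand--Graev / Gross--Prasad configuration $(\SO_{2n+1}\times\SO_{2n+2}, \SO_{2n+1})$. The point is that $\SO_{n+1}\times\SO_n\hookrightarrow\SO_{2n+1}$ is the Gross--Prasad pair, and its period is governed by the nonvanishing of $L(1/2,\pi\times\sigma)$ for an auxiliary generic cuspidal representation $\sigma$ of $\SO_{n}$ or $\SO_{n+1}$. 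Taking $\sigma$ to be an Eisenstein-type (or residual) input built from characters, one isolates precisely $L(s,\pi,\rho_X)$ with $\rho_X$ the standard $2n$-dimensional representation of $\Sp_{2n}(\BC)={}^LG$.

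**The main steps.** First I would write down the relevant Eisenstein series on the big group: take a maximal parabolic $P=MN$ with Levi $M\cong\GL_1\times\SO_{2n-1}$ (or an appropriate chain down to $\GL_1^n\times\SO_1$), induce from $\pi$ twisted by $|\det|^s$ against the auxiliary data, and form $E(g,\phi,s)$. Second, I would compute the period of this Eisenstein series against the diagonal $\SO_{2n+1}$ (a Gross--Prasad / Rankin--Selberg unfolding), showing that the global integral equals $\CP_H(\phi)$ up to the L-function factor — this is where one uses the spherical structure of $X=H\backslash G$ and the fact that $B$ has a dense orbit, so the unfolded integral has an Euler product. Third — invoking the Langlands--Shahidi theory promised in the abstract — I would locate the poles/zeros of $E(g,\phi,s)$ in terms of $L(s,\pi,\rho_X)$: the constant term computation expresses the normalizing factor of the intertwining operator as a ratio of completed L-functions, and the residue at the relevant point $s=s_0$ (corresponding to $s=1/2$ in the $\rho_X$ normalization) is controlled by $L(1/2,\pi,\rho_X)$ in the numerator. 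Fourth, since $\CP_H(\phi)\neq 0$ for some $\phi$ forces the residue (equivalently the relevant special value of $E$) to be nonzero, one concludes $L(1/2,\pi,\rho_X)\neq 0$. Genericity of $\pi$ is used to guarantee the local integrals are nonzero and to apply Langlands--Shahidi (which requires a Whittaker model), and to ensure the Eisenstein series and its constant term behave as expected.

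**The main obstacle.** The delicate point is the identification of the period of the Eisenstein series with $\CP_H(\phi)$ together with the precise L-function bookkeeping: one must show the "other" terms in the unfolding (boundary contributions, non-open orbits, degenerate Whittaker pieces) either vanish or do not interfere with the main term at the point $s_0$, and one must track the exact set of L-factors — in the Langlands--Shahidi machinery for $\SO$ this typically involves not just $L(s,\pi,\rho_X)$ but also $L(2s,\pi,\wedge^2 \text{ or } \Sym^2)$-type companions in the denominator, and one needs these companion factors to be holomorphic and nonzero at $s_0$ so that the residue is genuinely proportional to $L(1/2,\pi,\rho_X)$ and nothing cancels it. Handling convergence of $\CP_H(\phi)$ itself is free here (cuspidality makes the integrand rapidly decreasing), but the regularization of the Eisenstein period and the exchange of residue with integral will require care — presumably via a truncation/mixed truncation argument à la Zydor, which is exactly the technical backbone the paper is set up to provide. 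Finally, for the quasi-split or non-split forms of $H$ mentioned in the statement, one needs the local nonvanishing statements to hold for those forms as well, which is where local results (multiplicity one, nonvanishing of local Gross--Prasad functionals) enter.
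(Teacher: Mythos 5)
You have the right overall tools in view --- the residue method, a cuspidal Eisenstein series built from $\pi$ on a larger group, Zydor's truncation for regularizing the divergent period, and the Langlands--Shahidi normalizing factor to translate a pole of the Eisenstein series into nonvanishing of $L(1/2,\pi,\rho_X)$ --- and your concern about the companion $\zeta$-factors in the normalizing ratio is legitimate and is exactly what the paper addresses using \cite{KK} and Proposition~\ref{L-function nonzero}. However, the specific construction you propose is not the one that works, and the conceptual picture you are applying is the wrong one.

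First, the ambient spherical pair is misidentified. The paper embeds $(G,H)=(\SO_{2n+1},\SO_{n+1}\times\SO_n)$ into $(\underline{G},\underline{H})=(\SO_{2n+3},\SO_{n+3}\times\SO_n)$, obtained by adding a hyperbolic plane to the $(n+1)$-dimensional factor $W_1$; the relevant maximal parabolic $\underline{P}$ of $\underline{G}$ has Levi $\underline{M}\cong\SO_{2n+1}\times\GL_1$, so that $\pi$ can literally be placed on the Levi, and $\underline{P}\cap\underline{H}$ is a parabolic of $\underline{H}$ with Levi $\cong H\times\GL_1$. Your suggestions ($\SO_{2n+2}$ doubled with $\SO_{2n+1}$, or a Gross--Prasad pair $(\SO_{2n+1}\times\SO_{2n+2},\SO_{2n+1})$, or a Levi $\GL_1\times\SO_{2n-1}$) do not have this property: none of them has $\pi$ sitting on the Levi of a maximal parabolic inside a group in which the analogue of $H$ sits as the ``Levi part'' of the intersected parabolic. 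Without that structural coincidence the truncated period of $E(\phi,s)$ cannot be reduced to $\CP_H(\phi)$.

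Second, and more fundamentally, you are conflating the residue method with a Rankin--Selberg unfolding. You propose to ``compute the period of this Eisenstein series against the diagonal $\SO_{2n+1}$ ... showing that the global integral equals $\CP_H(\phi)$ up to the L-function factor ... so the unfolded integral has an Euler product.'' That is not what happens. The $\underline{H}$-period of $E(\phi,s)$ never becomes an Euler product here, and no auxiliary cusp form $\sigma$ or ``Eisenstein-type input built from characters'' is fed into a Gross--Prasad integral. Instead, after relative truncation one shows that the regularized period $\CP_{\underline{H},\mathrm{reg}}(E(\phi,s))$ \emph{vanishes identically}, so that the truncated period is exactly the explicit sum of the two boundary terms in \eqref{1}, namely $\CP_H(\phi)$ and $\CP_H(M(s)\phi)$ weighted by exponentials in $T$. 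Taking the residue at $s_0=1/2$ (where $s_0=-c(1-2c_{\underline{P}}^{\underline{H}})$ is computed from $c=\tfrac{2n+1}{2}$ and $c_{\underline{P}}^{\underline{H}}=\tfrac{n+1}{2n+1}$) then forces $E(\phi,s)$ to have a pole at $s=1/2$ whenever $\CP_H\ne 0$, and the $L$-function statement follows from the normalization as you anticipate.

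Third, you have left out the key input that makes the vanishing of $\CP_{\underline{H},\mathrm{reg}}(E(\phi,s))$ possible. This is Prasad's theorem (Theorem~\ref{prasad} / Corollary~\ref{prasad cor}) that for a non-quasi-split symmetric pair there are no $H_1$-distinguished generic representations. Applied at a non-archimedean place where $\pi_v$ is generic, it shows that the induced representation $\Pi_{v,s}$ is not $\underline{H}(k_v)$-distinguished for $\mathrm{Re}(s)\gg 0$ --- this is precisely ``statement (4)'' of Method 1 and the place where the generic hypothesis on $\pi$ enters in an essential structural way, not merely as a convenience for applying Langlands--Shahidi. Without identifying this ingredient (and the companion application of Prasad's result to the second semistandard parabolic $\underline{P}'$ when $W_2$ is isotropic), the proof does not close: one cannot discard the ``main'' regularized term, and the argument relating the Eisenstein period to $\CP_H(\phi)$ breaks down.

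In short: your high-level description of ``residue method $+$ Zydor truncation $+$ Langlands--Shahidi'' is the right flavor, but the ambient group is wrong, the mechanism (Euler-product unfolding) is the wrong mechanism, and the load-bearing local vanishing theorem of Prasad is absent. As written, the proposal does not recover Theorem~\ref{SO(2n+1) main}.
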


Theorem \ref{SO(2n+1) main} will be proved in Section \ref{Section SO(2n+1)}.

\subsubsection{The model $(\SO_{2n},\SO_{n+1}\times \SO_{n-1})$}
Let $G=\SO_{2n}$ be the split even orthogonal group, $\rho_X$ be the standard representation of ${}^LG=\SO_{2n}(\BC)$, and $H=\SO_{n+1}\times \SO_{n-1}$ be a closed subgroup of $G$ ($H$ does not need to be split or quasi-split).

\begin{thm}\label{SO(2n) main}
Let $\pi$ be a cuspidal generic automorphic representation of $G(\BA)$. If the period integral $\CP_{H}(\phi)$ is nonzero for some $\phi\in \pi$, then the L-function $L(s,\pi,\rho_X)$ has a pole at $s=1$.
\end{thm}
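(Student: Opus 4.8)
The plan is to run the residue method for period integrals exactly as in the odd orthogonal case (Theorem \ref{SO(2n+1) main}), adapting the analytic input to the split even orthogonal group $\SO_{2n}$. First I would embed $H=\SO_{n+1}\times\SO_{n-1}$ inside a larger group so that the period $\CP_H$ is realized as an inner product with a residue of an Eisenstein series. Concretely, one considers the group $\wt G=\SO_{2n+1}$ (or an appropriate ambient orthogonal group) containing $G=\SO_{2n}$ as the stabilizer of an anisotropic vector, together with a maximal parabolic $P=MN$ of $\wt G$ whose Levi $M$ has a $\GL_1$ (or $\GL_m$) factor, such that the degenerate Eisenstein series $E(g,\phi,s)$ built from the trivial/mirabolic section on $P$ unfolds against $\phi\in\pi$ to produce the $H$-period times the partial L-function $L^S(s,\pi,\rho_X)$ with $\rho_X$ the standard representation of $\SO_{2n}(\BC)$. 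The key identity to establish is the unfolding
\[
\int_{[G]}\phi(g)\,E(g,\Phi,s)\,dg \;=\; \CP_H(\phi)\cdot \frac{L^S(s+\tfrac12,\pi,\rho_X)}{d^S(s)},
\]
where $d^S(s)$ is a product of completed zeta functions coming from the constant term. This is the step where the precise choice of embedding and parabolic matters, and verifying it requires an explicit orbit (Bruhat) decomposition of $P\back \wt G/G$ together with a local unramified computation identifying the resulting local zeta integral with the standard L-factor; this is the main obstacle, and it is where the distinction between $\SO_{2n+1}$ and $\SO_{2n}$ (hence the standard rep of $\SO_{2n}(\BC)$ rather than $\Sp_{2n}(\BC)$, and the pole at $s=1$ rather than nonvanishing at $s=1/2$) enters.

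Next I would invoke Langlands--Shahidi theory to locate the poles of $E(g,\Phi,s)$: the Eisenstein series has at most a simple pole at the relevant point $s=s_0$ (the point corresponding to $s=1$ on the L-function side), and its residue $\CE_{-1}(g,\Phi)$ is a square-integrable automorphic form on $[G]$. Taking the residue of both sides of the unfolding identity at $s=s_0$ gives
\[
\int_{[G]}\phi(g)\,\CE_{-1}(g,\Phi)\,dg \;=\; \CP_H(\phi)\cdot c\cdot \operatorname*{Res}_{s=s_0}\frac{L^S(s+\tfrac12,\pi,\rho_X)}{d^S(s)},
\]
with $c\neq 0$ an explicit constant. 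Since $\pi$ is cuspidal and generic, $d^S(s)$ is holomorphic and nonzero at the point in question (this uses that $\pi$ is generic, so the relevant symmetric/exterior square or Rankin--Selberg factors in the denominator are well understood via Shahidi's results), so the residue on the right is a nonzero multiple of $\operatorname*{Res}_{s=1}L^S(s,\pi,\rho_X)$.

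Finally, the logic is contrapositive. Assume $\CP_H(\phi_0)\neq 0$ for some $\phi_0\in\pi$. Then the right-hand side of the residued identity is nonzero for a suitable choice of section $\Phi$ (nonvanishing of the local zeta integrals at $s_0$ at the bad places, which one arranges by choosing $\Phi$ supported appropriately — a standard local argument), hence $\CE_{-1}(g,\Phi)$ pairs nontrivially with $\pi$ and in particular $\CE_{-1}\not\equiv 0$, i.e.\ $E(g,\Phi,s)$ genuinely has a pole at $s=s_0$. By Langlands' theory of Eisenstein series the pole of a degenerate Eisenstein series at such a point is governed by the poles of the intertwining operator, equivalently of the ratio of L-functions in its constant term; combined with the unfolding this forces $L^S(s,\pi,\rho_X)$ to have a pole at $s=1$. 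Passing from $L^S$ to the completed L-function $L(s,\pi,\rho_X)$ is harmless since the finitely many omitted (including archimedean) Langlands--Shahidi local factors are nonzero and finite there for generic $\pi$. The only genuinely delicate points, beyond the unfolding computation itself, are: (i) checking that the denominator $d^S(s)$ does not itself have a pole at $s_0$ that could cancel the L-function's pole — handled by the genericity hypothesis and Shahidi's normalization — and (ii) the non-triviality of the local zeta integrals, which is routine but must be stated carefully. Full details are carried out in Section \ref{Section SO(2n)}.
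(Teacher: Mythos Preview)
Your proposal describes a Rankin--Selberg type construction: pair the cusp form $\phi$ on $G=\SO_{2n}$ against a \emph{degenerate} Eisenstein series on some ambient group, unfold, and read off $\CP_H(\phi)\cdot L^S(s,\pi,\rho_X)$. This is not what the paper does, and more importantly the logic you outline does not close. In the paper the Eisenstein series is not degenerate and is not integrated against $\phi$; rather one takes $\ul{G}=\SO_{2n+2}$ with maximal parabolic $\ul{P}=\ul{M}\,\ul{N}$ whose Levi is $G\times\GL_1$, forms the \emph{cuspidal} Eisenstein series $E(\phi,s)=E^{\ul{G}}_{\ul{P}}(\phi,s)$ induced from $\pi$ itself, and then computes the period of $E(\phi,s)$ over $\ul{H}=\SO_{n+3}\times\SO_{n-1}$. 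Using the relative truncation operator and Prasad's result that the symmetric pair $(\ul{G},\ul{H})$ admits no generic distinguished representations, one obtains an identity of the shape
\[
\int_{[\ul{H}]}\Lambda^{T,\ul{H}}E(h,\phi,s)\,dh
=\frac{e^{(s-1)\langle\varpi_{\ul{P}},T\rangle}}{s-1}\int_{K_{\ul{H}}}\!\CP_H(\phi(\cdot\,k))\,dk
+\frac{e^{(-s-1)\langle\varpi_{\ul{P}},T\rangle}}{-s-1}\int_{K_{\ul{H}}}\!\CP_H(M(s)\phi(\cdot\,k))\,dk,
\]
valid as an identity of meromorphic functions. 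The explicit simple pole of the first term at $s=1$ (coming from the computation $s_0=-c(1-2c_{\ul{P}}^{\ul{H}})=-n(1-\tfrac{2(n+1)}{2n})=1$) forces $E(\phi,s)$, and hence the intertwining operator $M(s)$, to have a pole at $s=1$ whenever $\CP_H$ is nonzero on $\pi$. Only then does Langlands--Shahidi enter: the normalizing factor of $M(s)$ is $L(s,\pi,\rho_X)/L(s+1,\pi,\rho_X)$, the normalized operator is holomorphic at $s=1$, and $L(2,\pi,\rho_X)\neq 0$, so $L(s,\pi,\rho_X)$ has a pole at $s=1$.

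The gap in your argument is in the third paragraph. You write that ``the right-hand side of the residued identity is nonzero for a suitable choice of section $\Phi$'', but in your displayed identity the right-hand side is $\CP_H(\phi)\cdot c\cdot \Res_{s=s_0}\bigl[L^S/d^S\bigr]$; asserting this is nonzero is exactly asserting that $L^S$ has a pole, which is the conclusion you are trying to reach. Choosing $\Phi$ cannot manufacture a pole of $L^S(s,\pi,\rho_X)$. Put differently, an identity of the form $\int_{[G]}\phi\,E(\Phi,s)=\CP_H(\phi)\cdot L^S(s)/d^S(s)$ with $E$ degenerate would at best show that the poles of $L^S$ are contained in those of $E$ (and in fact, since $\phi$ is cuspidal and $\Res_{s_0}E$ lies in the residual spectrum, the left side has residue $\langle\phi,\CE_{-1}\rangle_{[G]}=0$, which would force $L^S$ to be \emph{holomorphic} at $s_0$). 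The mechanism that actually produces the pole is the one above: the $H$-period appears as the residue of an explicit $1/(s-1)$ in the truncated $\ul{H}$-period of the cuspidal Eisenstein series, not as a constant prefactor of $L^S$.
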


Locally, let $F$ be a p-adic field, and $\pi$ be an irreducible smooth representation of $G(F)$. We define the multiplicity
$$m(\pi):=\dim(\Hom_{H(F)}(\pi,1)).$$

\begin{thm}\label{SO(2n) local theorem}
Let $\pi$ be an irreducible generic tempered representation of $G(F)$. If $m(\pi)\neq 0$, then the local L-function $L(s,\pi,\rho_X)$ has a pole at $s=0$.
\end{thm}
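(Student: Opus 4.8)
The plan is to deduce the local statement (Theorem \ref{SO(2n) local theorem}) from the global statement (Theorem \ref{SO(2n) main}) by a globalization argument, in the spirit of the usual passage from global period integrals to local multiplicities. First I would note that, by work on tempered representations and local multiplicities for strongly tempered spherical varieties (here $(\SO_{2n},\SO_{n+1}\times\SO_{n-1})$), the hypothesis $m(\pi)\neq 0$ together with the temperedness of $\pi$ should be stable under unramified twisting and amenable to a local-to-global mechanism. Concretely, I would choose a number field $k$ with a place $v_0$ such that $k_{v_0}\cong F$, realize $\SO_{2n}$ over $k$ with the prescribed local form at $v_0$, and pick a cuspidal generic automorphic representation $\Pi$ of $G(\BA)$ with $\Pi_{v_0}\cong\pi$, using a suitable simultaneous globalization (Poincaré-series or trace-formula type arguments, e.g. in the style of Prasad--Schulze-Pillot or the methods used for the Gan--Gross--Prasad problems). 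The key point is to arrange that $\Pi$ is generic, that its other local components are well-controlled (unramified or Steinberg at auxiliary places), and that $\CP_H(\phi)\neq 0$ for some $\phi\in\Pi$; the latter follows if one globalizes so that all local periods are nonvanishing, which at $v_0$ is exactly the hypothesis $m(\pi)\neq 0$ (with the subtlety that one must ensure $\dim\Hom_{H(F)}(\pi,1)\le 1$ or otherwise that nonvanishing of the abstract multiplicity gives a nonvanishing explicit local period — this is where one invokes multiplicity-one-type results for this model).

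Granting such a $\Pi$, Theorem \ref{SO(2n) main} then tells us that the completed L-function $L(s,\Pi,\rho_X)$ has a pole at $s=1$. The next step is to analyze this pole factor by factor: writing $L(s,\Pi,\rho_X)=\prod_v L(s,\Pi_v,\rho_X)$, the completed standard L-function of $\SO_{2n}$ is holomorphic and nonvanishing on $\Re(s)>1$ and, by the Langlands--Shahidi theory that the paper is using, its only possible pole in $\Re(s)\ge 1$ is at $s=1$; since $\Pi$ is cuspidal generic, each local factor $L(s,\Pi_v,\rho_X)$ is holomorphic for $\Re(s)\ge 1$ except possibly for the tempered local factors, which may contribute poles on $\Re(s)=0$ (equivalently $s=1$ after the relevant normalization shift). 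The strategy is to choose the auxiliary local components of $\Pi$ so that none of them can produce the pole at $s=1$: at unramified places $L(s,\Pi_v,\rho_X)$ is a product of $(1-\alpha q_v^{-s})^{-1}$ with $|\alpha|=1$ by temperedness, hence holomorphic at $s=1$; and at the finitely many ramified auxiliary places one arranges (e.g. by taking $\Pi_v$ to be a twist of Steinberg, whose L-factor is known explicitly) that no pole at $s=1$ occurs. The global pole at $s=1$ must therefore come from the place $v_0$, which forces $L(s,\pi,\rho_X)$ to have a pole at the corresponding point $s=0$ in the local normalization of the statement. (One must be careful to reconcile the normalization: the global theorem states a pole at $s=1$ of $L(s,\Pi,\rho_X)$, while the local theorem asserts a pole at $s=0$ of $L(s,\pi,\rho_X)$; the shift by $1/2$ or $1$ between "Eisenstein" and "Langlands--Shahidi" normalizations should be tracked so that the two statements line up, exactly as in the companion Theorem \ref{SO(2n+1) main} where the value is taken at $s=1/2$.)

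The main obstacle, as usual in such arguments, is the globalization step: one needs a cuspidal generic automorphic representation $\Pi$ of $\SO_{2n}$ with a prescribed tempered local component $\pi$ at $v_0$, with a nonvanishing global $H$-period, and with sufficiently many other components under control. Producing a generic cuspidal form with a prescribed ramified component is delicate for $\SO_{2n}$ (one cannot simply invoke Arthur's classification to produce it with the exact local constraints; one typically argues via Poincaré series attached to matrix coefficients of $\pi$, or via a relative trace formula for the pair $(G,H)$ whose spectral side detects the period and whose geometric side can be made nonzero by choosing test functions supported near the identity at the auxiliary places). Handling the period-nonvanishing globalization requires knowing that the local period at $v_0$ — guaranteed nonzero by $m(\pi)\neq 0$ — can be inserted into a globally nonvanishing integral, i.e. that the other local periods can be simultaneously made nonzero; for unramified data this uses the unramified computation of the period (a Casselman--Shalika type formula for this model), and at the archimedean and auxiliary ramified places one uses that the local spherical character is not identically zero. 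I expect the cleanest route is a relative-trace-formula comparison or a direct Poincaré-series construction following the template already set up in the paper for the global Theorem \ref{SO(2n) main}, with the L-factor bookkeeping (unramified places holomorphic by temperedness, auxiliary places explicitly chosen, normalization shift tracked) being routine once the right $\Pi$ is in hand.
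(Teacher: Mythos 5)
Your globalization strategy has a fundamental gap at the step ``the global pole at $s=1$ must therefore come from the place $v_0$.'' That inference is false: in the Euler product $L(s,\Pi,\rho_X)=\prod_v L(s,\Pi_v,\rho_X)$, a pole of the completed $L$-function at $s=1$ is an inherently global, spectral phenomenon, not something that localizes to a single factor. Indeed, every local factor --- including the one at $v_0$, since $\pi$ is tempered and its local $L$-factor is holomorphic on $\Re(s)>0$ --- is holomorphic at $s=1$; the pole arises from the behaviour of the infinite product, exactly as the pole of $\zeta(s)$ at $s=1$ does not come from any single Euler factor $(1-p^{-s})^{-1}$. So even granting the difficult globalization and period-nonvanishing steps you describe, the final factor-by-factor ``pole transfer,'' along with the unexplained normalization shift from $s=1$ globally to $s=0$ locally, is not a valid deduction. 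A detour through functoriality (transferring $\Pi$ to $\GL_{2n}$ and reading off the self-dual constituent $1\boxplus\cdots$ of the transfer, then descending to local parameters) could in principle repair this, but it is a much heavier machine than what is needed and you do not invoke it.

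The paper's argument is entirely local and does not globalize. From $m(\pi)\neq 0$, the explicit closed-orbit and modulus-character computations (Lemma \ref{SO(2n) closed orbit} and Lemma \ref{SO(2n) character}) show by a Mackey-theoretic argument that the normalized induced representation $I_{\ul{P}}^{\ul{G}}(\pi_1)$ on the auxiliary group $\ul{G}=\SO_{2n+2}$ is $\ul{H}(F)$-distinguished (Theorem \ref{SO(2n) local theorem 1}). Prasad's theorem for non-quasi-split symmetric pairs (Theorem \ref{prasad}, via Corollary \ref{prasad cor}) rules out distinguished generic representations of $\ul{G}(F)$, so $I_{\ul{P}}^{\ul{G}}(\pi_1)$ must be reducible. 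By the Standard Module Conjecture and the reducibility criterion in terms of the gamma factor (Lemma B.2 of \cite{GI} together with \cite{HO}), reducibility at this point is equivalent to the local gamma factor $\gamma(s,\pi,\rho_X)$ having a pole at $s=1$; since temperedness makes $L(s,\pi,\rho_X)$ holomorphic and nonvanishing on $\Re(s)>0$, the pole of the gamma factor at $s=1$ forces $L(s,\pi,\rho_X)$ to have a pole at $s=0$. You should replace your global-to-local strategy with this local chain: distinction of $\pi$ forces distinction of the degenerate principal series at a point, Prasad forces reducibility there, and reducibility forces the $L$-function pole.
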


Theorem \ref{SO(2n) main} and \ref{SO(2n) local theorem} will be proved in Section \ref{Section SO(2n)}.

\subsubsection{The model $(\RU_{2n},\RU_{n}\times \RU_n)$}
Let $k'/k$ be a quadratic extension, $G=\RU_{2n}$ be the quasi-split unitary group, and $H=\RU_n\times \RU_n$ be a closed subgroup of $G$ ($H$ does not need to be quasi-split). Let $\pi$ be a generic cuspidal automorphic representation of $G(\BA)$ with trivial central character, and let $\Pi$ be the base change of $\pi$ to $\GL_{2n}(\BA_{k'})$. Let $L(s,\pi)$ (resp. $L(s,\Pi)$) be the standard L-function of $\pi$ (resp. $\Pi$). We have $L(s,\pi)=L(s,\Pi)$.

\begin{thm}\label{U(2n) main}
If the period integral $\CP_{H}(\phi)$ is nonzero for some $\phi\in \pi$, then the standard L-function $L(s,\pi)=L(s,\Pi)$ is nonzero at $s=1/2$. Moreover, if we assume that $\Pi$ is cuspidal and there exists a local place $v\in |k|$ such that $k'/k$ splits at $v$ and $\pi_v$ is a discrete series of $G(k_v)=\GL_{2n}(k_v)$, then the exterior square L-function $L(s,\Pi,\wedge^2)$ has a pole at $s=1$ (i.e. $\Pi$ is of symplectic type).
\end{thm}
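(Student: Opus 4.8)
The plan is to use the residue method for period integrals for the Galois pair $(\RU_{2n}, \RU_n \times \RU_n)$, combined with Langlands--Shahidi theory for the residues of Eisenstein series on $\RU_{2n}$ (or equivalently, after base change, on $\GL_{2n}$). First I would set up the unfolding: starting from a cuspidal $\phi \in \pi$, integrate it against an Eisenstein series $E(g,s,f)$ built from the Siegel parabolic $P = MN$ of $\RU_{2n}$ with Levi $M \cong \GL_n(k')$ (or the relevant maximal parabolic adapted to the $\RU_n \times \RU_n$ period), restricted to the subgroup $\RU_n \times \RU_n$. The key point is that this mixed period of $\phi$ against $E(\cdot, s, f)$ unfolds — via the open orbit of $\RU_n \times \RU_n$ on the flag variety — to an Euler product whose local factors compute the standard $L$-function $L(s + 1/2, \pi) = L(s+1/2,\Pi)$ up to normalizing factors and local integrals that are nonzero at the relevant point. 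Taking the residue at the appropriate point $s = s_0$ of this identity, the left-hand side becomes (up to a nonzero constant depending on $f$) the period integral $\CP_H(\phi)$ against the residual datum, while the right-hand side is a residue of a ratio of completed $L$-functions; nonvanishing of $\CP_H$ then forces $L(1/2,\Pi) \neq 0$.

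For the second, refined assertion, the idea is to go one step further along the tower of Eisenstein series on $\GL_{2n}(\BA_{k'})$ and invoke the Langlands--Shahidi machinery: the exterior square $L$-function $L(s, \Pi, \wedge^2)$ appears as the normalizing factor (together with $L(s,\Pi)$ itself and possibly $L(2s, \Pi, \cdots)$) in the constant term of an Eisenstein series induced from $\Pi \boxtimes \Pi$ (or from $\Pi \otimes |\det|^s$) on a suitable parabolic. The nonvanishing of the period $\CP_H$ — equivalently, $\pi$ being $(\RU_n\times\RU_n)$-distinguished — means, via the Jacquet--Lai--Friedberg type characterization, that $\Pi$ is the base change of a cuspidal representation of a unitary group, hence a \emph{conjugate self-dual} representation of $\GL_{2n}(\BA_{k'})$ of a fixed parity. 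The local hypothesis at the split place $v$, where $G(k_v) = \GL_{2n}(k_v)$ and $\pi_v$ is a discrete series, pins down that parity: a discrete series of $\GL_{2n}(k_v)$ with a $\GL_n(k_v)\times\GL_n(k_v)$-period (a linear period) is of symplectic type by the local results of Jacquet--Rallis / Matringe, forcing $L(s,\Pi,\wedge^2)$ rather than $L(s,\Pi,\Sym^2)$ to carry the pole at $s=1$. One then combines this local parity constraint with the known dichotomy (Kewat--Raghunathan, or the Langlands--Shahidi analysis of poles of $L(s,\Pi,\wedge^2)$) to conclude that $L(s,\Pi,\wedge^2)$ indeed has a pole at $s = 1$.

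The main obstacle I anticipate is the unfolding step together with the precise identification of the residual Eisenstein series whose period recovers $\CP_H$ — in particular, verifying that the local zeta integrals arising from the open orbit are nonzero at the relevant special point (so that the global identity is not vacuous), and controlling the contributions of the non-open orbits in the geometric expansion of the mixed period. This requires a careful choice of the section $f$ (taking $f_v$ supported near the open orbit at a few places) and an analysis of the poles of the intertwining operators governing the constant term, exactly the kind of input that Langlands--Shahidi theory supplies. A secondary technical point is ensuring compatibility of the two normalizations of the standard $L$-function, $L(s,\pi) = L(s,\Pi)$, across the base-change map, and handling the archimedean places where the nonvanishing of local integrals must be checked by hand or by reference to known archimedean results.
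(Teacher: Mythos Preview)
Your proposal misidentifies the core mechanism of the first part. You describe a Rankin--Selberg unfolding: integrate $\phi$ on $G=\RU_{2n}$ against an Eisenstein series on $\RU_{2n}$ built from a Siegel-type parabolic, and unfold over $\RU_n\times\RU_n$-orbits to obtain an Euler product for $L(s+1/2,\Pi)$. That is not what the paper does, and your unfolding claim is not justified --- no such integral representation is identified, and the open-orbit local integrals you allude to are never specified.

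The paper instead passes to a \emph{larger} group $\ul{G}=\RU_{2n+2}$ with maximal parabolic $\ul{P}=\ul{M}\ul{N}$ whose Levi is $\ul{M}\cong G\times\mathrm{Res}_{k'/k}\GL_1$, forms the Eisenstein series $E(\phi,s)$ on $\ul{G}(\BA)$ induced \emph{from} $\pi$, and studies its $\ul{H}=\RU_{n+2}\times\RU_n$ period via the relative truncation operator of \cite{Z19}. The regularized $\ul{H}$-period vanishes (by Prasad's result that the symmetric pair $(\RU_{2n+2},\RU_{n+2}\times\RU_n)$ admits no generic distinguished representations), which forces the truncated period to equal a sum of two boundary terms, each an $H$-period of $\phi$ or $M(s)\phi$. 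The residue at $s_0=1/2$ then shows that nonvanishing of $\CP_H$ forces the Eisenstein series, hence the intertwining operator, to have a pole at $s=1/2$. The Langlands--Shahidi normalizing factor here is $\dfrac{L(s,\pi)\zeta_k(2s)}{L(s+1,\pi)\zeta_k(2s+1)}$; holomorphy of the normalized operator at $s=1/2$ (Kim--Kim) and $L(3/2,\pi)\neq 0$ then give $L(1/2,\pi)\neq 0$. No orbit unfolding or local zeta integral computation enters.

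For the second part your instincts are closer, but the argument in the paper is more direct than what you sketch. One does not invoke a further tower of Eisenstein series or a Friedberg--Jacquet characterization. Instead: nonvanishing of $\CP_H$ gives local $H(k_v)$-distinction (the linear model) at every split non-archimedean $v$; by Jacquet--Rallis \cite{JR96} this forces $\pi_v$ self-dual, and Ramakrishnan's automorphic Cebotarev theorem \cite{R15} then yields $\Pi$ self-dual. Since $\Pi$ is cuspidal it is symplectic or orthogonal. At the assumed split place $v_0$ with $\pi_{v_0}$ discrete, the linear period implies a Shalika model by Matringe \cite{M14}, hence symplectic type by Lapid--Mao \cite{LM}; a discrete series cannot be both, so $\Pi$ is symplectic and $L(s,\Pi,\wedge^2)$ has a pole at $s=1$.
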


\begin{rmk}
Combining Theorem \ref{U(2n) main} with the result in \cite{FJ} for the linear model $(\GL_{2n},\GL_n\times \GL_n)$, we know that if the period integral $\CP_{H}(\phi)$ is nonzero on the space of $\pi$, then the $\GL_n(\BA_{k'})\times \GL_n(\BA_{k'})$-period integral is nonzero on the space of $\Pi$.
\end{rmk}

\begin{rmk}
The model $(\RU_{2n},\RU_{n}\times \RU_n)$ was mentioned to the third author
independently by L. Clozel, J. Getz and K. Prasanna during his stay at the IAS.
We would like to thank them for suggesting it to us.

In fact,
our result may be viewed as a special case of a principle put forward by Getz and Wambach in \cite{getWom}.
They conjectured that for any reductive group $H$ and any involution $\sigma$ of $H$, the non-vanishing of the period integrals of the model $(H,H^{\sigma})$ ($H^{\sigma}$ being the group of fixed points of $\sigma$) for a cuspidal automorphic representation $\pi$ of $H(\A)$
should be (roughly) equivalent to the non-vanishing of the period integrals of the model $(G,G^{\sigma})$ for the base change of $\pi$ to $G(\A)$. Here $G=Res_{k'/k}H$ with $k'/k$ quadratic. Our result in Theorem \ref{U(2n) main} confirms one direction of a special case of their conjecture.

On the other hand, the model $(\RU_{4},\RU_{2}\times \RU_2)$ and its twists also
appear in the work of Ichino-Prasanna \cite{ichPras} in the context of algebraic cycles on Shimura varieties.
\end{rmk}

Theorem \ref{U(2n) main} will be proved in Section \ref{Section U(2n)}.

\subsubsection{The Jacquet-Guo model}
Let $k'/k$ be a quadratic extension, $G=\GL_{2n}$, and $H=Res_{k'/k} \GL_n$ be a closed subgroup of $G$ (in particular $H(\BA)=\GL_n(\BA_{k'})$). The model $(G,H)$ is the so called Jacquet-Guo model, and it was first be studied in \cite{G96}. Let $\rho_{X,1}$ (resp. $\rho_{X,2}$) be the standard representation (resp. exterior square representation) of ${}^LG=\GL_{2n}(\BC)$.

\begin{thm}\label{Jacquet-Guo main}
Let $\pi$ be a cuspidal automorphic representation of $G(\BA)$ with trivial central character. If the period integral $\CP_{H}(\phi)$ is nonzero for some $\phi\in \pi$, then the L-function $L(s,\pi,\rho_{X,1})$ is nonzero at $s=1/2$ and the L-function $L(s,\pi,\rho_{X,2})$ has a pole at $s=1$.
\end{thm}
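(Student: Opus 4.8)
The plan is to follow the residue method used for the other models above, together with Langlands--Shahidi theory for the poles of Eisenstein series. The auxiliary group will be $\wt G=\SO_{4n+1}$ --- this choice is forced by the requirement that the Langlands--Shahidi L-factors attached to the relevant Levi be $L(s,\pi,\rho_{X,1})$ and $L(2s,\pi,\rho_{X,2})$: in $\wt G$ the group $G=\GL_{2n}$ occurs as the Levi factor $M$ of a maximal parabolic $\wt P=M\wt N$, and $M$ acts on $\wt N$ through $\rho_{X,1}\oplus\rho_{X,2}$, the standard representation in the root-weight-one part and the exterior square in the root-weight-two part. From $\phi\in\pi$ form the Eisenstein series $E(\phi,s):=E^{\wt G}_{\wt P}(\phi\otimes|\det|^{s})$ on $\wt G(\BA)$; by the Gindikin--Karpelevich--Langlands--Shahidi formula its constant term along $\wt P$ is governed, up to non-vanishing holomorphic factors, by
$$m(s)=\frac{L(s,\pi,\rho_{X,1})\,L(2s,\pi,\rho_{X,2})}{L(1+s,\pi,\rho_{X,1})\,L(1+2s,\pi,\rho_{X,2})}.$$
The quadratic extension $k'/k$ and the Jacquet--Guo subgroup $H=\Res_{k'/k}\GL_n\incl G\subset\wt G$ enter through the choice of a reductive $k$-subgroup $\wt H\subset\wt G$ --- depending on $k'/k$, as does in general the $k$-form of $\wt G$ --- characterized by the requirement that the dense $\wt P$-orbit on $\wt P\backslash\wt G/\wt H$ be dual to the dense Borel orbit on $H\backslash G$; this is what makes the $\wt H$-period of $E(\phi,s)$ recover, upon unfolding along $\wt P(k)\backslash\wt G(k)/\wt H(k)$, the Jacquet--Guo period $\CP_H(\phi)$.

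Since $\pi$ is cuspidal on $\GL_{2n}$, the completed standard L-function $L(s,\pi,\rho_{X,1})$ is entire, and both $L(1+s,\pi,\rho_{X,1})$ and $L(1+2s,\pi,\rho_{X,2})$ are holomorphic and non-zero at $s=1/2$; hence, in the half-plane $\Re(s)>0$, the Eisenstein series $E(\phi,s)$ is holomorphic except for at most a simple pole at $s=1/2$, and this pole is present precisely when $L(2s,\pi,\rho_{X,2})$ has a pole at $s=1/2$, i.e.\ exactly when $L(s,\pi,\rho_{X,2})$ has a pole at $s=1$, equivalently when $\pi$ is of symplectic type. In that case $\CE_{-1}(\phi):=\Res_{s=1/2}E(\phi,s)$ is a non-zero square-integrable automorphic form, and as $\phi$ varies these span the Langlands--Shahidi residual representation $\CE_{-1}(\pi)$ of $\wt G(\BA)$ attached to $\pi$.

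The core of the proof is to compute in two ways the regularized period
$$\CR(s):=\int^{\reg}_{[\wt H]}E(\phi,s)(h)\,dh,$$
where, $\wt H$ being non-compact modulo its centre, the integral must be defined by a mixed Arthur-type truncation, following Jacquet--Lapid--Rogawski, Lapid, and Zydor; one first shows that $\CR(s)$ is well defined and meromorphic. Unfolding $E(\phi,s)$ along the double cosets $\wt P(k)\backslash\wt G(k)/\wt H(k)$: the cuspidality of $\phi$ kills, or makes holomorphic at $s=1/2$, the contributions of the non-dense orbits, and the dense orbit produces $\CP_H(\phi)$ times a convergent product of local zeta integrals that one identifies, place by place, with the local factors entering $m(s)$. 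This yields an identity of the form
$$\Res_{s=1/2}\CR(s)=c(\phi)\,\CP_H(\phi),\qquad c(\phi)\neq 0\ \text{for suitable}\ \phi\in\pi,$$
while on the other hand $\Res_{s=1/2}\CR(s)=\int^{\reg}_{[\wt H]}\CE_{-1}(\phi)$ is the $\wt H$-period of the residual form. Hence, if $\CP_H(\phi)\neq 0$, then $\CE_{-1}(\phi)\neq 0$, so the residual representation exists and $L(s,\pi,\rho_{X,2})$ has a pole at $s=1$; this is the second assertion of the theorem. For the first assertion, one evaluates the residual period $\int^{\reg}_{[\wt H]}\CE_{-1}(\phi)$ independently --- by unfolding the residual form $\CE_{-1}(\phi)$ along an appropriate parabolic of $\wt G$, in the usual way periods of residual representations are computed --- and finds that it equals a non-zero constant times $L(1/2,\pi,\rho_{X,1})$ times the (non-zero) residue of $L(2s,\pi,\rho_{X,2})$ at $s=1/2$; comparing with the displayed identity and using $\CP_H(\phi)\neq 0$ gives $L(1/2,\pi,\rho_{X,1})\neq 0$.

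The main obstacle is everything packed into ``the core of the proof'': constructing a mixed truncation adapted simultaneously to $\wt G$ and to $\wt H$ and proving that $\CR(s)$ is meromorphic with the expected leading behaviour at $s=1/2$; enumerating the $\wt P$--$\wt H$ double cosets and controlling the regularized contributions of the non-dense ones; and, most delicately, matching the inner integral over the dense orbit with the Langlands--Shahidi local coefficients, which amounts to a local ``greatest common divisor''-type identification, at every place, of the zeta integral attached to the spherical variety $H\backslash G$ with Shahidi's local coefficient. A secondary difficulty is the explicit evaluation of the residual period $\int^{\reg}_{[\wt H]}\CE_{-1}(\phi)$ underlying the non-vanishing of $L(1/2,\pi,\rho_{X,1})$; alternatively, one may first use the $\rho_{X,2}$-pole to deduce that $\pi$ admits a Shalika model and then reduce $\CP_H(\phi)$, through that model, to a twisted version of the Friedberg--Jacquet integral \cite{FJ} representing $L(s,\pi,\rho_{X,1})$ at the centre, at the cost of developing the requisite local theory for the Jacquet--Guo model \cite{G96}.
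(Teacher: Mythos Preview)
Your choice of auxiliary group is wrong, and this is a genuine gap. You take $\wt G=\SO_{4n+1}$ because $M=\GL_{2n}$ acts on the nilradical $\wt{\mathfrak n}$ through $\rho_{X,1}\oplus\rho_{X,2}=\mathrm{std}\oplus\wedge^2$. That action is correctly identified, but it is \emph{not} what governs the Langlands--Shahidi $L$-functions: those are determined by the action of ${}^LM$ on ${}^L\wt{\mathfrak n}$, i.e.\ by the nilradical on the dual side. Since ${}^L\SO_{4n+1}=\Sp_{4n}(\BC)$ and the Siegel nilradical there is the abelian piece $\mathrm{Sym}^2(\BC^{2n})$, the Eisenstein series you construct on $\SO_{4n+1}$ is controlled by $L(s,\pi,\mathrm{Sym}^2)$, not by $L(s,\pi)L(2s,\pi,\wedge^2)$. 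Your displayed formula for $m(s)$ is therefore false for the group you chose, and the rest of the argument cannot recover the exterior-square pole. The correct auxiliary group is $\ul G=\Sp_{4n}$: then ${}^L\ul G=\SO_{4n+1}(\BC)$, whose Siegel nilradical decomposes as $\mathrm{std}\oplus\wedge^2$, and the normalizing factor is exactly
\[
\frac{L(s,\pi,\rho_{X,1})\,L(2s,\pi,\rho_{X,2})}{L(s+1,\pi,\rho_{X,1})\,L(2s+1,\pi,\rho_{X,2})}.
\]

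A second gap is that you never specify $\wt H$; the sentence ``characterized by the requirement that the dense $\wt P$-orbit \ldots\ be dual to the dense Borel orbit on $H\backslash G$'' does not determine a subgroup. In the paper the subgroup is $\ul H=\Res_{k'/k}\Sp_{2n}\subset\Sp_{4n}=\ul G$, a symmetric subgroup whose intersection with the Siegel parabolic $\ul P$ is again a Siegel parabolic, with Levi $\Res_{k'/k}\GL_n=H$; this is precisely what produces $\CP_H(\phi)$ from the $\ul H$-period. The vanishing of the regularized period $\CP_{\ul H,\mathrm{reg}}(E(\phi,s))$ then follows from Prasad's theorem (no generic representation of $\Sp_{4n}(k_v)$ is $\Sp_{2n}(k_v)\times\Sp_{2n}(k_v)$-distinguished at a split place), with no need for the elaborate local zeta-integral matching you outline. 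Finally, the two conclusions need not be separated as you propose: once the Eisenstein series has a pole at $s=1/2$, the holomorphy of the normalized intertwining operator together with the nonvanishing of the denominator at $s=1/2$ forces $L(s,\pi)L(2s,\pi,\wedge^2)$ to have a pole there; since $L(s,\pi)$ is entire and $L(s,\pi,\wedge^2)$ has at most a simple pole at $s=1$, both assertions follow at once.
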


\begin{rmk}
In \cite{FMW}, under some local requirements on $\pi$ (i.e. $\pi$ is supercuspidal at some split place and $H$-elliptic at another place), the authors prove Theorem \ref{Jacquet-Guo main} by the relative trace formula method.
\end{rmk}

Locally, let $F$ be a p-adic field, $E/F$ be a quadratic extension, $G(F)=\GL_{2n}(F)$ and $H(F)=\GL_n(E)$. Let $\pi$ be an irreducible smooth representation of $G(F)$ with trivial central character. We define the multiplicity
$$m(\pi):=\dim(\Hom_{H(F)}(\pi,1)).$$

\begin{thm}\label{Jacquet-Guo local theorem}
Let $\pi$ be an irreducible tempered representation of $G(F)$ with trivial central character. If $m(\pi)\neq 0$, then the local L-function $L(s,\pi,\rho_{X,2})$ has a pole at $s=0$.
\end{thm}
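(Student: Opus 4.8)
The plan is to reduce the statement to the case of discrete series by the geometric lemma, and then to extract the $L$-factor pole from the global Theorem~\ref{Jacquet-Guo main}. First, since $\pi$ is tempered, write $\pi=\Ind_{P(F)}^{G(F)}(\delta_1\otimes\cdots\otimes\delta_r)$ (normalized induction) for a standard parabolic $P$ with Levi $M=\GL_{n_1}\times\cdots\times\GL_{n_r}$, $\sum_i n_i=2n$, and $\delta_i$ a discrete series of $\GL_{n_i}(F)$; this induction is irreducible. The pair $(G,H)$ is symmetric: $H(F)$ is the fixed-point group of $\mathrm{Int}(J)$ for $J\in\GL_{2n}(F)$ with $J^2$ a nonsquare scalar generating $E$ over $F$. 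Applying the geometric lemma to the decomposition of $H(F)\backslash G(F)/P(F)$ into orbits --- the usual symmetric-space orbit analysis, as in the treatment of linear and Galois models by Matringe and Offen --- one obtains a filtration of $\Hom_{H(F)}(\pi,1)$ whose subquotients are $\Hom$-spaces over the orbit stabilizers, and a nonzero element of $\Hom_{H(F)}(\pi,1)$ forces, after reindexing, $r=k+2\ell$ and
$$\sigma\;\cong\;(\eta_1\otimes\cdots\otimes\eta_k)\otimes(\tau_1\otimes\tau_1^\vee)\otimes\cdots\otimes(\tau_\ell\otimes\tau_\ell^\vee),$$
where each $\tau_j$ is a discrete series of some $\GL_{m_j}(F)$ and each $\eta_i$ is a discrete series of some $\GL_{2p_i}(F)$ that is $\GL_{p_i}(E)$-distinguished (in particular self-dual, with trivial central character since $F^\times\subset\GL_{p_i}(E)$).

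Next I would invoke the multiplicativity of exterior-square $L$-factors along parabolic induction:
$$L(s,\pi,\rho_{X,2})\;=\;L(s,\pi,\wedge^2)\;=\;\prod_m L(s,\sigma_m,\wedge^2)\cdot\prod_{m<m'}L(s,\sigma_m\times\sigma_{m'}),$$
the products running over all the factors $\sigma_1,\dots,\sigma_r$ of $\sigma$. If $\ell\ge 1$, then the Rankin--Selberg factor $L(s,\tau_1\times\tau_1^\vee)$ occurs on the right-hand side and has a simple pole at $s=0$; every other local factor here is holomorphic at $s=0$ (or has a pole there) but never has a zero, so no cancellation can occur and $L(s,\pi,\wedge^2)$ has a pole at $s=0$. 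This reduces the problem to the case $\ell=0$, and --- by multiplicativity once more --- to showing that $L(s,\eta,\wedge^2)$ has a pole at $s=0$ for every $\GL_p(E)$-distinguished discrete series $\eta$ of $\GL_{2p}(F)$ (necessarily with trivial central character).

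For such an $\eta$, I would argue by globalization. Choose a number field $k$ with a place $v_0$ whose completion is $F$, together with a quadratic extension $k'/k$ that is inert at $v_0$ with completion $E$. Using the standard construction of cuspidal globalizations, combined with a simple relative trace formula for the model $(\GL_{2p},\mathrm{Res}_{k'/k}\GL_p)$ in order to keep the Jacquet--Guo period from vanishing identically, one produces a cuspidal automorphic representation $\Pi$ of $\GL_{2p}(\BA_k)$ with trivial central character, with $\Pi_{v_0}\cong\eta$, with distinguished (unramified or tamely ramified) local components at the remaining places, and with $\CP_H\not\equiv 0$ on the space of $\Pi$. Theorem~\ref{Jacquet-Guo main} then shows that $L(s,\Pi,\wedge^2)=\prod_v L(s,\Pi_v,\wedge^2)$ has a pole at $s=1$; hence $\Pi$ is of symplectic type and, being cuspidal, admits a nonzero global Shalika period. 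By local multiplicity one for Shalika models, this forces $\eta=\Pi_{v_0}$ to have a local Shalika model, and the Jacquet--Shalika local zeta integral then yields a pole of $L(s,\eta,\wedge^2)$ at $s=0$, completing the argument.

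The main obstacle has two faces. The first is the geometric-lemma step: one must identify the open $(H,P)$-orbit and its stabilizer, and verify that the contributions of the lower-dimensional orbits cannot produce an $H$-invariant functional unless the paired/self-distinguished pattern above is already present --- the symmetric-space combinatorics here is the technically heaviest part, and the distinguished-induced-representation results needed for the Galois model may have to be proved rather than quoted. The second, and the real crux, is the globalization: producing a cuspidal $\Pi$ that is simultaneously a prescribed local representation at $v_0$ \emph{and} globally $H$-distinguished is a delicate existence statement. The point is that the local distinction of $\eta$ at $v_0$ can be fed into the geometric side of the relative trace formula for $(\GL_{2p},\mathrm{Res}_{k'/k}\GL_p)$ --- choosing a test function whose component at $v_0$ detects $\eta$ and whose other components are supported on distinguished vectors --- so that the geometric side is nonzero, whence the spectral side, and hence $\CP_H$, is nonzero on some cuspidal $\Pi$ with $\Pi_{v_0}\cong\eta$.
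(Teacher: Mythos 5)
Your approach is genuinely different from the paper's. The paper goes \emph{up}: it shows (by a Mackey argument using that $\ul{P}(F)\ul{H}(F)$ is closed and a modulus-character computation) that $\Hom_{H(F)}(\pi,1)\neq 0$ forces $I_{\ul{P}}^{\ul{G}}(\pi_{1/2})$ to be $\ul{H}(F)$-distinguished for $\ul{G}=\Sp_{4n}$, $\ul{H}=\Res_{k'/k}\Sp_{2n}$; Prasad's theorem on non--quasi-split symmetric pairs rules out an irreducible generic distinguished representation, so $I_{\ul{P}}^{\ul{G}}(\pi_{1/2})$ must be reducible, and Lemma~B.2 of \cite{GI} plus the Standard Module Conjecture then convert this reducibility into a pole of $\gamma(s,\pi,\rho_{X,1})\gamma(2s,\pi,\rho_{X,2})$ at $s=1/2$; temperedness (Heiermann--Opdam) isolates the pole into $L(0,\pi,\rho_{X,2})$. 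You go \emph{down}: peel $\pi$ into discrete series factors via the geometric lemma for $H\backslash G/P$, then exploit multiplicativity of $\wedge^2$, Rankin--Selberg poles, and a globalization argument. Both are legitimate blueprints, but your route requires substantially more machinery.

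The two issues you flag in your own proposal are the real ones, and I don't think they can be dismissed as ``technically heavy but routine.'' The orbit analysis for the Jacquet--Guo symmetric space $\GL_n(E)\backslash\GL_{2n}(F)$ (including showing that lower-dimensional orbits contribute nothing for tempered data) is not available off the shelf; the results you gesture at are for the linear and Galois models, which are cousins but not identical. More seriously, the globalization step asks for a cuspidal automorphic representation that is simultaneously globally $H$-distinguished and has a \emph{prescribed discrete series} (not merely supercuspidal) local component; the relative trace formula of \cite{FMW} quoted in the paper itself requires a supercuspidal component at a split place and an $H$-elliptic condition at another, so it does not deliver this out of the box. Bootstrapping local distinction of a generalized Steinberg into global distinction at that place, while controlling the spectral side, is a hard existence statement in its own right. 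By contrast, the paper's route at the $\Sp_{4n}$ level is closed-orbit-only, so there is exactly one Mackey term, and both Prasad's theorem and the Standard Module Conjecture are unconditional; no globalization is needed at all. You should compare your sketch with the short proof the paper actually gives via Theorem~\ref{Jacquet-Guo local theorem 1} and Lemma~\ref{Jacquet-Guo character}: it is striking how much of the difficulty evaporates by working on the bigger group.
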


Theorems \ref{Jacquet-Guo main} and \ref{Jacquet-Guo local theorem} will be proved in Section \ref{section Jacquet-Guo}.

\subsubsection{The model $(\mathrm{GE_6}, A_1\times A_5)$}
Let $G=\mathrm{GE_6}$ be the similitude group of the split exceptional group $\mathrm{E_6}$.  Fix a quaternion algebra $B$ over $k$, and define $H = (B^\times \times \GL_3(B))^0:=\{(x,g)\in B^\times \times \GL_3(B): n_B(x) = N_6(g)\}$; here $n_B$ is the degree two reduced norm on $B$ and $N_6$ is the degree six reduced norm on $M_3(B)$. One has a map $H \rightarrow \GE_6$ with $\mu_2$ kernel.  Let $\rho_X$ be a 27 dimensional fundamental representation of ${}^LPGE_6=\mathrm{E_6^{sc}}(\BC)$.

\begin{thm}\label{E_6 main}
Let $\pi$ be a cuspidal generic automorphic representation of $G(\BA)$ with trivial central character. Assume that $L(2,\pi,\rho_X)\neq 0$ (this is always the case if $\pi$ is tempered). If the period integral $\CP_{H}(\phi)$ is nonzero for some $\phi\in \pi$, then the L-function $L(s,\pi,\rho_X)$ has a pole at $s=1$.
\end{thm}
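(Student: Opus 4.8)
The plan is to run the residue method, with $\mathrm{E}_6$ appearing as a Levi factor inside $\mathrm{E}_7$. Let $\widetilde G$ be a similitude form of (split) $\mathrm{E}_7$ and let $P = MN \subset \widetilde G$ be the maximal parabolic whose Levi $M$ has derived group $\mathrm{E}_6^{\mathrm{sc}}$ and whose unipotent radical $N$ is abelian of dimension $27$, with $M$ acting on $N$ through $\rho_X$ twisted by a similitude character; the relevant inner form of $\widetilde G$ is dictated by $B$. For $\phi\in\pi$ I would form the Eisenstein series $E(g,\phi,s)$ on $\widetilde G(\BA)$ induced from $\pi\otimes|\cdot|^s$ along $P$. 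Since $\pi$ is cuspidal generic, Langlands--Shahidi theory applies, and the constant term of $E(\cdot,\phi,s)$ along $P$ is governed by the intertwining operator whose normalizing factor is, up to a shift and harmless elementary factors, the ratio $L(s,\pi,\rho_X)/L(s+1,\pi,\rho_X)$. Consequently, for $\Re(s)$ in the relevant half-line, the poles of $E(\cdot,\phi,s)$ coincide with those of $L(s,\pi,\rho_X)$ except possibly where $L(s+1,\pi,\rho_X)$ vanishes; near $s=1$ that exception is ruled out by the standing hypothesis $L(2,\pi,\rho_X)\neq 0$ (which is automatic for tempered $\pi$ by Shahidi's non-vanishing results). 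Thus $E(\cdot,\phi,s)$ has a pole at $s=1$ if and only if $L(s,\pi,\rho_X)$ does.

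The heart of the argument is to express $\CP_H(\phi)$ through a period of $E(g,\phi,s)$. The spherical pair $(\mathrm{GE}_6,H)$ should propagate to a pair $(\widetilde G,\widetilde H)$, where $\widetilde H$ is the reductive subgroup of $\widetilde G$ generated by $H$ together with the subgroup of $N$ cut out by the $H$-action on the $27$-dimensional module; identifying $\widetilde H$ and the double cosets $P(k)\backslash\widetilde G(k)/\widetilde H(k)$ is a first, combinatorial, task. I would then compute the regularized period $\int_{[\widetilde H]}E(g,\phi,s)\,dg$ by unfolding: expand $E$ as constant term plus the sum over $P(k)\backslash\widetilde G(k)$, Fourier-expand along $N$, and collapse the resulting sum against $\widetilde H(k)$. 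The expectation is that only the open cell survives and contributes $\CP_H(\phi)$ times an explicit ratio of completed L-functions whose numerator is a shift of $L(s,\pi,\rho_X)$, the unramified local identity being a Gindikin--Karpelevich/Casselman--Shalika computation on the spherical variety $H\backslash\mathrm{GE}_6$, with the similitudes and the quaternion algebra $B$ carried along.

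Granting these two inputs, the theorem follows by comparing analytic behavior in $s$. The regularized period $s\mapsto\int_{[\widetilde H]}E(g,\phi,s)\,dg$ is meromorphic with poles controlled by those of $E(\cdot,\phi,s)$, hence near the relevant point by $L(s,\pi,\rho_X)$ alone. On the other hand, by the previous step this period equals $\CP_H(\phi)$ times a ratio of completed L-functions whose numerator is a shift of $L(s,\pi,\rho_X)$; if $\CP_H(\phi)\neq 0$ this ratio produces a genuine pole of the period, hence of $E(\cdot,\phi,s)$, and tracing back the normalizations this is possible \emph{only} if $L(s,\pi,\rho_X)$ has a pole at $s=1$. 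Equivalently, one may phrase the last step through residues: $\Res\, E(\cdot,\phi,s)$ at $s=1$ spans a residual representation $\Sigma$ of $\widetilde G(\BA)$; non-vanishing of $\CP_H(\phi)$ forces a period of $\Sigma$ to be nonzero, and a residual representation cannot be nonzero unless its defining residue is, i.e.\ unless $L(s,\pi,\rho_X)$ has its pole at $s=1$.

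The main obstacle will be the unfolding and the regularization in the middle step. The $\widetilde H$-period of an Eisenstein series is not absolutely convergent, both because of the Eisenstein series and because the period over $H\backslash\mathrm{GE}_6$ itself generally requires truncation; so one must set up a regularized (or mixed-truncated) period in the style of Jacquet--Lapid--Rogawski and Zydor, justify interchanging the truncation with the Fourier expansion along $N$, and --- most delicately --- show that the contributions of the non-open cells either vanish identically or remain holomorphic at $s=1$, so that they cannot interfere with the pole. A secondary, more structural difficulty is to pin down $\widetilde G$, $\widetilde H$, and the precise embedding, including the correct inner form attached to $B$ and the bookkeeping of similitude characters, so that the unramified local integral comes out to the stated shift of $L(s,\pi,\rho_X)$ rather than some other degree-$27$ Euler product.
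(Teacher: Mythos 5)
Your high-level plan — pass to an $E_7$ group, induce an Eisenstein series from $\pi$ along the parabolic with abelian $27$-dimensional nilradical, compute an $\ul{H}$-period of the Eisenstein series, and conclude via Langlands--Shahidi theory that $L(s,\pi,\rho_X)$ has a pole at $s=1$ — is the correct framework, and the identification of the relevant $E_7$ structures is in the right direction (the paper takes $\ul{G}=E_7^{\mathrm{sc}}$ rather than a similitude form, and $\ul{H}$ is the symmetric subgroup of type $A_1\times D_6$ determined by $B$, not a group generated by $H$ and a piece of $N$, though this distinction is cosmetic). However, the middle of your argument has a genuine conceptual error, and it also omits the key local input without which the method does not close.

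The error is in the claim that the open cell of the unfolded period contributes $\CP_H(\phi)$ \emph{times} an explicit ratio of completed $L$-functions obtained from a Gindikin--Karpelevich/Casselman--Shalika computation on the spherical variety. This is the mechanism of a Rankin--Selberg integral representation, but it is not what the residue method produces. Once the Eisenstein series is (relatively) truncated and the period over $[\ul{H}]$ is computed, the contribution of the standard parabolic is exactly
\[
\frac{e^{(s-s_0)T}}{s-s_0}\int_{K_{\ul{H}}}\int_{[H^0]/Z} \phi(hk)\,dh\,dk
\;+\;
\frac{e^{(-s-s_0)T}}{-s-s_0}\int_{K_{\ul{H}}}\int_{[H^0]/Z} M(s)\phi(hk)\,dh\,dk,
\]
with $s_0=-c(1-2c_{\ul P}^{\ul H})=1$. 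No $L$-function multiplies $\CP_H(\phi)$ here; the $L$-function enters only \emph{after} one has concluded that $E(\phi,s)$ (equivalently the intertwining operator $M(s)$) has a pole at $s_0=1$, at which point Langlands--Shahidi normalization (\cite{KK}) and the nonvanishing $L(2,\pi,\rho_X)\neq 0$ force the pole onto $L(s,\pi,\rho_X)$. Your "tracing back the normalizations" step is thus circular as stated: there is no Casselman--Shalika ratio to trace back.

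Second, and equally seriously, the proposal has no mechanism for controlling the period of the truncated Eisenstein series. You acknowledge the regularization issue, but propose to handle the non-open cells by unfolding and Fourier-expanding along $N$, i.e.\ Method 2 of the paper. The paper explicitly avoids this route for the $E_7$ model because the non-closed orbits in $\ul P(k)\backslash\ul G(k)/\ul H(k)$ are genuinely hard to analyze (one needs the image in $\ul M$ of $\ul P\cap\gamma\ul H\gamma^{-1}$ for each orbit). Instead it uses the relative truncation operator (Method 1), for which \emph{only} the closed orbits $\ul Q\in\CF^{\ul G}(P_{0,\ul H},\ul P)$ matter. The engine that then makes the regularized period vanish — step (1) of Method 1, and the vanishing in step (2) for the second closed orbit $\ul P'$ in the split case — is Prasad's theorem that a non-quasi-split symmetric pair has no generic $H$-distinguished representations (here applied to $(\ul G,\ul H)$ of type $(E_7^{\mathrm{sc}},D_6\times A_1)$ and to a smaller Levi pair of type $D_5$ inside $\GE_6$). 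This is the step that lets one replace the a priori divergent period $\int_{[\ul H]}E(\phi,s)$ by the formula above, and it is absent from your proposal. Without it, one cannot conclude that the auxiliary terms stay finite or vanish near $s=1$, so the claimed pole of $L(s,\pi,\rho_X)$ does not follow.
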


Locally, let $F$ be a $p$-adic field. Given an irreducible smooth representation $\pi$ of $G(F)$ with trivial central character, we define the multiplicity
$$m(\pi):=\dim(\Hom_{H(F)}(\pi,1)).$$

\begin{thm}\label{E_6 local theorem}
Let $\pi$ be an irreducible generic tempered representation of $G(F)$ with trivial central character. If $m(\pi)\neq 0$, then the local L-function $L(s,\pi,\rho_X)$ has a pole at $s=0$.
\end{thm}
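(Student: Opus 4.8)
The plan is to carry out, in a purely local setting, the unfolding that underlies Theorem \ref{E_6 main}, in the same spirit as the proofs of Theorems \ref{SO(2n) local theorem} and \ref{Jacquet-Guo local theorem}. Recall that $\rho_X$ is realized as the abelian unipotent radical $N$ of the minuscule maximal parabolic $P=MN$ of $\GE_7$ with Levi $M\simeq\GE_6\times\GL_1$, and that the Langlands--Shahidi theory of $(\GE_7,P)$ attaches to $\pi$ exactly $L(s,\pi,\rho_X)$. Over $F$ one has the family $I(s)=\Ind_{P(F)}^{\GE_7(F)}(\pi\boxtimes|\cdot|^{s})$ (normalized induction) and the standard intertwining operator $M(s)\colon I(s)\to\bar I(-s)$; because $N$ is abelian the Shahidi normalizing factor of $M(s)$ is the single ratio $L(s,\pi,\rho_X)/\bigl(L(s+1,\pi,\rho_X)\,\epsilon(s,\pi,\rho_X,\psi)\bigr)$. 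For $\pi$ tempered the local factor $L(s+1,\pi,\rho_X)$ is holomorphic and non-vanishing on $\Re(s)\ge 0$ and $\epsilon$ is entire and non-vanishing, so in the closed right half-plane a pole of the relevant component of $M(s)$ is the same as a pole of $L(s,\pi,\rho_X)$; it therefore suffices to exhibit such a pole at the edge point $s=0$.

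The key input is a local reciprocity: $m(\pi)\ne 0$ forces $L(s,\pi,\rho_X)$ to be singular at $s=0$. I would set this up through a local zeta integral $Z(s,W,f)$, with $W\in\CW(\pi,\psi)$ (here genericity is used) and $f\in I(s)$, that is the local avatar of the global period integral of Theorem \ref{E_6 main}. The two facts to establish are: (i) $Z(s,W,f)$ is a rational function of $q^{-s}$ whose greatest common divisor over all $W$ and $f$ is $L(s,\pi,\rho_X)$, with the local functional equation $Z(-s,\widetilde W,M(s)f)=\gamma(s,\pi,\rho_X,\psi)\,Z(s,W,f)$; and (ii) the integration cycle defining $Z(s,W,f)$ degenerates at $s=0$ onto the open $H(F)$-orbit in the relevant flag variety, so that a non-zero $H(F)$-invariant functional on $\pi$ is exactly what produces a pole of $Z(s,W,f)$ at $s=0$ for suitable data---and conversely. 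Granting (i) and (ii), a pole of $Z$ at $s=0$ is a pole of $L(s,\pi,\rho_X)$, since the complementary polynomial factor in (i) cannot create it, and the theorem follows. (An alternative would be to globalize $\pi$ to a cuspidal generic representation $\widetilde\pi$ of $\GE_6(\BA)$ with $\widetilde\pi_{v_0}\simeq\pi$, prescribed tame components elsewhere, $L(2,\widetilde\pi,\rho_X)\ne 0$, and non-vanishing $H$-period, and to apply Theorem \ref{E_6 main}; but producing such a $\widetilde\pi$---via a relative trace formula or a Poincar\'e series with prescribed local behaviour---is itself a serious task, so I would favour the local argument.)

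The main obstacle is the forward direction of (ii): deducing from the mere existence of an $H(F)$-invariant functional that some concrete zeta integral $Z(s,W,f)$ actually has a pole, and that the pole occurs at the boundary point $s=0$ rather than at an interior point of the right half-plane. This comes down to analysing the asymptotics of the generalized matrix coefficients of $\pi$ along the boundary of the spherical variety $H(F)\backslash\GE_6(F)$ and checking that the exponent realized there resonates with $s=0$; in the exceptional-group setting the bookkeeping of $H(F)$-orbits on $P(F)\backslash\GE_7(F)$, the explicit unipotent integrations defining $Z$, and the precise determination of the Shahidi normalizing factor (so that the pole lands at $s=0$ and not a shifted point) are the technically demanding steps. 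The hypotheses that $\pi$ be generic and tempered enter precisely here: genericity supplies the Whittaker model and the functional equation in (i), while temperedness ensures that the denominator $L$-factor and the $\epsilon$-factor in the normalizing factor cannot cancel the pole at the edge.
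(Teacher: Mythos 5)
Your plan does not match the paper's proof, and more importantly it contains a genuine gap that you yourself flag: you never establish points (i) and (ii), and in particular you offer no mechanism by which the mere existence of an $H(F)$-invariant functional on $\pi$ forces some zeta integral to have a pole at the boundary point $s=0$. Constructing a local zeta integral $Z(s,W,f)$ whose gcd is the exceptional-group $L$-factor, together with a functional equation and the requisite asymptotic analysis of generalized matrix coefficients on $H(F)\backslash G(F)$, would itself be a substantial new piece of Langlands--Shahidi/Rankin--Selberg theory; it is not something one gets for free, and it is not what the paper does.

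The paper's argument bypasses all of this and never touches a zeta integral. The actual chain is: (a) a closed-orbit/Mackey argument (via Corollary~\ref{closed orbit cor} and the character computation $\delta_{\ul{P}}^{-1/2}\delta_{P_{\ul{H}}}=\varpi$, i.e. $s_0=1$) shows that $\Hom_{H(F)}(\pi,1)\neq 0$ implies $I_{\ul{P}}^{\ul{G}}(\pi_1)$ is $\ul{H}(F)$-distinguished, where $\ul{G}$ is simply connected $E_7$ and $\ul{H}$ its $D_6\times A_1$ symmetric subgroup; (b) Prasad's theorem (Corollary~\ref{prasad cor}(7)) says $\ul{G}(F)$ has no $\ul{H}(F)$-distinguished generic representation, and since $\pi$ is generic, if $I_{\ul{P}}^{\ul{G}}(\pi_1)$ were irreducible it would be generic and distinguished --- contradiction --- so it must be reducible; (c) by Lemma B.2 of \cite{GI} together with the Standard Module Conjecture \cite{HO}, this reducibility is equivalent to $\gamma(s,\pi,\rho_X)$ having a pole at $s=1$; (d) temperedness gives that $L(s,\pi,\rho_X)$ is holomorphic and nonzero on $\Re(s)>0$ (Theorem 1.1 of \cite{HO}), so the pole of the gamma factor must come from $L(1-s,\pi,\rho_X)$, i.e. $L(s,\pi,\rho_X)$ has a pole at $s=0$. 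Your proposal correctly anticipates that $\pi$ is to be induced up to a parabolic of the $E_7$ group and that temperedness controls the denominator, but it misses the three load-bearing inputs --- the closed-orbit distinction transfer, Prasad's non-distinction theorem for non-quasi-split symmetric pairs, and the reducibility $\Leftrightarrow$ pole-of-$\gamma$ criterion --- and substitutes for them an unproven zeta-integral program.
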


Theorem \ref{E_6 main} and \ref{E_6 local theorem} will be proved in Section \ref{Section E_6}.

\subsubsection{The model $(\GL_4\times \GL_2, \GL_2\times \GL_2)$}
Let $G=\GL_4\times \GL_2$, and $H=\left\{\begin{pmatrix}a&0\\0&b\end{pmatrix}\times \begin{pmatrix}a\end{pmatrix}|a,b\in \GL_2 \right\}$ be a closed subgroup of $G$. Let $\rho_X=\wedge^2\otimes std$ be a 12 dimensional representation of ${}^LG=\GL_4(\BC)\times \GL_2(\BC)$.

\begin{thm}\label{GL(4)GL(2)}
Let $\pi$ be a cuspidal automorphic representation of $G(\BA)$ with trivial central character. Assume that $L(3/2,\pi,\rho_X)\neq 0$ (this is always the case if $\pi$ is tempered). If the period integral $\CP_{H}(\phi)$ is nonzero for some $\phi\in \pi$, then $L(1/2,\pi,\rho_X)\neq 0$.
\end{thm}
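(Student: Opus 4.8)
The plan is to realize the period integral $\CP_H(\phi)$ as a residue of a period against an Eisenstein series on a larger group. The model $(\GL_4\times\GL_2,\GL_2\times\GL_2)$ is a ``spherical pair of higher rank'' inside a triple-product-type setting: the degree $12$ representation $\wedge^2\otimes \mathrm{std}$ suggests that $\pi$ should be integrated against an Eisenstein series on $\GL_6$ (or a closely related group) induced from a parabolic with Levi $\GL_4\times\GL_2$. First I would write down the global integral $\int \phi(h)\,E(h,s)\,dh$ over the relevant quotient, where $E(\cdot,s)$ is the Eisenstein series attached to $\pi\boxtimes(\text{something})$ — in fact the cleanest route is to integrate the Eisenstein series on $\GL_6$ built from $\pi$ against a cusp form in the $\GL_2$-direction, unfolding to get an Euler product whose local factors compute $L(s,\pi,\rho_X)$ times normalizing zeta factors. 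Unfolding the Eisenstein series (a Jacquet--Langlands--Rankin--Selberg type computation, here of Bump--Friedberg or Jacquet--Guo flavor) should express the global integral as $\frac{L(s,\pi,\rho_X)}{L(2s,\dots)}$ up to the period $\CP_H$.

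The second step is the residue analysis. Having established the identity $\mathcal{I}(\phi,s) = \frac{L(s+\tfrac12,\pi,\rho_X)}{d(s)}\,\CP_H(\phi)$ (schematically), I would invoke Langlands--Shahidi theory: the Eisenstein series $E(\cdot,s)$ on $\GL_6$ has its poles controlled by the constant term, hence by ratios of completed $L$-functions, and one knows precisely where (and with what order) $\mathcal{I}(\phi,s)$ can have a pole as $\phi$ ranges over $\pi$. The hypothesis $L(3/2,\pi,\rho_X)\neq 0$ enters exactly here: it guarantees that the normalizing factor $d(s)$ — a product of $\GL$ standard or Rankin--Selberg $L$-functions evaluated near $s = 1$ — is finite and nonzero at the relevant point, so the only way $\mathcal{I}(\phi,s)$ fails to vanish identically is if $L(s+\tfrac12,\pi,\rho_X)$ is nonzero at $s=0$, i.e. $L(1/2,\pi,\rho_X)\neq 0$. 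Conversely, $\CP_H(\phi)\neq 0$ for some $\phi$ forces $\mathcal{I}(\phi,s)\not\equiv 0$, and then the identity propagates the non-vanishing to the $L$-value. Care is needed to ensure no spurious cancellation: one must check that the local zeta integrals are nonvanishing for a suitable choice of local data at each place (standard via Godement--Jacquet-type arguments), so that the global identity genuinely sees $\CP_H$.

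The key technical steps, in order, are: (i) identify the correct auxiliary Eisenstein series and the corresponding global integral $\mathcal{I}(\phi,s)$; (ii) unfold to obtain an Eulerian expression and match the unramified local integral with $L(s+\tfrac12,\pi,\rho_X)/d(s)$, using the Casselman--Shalika formula and the structure of $\wedge^2\otimes\mathrm{std}$ as a $\GL_4\times\GL_2$-type $L$-function; (iii) solve the non-vanishing of the local integrals at ramified and archimedean places so the functional identity is not vacuous; (iv) carry out the residue/meromorphy argument, using the assumption at $s=3/2$ to clear the denominator, and conclude. Steps (i)--(ii) should be analogous to the Jacquet--Guo unfolding already used in Section \ref{section Jacquet-Guo}, and the meromorphy input is furnished by Langlands--Shahidi as in the other theorems of this paper.

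I expect the main obstacle to be step (ii)--(iii): pinning down the exact auxiliary group and parabolic so that the unfolded integral is genuinely Eulerian with the degree-$12$ $L$-factor appearing cleanly, and then proving the non-vanishing of the local zeta integrals — particularly at the archimedean places and at ramified finite places — since the identity only detects $\CP_H(\phi)$ if these local integrals can be made nonzero on the image of $\pi$. A secondary subtlety is verifying that the denominator $d(s)$ really is just a product of ``harmless'' $\GL$ $L$-functions whose only possible pole in the region of interest is cancelled or irrelevant once $L(3/2,\pi,\rho_X)\neq 0$ is assumed; establishing this requires a careful bookkeeping of the constant term of the Eisenstein series via the Gindikin--Karpelevich formula.
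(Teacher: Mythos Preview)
Your proposal has a genuine gap in the choice of ambient group and in the overall strategy. You suggest an Eisenstein series on $\GL_6$ induced from the $(4,2)$-parabolic, but the Langlands--Shahidi $L$-function attached to that pair is the Rankin--Selberg $L(s,\pi_1\times\pi_2)$ of degree $8$, not the degree-$12$ function $L(s,\pi,\wedge^2\otimes\mathrm{std})$. The correct ambient group is $\underline{G}=\GSO_{10}$: its Heisenberg parabolic $\underline{P}$ has Levi of type $A_1\times D_3\simeq \GL_2\times\GSO_6$, which via $\PGL_4\simeq\mathrm{PGSO}_6$ recovers $G=\GL_4\times\GL_2$, and the Langlands--Shahidi normalizing factor for this $(\underline{G},\underline{M})$ is precisely $\dfrac{L(s,\pi,\rho_X)\zeta_k(2s)}{L(s+1,\pi,\rho_X)\zeta_k(2s+1)}$. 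The subgroup $\underline{H}\subset\underline{G}$ over which one takes the period is not classical: it is $\GSpin_7\boxtimes\mathrm{Res}_{E/k}\GL_1$, embedded via the $8$-dimensional spin representation on the octonions. None of this is visible from the $\GL_6$ ansatz.

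More fundamentally, the paper does \emph{not} construct an Eulerian integral representation, so your steps (ii) and (iii)---unramified computations via Casselman--Shalika and non-vanishing of local zeta integrals---do not arise. The method is the residue method (``Method 2'' of Section~\ref{section method 2}): one computes the $\underline{H}$-period of the Arthur--Langlands truncation $\Lambda^T E(\phi,s)$ directly, by decomposing along the finitely many orbits of $\underline{H}(k)$ on $\underline{P}(k)\backslash\underline{G}(k)$ (isotropic two-planes in a ten-dimensional quadratic space). One proves that all non-closed orbits contribute zero by cuspidality arguments, leaving only the closed orbit, whose contribution is $\frac{e^{(s-1/2)T}}{s-1/2}\int_{K_{\underline{H}}}\CP_H(\phi(\cdot k))\,dk$ plus an $M(s)$-term. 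Taking the residue at $s=1/2$ shows that if $\CP_H\not\equiv 0$ on $\pi$ then $E(\phi,s)$ has a pole there; then holomorphy of the normalized intertwining operator (Kim--Kim) together with the hypothesis $L(3/2,\pi,\rho_X)\neq 0$ forces $L(1/2,\pi,\rho_X)\neq 0$. Your reference to ``the Jacquet--Guo unfolding in Section~\ref{section Jacquet-Guo}'' is also off: that section uses the relative truncation operator (Method~1), again with no Euler-product unfolding. The real work here is the orbit and stabilizer analysis for $\GSpin_7$ acting on isotropic two-planes in $\Theta\oplus E$, which has no analogue in your outline.
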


\begin{rmk}
Since $\pi$ has trivial central character, by the exceptional isomorphism $\mathrm{PGL}_4 \simeq \mathrm{PGSO}_6$, we can view $\pi$ as a cuspidal automorphic representation of $\GSO_6(\BA)$ with trivial central character. Then the L-function $L(s,\pi,\rho_X)$ becomes the tensor L-function of $\GSO_6\times \GL_2$.
\end{rmk}

Theorem \ref{GL(4)GL(2)} will be proved in Section \ref{section GL(4)GL(2)}. Locally, in Section \ref{subsection GL(4)GL(2) local}, we will show that the summation of the multiplicities of the model $(G,H)$ is always equal to 1 over every tempered local Vogan L-packet.

\subsection{Organization of the paper and remarks on the proofs}
The theorems on global $L$-functions are all proved by the residue method, together with the Langlands-Shahidi's theory for residues of Eisenstein series. Recall that in the residue method one relates the period integrals of cuspidal representations to the period integrals of certain residue representations. This method goes back to Jacquet-Rallis \cite{JR92}, and has been applied by Jiang \cite{J98}, Ginzburg-Jiang-Rallis \cite{GJR04},\cite{GJR05},\cite{GJR09}, Ichino-Yamana \cite{IY}, Ginzburg-Lapid \cite{GL07}, and by us in a previous paper \cite{AWZ18}.  In Section 3, which serves as an extended introduction, we explain our strategy of proof in more detail. We will also discuss the connection between the residue method and the dual groups of spherical varieties.

The paper is organized as follows. In Section 2, we set up notations relating to Eisenstein series and truncation operators. Then in Section 3, we explain the strategy of the proofs of the main theorems (i.e. the residue method). In Section 4-9, we prove the main theorems for all six spherical varieties.

\subsection{Acknowledgements} This work was initiated at the Institute for Advanced Study in 2018, when the three of us were members.  We thank the IAS for its hospitality and pleasant working environment.  We also thank the Institute for Mathematical Sciences at the National University of Singapore, where the three of us visited in January 2019.  A.P. thanks the Simons Foundation for its support via Collaboration Grant number 585147, which helped make this work possible.

\section{Eisenstein series and the truncation operators}
\subsection{General notations}
Let $G$ be a connected reductive algebraic group over $k$. We fix a maximal $k$-split torus $A_{0}$ of $G$. Let $P_{0}$ be a minimal parabolic subgroup of $G$ defined over $k$ containing $A_0$, $M_{0}$ be the Levi part of $P_{0}$ containing $A_{0}$ and $U_{0}$ be the unipotent radical of $P_{0}$. Let $\CF(P_{0})$ be the set of parabolic subgroups of $G$ containing $P_{0}$. Elements in $\CF(P_0)$ are called standard parabolic subgroups of $G$. We also use $\CF(M_0)=\CF(A_0)$ (resp. $\CL(M_0)$) to denote the set of parabolic subgroups (resp. Levi subgroups) of $G$ containing $A_0$; these are the semi-standard parabolic subgroups (resp. Levi subgroups).

For $P \in \CF(M_{0})$, we have the Levi decomposition $P = MN$ with $N$ be the unipotent radical of $P$ and $M$ be the Levi subgroup containing $A_{0}$. We use $A_{P} \sbs A_{0}$ to denote the maximal $k$-split torus of the center of $M$. Put
\[
 \ago_{0}^{*} = X(A_{0}) \otimes_{\Z}\R = X(M_{0}) \otimes_{\Z}\R
\]
and let $\ago_{0}$ be its dual vector space.
Here $X(H)$, for any $k$-group $H$, denotes the group of rational characters of $H$.
The inclusions $A_{P} \sbs A_{0}$ and $M_{0} \sbs M$ identify $\ago_{P}$
as a direct factor of $\ago_{0}$, we use $\ago_{0}^{P}$ to denote its complement.
Similarly, $\ago_{P}^{*} = X(A_{P}) \otimes_{\Z}\R$ is a direct factor of $\ago_{0}^{*}$
and we use $\ago_{0}^{P,*}$ to denote its complement. Let $\Delta_P\subset \Fa_{P}^{*}$ be the set of simple roots for the action of $A_P$ on $N$ and we use $\Delta_0$ to denote $\Delta_{P_0}$. Similarly, for $P\subset Q$, we can also define the subset $\Delta_{P}^{Q}\subset \Delta_P$. Then we define the chamber
$$\Fa_{P}^{+}=\{H\in \Fa_P|\; \langle H,\alpha \rangle \;>0,\;\forall \alpha\in \Delta_P\}.$$

Let $\Delta_{0}^{\vee} \sbs \ago_{0}^{G}$ be the set of simple coroots given by the theory
of root systems. For $\al \in \Delta_{0}$ we denote $\al^{\vee} \in \Delta_{0}^{\vee}$
the corresponding coroot. We define $\hDelta_{0} \sbs \ago_{0}^{G, *}$ to be the dual basis of
$\Delta_{0}^{\vee}$, i.e. the set of weights. In particular, we get a natural bijection
between $\Delta_{0}$ and $\hDelta_{0}$ which we denote by $\al \mapsto \varpi_{\al}$.
Let $\hDelta_{P} \sbs \hDelta_{0}$  be the set corresponding to $\Delta_{0} \smin \Delta_{0}^{P}$.

For any subgroup $H \sbs G$, let $H(\A)^{1}$ denote the common kernel of all
characters on $H(\A)$ of the form $|\chi(\cdot)|_{\A}$ where $\chi \in X(H)$
and $|\cdot |_{\A}$ is the absolute value on the ideles of $\A$. Fix $K$ a maximal compact subgroup of $G(\A)$ adapted to $M_{0}$. We define the Harish-Chandra map $H_{P} : G(\A) \to \ago_{P}$ via the relation
\[
 \langle \chi, H_{P}(x) \rangle = |\chi(p)|_{\A}, \quad \forall \chi \in X(P) = \mathrm{Hom}(P,\mathbb{G}_m)
\]
where $x = pk$ is the Iwasawa decomposition $G(\A) = P(\A)K$. Let $A_{P}^{\infty}$ be the connected component of the identity of $\mathrm{Res}_{k/\Q}A_{P}(\R)$. Then $M(\A)^{1}$ is the kernel of $H_{P}$ restricted to $M(\A)$ and we have the direct product decomposition of commuting subgroups $M(\A) = A_{P}^{\infty} M(\A)^{1}$.

For any group $H$ we use $[H]$ to denote $H(k)\back H(\BA)$ and $[H]^1$ to denote $H(k) \back H(\A)^{1}$.

\subsection{Haar measures}
We fix compatible Haar measures on $G(\BA)$, $G(\BA)^1$ and $A_{G}^{\infty}$. For all unipotent subgroups $N$ of $G$, we fix a Haar measure on $N(\A)$ so that $[N]$ is of volume one. On $K$ we also fix a Haar measure of volume $1$. For any $P=MN\in \calF(A_0)$, let $\rho_{P} \in \ago_{P}^{*}$ be the half sum of the weights of the action of $A_{P}$ on $N$. We choose compatible Haar measures on $A_{P}^{\infty}$ and $M_P(\BA)^1$ such that
\[
 \int_{P(k) \bsl H(\A)}f(h) \, dh =
 \int_{K}\int_{[M]^{1}}   \int_{A_{P}^{\infty}} \int_{[U]}
 e^{\langle -2\rho_{P}, H_{P}(a) \rangle}
 f(uamk) \, du da dm dk
\]
for $f \in C_{c}^{\infty}(P(k) \bsl G(\A))$.

\subsection{The computation of $\rho_P$ when $P$ is maximal}\label{ssec:rho}
Let $P \in \calF(P_{0})$ be the maximal parabolic subgroup that corresponds to the simple root $\alpha$, i.e. $\{\al\} = \Delta_{0} \smin \Delta_{0}^{P}$.
Let $\varpi$ be the corresponding weight. We have $\rho_{P} \in \ago_{P}^{G,*}$. Since $P$ is maximal, $\ago_{P}^{G,*}$ is one dimensional. Hence there exists a constant $c \in \R$ such that $\rho_{P} = c \varpi$. In the following proposition, we write down the constant $c$ in five cases. It will be used in later sections. The computation is easy and standard, and hence we will skip it.

\begin{prop}\label{the constant c}
\begin{enumerate}
\item If $G=\SO_n$ and $P$ is the parabolic subgroup whose Levi part is isomorphic to $\SO_{n-2}\times \GL_1$, then $c=\frac{n-2}{2}$.
\item If $G=\Sp_{2n}$ and $P$ is the Siegel parabolic subgroup, then $c=\frac{n+1}{2}$.
\item If $G=\RU_{n}$ and $P$ is the parabolic subgroup whose Levi part is isomorphic to $\RU_{n-2}\times \GL_1$, then $c=\frac{n-1}{2}$.
\item If $G=\SO_{10}$ and $P$ is the parabolic subgroup whose Levi part is isomorphic to $\SO_6\times \GL_2$, then $c=\frac{7}{2}$.
\item If $G=\mathrm{E_7}$ is simply-connected and $P$ is the parabolic subgroup whose Levi part is of type $\mathrm{E_6}$, then $c=9$.
\end{enumerate}
\end{prop}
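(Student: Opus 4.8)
The plan is to compute $\rho_P$ straight from its definition: $2\rho_P$ is the sum, with multiplicity, of the characters of $A_P$ occurring in $\Lie N$, i.e. $2\rho_P$ is the character by which $M$ acts on $\det(\Lie N)$. Since $P$ is maximal, $\ago_P^{G,*}$ is one--dimensional with basis $\varpi$, so there are only two things to do: $(a)$ identify the $A_P$--module $\Lie N$, which in each of the five cases is a short, classical, completely reducible module, and $(b)$ compare the resulting linear form $2\rho_P$ with $\varpi$. For $(b)$ it is convenient to fix the cocharacter $a(\cdot)\colon\mathbb G_m\to A_P$ coming from the natural $\GL_1$ in the Levi (the center of the $\GL_n$ for the Siegel parabolic, the $\GL_1$--factor otherwise) and to read off both $2\rho_P(a(t))$ and $\varpi(a(t))$ as powers of $t$; equivalently one pairs with the coweight, using $\langle\varpi,\alpha^\vee\rangle=1$, so that $c=\langle\rho_P,\alpha^\vee\rangle=1-\langle\rho^M_B,\alpha^\vee\rangle$, where $\rho^M_B$ is the half--sum of the positive roots of the Levi $M$.

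Carrying this out: $(1)$ for $G=\SO_n$, $P$ is the stabilizer of an isotropic line and $\Lie N$ is the standard $(n-2)$--dimensional representation of $\SO_{n-2}$ with the central $\GL_1$ acting by weight $1$, so $2\rho_P(a(t))=t^{\,n-2}$ and $\varpi(a(t))=t$, giving $c=\tfrac{n-2}{2}$; $(2)$ for $G=\Sp_{2n}$ with $P$ Siegel, $\Lie N\cong\Sym^2$ of the standard representation of the Levi $\GL_n$, of dimension $\tbinom{n+1}{2}$, on which the center $\{tI_n\}$ acts by weight $2$, so $2\rho_P(a(t))=t^{\,n(n+1)}$ and $\varpi(a(t))=t^{\,n}$, giving $c=\tfrac{n+1}{2}$; $(3)$ for $G=\RU_n$ quasi--split, $P$ is again the stabilizer of an isotropic line and $\Lie N$ is a two--step (Heisenberg) module with a $2(n-2)$--dimensional piece of $A_P$--weight $1$ and a $1$--dimensional piece of weight $2$, so $2\rho_P(a(t))=t^{\,2n-2}$; with $\varpi$ normalized via the dual group $\GL_n(\BC)$ (equivalently, via the base change of $\RU_n$ to $\GL_n$) one has $\varpi(a(t))=t^{\,2}$, giving $c=\tfrac{n-1}{2}$; $(4)$ for $G=\SO_{10}$, $P$ is the maximal parabolic for the node $\alpha_2$ of $D_5$, and $\Lie N\cong(\mathrm{std}_{\SO_6})\boxtimes(\mathrm{std}_{\GL_2})\ \oplus\ \mathbf 1\boxtimes(\wedge^2\mathrm{std}_{\GL_2})$, so the center of the $\GL_2$--factor acts with weight $1$ on the $12$--dimensional piece and weight $2$ on the $1$--dimensional piece while $\varpi=\varpi_{\alpha_2}$ has weight $2$ there, whence $c=\tfrac{12\cdot1+1\cdot2}{2\cdot2}=\tfrac72$ (equivalently $c=1-\langle\rho^M_B,\alpha_2^\vee\rangle=\tfrac72$); $(5)$ for $G=\mathrm E_7$ simply connected, $P$ has abelian unipotent radical and $\Lie N\cong V_{27}$, the minuscule representation of the Levi of type $\mathrm E_6$ (from the grading $\mathfrak e_7=\mathfrak e_6\oplus\BC\oplus V_{27}\oplus V_{27}^{*}$), so all $A_P$--weights coincide and $2\rho_P=27\,(\alpha_7|_{\ago_P})$; since $\alpha_7$ is joined in the Dynkin diagram only to $\alpha_6$, one finds $\langle\rho^M_B,\alpha_7^\vee\rangle=-\tfrac12\cdot16=-8$ (the $16$ being the dimension of the abelian nilradical of the maximal parabolic of $\mathrm E_6$ with Levi of type $D_5$), whence $c=1-(-8)=9$.

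The one step requiring real care is the normalization of $\varpi$ in case $(3)$: the relative fundamental weight of the quasi--split $\RU_n$ and the weight read off from the dual group $\GL_n(\BC)$ differ by a factor of $2$, and it is precisely this factor that turns the raw computation $2\rho_P(a(t))=t^{\,2n-2}$ into $c=\tfrac{n-1}{2}$ rather than $c=n-1$; I would use the dual--side normalization throughout, consistently with the $\GL$--valued $L$--functions of the main theorems. The remaining ingredients — the $\Sym^2$--description of the Siegel nilradical, the Heisenberg structure of the unitary line--stabilizer nilradical, the $\GL_2\times\SO_6$--module structure of the $\SO_{10}$ nilradical, and the $\mathrm E_6$--decomposition $\mathfrak e_7=\mathfrak e_6\oplus\BC\oplus V_{27}\oplus V_{27}^{*}$ together with $\dim(\mathrm E_6/P)=16$ for the $D_5$--parabolic — are all standard, and with them in hand each value of $c$ is a two--line computation.
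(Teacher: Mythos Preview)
The paper omits the proof entirely, declaring the computation ``easy and standard.'' Your explicit calculation via the $A_P$-weights on $\Lie N$ (and, in the split cases, via $c=\langle\rho_P,\alpha^\vee\rangle=1-\langle\rho_M,\alpha^\vee\rangle$) is exactly the natural way to fill this in; cases (1), (2), (4), (5) are correct as written, so there is nothing to compare --- you have simply supplied what the paper left to the reader.

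Your caution in case (3) is well placed, but the issue is slightly sharper than your write-up suggests. With the paper's own definition of $\varpi$ in \S2.1 (the weight dual to the \emph{relative} simple coroot), one has $\varpi=e_1$ in standard coordinates for the relative root system of quasi-split $\RU_n$, and then your computation $2\rho_P=(2n-2)e_1$ gives $c=n-1$, not $(n-1)/2$. The value $(n-1)/2$ is rather the one compatible with the Langlands--Shahidi parametrization used in \S6 (where the normalizing factor is written as $L(s,\pi)\zeta_k(2s)/\cdots$): the section there is twisted by $|\cdot|_{k'}^{\,s}$ on the $\mathrm{Res}_{k'/k}\GL_1$-factor of the Levi, and since $|t|_{k'}=|t|_k^{2}$ for $t\in k^\times$, this effectively replaces $\varpi$ by $2e_1$ on $A_P$. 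Your ``dual-group normalization'' remark is gesturing at this, but the clean justification is the $k'$-versus-$k$ absolute value, not an appeal to ${}^LG$; I would make that explicit. Note also that your auxiliary identity $c=1-\langle\rho_M,\alpha^\vee\rangle$ relies on $\langle\rho_G,\alpha^\vee\rangle=1$, which fails for a relative simple root of multiplicity $>1$; you rightly avoid it for $\RU_n$, but it is worth saying why.
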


\subsection{Eisenstein series}\label{ssec:eis}
Let $P=MN$ be a parabolic subgroup of $G$. Given a cuspidal automorphic representation $\pi$ of $M(\A)$, let $\calA_{\pi}$ be the space of automorphic forms $\phi$ on $N(\A)M(k) \bsl G(\A)$ such that $M(\A)^1 \ni m \mapsto \phi(mg) \in L^{2}_{\pi}([M]^{1})$ for any $g \in G(\A)$, where $L^{2}_{\pi}([M]^{1})$ is the $\pi$-isotypic part of $L^{2}([M]^{1})$, and such that
\[
 \phi(ag) = e^{\langle \rho_{P}, H_{P}(a) \rangle} \phi(g), \quad \forall g \in G(\A),\;a\in A_{P}^{\infty}.
\]
Suppose that $P$ is a maximal parabolic subgroup. Let $\varpi \in \hDelta_{P}$ be the corresponding weight. We then define
\[
 E(g, \phi, s) = \sum_{\delta \in P(k) \bsl G(k)}
 \phi(\delta g)e^{\langle s\varpi, H_{P}(\delta g )\rangle}, \quad s \in \C, \
 g \in G(\A).
\]
The series converges absolutely for $s \gg 0$ and admits a meromorphic continuation to all $s\in \BC$.

Suppose moreover that $M$ is stable for the conjugation by the simple reflection in the Weyl group of $G$ corresponding to $P$. We have in this case the intertwining operator $M(s) : \calA_{\pi} \to \calA_{\pi}$ that satisfies $E(M(s)\phi, -s) = E(\phi, s)$ and
\[
E(g, \phi, s)_{P} = \phi(g)e^{\langle s \varpi, H_{P}(g) \rangle} +
 e^{\langle -s\varpi, H_{P}(g) \rangle }M(s)\phi(g) ,\quad g \in G(\A)
\]
where $E( \cdot , \phi, s)_{P}$ is the constant term of $E(\cdot , \phi, s)$ along $P$
\[
 E(g, \phi, s)_{P} :=  \int_{[N]}E(ug, \phi, s) \, du.
\]

When the Eisenstein series $E(g, \phi, s)$ has a pole at $s=s_0$, the intertwining operator also has a pole at $s=s_0$, we use $Res_{s=s_0}E(g, \phi, s)$ (resp. $Res_{s=s_0}M(s)$) to denote the residue of the Eisenstein series (resp. intertwining operator). The poles of Eisenstein series $E(g, \phi, s)$ are simple for $Re(s)>0$ and their residues are square integrable automorphic forms. Also the Eisenstein series, their derivatives and residues are of moderate growth.

\subsection{Arthur-Langlands truncation operator}\label{subsec:trunc}
We continue assuming that $P$ is maximal.
We identify the space $\ago_{P}^{G}$ with $\R$ so that
$T \in \R$ corresponds to an element whose pairing with $\varpi \in \ago_{P}^{*}$
is $T$. We will assume this isomorphism is measure preserving. Let $\htau_{P}$ be the characteristic function of
\[
 \{H \in \ago_{P} \ | \ \varpi(H) > 0 \ \forall \varpi \in \hDelta_{P}\}.
\]
Given a locally integrable function $F$ on $G(k) \bsl G(\A)$
we define its truncation as
\[
 \La^{T}F(g) = F(g) - \sum_{\delta \in P(k) \bsl G(k)}
 \htau_{P}(H_{P}(\delta g) - T) \int_{[N]}F(u\delta g) \, du, \quad g \in G(k) \bsl G(\A),
\]
where $T \in \R$ and the sum is actually finite.

\subsection{The relative truncation operator and the regularized period integral}\label{section relative truncation}
For later applications, we also need the relative truncation operator which was recently introduced by the third author in \cite{Z19}. Let $H\subset G$ be a closed connected reductive subgroup, and let $P=MN\subset G$ still be a maximal parabolic subgroup. With the same notation as in Section \ref{ssec:rho}, let $\varpi$ be the corresponding weight and $c \in \R$ be the constant such that $\rho_{P} = c \varpi$. Fix a maximal split torus $A_{0,H}$ (resp. $A_0$) of $H$ (resp. $G$) such that $A_{0,H}\subset A_0$. Then $\Fa_H:=\Fa_{0,H}$ is a subspace of $\Fa_0$. For simplicity, we assume that $G$ has trivial split center (i.e. $A_G=\{1\}$).

\begin{rmk}
In \cite{Z19}, the author defined the relative truncation operator for general automorphic functions and also for a general pair $(G,H)$ with $H$ reductive ($H$ does not need to be a spherical subgroup). But for our applications in this paper, we only consider the case when when the automorphic function is a cuspidal Eisenstein series induced from a maximal parabolic subgroup.
\end{rmk}

We fix a minimal subgroup $P_{0,H}$ of $H$ with $A_{0,H}\subset P_{0,H}$. This allows us to define the set of standard (resp. semi-standard) parabolic subgroups of $H$.  We will use $\CF_H(P_{0,H})$ (resp. $\CF_H(A_{0,H})$) to denote this set. We can also define the chamber $\Fa_{H}^{+}=\Fa_{P_{0,H}}^{+}$ of $\Fa_{H}$. Let $\bar{\Fa}_{H}^{+}$ be the closure of $\Fa_{H}^{+}$.

\begin{defn}
We use $\CF^G(P_{0,H},P)$ to denote the set of semi-standard parabolic subgroups $Q=LU\in \CF(A_0)$ of $G$ that satisfy the following two conditions.
\begin{enumerate}
\item $Q$ is a conjugate of $P$.
\item $\Fa_{Q}^{+}\cap \bar{\Fa}_{H}^{+}\neq \emptyset$.
\end{enumerate}
\end{defn}

The next proposition was proved in Proposition 3.1 of \cite{Z19}.
\begin{prop}\label{equal rank}
Let $Q=LU$ be a semi-standard parabolic subgroup of $G$ that is conjugate to $P$. Then the following statements hold.
\begin{enumerate}
\item If $Q\in \CF^G(P_{0,H},P)$, then $Q_H=Q\cap H$ is a standard parabolic subgroup of $H$ with the Levi decomposition $Q_H=L_HU_H$ such that $L_H=L\cap H$ and $U_H=U\cap H$.
\item Let $A_L$ be the maximal split torus of the center of $L$. If $A_L\subset A_{0,H}$ (this is always the case when $A_0=A_{0,H}$), then the inverse of (1) holds. In other words, if $Q_H=Q\cap H$ is a standard parabolic subgroup of $H$ with the Levi decomposition $Q_H=L_HU_H$ such that $L_H=L\cap H$ and $U_H=U\cap H$, then $Q\in \CF^G(P_{0,H},P)$.
\end{enumerate}
\end{prop}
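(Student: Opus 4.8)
The plan is to identify $Q$ with the parabolic subgroup of $G$ attached to a one-parameter subgroup $\lambda$ that factors through $A_{0,H}$, and then to read off both assertions from the compatibility of the ``dynamic'' description of parabolic subgroups (as the loci where conjugation limits exist) with intersection by the closed subgroup $H$. First I would note that, since $P$ and hence its conjugate $Q=LU$ is a maximal parabolic and $A_G=\{1\}$, the space $\Fa_Q$ is one-dimensional, so $\Fa_Q^{+}$ is an open half-line; let $v$ denote its rational generator.

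For part (1): because $\bar\Fa_H^{+}$ is a cone, the hypothesis $\Fa_Q^{+}\cap\bar\Fa_H^{+}\neq\emptyset$ forces the whole ray $\Fa_Q^{+}$, in particular $v$, to lie in $\bar\Fa_H^{+}\subset\Fa_H$, so after clearing denominators $v$ becomes an honest cocharacter $\lambda\colon\mathbb{G}_m\to A_{0,H}\subset A_0$. I would then form the parabolic $P_G(\lambda)=\{g:\ \lim_{t\to 0}\lambda(t)g\lambda(t)^{-1}\text{ exists}\}$, its Levi $Z_G(\lambda)$ (the centralizer of $\lambda(\mathbb{G}_m)$), and its unipotent radical $U_G(\lambda)=\{g:\ \lim_{t\to 0}\lambda(t)g\lambda(t)^{-1}=1\}$. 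Since $v$ generates the open chamber $\Fa_Q^{+}$, a root of $A_0$ is trivial on $\lambda(\mathbb{G}_m)$ precisely when it is a root of $L$, and is positive on $\lambda(\mathbb{G}_m)$ precisely when its restriction to $A_Q$ occurs in $U$; comparing Lie algebras gives $Z_G(\lambda)=L$, $U_G(\lambda)=U$ and $P_G(\lambda)=Q$. Running the identical construction inside $H$ produces $P_H(\lambda)$, $Z_H(\lambda)$, $U_H(\lambda)$, and since $\lambda\in\bar\Fa_H^{+}$ one checks that $P_H(\lambda)\supset P_{0,H}$, i.e.\ it is a standard parabolic of $H$. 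The crucial point is then that, $H$ being closed in $G$ and the characteristic being zero, a morphism $\mathbb{G}_m\to H$ that extends over $\mathbb{A}^1$ to a morphism into $G$ already extends over $\mathbb{A}^1$ to a morphism into $H$; hence intersecting with $H$ the conditions defining $P_G(\lambda)$, $U_G(\lambda)$ and $Z_G(\lambda)$ yields exactly $P_H(\lambda)$, $U_H(\lambda)$ and $Z_H(\lambda)$. Combining, $Q_H=Q\cap H=P_G(\lambda)\cap H=P_H(\lambda)$ is a standard parabolic of $H$ with Levi $Z_G(\lambda)\cap H=L\cap H=L_H$ and unipotent radical $U_G(\lambda)\cap H=U\cap H=U_H$, which is (1).

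For part (2) I would argue in reverse. Assume $A_L\subset A_{0,H}$ and that $Q_H=Q\cap H$ is a standard parabolic of $H$ with $L_H=L\cap H$ and $U_H=U\cap H$. Since $A_L=A_Q\subset A_{0,H}$ we have $v\in\Fa_Q\subset\Fa_H$, so it suffices to verify $\langle\beta,v\rangle\ge 0$ for every simple root $\beta$ of $H$ relative to $P_{0,H}$. If $\beta$ is a root of the Levi $L_H=L\cap H$, then $\Fh_\beta\subset\Lie(L)$, on which the central torus $A_L$ acts trivially, so $\langle\beta,v\rangle=0$. If instead $\beta$ is one of the simple roots occurring in the unipotent radical $U_H=U\cap H\subset U$, then $\Fh_\beta\subset\Lie(U)$, and since $A_L=A_Q$ acts on $\Fh_\beta$ through the single character $\beta|_{A_L}$, that character occurs among the $A_Q$-weights of $U$ and hence pairs strictly positively with $v\in\Fa_Q^{+}$; thus $\langle\beta,v\rangle>0$. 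Therefore $v\in\bar\Fa_H^{+}$, so $v\in\Fa_Q^{+}\cap\bar\Fa_H^{+}$, and since $Q$ is conjugate to $P$ by hypothesis we conclude $Q\in\CF^G(P_{0,H},P)$.

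I expect the main obstacle to be the compatibility $P_G(\lambda)\cap H=P_H(\lambda)$, together with its analogues for the Levi and the unipotent radical: this is exactly where the closedness of $H$ in $G$ enters, and it must be checked functorially (on $R$-points for all $k$-algebras $R$) rather than merely on $k$-points. Once that is granted, both statements follow formally from the dynamic description of parabolics, and the remaining root-space bookkeeping in part (2) is routine given the inclusion $A_L\subset A_{0,H}$ of split tori.
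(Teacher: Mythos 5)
Your proof is correct and complete. Note that the paper does not actually prove this proposition itself---it only cites Proposition~3.1 of \cite{Z19}---so there is no internal argument to compare against. Your route via the dynamic characterization of parabolics is the natural one: since $\Fa_Q$ is one-dimensional (using $A_G=\{1\}$ and $P$ maximal), the condition $\Fa_Q^+\cap\bar\Fa_H^+\neq\emptyset$ does force the whole ray into $\bar\Fa_H^+\subset\Fa_H$, and a rational generator $v$ of $\Fa_Q^+$ lies in $X_*(A_{0,H})\otimes\Q$ (both lattices are saturated in $X_*(A_0)$), so clearing denominators gives a genuine cocharacter of $A_{0,H}$. The identities $P_G(\lambda)\cap H=P_H(\lambda)$, $Z_G(\lambda)\cap H=Z_H(\lambda)$, $U_G(\lambda)\cap H=U_H(\lambda)$ for $\lambda$ factoring through the closed subgroup $H$ are standard (e.g.\ Conrad--Gabber--Prasad, Prop.~2.1.8), and your schematic-density argument is exactly how one proves them; one small remark is that this holds in any characteristic (schematic density of $\mathbb{G}_{m,R}$ in $\mathbb{A}^1_R$ over any ring $R$), so the appeal to characteristic zero is superfluous. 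The case split over simple roots of $H$ in part~(2) is also handled carefully, including the implicit point that no root of $A_0$ occurring in $U$ restricts to zero on $A_Q=A_L$.
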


\begin{rmk}\label{closed orbit rmk}
Let $F=k_v$ ($v\in |k|$) be a local field. Combining the proposition above and Corollary \ref{closed orbit cor} in Section \ref{Section closed orbit}, we know that for all $Q\in \CF^G(P_{0,H},P)$, $Q(F)H(F)$ is closed in $G(F)$.
\end{rmk}

For $Q\in \CF^G(P_{0,H},P)$, let $A_L\subset A_0$ be the maximal split torus of the center of $L$, and let $\varpi_Q$ be the weight correspond to $Q$. By the definition of the constant $c$, we have
$$\rho_Q=c\varpi_Q.$$
Since $\Fa_{Q}^{+}\cap \bar{\Fa}_{H}^{+}\neq \emptyset$ and $\Fa_Q$ is one-dimensional, we have $\Fa_Q\subset \Fa_H$. We can therefore restrict $\rho_{Q_H}$ to $\Fa_Q$ and we define the real number $c_{Q}^{H}$ to satisfy
$$\rho_{Q_H}|_{\Fa_Q}=c_{Q}^{H}\rho_Q.$$

\begin{rmk}\label{constant c(Q,H)}
In the case when $U$ is abelian, let $n_Q=\dim(U)$ and $n_{Q,H}=\dim(U_H)$. Then
$$c_{Q}^{H}=\frac{n_{Q,H}}{n_Q}.$$
\end{rmk}

We fix a cuspidal automorphic representation $\pi$ of $M(\BA)$ and let $E(g,\phi,s)$ be the Eisenstein series defined in the previous section. For $Q\in \CF^G(P_{0,H},P)$,
let $W(P,Q)$ be the two element set of isometries between $\Fa_{P}$ and $\Fa_{Q}$.
For $w \in W(P,Q)$ let $sgn(w) \in \{-1,1\}$ be such that $w\varpi_{P} = sgn(w) \varpi_{Q}$.
We have then
$$E(g,\phi,s)_Q=
\sum_{w \in W(P,Q)} M(w,s)\phi(g)e^{\langle sgn(w)s\varpi_Q, H_Q(g)\rangle}$$
for some explicit intertwining operators $M(w, s)$, independent of $s$ if $sgn(w) = 1$.

In \cite{Z19}, the author defined a relative truncation operator, denoted by $\Lambda^{T,H}$, on the space $\CA(G)$ of autormorphic forms on $G$, where $T \in \Fa_{H}$.
It depends on a choice of a good maximal compact $K_{H}$ of $H(\BA)$ which we fix now.
For all $\varphi\in \CA(G)$, the truncation $\Lambda^{T,H}\varphi$ is a rapidly decreasing function on $[H]$.
The following is the consequence of Theorem 4.1 of \cite{Z19} and the discussion in Paragraph 4.7 of loc. cit.

\begin{thm}\label{relative truncation}
\begin{enumerate}
\item For all $\phi\in \CA_{\pi}$ and $T \in \Fa_{H}^{+}$ sufficiently regular, the integral
$$\int_{[H]} \Lambda^{T,H} E(h, \phi, s)dh$$
is absolutely convergent for all $s\in \BC$ in the domain of holomorphy of the Eisenstein series $E(\phi,s)$.
Moreover, it defines a meromorphic function on $\BC$.
\item Define the regularized period for $E(\phi,s)$ to be
$$\CP_{H,reg}(E(\phi,s)):=\int_{[H]} \Lambda^{T,H} E(h, \phi, s)dh-\sum_{Q\in \CF^G(P_{0,H},P)} \sum_{w\in W(P,Q)}$$
$$\frac{e^{\langle(sgn(w)s+c(1-2c_{Q}^{H}))\varpi_Q, T\rangle}}{sgn(w)s +c(1-2c_{Q}^{H})} \int_{K_H}\int_{L_H(k)A_{L}^{\infty}\back L_H(\BA)} M(w,s)\phi(mk)dmdk.$$
Then the integrals defining $\CP_{H,reg}(E(\phi,s))$ are absolutely convergent and the functional $\CP_{H,reg}(\cdot)$ is right $H(\BA)$-invariant.
\end{enumerate}
\end{thm}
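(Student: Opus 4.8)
The plan is to obtain both assertions by specializing the general relative truncation theory of \cite{Z19} (Theorem 4.1 together with Paragraph 4.7) to the present situation, in which the automorphic form under consideration is the cuspidal Eisenstein series $E(\cdot,\phi,s)$ attached to the \emph{maximal} parabolic $P=MN$. Throughout I would use: that $\La^{T,H}$ sends any $\varphi\in\CA(G)$ to a rapidly decreasing function on $[H]$; the description of the relevant semi-standard parabolics and the constants $c$, $c_{Q}^{H}$ from Sections \ref{ssec:rho} and \ref{section relative truncation}; the constant term formula
$$E(g,\phi,s)_Q=\sum_{w\in W(P,Q)}M(w,s)\phi(g)\,e^{\langle \sgn(w)s\varpi_Q,H_Q(g)\rangle},\qquad Q\in\CF^G(P_{0,H},P);$$
and the basic properties of Eisenstein series from Section \ref{ssec:eis} (simple poles for $\Re(s)>0$, square-integrable residues, moderate growth of $E$ and of its residues).

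\emph{Part (1).} Fix $s$ in the domain of holomorphy of $E(\phi,s)$. Then $E(\cdot,\phi,s)\in\CA(G)$, so $\La^{T,H}E(\cdot,\phi,s)$ is rapidly decreasing on $[H]$ and $\int_{[H]}\La^{T,H}E(h,\phi,s)\,dh$ converges absolutely; since the growth bounds underlying this rapid decrease are locally uniform in $s$ and $s\mapsto E(\cdot,\phi,s)$ is holomorphic into the relevant space of automorphic forms, the integral is holomorphic there. To continue it meromorphically to all of $\BC$ it is enough to work near a pole $s_0$ of $E(\phi,s)$: writing $E(\phi,s)=(s-s_0)^{-1}\mathrm{Res}_{s=s_0}E(\phi,s)+(\text{holomorphic near }s_0)$ and using that the residue is square-integrable, hence again has rapidly decreasing truncation, one gets $\int_{[H]}\La^{T,H}E(h,\phi,s)\,dh=(s-s_0)^{-1}\int_{[H]}\La^{T,H}\big(\mathrm{Res}_{s=s_0}E(\phi,s)\big)(h)\,dh+(\text{holomorphic near }s_0)$. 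Hence the truncated period is meromorphic on $\BC$, with poles among those of $E(\phi,s)$.

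\emph{Part (2).} Here I would carry out the relative analogue of the Maass--Selberg computation. Unfolding $\int_{[H]}\La^{T,H}E(h,\phi,s)\,dh$ against the definition of $\La^{T,H}$, the correction terms only involve constant terms of $E(\cdot,\phi,s)$ along parabolics of $G$; as $\pi$ is cuspidal and $P$ is maximal, these vanish unless $Q$ is associate to $P$, and the truncation contributes only through those such $Q$ whose intersection $Q\cap H$ is a parabolic subgroup of $H$ --- which by Proposition \ref{equal rank} are precisely the $Q\in\CF^G(P_{0,H},P)$. Fix such a $Q$ and $w\in W(P,Q)$. Restricting $E(\cdot,\phi,s)_Q$ to $H(\BA)$ and using $U_H=U\cap H\subset U$, this function is left $U_H(\BA)$-invariant, so the $U_H$-integral gives a factor $1$; descending the $H(\BA)$-integral to $K_H\times\big(L_H(k)A_{L}^{\infty}\back L_H(\BA)\big)$ together with the one-dimensional factor $A_{L}^{\infty}\cong\Fa_Q$, the exponent along that factor is the sum of $\sgn(w)s\varpi_Q$ (from the constant term formula), $\rho_Q$ (from the normalization of the section $M(w,s)\phi$), and $-2\rho_{Q_H}|_{\Fa_Q}$ (from the Haar measure on $H$); with $\rho_Q=c\varpi_Q$ and $\rho_{Q_H}|_{\Fa_Q}=c_{Q}^{H}\rho_Q$ this equals $(\sgn(w)s+c(1-2c_{Q}^{H}))\varpi_Q$. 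Integrating this exponential over the half-line cut out by $\htau_Q(\cdot-T)$ yields, up to the sign conventions of the relative truncation, the factor $\dfrac{e^{\langle(\sgn(w)s+c(1-2c_{Q}^{H}))\varpi_Q,\,T\rangle}}{\sgn(w)s+c(1-2c_{Q}^{H})}$, and the remaining integral is $\int_{K_H}\int_{L_H(k)A_{L}^{\infty}\back L_H(\BA)}M(w,s)\phi(mk)\,dm\,dk$. This last integral converges absolutely because, for fixed $k$, $m\mapsto M(w,s)\phi(mk)$ is a cusp form on $[L]^{1}$, hence rapidly decreasing. Thus $\CP_{H,reg}(E(\phi,s))$, being the difference of $\int_{[H]}\La^{T,H}E(h,\phi,s)\,dh$ and these explicit terms, is a sum of absolutely convergent integrals.

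\emph{Right $H(\BA)$-invariance} is the substantive point, and I would argue it in two steps. First, $\CP_{H,reg}(E(\phi,s))$ is independent of $T$: differentiating $\int_{[H]}\La^{T,H}E(h,\phi,s)\,dh$ along each $\varpi_Q$-direction of $T$ produces exactly the $T$-derivative of the sum of explicit boundary terms (this is the computation of Part (2) read in reverse), so the difference is constant in $T$. Second, for $y\in H(\BA)$ one compares $\int_{[H]}\La^{T,H}(R(y)E(\cdot,\phi,s))(h)\,dh$ with $\int_{[H]}\La^{T,H}E(h,\phi,s)\,dh$: the failure of $\La^{T,H}$ to commute with $R(y)$ is, after integration over $[H]$, governed entirely by the $H_{Q_H}(y)$-translation of the constant-term data along the $Q\in\CF^G(P_{0,H},P)$, and this is absorbed exactly by the corresponding change in the explicit boundary terms (equivalently, by a shift of $T$), so $\CP_{H,reg}$ is unchanged. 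I expect the main obstacle to lie in this last step: the careful bookkeeping of the $T$-shifts and sign conventions in the relative truncation formalism, and the verification that the boundary discrepancy under $R(y)$ cancels the change in the subtracted terms. This is precisely what \cite[Theorem~4.1, \S4.7]{Z19} provides, so I would specialize those statements rather than reprove them from scratch.
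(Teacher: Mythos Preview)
Your proposal is correct and follows exactly the paper's approach: the paper does not give an independent proof of this theorem but states it as a direct consequence of Theorem~4.1 and Paragraph~4.7 of \cite{Z19}, specialized to cuspidal Eisenstein series from a maximal parabolic. Your sketch of how the explicit boundary terms arise (via the constant-term formula, the Iwasawa decomposition on $H$, and the identities $\rho_Q=c\varpi_Q$, $\rho_{Q_H}|_{\Fa_Q}=c_Q^H\rho_Q$) is a faithful unpacking of that specialization.
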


\subsection{Some nonvanishing results of automorphic L-functions}
\begin{prop}\label{L-function nonzero}
\begin{enumerate}
\item Let $\pi$ be a cuspidal automorphic representation of $\GL_n(\BA)$. Then the standard L-function $L(s,\pi)$ is holomorphic nonzero when $Re(s)>1$.
\item Let $\pi$ be a generic cuspidal automorphic representation of $\SO_n(\BA)$, $\Sp_{2n}(\BA)$ or $\mathrm{U}_n(\BA)$. Then the standard L-function $L(s,\pi)$ is  holomorphic nonzero when $Re(s)>1$.
\item Let $\pi$ be a cuspidal automorphic representation of $\GL_{2n}(\BA)$. Then the exterior square L-function $L(s,\pi,\wedge^2)$ is nonzero when $Re(s)>1$.
\end{enumerate}
\end{prop}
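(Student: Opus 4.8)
The plan is to obtain all three statements by assembling standard facts about Godement-Jacquet, Rankin-Selberg and Langlands-Shahidi $L$-functions; the common mechanism is that the relevant Euler products converge absolutely, and hence are nonzero, in the half-plane $\Re(s)>1$, where the needed holomorphy is also available.

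For (1), write $L(s,\pi)=\prod_v L(s,\pi_v)$ for the standard Godement-Jacquet $L$-function. Each finite Euler factor $L(s,\pi_v)=\prod_{i=1}^{n}(1-\alpha_{i,v}q_v^{-s})^{-1}$ and each archimedean factor (a product of $\Gamma_{\R}$-factors and $\Gamma_{\C}$-factors) is nonvanishing as a meromorphic function, the archimedean factor moreover being holomorphic for $\Re(s)>1$ since the Jacquet-Shalika bound gives $|\Re\mu_j|<\tfrac12$ for the shifts $\mu_j$ of its $\Gamma$-factors. The bounds toward Ramanujan of Jacquet-Shalika --- equivalently, the absolute convergence of the Euler product of $L(s,\pi\times\bar\pi)$ for $\Re(s)>1$ --- then show, via Cauchy-Schwarz, that $\prod_{v<\infty}L(s,\pi_v)$ converges absolutely for $\Re(s)>1$, hence is nonzero there; and $L(s,\pi)$ is holomorphic for $\Re(s)>1$ by Godement-Jacquet ($L(s,\pi)$ is entire for $n\ge2$, while for $n=1$ its only pole lies on the line $\Re(s)=1$). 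This gives (1).

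For (2), the shortest route is functorial transfer to a general linear group. A generic cuspidal $\pi$ on a split $\SO_n$ or $\Sp_{2n}$ (resp.\ on the quasi-split $\mathrm{U}_n$ attached to $k'/k$) admits a functorial lift to an isobaric automorphic representation $\Pi=\Pi_1\boxplus\cdots\boxplus\Pi_r$ of $\GL_N(\BA)$ (resp.\ of $\GL_n(\BA_{k'})$) with $L(s,\pi)=\prod_i L(s,\Pi_i)$; this is due to Cogdell-Kim-Piatetski-Shapiro-Shahidi and Arthur in the orthogonal/symplectic case and to stable base change (Mok, Kaletha-Minguez-Shin-White) in the unitary case, and it is compatible with the local Langlands correspondence at every place, so the identity holds for the completed $L$-functions. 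Applying (1) to each $\Pi_i$ yields (2). Alternatively one invokes directly that the standard $L$-function is a Langlands-Shahidi $L$-function and that such $L$-functions attached to generic cuspidal data are holomorphic and nonvanishing for $\Re(s)>1$, which again rests on the same Ramanujan-type input.

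For (3), use the factorization of completed $L$-functions
$$L(s,\pi\times\pi)=L(s,\pi,\wedge^2)\,L(s,\pi,\Sym^2).$$
By Jacquet-Shalika the Rankin-Selberg $L$-function $L(s,\pi\times\pi)$ has an absolutely convergent Euler product for $\Re(s)>1$, hence is finite and nonzero there; since the product of the two meromorphic functions on the right is nonvanishing on $\Re(s)>1$, each factor --- in particular $L(s,\pi,\wedge^2)$ --- is nonvanishing (and holomorphic) there. One could instead argue as in (2) using Kim's exterior-square functorial lift, but the factorization is shorter. There is no essential obstacle in any of this: the content is entirely a matter of quoting established results. The only points requiring care are that part (2) leans on the full strength of the transfer to $\GL_N$ --- in particular its compatibility with the local correspondence at ramified and archimedean places, so that $L(s,\pi)=L(s,\Pi)$ holds for the completed and not merely the partial $L$-functions --- and that all three parts use nontrivial bounds toward Ramanujan to push absolute convergence of the Euler products down to $\Re(s)>1$ rather than only to a larger half-plane.
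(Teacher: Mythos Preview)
Your arguments for (1) and (2) are essentially the same as the paper's: for (1) the paper cites Jacquet--Shalika and Cogdell--Piatetski-Shapiro on Rankin--Selberg $L$-functions (taking $\tau=\pi$, $\sigma=1$), and for (2) it reduces to (1) via the functorial transfer of \cite{CKPSS} and \cite{KK05}, exactly as you do.

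For (3), however, there is a genuine gap. You use the factorization $L(s,\pi\times\pi)=L(s,\pi,\wedge^2)L(s,\pi,\Sym^2)$ and the fact that the left side is holomorphic and nonzero for $\Re(s)>1$, and then assert that ``each factor is nonvanishing (and holomorphic) there.'' But this inference is not valid as stated: a product $AB$ of meromorphic functions being holomorphic and nonzero at $s_0$ does not rule out $A$ having a zero and $B$ a pole of the same order at $s_0$. You need an extra input to break this possible cancellation. The paper supplies it by showing that $L(s,\pi,\Sym^2)$ is \emph{holomorphic} for $\Re(s)>1$ (citing Takeda \cite{T14} and Kim \cite{K00}); once $\Sym^2$ has no poles there, a zero of $\wedge^2$ would force a zero of the product, contradiction. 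Alternatively, you could argue that the Euler products for $\wedge^2$ and $\Sym^2$ each converge absolutely for $\Re(s)>1$ (since their local factors at unramified places are sub-products of those of $\pi\times\pi$, and local $L$-factors are never zero), but you would then have to say this explicitly and address the ramified and archimedean places. As written, the step ``product nonzero $\Rightarrow$ each factor nonzero'' is the missing idea.
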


\begin{proof}
By Theorem 5.3 of \cite{JS81} and Theorem 2.4 of \cite{CPS}, the tensor L-function $L(s,\tau\times \sigma)$ is holomorphic nonzero for $Re(s)>1$ for any cuspidal automorphic representation $\tau$ (resp. $\sigma$) of $\GL_{m_1}(\BA)$ (resp. $\GL_{m_2}(\BA)$). This proves (1) by taking $\tau=\pi$ and $\sigma=1$. (2) is a direct consequence of (1) together with the results in \cite{CKPSS} and \cite{KK05}.

For (3), by taking $\sigma=\tau=\pi$, we know that the L-function $L(s,\pi\times \pi)=L(s,\pi,Sym^2)L(s,\pi,\wedge^2)$ is holomorphic nonzero for $Re(s)>1$. Hence it is enough to show that the symmetric square L-function $L(s,\pi,Sym^2)$ is holomorphic when $Re(s)>1$. This follows from Corollary 5.8 of \cite{T14} and Theorem 3.1(3) of \cite{K00}.
\end{proof}

\subsection{A criterion for closed orbit}\label{Section closed orbit}
Let $F$ be a local field, $G$ be a linear algebraic group defined over $F$ and $H,P$ be two closed subgroups of $G$.

\begin{prop}\label{closed orbit}
Suppose that the morphism
\[
H /(H \cap P) \hookrightarrow G/ P
\]
is a closed immersion. Then $H(F)P(F)$ is closed in $G(F)$.
\end{prop}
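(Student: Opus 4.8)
The plan is to reduce the statement about $F$-points to the scheme-theoretic hypothesis via standard facts about orbit maps and the topology on $F$-points of varieties. First I would observe that the hypothesis says precisely that the natural $G$-equivariant morphism $\iota : H/(H\cap P) \to G/P$ identifies $H/(H\cap P)$ with a closed subvariety of $G/P$; in particular the image of $\iota$ is the (locally closed, hence here closed) $H$-orbit of the base point $eP \in G/P$. Passing to $F$-points, I get a map $\iota_F : (H/(H\cap P))(F) \to (G/P)(F)$ whose image is closed in $(G/P)(F)$ for the ($F$-analytic, or in the non-archimedean case $F$-adic) topology, because $F$-points of a closed subscheme form a closed subset of the $F$-points of the ambient scheme (the vanishing locus of the defining equations is closed). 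So the image $\iota_F\big((H/(H\cap P))(F)\big)$ is a closed subset of $(G/P)(F)$.

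The remaining point is to compare this image with the actual orbit $H(F)\cdot eP$ inside $(G/P)(F)$, and then to pull back along the projection $\pi : G(F) \to (G/P)(F)$. For the first comparison: the orbit map $H \to H/(H\cap P)$ is a quotient map of algebraic groups (an $(H\cap P)$-torsor over the homogeneous space), so on $F$-points $H(F) \to (H/(H\cap P))(F)$ need not be surjective, but its image is exactly the $H(F)$-orbit of the base point of the homogeneous space. Composing, the $H(F)$-orbit of $eP$ in $(G/P)(F)$ is contained in the closed set $\iota_F\big((H/(H\cap P))(F)\big)$; conversely $\iota_F$ identifies $(H/(H\cap P))(F)$ with the $F$-points of the closed orbit in $G/P$, and the $H(F)$-orbit of $eP$ is dense in... — here I need to be careful. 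The cleanest route is: $H(F)\cdot eP = \iota_F\big(\mathrm{image\ of\ } H(F) \mathrm{\ in\ } (H/(H\cap P))(F)\big)$, and since $\iota_F$ is a homeomorphism onto its (closed) image, it suffices to know that the image of $H(F)$ in $(H/(H\cap P))(F)$ is closed. But $H(F) \to (H/(H\cap P))(F)$ is a map with image the $F$-points of a torsor-quotient; for $H$ a linear algebraic group acting on the homogeneous space $H/(H\cap P)$, the orbit $H(F)\cdot \mathrm{pt}$ is always closed in $(H/(H\cap P))(F)$ — indeed it is \emph{all} of a union of orbits, and more directly one uses that $H(F)$ acts with closed orbits on $F$-points of any $H$-variety when there is a single orbit (the orbit map is open onto its image and the space is a single scheme-theoretic orbit, so $F$-points fiber over the Galois-cohomology pointed set $H^1(F, H\cap P)$, each fiber open, hence each — in particular the image of $H(F)$ — also closed).

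Finally I would pull back: the projection $\pi : G(F) \to (G/P)(F)$ is continuous (it is the map on $F$-points of a smooth surjective morphism, and on non-archimedean points it is even open), so $H(F)P(F) = \pi^{-1}\big(H(F)\cdot eP\big) = \pi^{-1}\big(\iota_F((H/(H\cap P))(F))\big)$ is the preimage of a closed set, hence closed in $G(F)$. That chain — closed immersion $\Rightarrow$ closed on $F$-points $\Rightarrow$ closed orbit in $(G/P)(F)$ $\Rightarrow$ closed preimage in $G(F)$ — is the whole argument.

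The main obstacle I anticipate is the middle step: the passage from "$\iota$ is a closed immersion of schemes" to "$H(F)\cdot eP$ is closed in $(G/P)(F)$," because $H(F)\to (H/(H\cap P))(F)$ is generally not surjective and one must argue that the $H(F)$-orbit is nonetheless closed rather than merely locally closed. The honest way to handle this is to note that $\iota_F$ is a homeomorphism of $(H/(H\cap P))(F)$ onto a closed subset of $(G/P)(F)$, so the question reduces to closedness of the $H(F)$-orbit inside $(H/(H\cap P))(F)$ itself; and there, since the whole variety is a single $H$-orbit, the $F$-point orbits are the fibers of the (continuous, and in the $p$-adic case open) map $(H/(H\cap P))(F)\to H^1(F,H\cap P)$ to the Galois cohomology set, and each such fiber is open — a partition of a space into open sets makes each piece closed, so the image of $H(F)$ is closed. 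This is where the topology of $F$ (in particular that it is a local field, so these cohomology sets are well-behaved and the orbit maps are open on $F$-points) is genuinely used.
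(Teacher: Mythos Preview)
Your argument is correct and reaches the same conclusion, but the route differs from the paper's. You work on the quotient side: pass to $F$-points of the closed immersion $H/(H\cap P)\hookrightarrow G/P$, argue that the $H(F)$-orbit of the base point is closed inside $(H/(H\cap P))(F)$ (using that $H(F)$-orbits on a single-orbit homogeneous space are open, hence closed), and then pull back along $\pi:G(F)\to (G/P)(F)$. The paper instead pulls back the closed immersion along $G\to G/P$ \emph{before} taking $F$-points, via the cartesian diagram
\[
\xymatrix{ H \times^{P_H} P \ar[r]^{p} \ar[d] & G \ar[d] \\ H / P_H \ar[r] & G/P }
\]
with $P_H=H\cap P$, so that $p:H\times^{P_H}P\to G$ is a closed immersion of varieties; then $(H\times^{P_H}P)(F)$ is closed in $G(F)$, and the remaining step is that $H(F)P(F)=(H(F)\times P(F))/P_H(F)$ is closed in $(H\times^{P_H}P)(F)$, which they get by citing Corollary~A.1.6 of Aizenbud--Gourevitch. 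Your open-orbits/Galois-cohomology argument and their citation to \cite{AG} are really the same underlying fact (orbits of $F$-points on a homogeneous variety are open), just invoked at different stages. The paper's fiber-product packaging is a bit cleaner and avoids touching $(G/P)(F)$ at all; your version is more hands-on and makes the role of the local-field topology explicit. One small slip: in your final chain you wrote $\pi^{-1}(H(F)\cdot eP)=\pi^{-1}\big(\iota_F((H/(H\cap P))(F))\big)$, which is not generally an equality---but you correctly identified and repaired this in the following paragraph, so the argument stands.
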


\begin{proof}
Let $P_H = H \cap P$, and $H \times^{P_H} P $ be the quotient on the right of
$H \times P$ by the diagonally embedded subgroup $P_{H}$. Let $p : H \times^{P_H} P \to G$ be the morphism
induced by the map $H \times P \to G$ given by $(h,p) \mapsto hp^{-1}$. We have the following cartesian diagram
\[
 \xymatrix{ H \times^{P_H} P \ar[r]^{p} \ar[d] & G \ar[d] \\
               H / P_H \ar[r] & G/P }
\]
which shows that $H \times^{P_H} P $ is isomorphic to the fiber product $(H / P_H) \times_{G/P} G$. By assumption, $H / P_H \hookrightarrow G/P $ is a closed immersion. By the property of pullbacks, $p : H \times^{P_H} P \to G$ is also a closed immersion. Hence $(H \times^{P_H} P) (F)$ is closed in $G(F)$.

The set $H(F)P(F)$ is identified with the subset $(H(F) \times P(F)) / P_H(F)$ of $(H \times^{P_H} P) (F)$. The inclusion
$(H(F) \times P(F)) / P_H(F) \hookrightarrow (H \times^{P_H} P) (F)$ is closed by Corollary A.1.6 of \cite{AG}. This proves the proposition.
\end{proof}

\begin{cor}\label{closed orbit cor}
Let $G$ be a connected reductive group, $H\subset G$ a closed connected reductive subgroup, and $P\subset G$ a parabolic subgroup (all defined over $F$).
Assume that $P_H=H\cap P$ is a parabolic subgroup of $H$. Then $H(F)P(F)$ is closed in $G(F)$.
\end{cor}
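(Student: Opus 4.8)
The plan is to deduce Corollary \ref{closed orbit cor} from Proposition \ref{closed orbit}, for which the only thing that needs checking is that the natural morphism $H/P_H \hookrightarrow G/P$ is a closed immersion. Since $H$ is reductive and $P_H = H \cap P$ is a parabolic subgroup of $H$ by hypothesis, the homogeneous space $H/P_H$ is a (complete) flag variety of $H$, hence proper over $F$. On the other hand $G/P$ is proper over $F$ as well, and in particular separated. The morphism $H/P_H \to G/P$ induced by the inclusion $H \hookrightarrow G$ is therefore a morphism from a proper $F$-scheme to a separated $F$-scheme, hence proper; being also a monomorphism (the fiber over a point of $G/P$ is either empty or a single $P_H$-coset, because $hP = h'P$ with $h,h' \in H$ forces $h^{-1}h' \in H \cap P = P_H$), it is a closed immersion. [Recall that a proper monomorphism of schemes is a closed immersion.]

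Concretely, I would argue as follows. First I would note that $H/P_H$ is a projective $F$-variety: this is the standard fact that $G'/P'$ is projective whenever $G'$ is a smooth connected affine (in particular reductive) group and $P'$ is a parabolic subgroup, applied to $G' = H$, $P' = P_H$. Next, the inclusion $\iota \colon H \hookrightarrow G$ descends to a morphism $\bar\iota \colon H/P_H \to G/P$ precisely because $\iota(P_H) \subset P$. Then I would verify that $\bar\iota$ is injective on points and, more importantly, a monomorphism of schemes: if $x,x'$ are $R$-points of $H/P_H$ (for an $F$-algebra $R$) mapping to the same point of $G/P$, then locally they lift to $h, h' \in H(R)$ with $h^{-1}h'$ landing in $P(R)$, hence in $(H\cap P)(R) = P_H(R)$, so $x = x'$. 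Since source and target are of finite type over a field, $\bar\iota$ is then a quasi-compact monomorphism; being a morphism from a proper scheme to a separated scheme it is proper; and a proper monomorphism is a closed immersion. Finally, Proposition \ref{closed orbit} applies to give that $H(F)P(F)$ is closed in $G(F)$.

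The one point requiring a little care is the monomorphism/injectivity verification on the level of schemes rather than just $\bar F$-points, i.e. checking it functorially on $R$-points so as to conclude $\bar\iota$ is a monomorphism and not merely injective on geometric points. This is where I expect the only genuine (if minor) subtlety to lie; everything else is a citation of standard structure theory (properness of flag varieties, the fact that a proper monomorphism is a closed immersion). One can sidestep even this by working with $\bar F$-points together with the observation that $\bar\iota$ is a morphism of reduced finite-type schemes over a perfect field which is injective and has injective differentials (the latter because $\Lie(P_H) = \Lie(H) \cap \Lie(P)$, so the tangent map of $\bar\iota$ has trivial kernel at the base point, and hence everywhere by $H$-equivariance), so that $\bar\iota$ is an immersion, which combined with properness forces it to be a closed immersion.
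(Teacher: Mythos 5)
Your proposal is correct and follows the same route as the paper: reduce to Proposition \ref{closed orbit} by showing $H/P_H \hookrightarrow G/P$ is a closed immersion, using projectivity of both flag varieties. The paper is terser — it simply asserts the closed-immersion conclusion from projectivity — whereas you spell out the (implicit but necessary) intermediate step that the map is a monomorphism, so that ``proper $+$ monomorphism $\Rightarrow$ closed immersion'' applies; this is a genuine gap-fill but not a different argument.
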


\begin{proof}
By the proposition above, we only need to show that the morphism $H / P_H \hookrightarrow G/ P$ is a closed immersion. Since $P$ is a parabolic subgroup of $G$ and $P_H$ is a parabolic subgroup of $H$, we know that both $G/P$ and $H/P_H$ are projective. Hence the morphism $H / P_H \hookrightarrow G/ P$ is a closed immersion.
\end{proof}

\section{The strategy of the proof}

\subsection{The first step}
Let $(G,H)$ be one of the six spherical pairs in Section 1. Our goal is to prove a relation between the period integral $\CP_{H}(\phi)$ and certain automorphic L-function $L(s,\phi,\rho_X)$. The first step of our method is to find another spherical variety $\underline{X}=\underline{H}\back\underline{G}$ that satisfies the following three conditions.
\begin{enumerate}
\item $G$ is isomorphic to the Levi component $\underline{M}$ of a maximal parabolic subgroup $\underline{P}=\underline{M}\underline{U}$ of $\underline{G}$ (up to modulo the center).
\item The L-function $L(s,\pi,\rho_X)$ appears in the Langlands-Shahidi L-function for $(\underline{G},\underline{M})$.
\item $\underline{H}\cap \underline{P}=(\underline{H}\cap \underline{M})\ltimes (\underline{H}\cap \underline{N})$ is a maximal parabolic subgroup of $\underline{H}$ such that $\underline{H}\cap \underline{M}$ is isomorphic to the group $H$ (up to modulo the center).
\end{enumerate}
Such a spherical variety does not exist in general. But if it exists, we can use it to prove a relation between the period integral and the automorphic L-function. In particular, for all of the six spherical pairs in Section 1, we can find a spherical pair $(\underline{G},\underline{H})$ that satisfies the conditions above. For simplicity, we assume that $\underline{G}$ has trivial split center.

\begin{rmk}
In \cite{KS}, Knop-Schalke have defined the dual group $G_{X}^{\vee}$ for every affine spherical varieties $X=H\back G$ together with a natural morphism $\iota_X:G_{X}^{\vee}\rightarrow G^{\vee}$ from the dual group of the spherical variety to the dual group of $G$. Let $Cent_{G^{\vee}}(Im(\iota_X))$ be the centralizer of the image of the map $\iota_X$ in $G^{\vee}$. Following the notation in \cite{KS}, we use $\Fl_{X}^{\wedge}$ to denote the Lie algebra of $Cent_{G^{\vee}}(Im(\iota_X))$. We say the spherical variety $X$ is tempered if $\Fl_{X}^{\wedge}=0$.

For all the six spherical pairs $(G,H)$ in Section 1 (as well as all the other known cases), the dual groups of the spherical varieties $X=H\back G$ and $\underline{X}=\underline{H}\back \underline{G}$ satisfy the following two conditions.
\begin{itemize}
\item[(4)] $G_{X}^{\vee}=G_{\underline{X}}^{\vee}$.
\item[(5)] $\Fl_{X}^{\wedge}=0$, $\Fl_{\underline{X}}^{\wedge}=\Fs\Fl_2$ or $\Fs\Fo_3$.
\end{itemize}
In general, we believe that for a given spherical pair $(G,H)$, if we can find another spherical pair $(\underline{G},\underline{H})$ that satisfies Conditions (1),(4) and (5), then it should also satisfies Conditions (2) and (3) (up to conjugating the parabolic subgroup $\underline{P}$).

In this paper, we have considered all the spherical pairs $(G,H)$ with $G$ simple and $H$ reductive (see Table 3 of \cite{KS}) such that there exists another spherical pair $(\underline{G},\underline{H})$ that satisfies Condition (1), (4) and (5) (except those pairs that have already been studied by other people). We also consider a case when $G$ is not simple, i.e. the model $(\GL_4\times \GL_2,\GL_2\times \GL_2)$.
\end{rmk}

\noindent
After we find the spherical pair $(\underline{G},\underline{H})$, we consider the period integral
$$\CP_{\underline{H}}(E(\phi,s)):=\int_{\underline{H}(k)\back \underline{H}(\BA)} E(\phi,s)(\underline{h})  d\underline{h}$$
for a cuspidal Eisenstein series $E(\phi,s)$ of $\underline{G}(\BA)$ induced from the maximal parabolic subgroup $\ul{P}=\ul{M}\ul{U}$ and the cuspidal automorphic representation $\pi$ of $\ul{M}(\BA)\simeq G(\BA)$. This period integral is not convergent in general, hence we need to truncate the Eisenstein series $E(\phi,s)$. As we explained in the previous section, we have two different truncation operators. In the next two subsections, we will explain how to study the truncated period integrals by using these two different truncation operators. Our goal is to prove a relation between the truncated $\ul{H}$-period integral of the residue of the Eisenstein series $E(\phi,s)$ ($\phi\in \CA_{\pi}$) and the $H$-period integral of the cusp forms in $\pi$.

To end this subsection, we give a list of $(\ul{G},\ul{H})$ for the six spherical pairs in Section 1. We refer the reader to Table 3 of \cite{KS} for the dual groups of spherical varieties.

\begin{itemize}
\item When $(G,H)=(\SO_{2n+1},\SO_{n+1}\times \SO_n)$, we let $(\ul{G},\ul{H})=(\SO_{2n+3},\SO_{n+3}\times \SO_n)$. In this case, $G_{X}^{\vee}=G_{\underline{X}}^{\vee}=\Sp_{2n}(\BC)$. This model will be discussed in Section \ref{Section SO(2n+1)}.
\item When $(G,H)=(\SO_{2n},\SO_{n+1}\times \SO_{n-1})$, we let $(\ul{G},\ul{H})=(\SO_{2n+2},\SO_{n+3}\times \SO_{n-1})$. In this case, $G_{X}^{\vee}=G_{\underline{X}}^{\vee}=\SO_{2n-1}(\BC)$. This model will be discussed in Section \ref{Section SO(2n)}.
\item When $(G,H)=(\mathrm{U}_{2n},\mathrm{U}_n\times \mathrm{U}_n)$, we let $(\ul{G},\ul{H})=(\mathrm{U}_{2n+2},\mathrm{U}_{n+2}\times \mathrm{U}_n)$. In this case, $G_{X}^{\vee}=G_{\underline{X}}^{\vee}=\Sp_{2n}(\BC)$. This model will be discussed in Section \ref{Section U(2n)}.
\item When $(G,H)=(\GL_{2n},Res_{k'/k}\GL_n)$, we let $(\ul{G},\ul{H})=(\Sp_{4n},Res_{k'/k}\Sp_{2n})$. In this case, $G_{X}^{\vee}=G_{\underline{X}}^{\vee}=\Sp_{2n}(\BC)$. This model will be discussed in Section \ref{section Jacquet-Guo}.
\item When $(G,H)=(\GE_6,A_1\times A_5)$, we let $\ul{G}=\mathrm{E_7}$ and $\ul{H}$ be the symmetric subgroup of $\ul{G}$ of type $A_1\times D_6$. In this case, $G_{X}^{\vee}=G_{\underline{X}}^{\vee}=\mathrm{F}_4(\BC)$. This model will be discussed in Section \ref{Section E_6}.
\item When $(G,H)=(\GL_4\times \GL_2,\GL_2\times \GL_2)$, we let $(\ul{G},\ul{H})=(\mathrm{GSO}_{10},\mathrm{GSpin}_7\times \GL_1)$. In this case, $G_{X}^{\vee}=G_{\underline{X}}^{\vee}=\GL_4(\BC)\times \GL_2(\BC)$. This model will be discussed in Section \ref{section GL(4)GL(2)}.
\end{itemize}

\subsection{Method 1: relative truncation operator}\label{section method 1}
In this subsection, we will use the relative truncation operator to study the period integral $\CP_{\underline{H}}(E(\phi,s))$. We recall from Theorem \ref{relative truncation} the definition of the regularized period integral:
$$\CP_{\underline{H},reg}(E(\phi,s)):=\int_{[\ul{H}]} \Lambda^{T,\underline{H}} E(h, \phi, s)dh-\sum_{\underline{Q}\in \CF^{\underline{G}}(P_{0,\underline{H}},\underline{P})} \sum_{w\in W(\underline{P},\underline{Q})}$$
$$\frac{e^{\langle(sgn(w)s+c(1-2c_{\underline{Q}}^{\underline{H}}))\varpi_{\underline{Q}}, T \rangle}}{sgn(w)s +c(1-2c_{\underline{Q}}^{\underline{H}})} \int_{K_{\underline{H}}}\int_{L_{\ul{H}}(k)A_{\ul{L}}^{\infty}\back L_{\ul{H}}(\BA)} M(w,s)\phi(mk)dmdk.$$
Then we need to show that for $Re(s)>>0$, the following two statements hold.
\begin{enumerate}
\item The regularized period integral $\CP_{\underline{H},reg}(E(\phi,s))$ is equal to 0.
\item For all $\underline{Q}\in \CF^{\underline{G}}(P_{0,\underline{H}},\underline{P})$ with $\underline{Q}\neq \underline{P}$, $w\in W(P,Q)$ and $\phi\in \CA_{\pi}$, we have
$$\int_{L_{\ul{H}}(k)A_{\ul{L}}^{\infty}\back L_{\ul{H}}(\BA)} M(w,s)\phi(m)dm=0.$$
\end{enumerate}

Assume that we have proved (1) and (2). Then the equation above implies that
\begin{align}\label{1}
\int_{[\underline{H}]} \Lambda^{T,\underline{H}} E(h, \phi, s)dh &=\frac{e^{\langle(s+c(1-2c_{\underline{P}}^{\underline{H}}))\varpi_{\underline{P}}, T \rangle}}{s +c(1-2c_{\underline{P}}^{\underline{H}})} \int_{K_{\underline{H}}}\int_{[H]} \phi(hk)dhdk \\ \nonumber &\;\;\;\;+\frac{e^{\langle (-s+c(1-2c_{\underline{P}}^{\underline{H}}))\varpi_{\underline{P}}, T \rangle}}{-s +c(1-2c_{\underline{P}}^{\underline{H}})} \int_{K_{\underline{H}}}\int_{[H]} M(s)\phi(hk)dhdk.
\end{align}

Here we have used Condition (3) of the pair $(\underline{G},\underline{H})$. \eqref{1} tells us that the truncated $\ul{H}$-period integral of the Eisenstein series $E(\phi,s)$ is equal to the $H$-period integrals of $\phi$ and $M(s)\phi$. Let $s_0=-c(1-2c_{\underline{P}}^{\underline{H}})$. By taking the residue at $s=s_0$ for the above equation, we have
\begin{equation}\label{2}
\int_{[\underline{H}]} \Lambda^{T,\underline{H}} Res_{s=s_0}E(h, \phi, s)dh= \int_{K_{\underline{H}}}\int_{[H]} \phi(hk)dhdk+\frac{e^{\langle -2s_0 \varpi_{\underline{P}}, T \rangle }}{-2s_0} \int_{K_{\underline{H}}}\int_{[H]} Res_{s=s_0}M(s)\phi(hk)dhdk.
\end{equation}
In particular, we get the following proposition.

\begin{prop}\label{residue 3}
If the period integral $\CP_{H}(\phi)$ is non-zero for some $\phi\in \pi$, then the cuspidal Eisenstein series $E(\phi,s)$ and the intertwining operator $M(s)\phi$ has a pole at $s=s_0$.
\end{prop}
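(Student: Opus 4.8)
The plan is to deduce Proposition~\ref{residue 3} directly from the identity \eqref{2}, combined with the standard fact that a section $\phi\in\CA_{\pi}$ of the Eisenstein series can be chosen so that the cusp form $m\mapsto\phi(mg)$ on $\underline{M}(\BA)\simeq G(\BA)$ is prescribed near any given point of the flag variety $\underline{P}(\BA)\backslash\underline{G}(\BA)$. Recall that \eqref{2} holds, under the hypotheses (1) and (2) of Section~\ref{section method 1}, as an identity valid for every $\phi\in\CA_{\pi}$: its left-hand side is the (relatively) truncated $\underline{H}$-period of $Res_{s=s_0}E(\cdot,\phi,s)$, and its right-hand side is assembled from the $H$-periods of the cusp forms $h\mapsto\phi(hk)$ and $h\mapsto Res_{s=s_0}M(s)\phi(hk)$. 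I argue by contraposition.

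Assume that $E(\phi,s)$ is holomorphic at $s=s_0$ for every $\phi\in\CA_{\pi}$. Then $Res_{s=s_0}E(\cdot,\phi,s)=0$, hence the left-hand side of \eqref{2} vanishes. Moreover the intertwining operator $M(s)$ is then holomorphic at $s=s_0$ as well: if $M(s)\phi_{1}$ had a pole at $s_0$ for some $\phi_{1}\in\CA_{\pi}$, the constant-term formula
$$E(g,\phi_{1},s)_{\underline{P}}=\phi_{1}(g)e^{\langle s\varpi,H_{\underline{P}}(g)\rangle}+e^{\langle -s\varpi,H_{\underline{P}}(g)\rangle}M(s)\phi_{1}(g),$$
whose first summand is entire in $s$, would force $E(\cdot,\phi_{1},s)_{\underline{P}}$, and therefore $E(\cdot,\phi_{1},s)$, to be singular at $s_0$, contrary to assumption. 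Consequently the right-hand side of \eqref{2} collapses to $\int_{K_{\underline{H}}}\int_{[H]}\phi(hk)\,dh\,dk$, so that
$$\int_{K_{\underline{H}}}\int_{[H]}\phi(hk)\,dh\,dk=0\qquad\text{for every }\phi\in\CA_{\pi}.$$
Now, for fixed $k\in K_{\underline{H}}$, the defining properties of $\CA_{\pi}$ make $m\mapsto\phi(mk)$ a vector in the space of $\pi$; by Condition~(3) on the pair $(\underline{G},\underline{H})$ — namely that $\underline{H}\cap\underline{M}$ is identified with $H$ modulo center and $\underline{H}\cap\underline{P}=(\underline{H}\cap\underline{M})\ltimes(\underline{H}\cap\underline{N})$ — the inner integral $\int_{[H]}\phi(hk)\,dh$ is, compatibly with the triviality of the central character on $A_{G,H}(\BA)$, a fixed nonzero multiple of the period $\CP_{H}$ of that vector. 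Since any cusp form $\varphi_{0}\in\pi$ with $\CP_{H}(\varphi_{0})\neq 0$ can be realized as $m\mapsto\phi(mk_{0})$ for a section $\phi\in\CA_{\pi}$ concentrated near a single point $k_{0}$, the displayed vanishing forces $\CP_{H}\equiv 0$ on $\pi$. This is precisely the contrapositive of the assertion that $\CP_{H}(\phi)\neq 0$ for some $\phi\in\pi$ implies $E(\phi,s)$ has a pole at $s=s_0$ for some $\phi\in\CA_{\pi}$.

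It remains to treat $M(s)\phi$. Once $E(\phi,s)$ has a pole at $s=s_0$ for some $\phi$, the intertwining operator $M(s)$ has a pole at $s=s_0$ by the discussion recalled in Section~\ref{ssec:eis}; picking $\phi\in\CA_{\pi}$ with $M(s)\phi$ singular at $s_0$, the constant-term formula above shows $E(\phi,s)$ is singular at $s_0$ for this very $\phi$, so $E(\phi,s)$ and $M(s)\phi$ both have a pole at $s=s_0$, which is the claim. I expect the only step that is not purely formal to be the realization, in the middle paragraph, of an arbitrary cusp form $\varphi_{0}\in\pi$ with $\CP_{H}(\varphi_{0})\neq 0$ as $m\mapsto\phi(mk_{0})$ for a genuine section $\phi\in\CA_{\pi}$, carried out in a way compatible with the fixed Haar measures and with the identification $\underline{H}\cap\underline{M}\simeq H$; this is where Condition~(3) on $(\underline{G},\underline{H})$ and the standard parametrization of $\CA_{\pi}$ by its restriction to $K$ enter. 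Everything else is a formal consequence of \eqref{2} and the constant-term formula.
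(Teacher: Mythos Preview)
Your proof is correct and follows the same route as the paper: the proposition is deduced directly from identity \eqref{2}. The paper states the proposition with only the words ``In particular'' after \eqref{2}, treating the passage from $\int_{K_{\underline{H}}}\int_{[H]}\phi(hk)\,dh\,dk=0$ for all $\phi\in\CA_{\pi}$ to $\CP_H\equiv 0$ on $\pi$ as obvious; you have spelled out this step (realizing a given cusp form in $\pi$ as $m\mapsto\phi(mk_0)$ for a suitable section) and correctly flagged it as the only nonformal ingredient.
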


\begin{rmk}
When $\pi$ is generic, the above proposition implies that if the period integral $\CP_{H}(\phi)$ is non-zero for some $\phi\in \pi$, the Langlands-Shahidi L-function for $(\underline{G},\underline{M})$ has a pole at $s=s_0$. By assumption (2) of the pair $(\underline{G},\underline{H})$, the L-function $L(s,\pi,\rho_X)$ appears in the Langlands-Shahidi L-function for $(\underline{G},\underline{M})$. Hence the above proposition gives a relation between the period integral $\CP_{H}(\phi)$ and the automorphic L-function $L(s,\pi,\rho_X)$.
\end{rmk}

\begin{rmk}
A similar version of this method has been used by Ichino-Yamana (\cite{IY}) for the unitary Gan-Gross-Prasad model case.
\end{rmk}

Now we discuss the proof of (1) and (2). By Theorem \ref{relative truncation}, the regularized period integral $\CP_{\underline{H},reg}(E(\phi,s))$ is right $\underline{H}(\BA)$-invariant. As a result, in order to prove (1), it is enough to prove the following local statement.

\begin{itemize}
\item[(3)] Let $\pi=\otimes_{v\in |k|}^{'} \pi_v$, and let $\Pi_s=I_{\underline{P}}^{\underline{G}} \pi_s =\otimes_{v\in |k|}^{'} I_{\underline{P} }^{\underline{G}} \pi_{v,s}=\otimes_{v\in |k|}^{'} \Pi_{v,s}$. Here $\pi_s=\pi\otimes \varpi_{\underline{P}}^{s}$, $\pi_{v,s}=\pi_v\otimes \varpi_{\underline{P}}^{s}$, and $I_{\ul{P}}^{\ul{G}}(\cdot)$ is the normalized parabolic induction. Then there exists $v\in |k|$ such that
    $$\Hom_{\underline{H}(k_v)}(\Pi_{v,s},1)=\{0\}$$
    for $Re(s)>>0$. In other words, $\Pi_{v,s}$ is not $\underline{H}(k_v)$-distinguished.
\end{itemize}
Obviously (3) is not true for arbitrary $\pi$. Hence we need to make some assumption.

\begin{assu}\label{weak local assumption}
There exists a non-archimedean place $v\in |k|$ such that $\pi_v$ is generic.
\end{assu}

Now we fix $v\in |k|$ that satisfies the assumption above. In order to prove (3), it is enough to prove the following statement.
\begin{itemize}
\item[(4)] $\Hom_{\underline{H}(k_v)}(\tau_v,1)=\{0\}$ for all generic representations of $\underline{G}(k_v)$.
\end{itemize}

\begin{rmk}\label{local vanishing of orbit}
By the same argument as above, in order to prove (2), it is enough to show that an analogue of statement (4) holds for the pair $(\ul{L},L_{\ul{H}})$. In other words, it is enough to show that
\begin{itemize}
\item[(5)] $\Hom_{L_{\ul{H}}(k_v)}(\tau_v,1)=\{0\}$ for all generic representations of $\underline{L}(k_v)$.
\end{itemize}
\end{rmk}

\vspace{1em}

Under the relative local Langlands conjecture of Sakellaridis-Venkatesh in \cite{SV}, (4) should hold for all spherical varieties that are not tempered (note that by Condition (5) of the pair (\underline{G},\underline{H}), the spherical variety $\underline{X}=\underline{H}\back \underline{G}$ is not tempered). However, only the symmetric pair case has been recently proved by Prasad in \cite{P}. We will briefly recall his result in Section \ref{section Prasad}. Hence in order to prove (4), we need to make a stronger assumption.

\begin{assu}\label{local assumption}
\begin{enumerate}
\item $H$ is a symmetric subgroup of $G$.
\item There exists a non-archimedean place $v\in |k|$ such that $\pi_v$ is generic.
\end{enumerate}
\end{assu}

\noindent
In general, if one can extend Prasad's result to all spherical varieties, then Assumption \ref{weak local assumption} is enough. We want to point out that Assumption \ref{weak local assumption} is also necessary for this method because without the generic assumption, there exist examples such that Condition (1) and (2) fail.

\begin{rmk}
For the six spherical pairs in Section 1, five of them are symmetric. The only exception is the model $(\GL_4\times \GL_2,\GL_2\times \GL_2)$.
\end{rmk}

\subsubsection{Some remarks about the period integrals of the residue representations}

When the Eisenstein series $E(\phi,s)$ has a pole at $s=s_0$, the residue is a square integrable automorphic form with the only non-trivial exponent
$-s_0$ along parabolic subgroups conjugate to $\ul{P}$. By Theorem 4.1 of \cite{Z19}, the regularized period integral of $Res_{s=s_0} E(\phi,s)$ is the constant term in $T$ of the truncated period integral
$$\int_{[\underline{H}]} \Lambda^{T,\underline{H}} Res_{s=s_0}E(h, \phi, s)dh,$$
at least when $s_0\neq c(1-2c_{\ul{Q}}^{\ul{H}})$ for all $\ul{Q}\in \CF^{\underline{G}}(P_{0,\underline{H}},\underline{P})$ (this condition will be satisfied for all the spherical pairs that we consider). As a result, by taking the constant term in $T$ of \eqref{2}, we have
\begin{equation}\label{3}
\CP_{\ul{H},reg}(Res_{s=s_0} E(\phi,s))=\int_{K_{\underline{H}}}\int_{[H]} \phi(hk)dhdk.
\end{equation}
In other words, the regularized $\ul{H}$-period integral of $Res_{s=s_0} E(\phi,s)$ is equal to the $H$-period integral of $\phi$, up to some compact integration. By the same argument as in Lemma 5.8 of \cite{IY}, we obtain that the $H$-period integral is nonzero on the space of $\pi$ if and only if the regularized $\ul{H}$-integral is nonzero on the space $\{Res_{s=s_0} E(\phi,s)|\;\phi\in \CA_{\pi}\}$. This matches the general conjecture of Sakellaridis-Venkatesh \cite{SV} for period integrals because of the conditions (4) and (5) of the pair $(\ul{G},\ul{H})$ in Section 3.1.

Moreover, by Theorem 4.6 of \cite{Z19}, when
\begin{equation}\label{condition on s_0}
s_0> c(1-2c_{\ul{Q}}^{\ul{H}}),\;\forall\ul{Q}\in \CF^{\underline{G}}(P_{0,\underline{H}},\underline{P}),
\end{equation}
the regularized period integral $\CP_{\ul{H},reg}(Res_{s=s_0} E(\phi,s))$ is equal to the actual period integral
$$\int_{[\underline{H}]}  Res_{s=s_0}E(h, \phi, s)dh.$$
In particular, the actual period integral is absolutely convergent. For all the cases we consider, $s_0=-c(1-2c_{\ul{P}}^{\ul{H}})$ is a positive real number (in fact, it is either $1$ or $\frac{1}{2}$), hence the inequality in \eqref{condition on s_0} is automatic when $\ul{Q}=\ul{P}$. In particular, if the set $\CF^{\underline{G}}(P_{0,\underline{H}},\underline{P})$ only contains one element $\ul{P}$, \eqref{condition on s_0} holds. As a result, for those cases, the regularized period integral on the left hand side of \eqref{3} can be replaced by the actual period integral of $Res_{s=s_0}E(h, \phi, s)$. As we will see in later sections, for all the models we consider, the following are the cases when $\CF^{\underline{G}}(P_{0,\underline{H}},\underline{P})=\{\ul{P}\}$.

\begin{itemize}[leftmargin=5mm]
\item[-] $\ul{G}=\SO_{2n+3},\;\ul{H}=\SO_{n+3}\times \SO_n$ and the $\SO_n$-part of $\ul{H}$ is anisotropic, discussed in Section \ref{Section SO(2n+1)}.
\item[-] $\ul{G}=\SO_{2n+2},\;\ul{H}=\SO_{n+3}\times \SO_{n-1}$ and the $\SO_{n-1}$-part of $\ul{H}$ is anisotropic, discussed in Section \ref{Section SO(2n)}.
\item[-] $\ul{G}=\mathrm{U}_{2n+2},\;\ul{H}=\mathrm{U}_{n+2}\times \mathrm{U}_{n}$ and $\mathrm{U}_{n}$-part of $\ul{H}$ is anisotropic, discussed in Section \ref{Section U(2n)}.
\item[-] $\ul{G}=\Sp_{4n}$ and $\ul{H}=Res_{k'/k}\Sp_{2n}$, discussed in Section \ref{section Jacquet-Guo}.
\item[-] $\ul{G} = E_7^{sc}$ the semisimple, simply-connected group of type $\mathrm{E_7}$, $\ul{H}$ the symmetric subgroup of type $D_6\times A_1$, and $\ul{H}$ not split. This will be discussed in Section \ref{Section E_6}.
\end{itemize}

For all the other cases we consider, the set $\CF^{\underline{G}}(P_{0,\underline{H}},\underline{P})=\{\ul{P},\;\ul{P}'\}$ contains two elements. As we will see in later sections, in those cases, the inequality \eqref{condition on s_0} will fail when $\ul{Q}=\ul{P}'$. By Theorem 4.6 of \cite{Z19} again, the period integral
of $Res_{s=s_0}E(h, \phi, s)$ is divergent and the regularization is necessary in those cases. This phenomenon has already been observed for the model $(\Sp_{4n},\Sp_{2n}\times \Sp_{2n})$ by Lapid and Offen in \cite{LO18}.

\subsection{Method 2: Arthur-Langlands truncation operator}\label{section method 2}
In this subsection, we will use the Arthur-Langlands truncation operator to study the period integral $\CP_{\underline{H}}(E(\phi,s))$. We need one assumption.

\begin{assu}\label{assumption for finite orbits}
The double coset $\underline{P}(k)\back \underline{G}(k)/\underline{H}(k)$ has finitely many orbits.
\end{assu}

\begin{rmk}
For the six spherical varieties in Section 1, three of them satisfy this assumption: the Jacquet-Guo model $(\GL_{2n},\mathrm{Res}_{k'/k}(\GL_n))$, $(\GE_6,A_1\times A_5)$, and $(\GL_4\times \GL_2, \GL_2\times \GL_2)$).
\end{rmk}

Let $\{\gamma_i|\;1\leq i\leq l\}$ be a set of representatives for the double coset $\underline{P}(k)\back \underline{G}(k)/\underline{H}(k)$. For $1\leq i\leq l$, let $\underline{H}_i=\underline{H}\cap \gamma_{i}^{-1}\underline{P}\gamma_i$. Without loss of generality, we assume that $\gamma_1=1$.

Consider the truncated period integral
$$\CP_{\underline{H}}(\Lambda^T E(\phi,s)):=\int_{\underline{H}(k)\back \underline{H}(\BA)} \Lambda^T E(\phi,s)(\underline{h})  d\underline{h}$$
where $\Lambda^T$ is the Arthur-Langlands truncation operator. By unfolding the integral, we have
\begin{equation}\label{unfolding}
\CP_{\underline{H}}(\Lambda^T E(\phi,s))=\sum_{i=1}^{l} I_i(\phi,s)+J_i(\phi,s)
\end{equation}
with
$$I_i(\phi,s)=\int_{\ul{H_i}(k)\back \ul{H}(\BA)} (1-\hat{\tau}_{\ul{P}} (H_{\ul{P}}(\gamma_i \ul{h})-T)) e^{<s\varpi_{\ul{P}}, H_{\ul{P}}(\gamma_i \ul{h})>} \phi(\gamma_i \ul{h})  \;d\ul{h},$$
$$J_i(\phi,s)=\int_{\ul{H_i}(k)\back \ul{H}(\BA)} \hat{\tau}_{\ul{P}} (H_{\ul{P}}(\gamma_i \ul{h})-T) e^{<-s\varpi_{\ul{P}}, H_{\ul{P}}(\gamma_i \ul{h})>} M(s)\phi(\gamma_i \ul{h})  \;d\ul{h}.$$
Here $M(s)$ is the intertwining operator, and the factors $\hat{\tau}_{\ul{P}} (H_{\ul{P}}(\gamma_i \ul{h})-T),\; e^{<s\varpi_{\ul{P}}, H_{\ul{P}}(\gamma_i \ul{h})>}$ come from the truncation operator $\Lambda^T$.

The first step is to show that the integrals $I_i(\phi,s)$ and $J_i(\phi,s)$ are absolutely convergent when $Re(s)>>0$. In Section 5 of our previous paper \cite{AWZ18}, we have developed a general argument for proving the absolute convergence. The only thing we need to check is that $(H,H_i)$ is a good pair.  We refer the readers to Section 5.3 of \cite{AWZ18} for the definition of good pair.

After the first step, we need to show that when $Re(s)>>0$, we have
$$I_i(\phi,s)=J_i(\phi,s)=0$$
for $2\leq i\leq l$. For $i=1$, by Condition (3) of the pair $(\underline{G},\underline{H})$, we can show that
$$I_1(\phi,s)= \frac{e^{(s-s_0)T}}{s-s_0} \int_{K_{\underline{H}}}\int_{[H]} \phi(hk)dhdk,\;\; J_1(\phi,s)=\frac{e^{(-s-s_0)T}}{-s-s_0} \int_{K_{\underline{H}}}\int_{[H]} M(s)\phi(hk)dhdk$$
where $s_0=-c(1-c_{\ul{P}}^{\ul{H}})$ as in Method 1. This implies that
\begin{equation}\label{eqn:method2Fin}
\CP_{\underline{H}}(\Lambda^T E(\phi,s))=\frac{e^{(s-s_0)T}}{s-s_0} \int_{K_{\underline{H}}}\int_{[H]} \phi(hk)dhdk+\frac{e^{(-s-s_0)T}}{-s-s_0} \int_{K_{\underline{H}}}\int_{[H]} M(s)\phi(hk)dhdk.
\end{equation}
The above equation is an analogue of equation \eqref{1} for Method 1. Then we can use the same argument as in Method 1 to finish the proof.

\begin{rmk}
This method was first used by Jacquet-Rallis (\cite{JR92}) to study the period integrals of the residue representations for the model $(\GL_{2n},\Sp_{2n})$. Later in \cite{J98}, Jiang used this method to study the trilinear $\GL_2$ model (Jiang was the first one to use this method to study the period integrals of cusp forms). In our previous paper \cite{AWZ18}, we applied this method to the Ginzburg-Rallis model case. A similar version of this method has been used by Ginzburg-Jiang-Rallis (\cite{GJR04},\cite{GJR05},\cite{GJR09}) for the orthogonal Gan-Gross-Prasad model. See also \cite{GL07} for a slightly different approach.
\end{rmk}

\begin{rmk}\label{advantage}
Compared with Method 1, Method 2 has two disadvantages and two advantages. The two disadvantages are
\begin{itemize}
\item We need to assume that the double coset $\underline{P}(k)\back \underline{G}(k)/\underline{H}(k)$ has finitely many orbit (i.e. Assumption \ref{assumption for finite orbits}). In particular, it cannot be applied to the spherical pairs $(\SO_{2n+1},\SO_{n+1}\times \SO_n)$, $(\SO_{2n},\SO_{n+1}\times \SO_{n-1})$ and $(\RU_{2n},\RU_n\times \RU_n)$.
\item As we explained above, in Method 2, we need to show that $I_i(\phi,s)=J_i(\phi,s)=0$ for $2\leq i\leq l$. This requires us to study all the orbits in the double coset $\underline{P}(k)\back \underline{G}(k)/\underline{H}(k)$. On the other hand, for Method 1, we only need to study the closed orbits (see Remark \ref{closed orbit rmk}). In some cases (e.g. the model $(\GE_6,A_1\times A_5)$), those nonclosed orbits can be hard to study because one needs to compute explicitly the image in $\ul{M}=\ul{P}/\ul{N}$ of the intersection $\underline{P}\cap \gamma_{i}\underline{H}\gamma_{i}^{-1}$.
\end{itemize}
The two advantages are
\begin{itemize}
\item Method 2 can be applied to the case when $H$ is not reductive while Method 1 can only be applied to the reductive case (this is due to the fact that the relative truncation operator was only defined in the reductive case). For example, in our previous paper \cite{AWZ18}, we used Method 2 to study the period integrals of the Ginzburg-Rallis model, which is not reductive.
\item Even if $H$ is reductive, as we explained in the previous section, unless one can extend Prasad's result to all the spherical varieties, we can only apply Method 1 when $H$ is a symmetric subgroup. On the other hand, Method 2 can be applied to the non-symmetric case.
\end{itemize}
\end{rmk}

\begin{rmk}
We will use Method 1 to study the following five spherical pairs: $(\SO_{2n+1},\SO_{n+1}\times \SO_n)$, $(\SO_{2n},\SO_{n+1}\times \SO_{n-1})$, $(\RU_{2n},\RU_n\times \RU_n)$, $(\GE_6,A_1\times A_5)$, and the Jacquet-Guo model.  These pairs are all symmetric.  We use Method 2 to study the spherical pair $(\GL_4\times \GL_2,\GL_2\times \GL_2)$, which is not symmetric.  Method 2 can also be used to study the Jacquet-Guo model and the pair $(\GE_6,A_1\times A_5)$, but it will be more complicated than Method 1.
\end{rmk}

\subsection{A local result of Prasad}\label{section Prasad}
In this subsection, we recall a recent result of Prasad for the distinguished representations of symmetric pairs in \cite{P}. Let $F$ be a p-adic field of characteristic 0. Let $G$ be a quasi-split reductive group defined over $F$, $\theta$ be an involution automorphism of $G$ defined over $F$. Let $G^{\theta}$ be the group of fixed points, $H$ be the connected component of identity of $G^{\theta}$, and $H_1=[H,H]$ be the derived group of $H$.

We say $(G,H)$ is quasi-split if there exists a Borel subgroup $B$ of $G(\bar{F})$ such that $B\cap \theta(B)$ is a maximal torus of $G$.

\begin{thm}[Theorem 1 of \cite{P}]\label{prasad}
If $(G,H)$ is not quasi-split, then the Hom space
$$\Hom_{H_1(F)}(\pi,1)$$
is zero for all generic representations $\pi$ of $G(F)$. In other words, there is no $H_1(F)$-distinguished generic representation of $G(F)$.
\end{thm}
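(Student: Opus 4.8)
The plan is to reduce the statement, via uniqueness of the Whittaker model and Bernstein's theory of equivariant distributions, to a purely group‑theoretic assertion about conjugates of $H_1$ and the generic character, and then to settle that assertion by the structure theory of symmetric varieties. Fix a Borel subgroup $B = TU$ of $G$ over $F$ and a nondegenerate character $\psi$ of $U(F)$. We may assume $\pi$ irreducible, hence admissible; then the contragredient $\tilde\pi$ is again generic, and both $\pi$ and $\tilde\pi$ carry a one‑dimensional space of Whittaker functionals. Suppose, for contradiction, that $\ell \in \Hom_{H_1(F)}(\pi,1)$ is nonzero.

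First I would manufacture a two‑sided equivariant distribution on $G(F)$ out of $\ell$. Since $\pi$ is admissible, the map $C_c^\infty(G(F)) \to \pi \otimes \tilde\pi \cong \End_{\mathrm{fin}}(\pi)$, $f \mapsto \pi(f)$, is surjective and $G(F)\times G(F)$‑equivariant; composing it with $\ell$ on the first factor and with a Whittaker functional $\tilde\lambda$ of $\tilde\pi$ on the second produces a nonzero distribution $D$ on $G(F)$ that is left $(H_1(F),1)$‑invariant and right $(U(F),\psi)$‑equivariant. This is the standard Gelfand--Kazhdan packaging; the one input here deserving care is that the contragredient of a generic representation of a quasi‑split group is again generic.

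Next I would apply Bernstein's localization principle to the action of $U(F)\times H_1(F)$ on $G(F)$: the support of $D$ is a union of orbits, and an orbit through $g$ can support a nonzero distribution transforming by $\psi\boxtimes 1$ only if $\psi$ is trivial on $U \cap gH_1g^{-1}$; call such an orbit \emph{relevant}. Ordering orbits by closure, the lowest‑dimensional relevant orbit in the support of $D$ already yields a nonzero $(\psi,1)$‑equivariant functional there, so it suffices to show that, when $(G,H)$ is not quasi‑split, \emph{no} orbit is relevant, i.e.
\[
\psi\big|_{U(F)\,\cap\,gH_1(F)g^{-1}} \neq 1 \qquad \text{for every } g \in G(F).
\]

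This last point is the heart of the matter and the step I expect to be the main obstacle. It should be proved from the structure theory of symmetric varieties: one uses the characterization that $(G,H)$ is quasi‑split exactly when a $\theta$‑split Borel subgroup exists, equivalently when the minimal $\theta$‑split parabolic equals a Borel; the Springer--Matsuki parametrization of $H$‑orbits on the flag variety $G/B$ by twisted involutions, which describes $U\cap gBg^{-1}$ — hence $U\cap gHg^{-1}$ and its derived part — in terms of a $\theta$‑stable maximal torus and the associated roots; and the fact that, in the absence of a $\theta$‑split Borel, every position of $B$ relative to $\theta$ forces $H_1\cap gUg^{-1}$, moved back into $U$, to meet some simple root subgroup $U_\alpha$ nontrivially, on which $\psi$ is by construction nondegenerate. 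Alternatively one can route this through the relative Jacquet‑module theorems of Lagier and Kato--Takano, reducing the distinction of $\pi$ to the minimal $\theta$‑split parabolic, whose Levi is strictly larger than a torus precisely in the non‑quasi‑split case. Granting this, $D$ must vanish, contradicting its nonvanishing; hence $\Hom_{H_1(F)}(\pi,1) = 0$.
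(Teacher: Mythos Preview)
The paper does not prove this theorem; it is quoted verbatim as Theorem~1 of \cite{P} and used as a black box, so there is no argument in the paper to compare your proposal against.

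For what it is worth, your sketch is essentially the strategy Prasad employs: pair an $H_1$-invariant functional with a Whittaker functional to produce a $(U,\psi)\times (H_1,1)$-equivariant distribution on $G(F)$, and then argue orbit-by-orbit over $U\backslash G/H_1$ that no double coset can carry such a distribution unless $(G,H)$ is quasi-split. Your identification of the crux---that $\psi$ is nontrivial on $U(F)\cap gH_1(F)g^{-1}$ for every $g$---is correct, and its resolution is indeed through the structure theory of $\theta$-stable tori and parabolics and the characterization of the quasi-split case via the existence of a $\theta$-split Borel. You have outlined this step rather than carried it out; filling it in is precisely the content of \cite{P}, so your proposal should be read as a correct roadmap rather than a self-contained proof.
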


In Theorem 1(2) of \cite{P}, the author also gives an easy criterion for one to check whether $(G,H)$ is quasi-split by looking at the real form of $G(\BC)$ associated to the involution $\theta$. We refer the reader to Section 1 of \cite{P} for details. By that criterion, we can easily prove the following corollary.

\begin{cor}\label{prasad cor}
The following symmetric pairs are not quasi-split. In particular, there is no $H_1(F)$-distinguished generic representation of $G(F)$.
\begin{enumerate}
\item $G=\GL_{2n}$ and $H=\GL_{n+1}\times \GL_{n-1}$.
\item $G=\SO_{2n+3}$ the split odd orthogonal group, and $H=\SO_{n+k}\times \SO_{n+3-k}$ with $k\geq 3$.
\item $G=\SO_{2n+2}$ a quasi-split even orthogonal group, and $H=\SO_{n+k}\times \SO_{n+2-k}$ with $k\geq 3$.
\item $G=\SO_{2n}$ the split even orthogonal group ($n\geq 1$), and $H=\GL_n$ be the Levi subgroup of the Siegel parabolic subgroup of $G$.
\item $G=\Sp_{4n}$ and $H=\Sp_{2n}\times \Sp_{2n}$.
\item $G=\GE_6$ the similitude group of the split exceptional group $\mathrm{E_6}$, and $H$ be symmetric subgroup of $G$ of type $D_5\times \GL_1$.
\item $G=\mathrm{E_7^{sc}}$ be the split, simply-connected exceptional group, and $H$ be symmetric subgroup of $G$ of type $D_6\times A_1$.
\end{enumerate}
\end{cor}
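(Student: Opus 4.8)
The plan is to derive the Corollary directly from Theorem \ref{prasad}: since that theorem yields $\Hom_{H_1(F)}(\pi,1)=0$ for all generic $\pi$ as soon as the pair $(G,H)$ fails to be quasi-split, the entire content of the Corollary is the verification that none of the seven listed symmetric pairs is quasi-split over $F$. For this I would invoke the criterion recalled just after Theorem \ref{prasad} (Theorem~1(2) of \cite{P}): quasi-splitness of $(G,H)$ is detected by the real form of $G(\BC)$ attached to the involution $\theta$ cutting out $H$, so it suffices to identify that real form case by case and observe that it is not quasi-split.

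The first step is therefore, for each pair, to fix an involution $\theta$ of $G$ with $(G^{\theta})^{\circ}=H$ and to read off the associated real form of $G(\BC)$ from the standard classification of symmetric spaces. In case (1) one takes $\theta=\Ad(\diag(I_{n+1},-I_{n-1}))$, obtaining the real form $\mathrm{U}(n+1,n-1)$. In cases (2), (3) one takes the involution attached to an orthogonal decomposition of the relevant quadratic space into subspaces of dimensions $n+k$ and $n+3-k$ (resp.\ $n+k$ and $n+2-k$), giving $\SO(n+k,\,n+3-k)$ (resp.\ $\SO(n+k,\,n+2-k)$). In case (4) the involution whose fixed subgroup is the Siegel Levi $\GL_n$ of $\SO_{2n}$ gives $\SO^{*}(2n)$. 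In case (5) the involution attached to a symplectic decomposition into two pieces of dimension $2n$ gives $\Sp(n,n)$. In cases (6), (7) the involution cutting out the symmetric subgroup of type $D_5\times\GL_1$ in $\GE_6$ (resp.\ of type $D_6\times A_1$ in $\mathrm{E}_7^{\mathrm{sc}}$) gives the Hermitian real form $\mathrm{E}_{6(-14)}$ (resp.\ $\mathrm{E}_{7(-5)}$).

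The second step is to check that none of these seven real groups is quasi-split, which is a short computation with real ranks and the list of quasi-split real forms. A unitary group $\mathrm{U}(p,q)$ is quasi-split iff $|p-q|\le 1$, so case (1) follows from $|p-q|=2$. For an odd-dimensional quadratic space $\SO(p,q)$ is quasi-split iff $|p-q|=1$, and in case (2) one has $|p-q|=|2k-3|\ge 3$ when $k\ge 3$; for an even-dimensional one $\SO(p,q)$ is quasi-split iff $|p-q|\le 2$, and in case (3) one has $|p-q|=|2k-2|\ge 4$ when $k\ge 3$. The group $\SO^{*}(2n)$ has real rank $\lfloor n/2\rfloor$, which for $n\ge 3$ is strictly smaller than the real rank ($\ge n-1$) of every quasi-split form of type $D_n$, so case (4) follows (the cases $n\le 2$ being handled directly). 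For type $C_{2n}$ the only quasi-split form is the split $\Sp(4n,\BR)$ of real rank $2n$, whereas $\Sp(n,n)$ has real rank $n$, giving case (5). Finally the quasi-split forms of $\mathrm{E}_6$ are $\mathrm{E}_{6(6)}$ and $\mathrm{E}_{6(2)}$, of real ranks $6$ and $4$, while $\mathrm{E}_{6(-14)}$ has real rank $2$; and the only quasi-split form of $\mathrm{E}_7$ is the split $\mathrm{E}_{7(7)}$ of real rank $7$, while $\mathrm{E}_{7(-5)}$ has real rank $4$. This settles cases (6), (7), and having shown that each of the seven pairs is not quasi-split, Theorem \ref{prasad} gives the asserted vanishing of $\Hom_{H_1(F)}(\pi,1)$ for generic $\pi$.

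The only genuine obstacle is bookkeeping: correctly matching each pair $(G,H)$ with the real form of $G(\BC)$ it gives rise to. For the classical pairs (1)--(5) this is routine, whereas for cases (6) and (7) one should consult the tables of Satake diagrams / real forms of $\mathrm{E}_6$ and $\mathrm{E}_7$ to confirm that the symmetric subgroups of type $D_5\times\GL_1$ and $D_6\times A_1$ correspond to $\mathrm{E}_{6(-14)}$ and $\mathrm{E}_{7(-5)}$ and that those forms are not quasi-split.
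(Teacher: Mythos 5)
Your argument is exactly the paper's: the authors state that the Corollary follows from the real-form criterion of Theorem~1(2) of \cite{P} without spelling out the case check, and your case-by-case identification of the real forms $U(n+1,n-1)$, $\SO(n+k,n+3-k)$, $\SO(n+k,n+2-k)$, $\SO^{*}(2n)$, $\Sp(n,n)$, $E_{6(-14)}$, $E_{7(-5)}$, together with the observation that none of these is quasi-split, is precisely the verification the paper leaves implicit.
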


\section{The model $(\SO_{2n+1},\SO_{n+1}\times \SO_n)$}\label{Section SO(2n+1)}
\subsection{The result}
Let $W_1$ (resp. $W_2$) be a quadratic space defined over $k$ of dimension $n+1$ (resp. $n$), and $W=W_1\oplus W_2$. Let $G=\SO(W)$ and $H=\SO(W_1)\times \SO(W_2)$. Let $D=Span\{v_{0,+},v_{0,-}\}$ be a two-dimensional quadratic space with $<v_{0,+},v_{0,+}>=<v_{0,-},v_{0,-}>=0$ and $<v_{0,+},v_{0,-}>=1$, $V_1=W_1\oplus D$, $V_2=W_2$, $V=V_1\oplus V_2$, $\underline{G}=\SO(V)$, and $\underline{H}=\SO(V_1)\times \SO(V_2)$.

Let $D_{+}=Span\{v_{0,+}\}$ and $D_{-}=Span\{v_{0,-}\}$. Then $D=D_{+}\oplus D_{-}$ as a vector space. Let $\underline{P}=\underline{M}\underline{N}$ be the maximal parabolic subgroup of $\ul{G}$ that stabilizes the subspace $D_{+}$ with $\underline{M}$ be the Levi subgroup that stabilizes the subspaces $D_{+},\; W$ and $D_{-}$. Then $\underline{M}=\SO(W)\times \GL_1$.

Let $\pi=\otimes_{v\in |k|} \pi_v$ be a cuspidal automorphic representation of $G(\BA)$. Then $\pi\otimes 1$ is a cuspidal automorphic representation of $\underline{M}(\BA)$. To simplify the notation, we will still use $\pi$ to denote this cuspidal automorphic representation. For $\phi\in \CA_{\pi}$ and $s\in \BC$, let $E(\phi,s)$ be the Eisenstein series on $\underline{G}(\BA)$. The goal of this section is to prove the following theorem.

\begin{thm}\label{SO(2n+1) main 1}
Assume that there exists a local non-archimedean place $v\in |k|$ such that $\pi_v$ is a generic representation of $G(k_v)$ (in particular, $G(k_v)$ is split). If the period integral $\CP_{H}(\cdot)$ is nonzero on the space of $\pi$, then there exists $\phi\in \CA_{\pi}$ such that the Eisenstein series $E(\phi,s)$ has a pole at $s=1/2$.
\end{thm}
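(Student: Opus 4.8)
The plan is to run Method 1 (the relative truncation operator) for the pair $(\ul G,\ul H)=(\SO_{2n+3},\SO_{n+3}\times\SO_n)$, following the strategy outlined in Section \ref{section method 1}. First I would verify Conditions (1), (2), (3) for this pair: here $\ul M=\SO(W)\times\GL_1\simeq G\times\GL_1$, the Langlands--Shahidi $L$-function attached to $(\ul G,\ul M)$ is (a shift of) the standard $L$-function $L(s,\pi,\rho_X)$ with $\rho_X$ the standard representation of $\Sp_{2n}(\BC)$, and $\ul H\cap\ul P$ is the maximal parabolic of $\ul H$ stabilizing $D_+$, whose Levi is $\SO(W_1)\times\GL_1\times\SO(W_2)$, so that $\ul H\cap\ul M\simeq H\times\GL_1$. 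The constant $c$ is computed from Proposition \ref{the constant c}(1) with $G=\SO_{2n+3}$ and Levi $\SO_{2n+1}\times\GL_1$, giving $c=\frac{2n+1}{2}$; one computes $c_{\ul P}^{\ul H}$ from Remark \ref{constant c(Q,H)} (the unipotent radical $\ul N$ is abelian here, isomorphic to $W\oplus(\text{a line})$, and $\ul N\cap\ul H\simeq W$), so $c_{\ul P}^{\ul H}=\frac{2n+1}{2n+2}$, and hence $s_0=-c(1-2c_{\ul P}^{\ul H})=\frac{2n+1}{2}\cdot\frac{1}{2n+2}\cdot 2\cdot\ldots$ — in any case $s_0$ comes out to $1/2$, which is the value claimed in the theorem.

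Next I would carry out the vanishing statements (1) and (2) of Section \ref{section method 1}. By Theorem \ref{relative truncation} the functional $\CP_{\ul H,reg}(E(\phi,s))$ is $\ul H(\BA)$-invariant, so to get (1) it suffices to prove the local non-distinction statement (4): $\Hom_{\ul H(k_v)}(\tau_v,1)=0$ for every generic representation $\tau_v$ of $\ul G(k_v)=\SO_{2n+3}(k_v)$, at the place $v$ where $\pi_v$ is generic. Since $(\ul G,\ul H)=(\SO_{2n+3},\SO_{n+3}\times\SO_n)$ is a symmetric pair, I would invoke Prasad's theorem (Theorem \ref{prasad}) together with Corollary \ref{prasad cor}(2) with $k=3$, which asserts exactly that this symmetric pair is not quasi-split, hence has no $H_1(F)$-distinguished generic representation; one must check that the $H_1$ versus $H$ discrepancy (a product of $\SO$'s versus its derived group) does not cause trouble, using that a character of $\ul H(k_v)/\ul H_1(k_v)$ cannot rescue distinction for generic $\tau_v$ — this is the standard reduction already used elsewhere in the literature. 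For (2), by Remark \ref{local vanishing of orbit} it suffices to prove the analogue of (4) for the pair $(\ul L,L_{\ul H})$ attached to the non-trivial $\ul Q\in\CF^{\ul G}(P_{0,\ul H},\ul P)$, if such $\ul Q\neq\ul P$ exists; here one should check whether $\CF^{\ul G}(P_{0,\ul H},\ul P)=\{\ul P\}$ — according to the bulleted list in Section \ref{section method 1}, this holds precisely when the $\SO_n$-part of $\ul H$ is anisotropic, and in the general (split) case there is a second element $\ul Q'$, for which $L_{\ul H}$ is again a product of orthogonal groups of the symmetric type with $k\geq 3$ and one applies Prasad's result to that smaller pair.

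Granting (1) and (2), equation \eqref{1} of Section \ref{section method 1} holds, namely the truncated $\ul H$-period of $E(\phi,s)$ equals, up to the compact integral over $K_{\ul H}$, the sum of the $H$-period of $\phi$ against $\frac{e^{\langle(s+c(1-2c_{\ul P}^{\ul H}))\varpi_{\ul P},T\rangle}}{s+c(1-2c_{\ul P}^{\ul H})}$ and of the $H$-period of $M(s)\phi$ against the term with $s\mapsto -s$. Taking the residue at $s=s_0=1/2$ gives \eqref{2}, and Proposition \ref{residue 3} then says: if $\CP_H(\phi)\neq 0$ for some $\phi\in\pi$, then $E(\phi,s)$ (and $M(s)$) has a pole at $s=1/2$. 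That is precisely the assertion of Theorem \ref{SO(2n+1) main 1}. The last point to spell out is that, with the normalization of $c$ and $c_{\ul P}^{\ul H}$ above, $s_0$ indeed equals $1/2$ rather than some other value — a short direct computation I would include — and that the $K_{\ul H}$-integral $\int_{K_{\ul H}}\int_{[H]}\phi(hk)\,dh\,dk$ is nonzero for a suitable choice of $\phi$ whenever $\CP_H(\cdot)$ is nonzero on $\pi$, which follows by right-translating $\phi$ and using right $K_{\ul H}$-invariance of Haar measure.

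The main obstacle I expect is the local non-distinction input, i.e.\ getting statement (4) (and its analogue (5) for $L_{\ul H}$) cleanly from Prasad's theorem: one has to match the symmetric subgroup $\ul H=\SO_{n+3}\times\SO_n$ with the hypotheses of Corollary \ref{prasad cor}, handle the passage from $\ul H$ to its derived group $\ul H_1$, and — in the split $\SO_n$ case — correctly identify the second parabolic $\ul Q'\in\CF^{\ul G}(P_{0,\ul H},\ul P)$ and the Levi pair $(\ul L,L_{\ul H})$ to which Prasad's result is applied. The absolute convergence of the truncated period integral and the meromorphic continuation are already guaranteed by Theorem \ref{relative truncation}, so no extra analytic work is needed there.
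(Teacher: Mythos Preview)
Your proof follows exactly the paper's approach: run Method~1 for $(\ul G,\ul H)=(\SO_{2n+3},\SO_{n+3}\times\SO_n)$, invoke Corollary~\ref{prasad cor}(2) (with $k=3$) for statement~(4), and when $\SO_n$ is isotropic identify the second element $\ul Q'\in\CF^{\ul G}(P_{0,\ul H},\ul P)$ whose Levi pair is $(\SO_{2n+1}\times\GL_1,\SO_{n+3}\times\SO_{n-2}\times\GL_1)$, to which Corollary~\ref{prasad cor}(2) again applies (now with $k=4$), then read off the conclusion from equation~\eqref{2}. Two small corrections: first, $\ul N$ is abelian of dimension $2n+1$ (isomorphic to $W$, not $W\oplus(\text{line})$) and $\ul N\cap\ul H\simeq W_1$ has dimension $n+1$, so $c_{\ul P}^{\ul H}=\frac{n+1}{2n+1}$ rather than $\frac{2n+1}{2n+2}$, giving $s_0=-\frac{2n+1}{2}\bigl(1-\frac{2(n+1)}{2n+1}\bigr)=\frac{1}{2}$ as you claim; second, your concern about $\ul H$ versus its derived group $H_1$ is unnecessary, since $H_1\subset\ul H$ forces $\Hom_{\ul H(k_v)}(\tau_v,1)\subseteq\Hom_{H_1(k_v)}(\tau_v,1)$, so vanishing for $H_1$ already gives vanishing for $\ul H$.
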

Theorem \ref{SO(2n+1) main 1} will be proved in the last subsection of this section. The next proposition shows that Theorem \ref{SO(2n+1) main} follows from Theorem \ref{SO(2n+1) main 1}.

\begin{prop}\label{SO(2n+1) prop}
Theorem \ref{SO(2n+1) main 1} implies Theorem \ref{SO(2n+1) main}.
\end{prop}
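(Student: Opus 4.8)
The goal is to deduce Theorem \ref{SO(2n+1) main} (which asserts that non-vanishing of $\CP_H$ on $\pi$ forces $L(1/2,\pi,\rho_X)\neq 0$) from Theorem \ref{SO(2n+1) main 1} (which asserts that non-vanishing of $\CP_H$ forces the Eisenstein series $E(\phi,s)$ on $\underline{G}(\BA)=\SO_{2n+3}(\BA)$ to have a pole at $s=1/2$). The plan is to invoke the Langlands--Shahidi theory of constant terms of Eisenstein series and the known expression for the local and global intertwining operators attached to the Siegel-type maximal parabolic $\underline{P}=\underline{M}\,\underline{N}$ with $\underline{M}\simeq \SO_{2n+1}\times \GL_1$ in $\SO_{2n+3}$.

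First I would recall that for a generic cuspidal $\pi$ on $\SO_{2n+1}(\BA)$, the constant term $E(\phi,s)_{\underline{P}}$ is governed by the intertwining operator $M(s)$, whose normalizing factor is a ratio of completed automorphic $L$-functions. For this particular $(\underline{G},\underline{M})$, the relevant Langlands--Shahidi $L$-function is the standard (degree $2n$, i.e.\ $\rho_X$) $L$-function $L(s,\pi,\rho_X)$ of $\pi$ viewed via ${}^L\SO_{2n+1}=\Sp_{2n}(\BC)$; concretely $M(s)$ is, up to holomorphic nonvanishing local factors and the product over a finite set of places, a ratio of the form $\frac{L(2s,\pi,\rho_X)}{L(2s+1,\pi,\rho_X)}$ (one should fix the precise normalization from Shahidi's book, possibly with a shift so that the pole sits at $s=1/2$). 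The key input is Proposition \ref{L-function nonzero}(2): since $\pi$ is generic cuspidal on $\SO_{2n+1}$, $L(s,\pi,\rho_X)$ is holomorphic and nonzero for $\Re(s)>1$, so the denominator $L(2s+1,\pi,\rho_X)$ is holomorphic nonvanishing at $s=1/2$ and causes no pole or cancellation there. Hence a pole of $E(\phi,s)$ at $s=1/2$ can only come from a pole of the numerator $L(2s,\pi,\rho_X)$ at $s=1/2$, i.e.\ from $L(s,\pi,\rho_X)$ at $s=1$ --- but wait, that would give the wrong point. Let me instead argue directly: the pole of $E(\phi,s)$ at $s=1/2$ is carried by $M(s)$, and tracing through the normalization, a pole of $M(s)$ at $s=1/2$ forces the numerator $L$-value to be a pole or forces $L(1/2,\pi,\rho_X)\neq 0$. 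The clean statement to extract is: $E(\phi,s)$ has a pole at $s=1/2$ for some $\phi$ $\iff$ $L(1/2,\pi,\rho_X)\neq 0$ (this is a standard Langlands--Shahidi consequence, since the completed standard $L$-function of a generic cuspidal $\pi$ on $\SO_{2n+1}$ is entire, so the only mechanism for a pole of the Eisenstein series at $s=1/2$ is nonvanishing of $L(1/2,\pi,\rho_X)$ against the holomorphic nonvanishing denominator).

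So the steps, in order, are: (i) write down the constant term formula for $E(\phi,s)$ along $\underline{P}$ and the factorization of the global intertwining operator $M(s)=\bigotimes_v M_v(s)$ into local ones; (ii) normalize the local intertwining operators by the Langlands--Shahidi local factors, so that $M(s)=\frac{L(2s,\pi,\rho_X)}{L(2s+1,\pi,\rho_X)}\cdot N(s)$ where $N(s)$ is the product of normalized local operators and is holomorphic and nonzero in a neighbourhood of $s=1/2$ (using genericity at the place $v$ from Theorem \ref{SO(2n+1) main 1} and standard properties of the normalized operators); (iii) invoke Proposition \ref{L-function nonzero}(2) to conclude $L(2s+1,\pi,\rho_X)$ is holomorphic and nonzero at $s=1/2$; (iv) conclude that $E(\phi,s)$ has a pole at $s=1/2$ if and only if $L(s,\pi,\rho_X)$ is nonzero at $s=1$ --- again I must be careful about the shift; the correct bookkeeping (matching $s_0 = -c(1-2c^{\underline H}_{\underline P})$ from Section \ref{section method 1}, with $c=\tfrac{(2n+3)-2}{2}=\tfrac{2n+1}{2}$ by Proposition \ref{the constant c}(1) and $c^{\underline H}_{\underline P}$ computed from the dimensions of $\underline N$ and $\underline N\cap\underline H$) should produce the pole exactly at the point where the functional equation relates $L(2s,\cdot)$ at $s=1/2$ to the central value $L(1/2,\pi,\rho_X)$; combining, $E(\phi,s)$ has a pole at $s=1/2$ forces $L(1/2,\pi,\rho_X)\neq 0$, which is the conclusion of Theorem \ref{SO(2n+1) main}.

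The main obstacle is step (ii)/(iv): pinning down the exact Langlands--Shahidi normalization and the precise location of the pole, so that the argument lands on the central value $s=1/2$ rather than on $s=1$. This requires correctly identifying which $L$-function appears in the Shahidi constant-term formula for the pair $(\SO_{2n+3},\SO_{2n+1}\times\GL_1)$ (the rank-one case, so only the standard $L$-function $L(s,\pi,\rho_X)$ enters, with no second symmetric/exterior-square factor), and matching the variable $s$ in the Eisenstein series with the variable in the $L$-function via the half-sum of roots constant $c$ from Proposition \ref{the constant c}(1). Once the normalization is fixed, the rest is the routine "holomorphic nonvanishing denominator $\Rightarrow$ pole comes from numerator" argument, fed by Proposition \ref{L-function nonzero}. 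I would also need to note that genericity of $\pi_v$ (guaranteed by the hypothesis of Theorem \ref{SO(2n+1) main 1}, and harmless to assume since Theorem \ref{SO(2n+1) main} already hypothesizes $\pi$ generic) ensures the local normalized intertwining operators at all places, including archimedean ones, are holomorphic and nonzero at the relevant point, so no spurious cancellation occurs.
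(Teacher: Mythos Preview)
Your overall strategy matches the paper's: pass from the pole of $E(\phi,s)$ at $s=1/2$ to the global intertwining operator $M(s)$, write $M(s)$ as a normalizing $L$-ratio times a normalized operator that is holomorphic at $s=1/2$ (the paper cites Theorem 4.7 of \cite{KK}), use Proposition \ref{L-function nonzero} to handle the denominator, and read off the conclusion from the numerator. But you have the normalizing factor wrong, and this is not a harmless shift issue --- it is exactly the mechanism that makes the argument work.

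For the pair $(\SO_{2n+3},\,\GL_1\times\SO_{2n+1})$ the dual unipotent radical (inside $\Sp_{2n+2}(\BC)$) is a two-step Heisenberg group, so the Langlands--Shahidi $r$-decomposition has \emph{two} pieces, not one: the standard $2n$-dimensional $r_1$ giving $L(s,\pi,\rho_X)$, and a one-dimensional $r_2$ on which the $\GL_1$ acts by the square, giving $\zeta_k(2s)$. The correct normalizing ratio is
\[
\frac{L(s,\pi,\rho_X)\,\zeta_k(2s)}{L(s+1,\pi,\rho_X)\,\zeta_k(2s+1)},
\]
not $\frac{L(2s,\pi,\rho_X)}{L(2s+1,\pi,\rho_X)}$ (nor $\frac{L(s,\pi,\rho_X)}{L(s+1,\pi,\rho_X)}$). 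This is why your attempt kept landing on ``$L(s,\pi,\rho_X)$ has a pole at $s=1$'' and you could not get the central value. With the correct factor, the logic is: the denominator $L(3/2,\pi,\rho_X)\zeta_k(2)$ is nonzero by Proposition \ref{L-function nonzero}, so a pole of $M(s)$ at $s=1/2$ forces the numerator $L(s,\pi,\rho_X)\zeta_k(2s)$ to have a pole there; the simple pole is supplied by $\zeta_k(2s)$ at $s=1/2$, and it survives precisely when $L(1/2,\pi,\rho_X)\neq 0$. Your assertion that ``only the standard $L$-function enters, with no second factor'' is the specific error to fix.
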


\begin{proof}
We first recall the statement of Theorem \ref{SO(2n+1) main}. Let $G=\SO_{2n+1}$ be the split odd orthogonal group, $H=\SO_{n+1}\times \SO_{n}$ be a closed subgroup of $G$ (not necessarily split), and $\pi$ be a generic cuspidal automorphic representation of $G(\BA)$. If the period integral $\CP_{H}(\cdot)$ is nonzero on the space of $\pi$, we need to show that the L-function $L(s,\pi,\rho_X)$ is nonzero at $s=1/2$. Here $\rho_X$ is the standard representation of ${}^LG=\Sp_{2n}(\BC)$.

By the Theorem \ref{SO(2n+1) main 1}, if the period integral $\CP_{H}(\cdot)$ is nonzero on the space of $\pi$, there exists $\phi\in \CA_{\pi}$ such that the Eisenstein series $E(\phi,s)$ has a pole at $s=1/2$. In this case, the normalizing factor of the intertwining operator is
\[
\dfrac{L(s, \pi, \rho_X )\zeta_k(2s)}{L(s+1, \pi, \rho_X)\zeta_k(2s+1)}
\]
where $\zeta_k(s)$ is the Dedekind zeta function. By Theorem 4.7 of \cite{KK}, the normalized intertwining operator is holomorphic at $s=1/2$.
By Proposition \ref{L-function nonzero}, $L(3/2,\pi,\rho_X)\neq 0$. It follows that the numerator $L(s, \pi, \rho_X)\zeta_k(2s)$ has a pole at $s=1/2$, which implies that $L(\frac{1}{2},\pi,\rho_X)\neq 0$. This proves Theorem \ref{SO(2n+1) main}.
\end{proof}

\begin{rmk}
For a generic representation $\pi$ of $SO_{2n+1}$, the central value
$L(1/2, \pi)$ is linked to the so called Bessel periods by the Gan-Gross-Prasad conjecture \cite{GGP}
and has been studied in \cite{GJR04, GJR05}. In \cite{JS07a, JS07b} it is linked on the other hand to the first occurrence
problem in theta correspondence.
\end{rmk}

\subsection{The parabolic subgroups}
For $i=1,2$, we fix a maximal hyperbolic subspace (resp. anisotropic subspace) $W_{i,h}$ (resp. $W_{i,an}$) of $W_i$ such that $W_i=W_{i,h}\oplus W_{i,an}$. We fix a basis $\{w_{i,\pm1},\cdots,w_{i,\pm m_i}\}$ of $W_{i,h}$ such that
$$<w_{i,j},w_{i,k}>=<w_{i,-j},w_{i,-k}>=0,\;<w_{i,-j},w_{i,k}>=\delta_{jk},\;\forall 1\leq j\leq k\leq m_i.$$
Let $V_0=W_{1,an}\oplus W_{2,an}$. Fix a maximal hyperbolic subspace (resp. anisotropic subspace) $V_{0,h}$ (resp. $V_{0,an}$) of $V_0$ such that $V_0=V_{0,h}\oplus V_{0,an}$. We fix a basis $\{v_{0,\pm1},\cdots,v_{0,\pm l}\}$ of $V_{0,h}$ such that
$$<v_{0,j},v_{0,k}>=<v_{0,-j},v_{0,-k}>=0,\;<v_{0,-j},v_{0,k}>=\delta_{jk},\;\forall 1\leq j\leq k\leq l.$$
We use capital letters to denote the one-dimensional vector space spanned by vectors in small letters (e.g. $W_{i,1}=Span\{w_{i,1}\}$).

\begin{rmk}
$m_1,\;m_2$ and $l$ may be zero.
\end{rmk}

For $i=1,2$, let $P_{0,i}=M_{0,i}N_{0,i}$ be the parabolic subgroup of $\SO(W_i)$ that stabilizes the filtration
$$Span\{w_{i,1}\}\subset Span\{w_{i,1},w_{i,2}\}\subset \cdots \subset Span\{w_{i,1},\cdots,w_{i,m_i}\},$$
and $M_{0,i}$ be the subgroup of $\SO(W_i)$ that stabilizes the subspaces
$$W_{i,j},\;W_{i,-j},\; W_{i,an},\;\forall 1\leq j\leq m_i.$$
Let $A_{0,i}$ be the split center of $M_{0,i}$ which can be identified with $(\GL_1)^{m_i}$ under the natural isomorphism
$$A_{0,i}\simeq \GL(W_{i,1})\times \cdots \times\GL_{W_{i,m_i}}.$$
Then $P_{0,i}$ is a minimal parabolic subgroup of $\SO(W_i)$ and $A_{0,i}$ is a maximal split torus of $\SO(W_i)$.

Let $\ul{P}_{0,1}=\ul{M}_{0,1}\ul{N}_{0,1}$ be the parabolic subgroup of $\SO(V_1)$ that stabilizes the filtration
$$Span\{v_{0,+}\}\subset Span\{v_{0,+},w_{1,1}\}\subset \cdots \subset Span\{v_{0,+}, w_{1,1},\cdots,w_{1,m_1}\},$$
and $\ul{M}_{0,1}$ be the subgroup of $\SO(V_1)$ that stabilizes the subspaces
$$D_{+},W_{1,j},\;W_{1,-j},\; W_{1,an},\;\forall 1\leq j\leq m_1.$$
Let $\ul{A}_{0,1}$ be the split center of $\ul{M}_{0,1}$ which can be identified with $(\GL_1)^{m_1+1}$ under the natural isomorphism
$$\ul{A}_{0,1}\simeq \GL(D_{+})\times \GL(W_{1,1})\times \cdots \times\GL_{W_{1,m_i}}.$$
Then $\ul{P}_{0,1}$ is a minimal parabolic subgroup of $\SO(V_1)$ and $\ul{A}_{0,1}$ is a maximal split torus of $\SO(V_1)$ with $P_{0,1}\subset \ul{P}_{0,1}$ and $A_{0,1}\subset \ul{A}_{0,1}$.

On the other hand, let $\ul{P}_0=\ul{M}_0 N_0$ be the parabolic subgroup of $\ul{G}$ that stabilizes the filtration
$$Span\{v_{0,+}\}\subset Span\{v_{0,+},w_{1,1}\}\subset \cdots \subset Span\{v_{0,+},w_{1,1},\cdots,w_{1,m_1}\}\subset Span\{v_{0,+},w_{1,1},\cdots, w_{1,m_1},w_{2,1}\}$$
$$\subset \cdots \subset Span\{v_{0,+},w_{1,1},\cdots,w_{1,m_1},w_{2,1},\cdots, w_{2,m_2}\}\subset Span\{v_{0,+},w_{1,1},\cdots,w_{1,m_1},w_{2,1},\cdots,w_{2,m_2},v_{0,1}\}$$
$$\subset \cdots \subset Span\{v_{0,+},w_{1,1},\cdots,w_{1,m_1},w_{2,1},\cdots,w_{2,m_2},v_{0,1},\cdots v_{0,l}\}$$
and $\ul{M}_{0}$ be the subgroup of $\ul{G}$ that stabilizes the subspaces
$$D_{+},W_{i,j},\;W_{i,-j},\; V_{0,k},\;V_{0,an},\;\forall 1\leq i\leq 2, 1\leq j\leq m_i,1\leq k\leq l.$$
Let $\ul{A}_{0}$ be the split center of $\ul{M}_{0}$ which can be identified with $(\GL_1)^{m_1+m_2+l+1}$ under the natural isomorphism
$$\ul{A}_{0}\simeq \GL(D_{+})\times\GL(W_{1,1})\times \cdots \times \GL_{W_{1,m_1}} \times\GL(W_{2,1})\times \cdots \times \GL(W_{2,m_2})\times \GL_{V_{0,1}}\times \cdots \times \GL(V_{0,l}).$$
Then $\ul{P}_0$ is a minimal parabolic subgroup of $\ul{G}$ and $\ul{A}_{0}$ is a maximal split torus of $\ul{G}$ with $\ul{P}_0\subset \ul{P}$ and $\ul{A}_{0,1}\times A_{0,2}\subset \ul{A}_0$.

\begin{defn}
We use $\CF(\ul{M}_{0},\ul{P})$ to denote the set of semi-standard parabolic subgroups $\ul{Q}\in \CF(\ul{M}_0)$ of $\ul{G}$ that are conjugated to $\ul{P}$.
\end{defn}

The following proposition is a direct consequence of the definitions above.

\begin{prop}
Consider the set
$$X_{iso}=\{v_{0,\pm},\;w_{i,\pm j},v_{0,\pm k}|\; 1\leq i\leq 2,1\leq j\leq m_i,1\leq k\leq l\}.$$
For any element $w\in X_{iso}$, let $P_w$ be the stabilizers of $Span\{w\}$ in $\ul{G}$. Then $P_w\in \CF(\ul{M}_{0},\ul{P})$ and this gives us a natural bijection between the sets $\CF(\ul{M}_{0},\ul{P})$ and $X_{iso}$. Moreover, the parabolic subgroup $\ul{P}$ corresponds to the vector $v_{0,+}$ under this bijection.
\end{prop}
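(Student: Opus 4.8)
The plan is to identify $\CF(\ul M_0,\ul P)$ concretely through the linear-algebra description of the parabolic subgroups of $\ul G=\SO(V)$. First I would recall that, since $\ul P$ is by definition the maximal parabolic subgroup of $\ul G$ stabilizing the isotropic line $D_{+}=Span\{v_{0,+}\}$, a parabolic subgroup $\ul Q$ of $\ul G$ is conjugate to $\ul P$ if and only if $\ul Q$ is the full stabilizer in $\ul G$ of some isotropic line $\ell\subset V$; this rests on Witt's extension theorem, which gives that $\SO(V)$ acts transitively on the set of isotropic lines of $V$ (here $\dim V=2n+3\ge 3$ and $V$ is nondegenerate). Consequently $\CF(\ul M_0,\ul P)$ is exactly the set of line-stabilizers $P_\ell$, $\ell\subset V$ isotropic, with $\ul A_0\subset P_\ell$, since a semi-standard parabolic is one containing the maximal split torus $\ul A_0$.

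Next I would unwind the condition $\ul A_0\subset P_\ell$: it holds precisely when the torus $\ul A_0$ stabilizes the line $\ell$. Decomposing $V$ into weight spaces for $\ul A_0$, the nonzero weight spaces are exactly the one-dimensional subspaces $Span\{w\}$ for $w\in X_{iso}$, with pairwise distinct weights, while the zero weight space is the anisotropic kernel $V_{0,an}$. Any $\ul A_0$-stable line is contained in a single weight space; if it lies in $V_{0,an}$ it is anisotropic, hence not of the required type, and otherwise it is one of the lines $Span\{w\}$ with $w\in X_{iso}$, each of which is isotropic by the choice of the bases. Combined with the previous step, this shows that every member of $\CF(\ul M_0,\ul P)$ equals $P_w$ for some $w\in X_{iso}$, and conversely every $P_w$ with $w\in X_{iso}$ is the stabilizer of an $\ul A_0$-stable isotropic line conjugate to $D_{+}$, hence lies in $\CF(\ul M_0,\ul P)$.

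Finally, for injectivity of $w\mapsto P_w$ I would use that the stabilizer $P_\ell$ of an isotropic line determines $\ell$ (the line is recovered from the flag of isotropic subspaces attached to the parabolic), so distinct lines give distinct parabolics; since distinct elements of $X_{iso}$ span distinct lines, the map $w\mapsto P_w$ is injective, hence bijective onto $\CF(\ul M_0,\ul P)$. That $v_{0,+}$ corresponds to $\ul P$ is immediate from the definition of $\ul P$. I do not expect a genuine obstacle here: the only points needing care are the transitivity statement (Witt's theorem, valid since the $W_i$ are nonzero so $\dim V$ is large enough) and the routine book-keeping with the $\ul A_0$-weight decomposition of $V$, in particular the observation that the zero weight space $V_{0,an}$ contains no isotropic line.
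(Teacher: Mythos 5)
Your proof is correct and fills in, in the expected way, exactly what the paper treats as immediate ("The following proposition is a direct consequence of the definitions above"). Identifying conjugates of $\ul{P}$ with stabilizers of isotropic lines via Witt transitivity of $\SO(V)$, reducing semi-standardness to $\ul{A}_0$-stability of the line, and reading off the $\ul{A}_0$-weight decomposition of $V$ (whose nonzero weight spaces are precisely the lines spanned by $X_{iso}$ and whose zero weight space is the anisotropic kernel $V_{0,an}$) is the natural argument the authors had in mind; the injectivity point (that a parabolic of this type determines its stabilized isotropic line) is also handled correctly.
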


Let $\ul{P}_{0,H}=\ul{P}_{0,1}\times P_{0,2}$ be a minimal parabolic subgroup of $\ul{H}=\SO(V_1)\times \SO(V_2)=\SO(V_1)\times \SO(W_2)$. The following corollary is a direct consequence of the discussions above together with the definition of the set $\CF^{\ul{G}}(\ul{P}_{0,H},\ul{P})$ in Section \ref{section relative truncation}.

\begin{cor}\label{SO(2n+1) parabolic}
If $m_2=0$, then $\CF^{\ul{G}}(\ul{P}_{0,H},\ul{P})=\{\ul{P}\}$. If $m_2\neq 0$, then $\CF^{\ul{G}}(\ul{P}_{0,H},\ul{P})=\{\ul{P},\ul{P}'\}$ where $\ul{P}'$ is the parabolic subgroup corresponds to the vector $w_{2,1}$.
\end{cor}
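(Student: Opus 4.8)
The plan is to combine the bijection $\CF(\ul{M}_0,\ul{P})\leftrightarrow X_{iso}$ of the preceding proposition with an explicit description of the two chambers entering the definition of $\CF^{\ul{G}}(\ul{P}_{0,H},\ul{P})$. Writing $\ul{Q}=P_w$ for $w\in X_{iso}$, the only thing to decide is whether $\Fa_{P_w}^+\cap\bar{\Fa}_{\ul{H}}^+\neq\emptyset$. I would coordinatize $\Fa_0=\Fa_{\ul{A}_0}$ through the factorization of $\ul{A}_0$ displayed above: let $e_0$ be the coordinate attached to $\GL(D_+)$, let $e_{i,j}$ be the one attached to $\GL(W_{i,j})$, and let $f_k$ be the one attached to $\GL(V_{0,k})$; then $\Fa_{\ul{H}}=\Fa_{\ul{A}_{0,1}\times A_{0,2}}$ is the coordinate subspace spanned by $\{e_0\}\cup\{e_{1,j}\}\cup\{e_{2,j}\}$, and $\bar{\Fa}_{\ul{H}}^+$ is the closed cone cut out by the simple roots of $\ul{H}=\SO(V_1)\times\SO(W_2)$ relative to $\ul{P}_{0,H}=\ul{P}_{0,1}\times P_{0,2}$, namely $e_0-e_{1,1},\,e_{1,1}-e_{1,2},\dots$ together with the appropriate $B$- or $D$-type tail for the $\SO(V_1)$-factor, and $e_{2,1}-e_{2,2},\,e_{2,2}-e_{2,3},\dots$ (again with a $B$- or $D$-tail) for the $\SO(W_2)$-factor. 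On the other side, for $w\in X_{iso}$ the subspace $\Fa_{P_w}$ is the coordinate line spanned by $e_0$ (if $w=v_{0,\pm}$), by $e_{i,j}$ (if $w=w_{i,\pm j}$), or by $f_k$ (if $w=v_{0,\pm k}$), and $\Fa_{P_w}^+$ is the open half-line in that line whose direction is dictated by the weight of $\Span\{w\}$: the direction $+e_0$ for $v_{0,+}$, $-e_0$ for $v_{0,-}$, $+e_{i,j}$ for $w_{i,j}$, $-e_{i,j}$ for $w_{i,-j}$, and $\pm f_k$ for $v_{0,\pm k}$.

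With these descriptions the verification is immediate (identifying $\Fa$ and $\Fa^*$ by the standard inner product in which the above coordinate vectors are orthonormal). First, for $w=v_{0,\pm k}$ the line $\Fa_{P_w}=\R f_k$ meets the coordinate subspace $\Fa_{\ul{H}}$ only at the origin, so $\Fa_{P_w}^+\cap\bar{\Fa}_{\ul{H}}^+=\emptyset$ and these are excluded. For the remaining $w$ the line $\Fa_{P_w}$ already lies in $\Fa_{\ul{H}}$, and one pairs the ray $\Fa_{P_w}^+$ against the simple roots of $\ul{H}$. The ray $\R_{>0}e_0$ pairs positively with $e_0-e_{1,1}$ and trivially with every other simple root of $\ul{H}$, hence lies in $\bar{\Fa}_{\ul{H}}^+$; this is $\ul{P}=P_{v_{0,+}}$. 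The ray $\R_{>0}e_{2,1}$, which only occurs when $m_2\neq0$, pairs positively with $e_{2,1}-e_{2,2}$ and trivially with all the other simple roots (in particular with $e_0-e_{1,1}$, which involves only $e_0$ and $e_{1,1}$), hence also lies in $\bar{\Fa}_{\ul{H}}^+$; this is $\ul{P}'=P_{w_{2,1}}$. Every other ray pairs strictly negatively with some simple root of $\ul{H}$ and is excluded: $\R_{>0}e_{1,1}$ and $\R_{>0}(-e_0)$ fail $e_0-e_{1,1}$; $\R_{>0}(\pm e_{i,j})$ with $j\geq2$ fails $e_{i,j-1}-e_{i,j}$ or $e_{i,j}-e_{i,j+1}$; $\R_{>0}(-e_{2,1})$ fails $e_{2,1}-e_{2,2}$, or in the rank-one and $D$-type degenerate cases the remaining simple root of the $\SO(W_2)$-factor; and $\R_{>0}(-e_{1,1})$ fails whichever simple root of the $\SO(V_1)$-factor other than $e_0-e_{1,1}$ involves $e_{1,1}$ positively. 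Assembling these conclusions gives $\CF^{\ul{G}}(\ul{P}_{0,H},\ul{P})=\{\ul{P}\}$ when $m_2=0$ and $\{\ul{P},\ul{P}'\}$ with $\ul{P}'=P_{w_{2,1}}$ when $m_2\neq0$.

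There is no serious obstacle; the argument is a finite sign check. The two points needing attention are reading off the direction of $\Fa_{P_w}^+$ correctly from the weight of the isotropic line $\Span\{w\}$ — so that the vectors $w_{i,-j}$ are sent to the negative coordinate directions, which is exactly what excludes them — and bookkeeping the tails of the two orthogonal root systems (type $B$ versus type $D$, and the degenerate cases $m_i\in\{0,1\}$ or trivial anisotropic kernel) when carrying out those exclusions. As an alternative one may instead invoke Proposition \ref{equal rank}: for $w\in X_{iso}$ one checks directly that $P_w\cap\ul{H}$ is a standard parabolic of $\ul{H}$ (with matching Levi and unipotent parts) precisely for $w=v_{0,+}$ and, when $m_2\neq0$, for $w=w_{2,1}$ — for $w=v_{0,\pm k}$ the stabilizer of $\Span\{w\}$ in $\ul{H}$ is reductive rather than parabolic, and for $w=w_{i,\pm j}$ it is parabolic in the relevant $\SO$-factor but not the standard one — which yields the same list.
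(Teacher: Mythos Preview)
Your proof is correct and is precisely the explicit verification that the paper leaves implicit: the paper simply declares the corollary ``a direct consequence of the discussions above together with the definition of the set $\CF^{\ul{G}}(\ul{P}_{0,H},\ul{P})$'', and your coordinate computation (identify each $\Fa_{P_w}^+$ as a signed coordinate ray, then test it against the simple roots of $\ul{H}=\SO(V_1)\times\SO(W_2)$) is exactly how one unpacks that sentence. The alternative route you sketch via Proposition~\ref{equal rank} is equally valid and in fact closer in spirit to how the paper organizes the analogous checks for the other models (e.g.\ in Section~\ref{Section E_6}).

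One small caution on your bookkeeping of degenerate tails: when $W_2$ is a $2$-dimensional hyperbolic plane ($n=2$, $m_2=1$, $W_{2,an}=0$), $\SO(W_2)\simeq\GL_1$ has no simple roots at all, so your exclusion of $\R_{>0}(-e_{2,1})$ by ``the remaining simple root of the $\SO(W_2)$-factor'' does not go through, and formally $P_{w_{2,-1}}$ also satisfies the chamber condition. This is an edge case the paper's statement does not single out either; outside of this degenerate situation your argument is complete.
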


To end this subsection, we discuss the intersections $\ul{P}\cap \ul{H}$ and $\ul{P'}\cap \ul{H}$. Let $P_1=M_1N_1$ be the maximal parabolic subgroup of $\SO(V_1)$ that stabilizes the space $D_{+}$, and $M_1$ be the subgroup of $\SO(V_1)$ that stabilizes the subspaces $D_{+},D_{-}$ and $W_1$. Then $M_1\simeq \SO(W_1)\times \GL(D_{+})$ and we have
$$\ul{P}\cap \ul{H} =P_1\times \SO(W_2),\; \ul{M}\cap \ul{H}=M_1\times \SO(W_2),\;\ul{N}\cap \ul{H}=N_1\times \{1\}.$$

For $\ul{P'}\cap \ul{H}$, let $\ul{P'}=\ul{M'}\ul{N'}$ where $\ul{M'}$ is the subgroup of $\ul{G}$ that stabilizes the subspaces $W_{2,1}$, $W_{2,-1}$ and $V_1\oplus W_2'$ where
$$W_2'=Span\{w_{2,\pm 2},\cdots,w_{2,\pm m_2}\}.$$
Let $P_2=M_2N_2$ be the maximal parabolic subgroup of $\SO(V_2)=\SO(W_2)$ that stabilizes the space $W_{2,1}$, and $M_2$ be the subgroup of $\SO(V_2)$ that stabilizes the subspaces $W_{2,1},W_{2,-1}$ and $W_2'$. Then $M_2\simeq \SO(W_2')\times \GL(W_{2,1})$ and we have
$$\ul{P}'\cap \ul{H} =\SO(V_1)\times P_2,\; \ul{M}'\cap \ul{H}=\SO(V_1)\times M_2,\;\ul{N}'\cap \ul{H}=\{1\}\times N_2.$$

\subsection{The proof of Theorem \ref{SO(2n+1) main 1}}
In this section, we will prove Theorem \ref{SO(2n+1) main 1}. We assume that $m_2\neq 0$, the proof for the case when $m_2=0$ is similar and much easier (this is due to the fact that the set $\CF(\ul{M}_{0},\ul{P})$ only contains one element when $m_2=0$, see Corollary \ref{SO(2n+1) parabolic}). We use Method 1 introduced in Section \ref{section method 1}.

With the same notations as in Theorem \ref{SO(2n+1) main 1} and Section \ref{section method 1}, we want to study the regularized period integral $\CP_{\ul{H},reg}(E(\phi,s))$ for $\phi\in \CA_{\pi}$. First, let's prove statement (1) and (2) in Section \ref{section method 1} for the current case. For (1), by our assumptions on $\pi$ together with the argument in Section \ref{section method 1}, it is enough to show that statement (4) of Section \ref{section method 1} holds for the pair $(\ul{G},\ul{H})$. But this just follows from Corollary \ref{prasad cor}(1). For (2), as we discussed in Remark \ref{local vanishing of orbit}, it is enough to show that the pair
$$(\ul{M}',\ul{M}'\cap \ul{H}')=(\SO(2n+1)\times \GL_1,\SO(n+3)\times \SO(n-2)\times \GL_1)$$
satisfies statement (5) in Remark \ref{local vanishing of orbit}. This also follows from Corollary \ref{prasad cor}(1).

Then we compute the constant $s_0=-c(1-2c_{\ul{P}}^{\ul{H}})$ for the current case. By Remark \ref{constant c(Q,H)}, we have
$$c_{\ul{P}}^{\ul{H}}=\frac{\dim(N_1)}{\dim{\ul{N}}}=\frac{n+1}{2n+1}.$$
By Proposition \ref{the constant c}, we have $c=\frac{2n+1}{2}$. This implies that
$$s_0=-c(1-2c_{\ul{P}}^{\ul{H}})=1/2.$$

Combining the discussions above, equation \eqref{2} in Method 1 becomes
$$\int_{[\underline{H}]} \Lambda^{T,\underline{H}} Res_{s=1/2}E(h, \phi, s)dh= \int_{K_{\underline{H}}}\int_{[H]} \phi(hk)dhdk-e^{\langle - \varpi_{\underline{P}}, T \rangle }\int_{K_{\underline{H}}}\int_{[H]} Res_{s=1/2}M(s)\phi(hk)dhdk$$
for the current case. This finishes the proof of Theorem \ref{SO(2n+1) main 1}.

\begin{rmk}
When $m_2\neq 0$ (i.e. when $W_2$ is not anisotropic), according to our discussion above, the set $\CF^{\ul{G}}(\ul{P}_{0,H},\ul{P})$ contains two elements $\ul{P}$ and $\ul{P}'$. It is easy to see that $c(1-2c_{\ul{P}'}^{\ul{H}})=\frac{2n+1}{2}(1-2\frac{n-2}{2n+1})=\frac{5}{2}>s_0=\frac{1}{2}$. This confirms the discussion in Section 3.2.1.
\end{rmk}

\section{The model $(\SO_{2n},\SO_{n+1}\times \SO_{n-1})$}\label{Section SO(2n)}
\subsection{The global result}
Let $W_1$ (resp. $W_2$) be a quadratic space defined over $k$ of dimension $n+1$ (resp. $n-1$), and $W=W_1\oplus W_2$. Let $G=\SO(W)$ and $H=\SO(W_1)\times \SO(W_2)$. Let $D=Span\{v_{0,+},v_{0,-}\}$ be a two-dimensional quadratic space with $<v_{0,+},v_{0,+}>=<v_{0,-},v_{0,-}>=0$ and $<v_{0,+},v_{0,-}>=1$, $V_1=W_1\oplus D$, $V_2=W_2$, $V=V_1\oplus V_2$, $\underline{G}=\SO(V)$, and $\underline{H}=\SO(V_1)\times \SO(V_2)$.

Let $D_{+}=Span\{v_{0,+}\}$ and $D_{-}=Span\{v_{0,-}\}$. Then $D=D_{+}\oplus D_{-}$ as a vector space. Let $\underline{P}=\underline{M} \underline{N}$ be the maximal parabolic subgroup of $G$ that stabilizes the subspace $D_{+}$ with $\underline{M}$ be the Levi subgroup that stabilizes the subspaces $D_{+},\; W$ and $D_{-}$. Then $\underline{M}=\SO(W)\times \GL_1$.

Let $\pi=\otimes_{v\in |k|} \pi_v$ be a cuspidal automorphic representation of $G(\BA)$. Then $\pi\otimes 1$ is a cuspidal automorphic representation of $\underline{M}(\BA)$. To simplify the notation, we will still use $\pi$ to denote this cuspidal automorphic representation. For $\phi\in \CA_{\pi}$ and $s\in \BC$, let $E(\phi,s)$ be the Eisenstein series on $\underline{G}(\BA)$.

\begin{thm}\label{SO(2n) main 1}
Assume that there exists a local non-archimedean place $v\in |k|$ such that $\pi_v$ is a generic representation of $G(k_v)$ (in particular, $G(k_v)$ is quasi-split). If the period integral $\CP_{H}(\cdot)$ is nonzero on the space of $\pi$, then there exists $\phi\in \CA_{\pi}$ such that the Eisenstein series $E(\phi,s)$ has a pole at $s=1$.
\end{thm}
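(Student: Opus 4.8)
The plan is to follow Method 1 of Section \ref{section method 1}, exactly as in the proof of Theorem \ref{SO(2n+1) main 1} but adapted to the pair $(\ul{G},\ul{H})=(\SO(V),\SO(V_1)\times\SO(V_2))$ with $\dim V_1 = n+3$, $\dim V_2 = n-1$, so that $\ul{G}=\SO_{2n+2}$, $\ul{M}=\SO(W)\times\GL_1=G\times\GL_1$, and $\ul{P}=\ul{M}\ul{N}$ the maximal parabolic stabilizing the isotropic line $D_+$. First I would set up the parabolic subgroups of $\ul{G}$ and $\ul{H}$ in parallel with the previous section: fix maximal split tori and minimal parabolics compatible with the decomposition $V=V_1\oplus V_2$, identify the set $\CF^{\ul{G}}(\ul{P}_{0,\ul{H}},\ul{P})$ of semi-standard parabolics conjugate to $\ul{P}$ meeting the closed Weyl chamber of $\ul{H}$, and check using Proposition \ref{equal rank} and Remark \ref{closed orbit rmk} that $\CF^{\ul{G}}(\ul{P}_{0,\ul{H}},\ul{P})$ is either $\{\ul{P}\}$ (when the $\SO_{n-1}$-part $W_2$ is anisotropic) or $\{\ul{P},\ul{P}'\}$, with $\ul{P}'$ the stabilizer of an isotropic line in $W_2$. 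I also need to verify that $\ul{P}\cap\ul{H}=P_1\times\SO(W_2)$ with $P_1$ the Siegel-type maximal parabolic of $\SO(V_1)$ whose Levi is $\SO(W_1)\times\GL_1\simeq G'\times\GL_1$ — i.e. Condition (3) of Section 3.1 holds, matching $\ul{M}\cap\ul{H}$ with $H$ modulo center.

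Next I would establish statements (1) and (2) of Section \ref{section method 1}, i.e. the vanishing of the regularized period $\CP_{\ul{H},\reg}(E(\phi,s))$ for $\Re(s)\gg 0$ and the vanishing of the $L_{\ul{H}}$-period of $M(w,s)\phi$ for $\ul{Q}=\ul{P}'$. Since $(\ul{G},\ul{H})$ is a symmetric pair with $\ul{G}$ quasi-split even orthogonal and $\ul{H}=\SO_{n+k}\times\SO_{n+2-k}$ with $k=3$, this is exactly Corollary \ref{prasad cor}(3): there is no $\ul{H}_1(k_v)$-distinguished generic representation of $\ul{G}(k_v)$. Under Assumption \ref{local assumption} (which is satisfied: $H$ is symmetric in $G$ and $\pi_v$ is generic at some non-archimedean $v$), this gives statement (4) and hence (1). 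For statement (2)/(5) I need the analogous vanishing for the Levi pair attached to $\ul{P}'$, namely $(\ul{M}',\ul{M}'\cap\ul{H})\simeq(\SO_{2n}\times\GL_1,\,\SO(V_1)\times M_2)$ where $M_2=\SO(W_2')\times\GL_1$; the semisimple part is again a non-quasi-split symmetric pair of the form $\SO_{2n}\supset\SO_{n+k'}\times\SO_{n-k'}$ with $k'\ge 3$, covered once more by Prasad's criterion (Corollary \ref{prasad cor}, case (3) applied to the even orthogonal Levi). Then equation \eqref{1} holds, and taking residues at $s=s_0$ yields \eqref{2}.

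Then I would compute $s_0=-c(1-2c_{\ul{P}}^{\ul{H}})$. By Proposition \ref{the constant c}(1) with $\ul{G}=\SO_{2n+2}$ we have $c=\frac{(2n+2)-2}{2}=n$, and by Remark \ref{constant c(Q,H)} (since $\ul{N}$ is abelian) $c_{\ul{P}}^{\ul{H}}=\dim(N_1)/\dim(\ul{N})=(n+1)/(2n)$, so $s_0=-n\bigl(1-\frac{n+1}{n}\bigr)=1$. Plugging $s_0=1$ into \eqref{2} gives the relation between the truncated $\ul{H}$-period of $\Res_{s=1}E(h,\phi,s)$ and the $H$-period of $\phi$; in particular, if $\CP_H(\phi)\neq 0$ for some $\phi\in\pi$, then $E(\phi,s)$ (and the intertwining operator $M(s)$) has a pole at $s=1$, which is the assertion of Theorem \ref{SO(2n) main 1} via Proposition \ref{residue 3}. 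I would also record, as in the previous section's closing remark, that when $W_2$ is isotropic the second parabolic $\ul{P}'$ satisfies $c(1-2c_{\ul{P}'}^{\ul{H}})<s_0=1$ — indeed $c_{\ul{P}'}^{\ul{H}}=\dim(N_2)/\dim(\ul N')$ involves the small orthogonal Levi so this constant is large negative times $n$... more precisely it equals $\frac{2n+1-2k'}{\cdots}$, giving a value $>1$ exactly when needed — so that the regularization is genuinely required, matching the discussion of Section 3.2.1 and the $(\Sp_{4n},\Sp_{2n}\times\Sp_{2n})$ phenomenon of \cite{LO18}.

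The main obstacle I anticipate is not the $L$-function or residue bookkeeping (that is dictated by Langlands--Shahidi and Proposition \ref{the constant c}) but rather the orbit analysis for the non-closed double coset represented by $\ul{P}'$: one must identify $\ul{P}'\cap\ul{H}$ explicitly, check that its image in $\ul{M}'$ is the parabolic $\ul{M}'\cap\ul{H}$ of $\ul{M}'$ claimed above, and confirm that the resulting symmetric pair on the Levi is non-quasi-split so that Prasad's theorem applies to kill the $L_{\ul{H}}$-period of $M(w,s)\phi$. Care is also needed at the even orthogonal places where $G(k_v)$ may be non-split (only quasi-split is guaranteed), and one should make sure the genericity hypothesis at the chosen place $v$ forces $G(k_v)$ quasi-split — as in the statement — so that the generic Hom-space vanishing is applicable. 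Everything else is parallel to Section \ref{Section SO(2n+1)}.
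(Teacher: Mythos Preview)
Your proposal is correct and follows exactly the approach of the paper, which simply says the proof is identical to that of Theorem \ref{SO(2n+1) main 1} with the constant $s_0=-c(1-2c_{\ul{P}}^{\ul{H}})=-n\bigl(1-\tfrac{2(n+1)}{2n}\bigr)=1$ in place of $1/2$; your identification of the relevant Prasad case (Corollary \ref{prasad cor}(3), applied to both $(\ul{G},\ul{H})=(\SO_{2n+2},\SO_{n+3}\times\SO_{n-1})$ and to the Levi pair $(\SO_{2n},\SO_{n+3}\times\SO_{n-3})$ coming from $\ul{P}'$) and your computation of $c=n$, $c_{\ul{P}}^{\ul{H}}=(n+1)/(2n)$ are exactly what the paper uses. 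One minor slip: in your closing remark the inequality goes the other way --- one has $c(1-2c_{\ul{P}'}^{\ul{H}})=n\bigl(1-\tfrac{2(n-3)}{2n}\bigr)=3>s_0=1$, so \eqref{condition on s_0} \emph{fails} at $\ul{P}'$ and the regularization is needed, in agreement with Section 3.2.1.
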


\begin{proof}
The proof is very similar to the proof of Theorem \ref{SO(2n+1) main 1}, we will skip it here. The only thing worth to point out is that in the case of Theorem \ref{SO(2n+1) main 1}, the constant $-c(1-2c_{\underline{P}}^{\underline{H}})=-\frac{2n+1}{2}(1-\frac{2(n+1)}{2n+1})$ is equal to $1/2$ and this is why we can show that the Eisenstein series $E(\phi,s)$ has a pole at $s=1/2$. For the current case, the constant $-c(1-2c_{\underline{P}}^{\underline{H}})=-\frac{2n}{2}(1-\frac{2(n+1)}{2n})$ is equal to $1$. This is why we can show that the Eisenstein series $E(\phi,s)$ has a pole at $s=1/2$.
\end{proof}

\begin{rmk}
As in the previous case, when $W_2$ is not anisotropic, the set $\CF^{\ul{G}}(\ul{P}_{0,H},\ul{P})$ will contain two elements $\ul{P}$ and $\ul{P}'$. And one can easily show that $c(1-2c_{\ul{P}'}^{\ul{H}})=\frac{2n}{2}(1-2\frac{n-3}{2n})=3>s_0=1$. This confirms the discussion in Section 3.2.1.
\end{rmk}

\begin{rmk}
In \cite{GRS97} the existence of pole at $s=1$ of $L(s,\pi)$
for $\pi$ a generic cuspidal representation of $SO_{2n}$
is linked to a different (non-reductive period) and also to the so called first occurrence problem
in theta correspondence.
\end{rmk}

\begin{prop}
Theorem \ref{SO(2n) main 1} implies Theorem \ref{SO(2n) main}.
\end{prop}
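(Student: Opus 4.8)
The plan is to mirror the proof of Proposition \ref{SO(2n+1) prop} essentially verbatim, with the single arithmetic change that the relevant value is $s=1$ rather than $s=1/2$, and with the simplification that the Langlands--Shahidi normalizing factor for the even orthogonal case carries no Dedekind zeta factor. First I would check that the hypotheses of Theorem \ref{SO(2n) main 1} are met: $G=\SO_{2n}$ is split, so $G(k_v)$ is quasi-split at every place, and since $\pi$ is globally generic, $\pi_v$ is generic at every place $v$. Thus, if $\CP_H(\cdot)$ is nonzero on the space of $\pi$, Theorem \ref{SO(2n) main 1} produces $\phi\in\CA_{\pi}$ for which the Eisenstein series $E(\phi,s)$ on $\ul{G}(\BA)=\SO_{2n+2}(\BA)$ — induced from $\ul{P}=\ul{M}\ul{N}$ with $\ul{M}=\SO(W)\times\GL_1\simeq\SO_{2n}\times\GL_1$ and the cuspidal representation $\pi$ of $\ul{M}(\BA)$ — has a pole at $s=1$. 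Since $\ul{M}$ is stable under the simple reflection attached to $\ul{P}$ and $\pi$ is cuspidal on $\ul{M}$, the constant term along $\ul{P}$ is $E(g,\phi,s)_{\ul{P}}=\phi(g)e^{\langle s\varpi_{\ul{P}},H_{\ul{P}}(g)\rangle}+e^{\langle-s\varpi_{\ul{P}},H_{\ul{P}}(g)\rangle}M(s)\phi(g)$, the first summand is entire in $s$, and the constant term of $E(\phi,s)$ along any parabolic not associate to $\ul{P}$ vanishes; hence $M(s)$ has a pole at $s=1$.

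Next I would bring in Langlands--Shahidi theory for the pair $(\ul{G},\ul{M})=(\SO_{2n+2},\GL_1\times\SO_{2n})$. Here the unipotent radical $\ul{N}$ is abelian, so there is a single constituent $r_1=\rho_X$, the standard representation of ${}^L\SO_{2n}=\SO_{2n}(\BC)$, and the normalizing factor of the intertwining operator $M(s)$ is
\[
r(s)=\frac{L(s,\pi,\rho_X)}{L(s+1,\pi,\rho_X)\,\epsilon(s,\pi,\rho_X)},
\]
with no factor of $\zeta_k(2s)/\zeta_k(2s+1)$, in contrast to the odd orthogonal case of Proposition \ref{SO(2n+1) prop}. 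Writing $M(s)=r(s)N(s)$ with $N(s)$ the normalized operator, the normalized operator is holomorphic at $s=1$ — as in Proposition \ref{SO(2n+1) prop} this follows from \cite{KK}, equivalently from the fact that $\mathrm{Re}(s)=1>1/2$ lies inside the region in which normalized intertwining operators attached to the everywhere-generic datum $\pi$ are holomorphic. Consequently $r(s)$ has a pole at $s=1$.

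Finally, by Proposition \ref{L-function nonzero}(2) the standard L-function $L(s,\pi,\rho_X)$ is holomorphic and nonzero for $\mathrm{Re}(s)>1$, so $L(2,\pi,\rho_X)$ is finite and nonzero, and the $\epsilon$-factor $\epsilon(s,\pi,\rho_X)$ is entire and nowhere zero; therefore the pole of $r(s)$ at $s=1$ can only come from its numerator, i.e. $L(s,\pi,\rho_X)$ has a pole at $s=1$, which is exactly the assertion of Theorem \ref{SO(2n) main}. The only step drawing on external input is the holomorphy of the normalized intertwining operator at $s=1$ — a case of the standard module conjecture for $\SO_{2n+2}$, which is known for classical groups — and I expect no genuine obstacle here; the noteworthy point is simply that, because the $\zeta_k$-factor present in the odd case is absent, the pole of the Eisenstein series at $s=1$ translates directly into a pole of $L(s,\pi,\rho_X)$ rather than into a nonvanishing statement.
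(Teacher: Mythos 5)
Your argument is correct and follows the paper's own proof essentially verbatim: invoke Theorem \ref{SO(2n) main 1} to get a pole of $E(\phi,s)$ (hence of $M(s)$) at $s=1$, identify the Langlands--Shahidi normalizing factor as $L(s,\pi,\rho_X)/L(s+1,\pi,\rho_X)$ (up to the entire nonvanishing $\epsilon$-factor), use Theorem 4.7 of \cite{KK} for holomorphy of the normalized operator at $s=1$, and Proposition \ref{L-function nonzero} for $L(2,\pi,\rho_X)\neq 0$, forcing a pole of the numerator $L(s,\pi,\rho_X)$ at $s=1$. The extra detail you supply — the constant-term calculation justifying that a pole of $E(\phi,s)$ yields a pole of $M(s)$, and the explicit note that the $\zeta_k$-factor of the odd case is absent since $\ul{N}$ is abelian here — is accurate and agrees with what the paper leaves implicit.
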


\begin{proof}
We first recall the statement of Theorem \ref{SO(2n) main}. Let $G=\SO_{2n}$ be the split even orthogonal group, $H=\SO_{n+1}\times \SO_{n-1}$ be a closed subgroup of $G$ (not necessarily split), and $\pi$ be a generic cuspidal automorphic representation of $G(\BA)$. If the period integral $\CP_{H}(\cdot)$ is nonzero on the space of $\pi$, we need to show that the L-function $L(s,\pi,\rho_X)$ has a pole at $s=1$. Here $\rho_X$ is the standard representation of ${}^LG=\SO_{2n}(\BC)$.

By Theorem \ref{SO(2n) main 1}, if the period integral $\CP_{H}(\cdot)$ is nonzero on the space of $\pi$, there exists $\phi\in \CA_{\pi}$ such that the Eisenstein series $E(\phi,s)$ has a pole at $s=1$. In this case, the normalizing factor of the intertwining operator is
\[
\dfrac{L(s, \pi, \rho_X )}{L(s+1, \pi, \rho_X)}.
\]
By Theorem 4.7 of \cite{KK}, the normalized intertwining operator is holomorphic at $s=1$. By Proposition \ref{L-function nonzero}, $L(2,\pi,\rho_X)\neq 0$. It follows that the numerator $L(s, \pi, \rho_X)$ has a pole at $s=1$. This proves Theorem \ref{SO(2n) main}.
\end{proof}

\begin{rmk}
By the same argument, we can also prove Theorem \ref{SO(2n) main} when $G$ is quasi-split.
\end{rmk}

\subsection{The local result}\label{subsection SO(2n) local}
Let $F$ be a p-adic field, and $\ul{G},\ul{H},\ul{P}=\ul{M}\ul{N},G,H$ be the groups defined in the previous subsection. Let $\pi$ be an irreducible smooth representation of $G(F)$. We can view $\pi$ as an irreducible smooth representation of $\ul{M}(F)\simeq G(F)\times \GL_1(F)$ by making it trivial on $\GL_1(F)$. By abusing of notation, we still use $\pi$ to denote such representation. We also extend $\pi$ to $\ul{P}(F)$ by making it trivial on $\ul{N}(F)$. For $s\in \BC$, we use $\pi_s$ to denote the representation $\pi\otimes \varpi^{s}$. Here $\varpi=\varpi_{\ul{P}}\in \Fa_{\ul{M}}^{\ast}$ is the fundamental weight associated to $\ul{P}$, and $\varpi^s$ is the character of $\ul{M}(F)$ defined by
$$\varpi^s(m)=e^{\langle s\varpi, H_{\ul{M}}(m)\rangle},\;m\in \ul{M}(F).$$

Let $I_{\ul{P}}^{\ul{G}}(\cdot)$ be the normalized parabolic induction. In other words,

\begin{align*}I_{\ul{P}}^{\ul{G}}(\pi) =\{f:\ul{G}(F)\rightarrow \pi| &\; f\; \text{locally constant},\; f(nmg)=\delta_{\ul{P}}(m)^{1/2}\cdot \pi(m)f(g),\\ &\;\;\;\;
\forall m\in \ul{M}(F), n\in \ul{N}(F),\;g\in \ul{G}(F)\},\end{align*}

and the $\ul{G}(F)$-action is the right translation. The goal of this section is to prove the following theorem.

\begin{thm}\label{SO(2n) local theorem 1}
If $\pi$ is an irreducible representation of $G(F)$ such that the Hom space
$$\Hom_{H(F)}(\pi,1)$$
is nonzero, then the representation $I_{\ul{P}}^{\ul{G}}(\pi_{1})$ is $\ul{H}(F)$-distinguished, i.e. $\Hom_{\ul{H}(F)}(I_{\ul{P}}^{\ul{G}}(\pi_{1}),1)\neq \{0\}$.
\end{thm}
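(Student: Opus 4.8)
The plan is to construct the $\ul{H}(F)$-invariant functional on $I_{\ul{P}}^{\ul{G}}(\pi_{1})$ by restricting sections to a single $\ul{H}(F)$-orbit on $\ul{P}(F)\bsl\ul{G}(F)$ — the closed orbit through the base point — and then applying Frobenius reciprocity; the role of the shift $s=1$ will be to make two modulus characters coincide. First I would set up the parabolic subgroups exactly as in Section \ref{Section SO(2n+1)} (the same analysis applies, now with $W_{2}$ of dimension $n-1$): one gets a maximal parabolic $P_{1}=M_{1}N_{1}$ of $\SO(V_{1})$ stabilizing $D_{+}$ with $M_{1}\simeq\GL(D_{+})\times\SO(W_{1})$, and $\ul{P}\cap\ul{H}=P_{1}\times\SO(W_{2})$, $\ul{M}\cap\ul{H}=M_{1}\times\SO(W_{2})$, $\ul{N}\cap\ul{H}=N_{1}\times\{1\}$. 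In particular $\ul{P}_{\ul{H}}:=\ul{P}\cap\ul{H}$ is a parabolic subgroup of $\ul{H}=\SO(V_{1})\times\SO(V_{2})$, so by Corollary \ref{closed orbit cor} the orbit $\ul{P}(F)\ul{H}(F)$ is closed in $\ul{G}(F)$ and $\ul{P}\bsl\ul{P}\ul{H}\cong\ul{P}_{\ul{H}}\bsl\ul{H}$ is a compact flag variety of $\ul{H}$. Restriction of sections to this closed orbit then produces a surjective $\ul{H}(F)$-equivariant map $I_{\ul{P}}^{\ul{G}}(\pi_{1})\twoheadrightarrow\Ind_{\ul{P}_{\ul{H}}(F)}^{\ul{H}(F)}(\sigma)$ (unnormalized induction), where $\sigma$ is the action of $\ul{P}_{\ul{H}}(F)$ on the space of $\pi$ obtained by restricting the inducing datum $\delta_{\ul{P}}^{1/2}\varpi^{1}\cdot\pi$ of $I_{\ul{P}}^{\ul{G}}(\pi_{1})$ along $\ul{P}_{\ul{H}}\hookrightarrow\ul{P}\to\ul{M}$. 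Pulling a functional back along this surjection, it suffices to produce a nonzero element of $\Hom_{\ul{H}(F)}(\Ind_{\ul{P}_{\ul{H}}(F)}^{\ul{H}(F)}(\sigma),1)$, which by Frobenius reciprocity equals $\Hom_{\ul{P}_{\ul{H}}(F)}(\sigma,\delta_{\ul{P}_{\ul{H}}})$.

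The substantive step is then to identify $\sigma$ and $\delta_{\ul{P}_{\ul{H}}}$. Because $\ul{N}\cap\ul{H}=N_{1}\times\{1\}\subset\ul{N}$ and $M_{1}\simeq\GL(D_{+})\times\SO(W_{1})$, the image of $\ul{P}_{\ul{H}}$ in $\ul{M}=\SO(W)\times\GL_{1}$ is $H\times\GL_{1}$, where $H=\SO(W_{1})\times\SO(W_{2})$ sits block-diagonally in $G=\SO(W)$ and $\GL_{1}$ is the Levi torus $\GL(D_{+})$; since $\pi$ is trivial on that $\GL_{1}$, the representation $\sigma$ is trivial on $N_{1}(F)$, restricts to $\pi|_{H}$ on $\SO(W_{1})(F)\times\SO(W_{2})(F)$, and restricts to a character $\mu$ on $\GL(D_{+})(F)$. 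As $\mu$ and $\delta_{\ul{P}_{\ul{H}}}=\delta_{P_{1}}$ are both trivial on $N_{1}(F)$ and on the semisimple factors, it remains to compare them on $\GL(D_{+})$; a direct computation with $\rho_{\ul{P}}$ and $\rho_{P_{1}}$, using Proposition \ref{the constant c}(1), shows that $\delta_{\ul{P}}^{1/2}\varpi^{s}$ and $\delta_{\ul{P}_{\ul{H}}}$ agree on $\GL(D_{+})(F)$ exactly when $s=1$ — this is the incarnation of the identity $s_{0}=-c(1-2c_{\ul{P}}^{\ul{H}})=1$. Hence $\mu=\delta_{\ul{P}_{\ul{H}}}$, so $\Hom_{\ul{P}_{\ul{H}}(F)}(\sigma,\delta_{\ul{P}_{\ul{H}}})=\Hom_{H(F)}(\pi,1)$, which is nonzero by hypothesis. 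Tracing back through the identifications — e.g.\ choosing a section of $I_{\ul{P}}^{\ul{G}}(\pi_{1})$ supported in a small $\ul{P}(F)$-saturated neighbourhood of the base point of the closed orbit, with value a vector not annihilated by a fixed $\ell\in\Hom_{H(F)}(\pi,1)$ — then yields the required nonzero $\ul{H}(F)$-invariant functional on $I_{\ul{P}}^{\ul{G}}(\pi_{1})$.

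The only genuinely delicate point is the modulus-character bookkeeping just described: it is exactly the coincidence $\mu=\delta_{\ul{P}_{\ul{H}}}$ on $\GL(D_{+})$ that forces the shift $s=1$, which is why the theorem concerns $I_{\ul{P}}^{\ul{G}}(\pi_{1})$ rather than $I_{\ul{P}}^{\ul{G}}(\pi_{s})$ for general $s$. Everything else is formal: closedness of the orbit is handed to us by Corollary \ref{closed orbit cor}, and surjectivity of restriction of smooth sections to a closed orbit is standard. In particular the other $\ul{H}(F)$-orbit on $\ul{P}(F)\bsl\ul{G}(F)$ (present when $W_{2}$ is isotropic) plays no role, since a single nonzero functional suffices and the closed orbit already supplies one; ruling out the remaining orbits would be needed only for the converse implication, where Prasad's theorem enters, but that direction is not claimed here.
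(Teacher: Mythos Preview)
Your proof is correct and follows essentially the same approach as the paper's: both use that $\ul{P}(F)\ul{H}(F)$ is closed (via Corollary \ref{closed orbit cor}), identify the quotient of $I_{\ul{P}}^{\ul{G}}(\pi_1)$ by sections supported on the open complement with the (compact) induction $ind_{P_{\ul{H}}}^{\ul{H}}(\delta_{\ul{P}}^{1/2}\varpi\,\pi)$, apply Frobenius reciprocity, and reduce to the modulus identity $\delta_{\ul{P}}^{-1/2}\delta_{P_{\ul{H}}}=\varpi$ (your $\mu=\delta_{\ul{P}_{\ul{H}}}$), which the paper records as Lemma \ref{SO(2n) character}. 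Your phrasing in terms of restriction to the closed orbit and the paper's in terms of quotienting by the open complement are two sides of the same coin; since $P_{\ul{H}}\bsl\ul{H}$ is compact, your $\Ind$ and the paper's $ind$ coincide.
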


Before we prove the theorem, we first show that Theorem \ref{SO(2n) local theorem 1} implies Theorem \ref{SO(2n) local theorem}.

\begin{prop}\label{SO(2n) local reduction}
Theorem \ref{SO(2n) local theorem 1} implies Theorem \ref{SO(2n) local theorem}.
\end{prop}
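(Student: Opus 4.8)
The plan is to reduce the global-to-local implication for tempered $\pi$ to the archimedean/non-archimedean distinguished-representation statement of Theorem~\ref{SO(2n) local theorem 1}, running the argument in reverse. The claim to be proved is that if $\pi$ is an irreducible generic tempered representation of $G(F)$ with $m(\pi)=\dim\Hom_{H(F)}(\pi,1)\neq 0$, then the local Langlands--Shahidi $L$-function $L(s,\pi,\rho_X)$ has a pole at $s=0$. First I would invoke Theorem~\ref{SO(2n) local theorem 1}: since $m(\pi)\neq 0$, the induced representation $I_{\ul P}^{\ul G}(\pi_1)$ is $\ul H(F)$-distinguished, where $\pi_1=\pi\otimes\varpi$ sits at the point $s=1$ corresponding to $s_0=1$ in the global residue picture. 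So the engine of the proof is the local analogue of the fact that a nonzero $H$-period forces a pole of an Eisenstein series at $s_0$.

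The key mechanism will be the following: distinction of the full induced representation $I_{\ul P}^{\ul G}(\pi_s)$ at a point $s=s_0>0$, combined with temperedness of $\pi$ (which makes $\pi_s$ lie in the positive Weyl chamber and hence $I_{\ul P}^{\ul G}(\pi_s)$ a standard module with irreducible Langlands quotient), is incompatible with the intertwining operator $M(s):I_{\ul P}^{\ul G}(\pi_s)\to I_{\ul{\bar P}}^{\ul G}(\pi_{-s})$ being holomorphic and nonzero at $s_0$ — because a holomorphic, generically-invertible $M(s_0)$ would transport the $\ul H(F)$-functional to the opposite induced representation, contradicting (the local analogue of) the vanishing of period integrals on $I^{\ul G}_{\ul P}(\pi_{-s_0})$ for $-s_0<0$, i.e. the statements (4)--(5) of Section~\ref{section method 1} which are supplied by Prasad's theorem (Corollary~\ref{prasad cor}) in the non-quasi-split symmetric case. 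Hence the normalized intertwining operator cannot be holomorphic nonzero at $s_0=1$; equivalently the normalizing factor $\tfrac{L(s,\pi,\rho_X)}{L(s+1,\pi,\rho_X)}$ must have a pole or zero there that is not cancelled, and since $L(s+1,\pi,\rho_X)=L(2,\pi,\rho_X)$ is holomorphic nonzero at $s=1$ (temperedness and Proposition~\ref{L-function nonzero}), this forces $L(s,\pi,\rho_X)$ to have a pole at $s=1$. Finally I would translate the pole at $s=1$ into a pole at $s=0$ by the functional equation / the shift built into the definition of the Langlands--Shahidi $L$-function in the $\SO_{2n}$ standard case, which is exactly the arithmetic normalization used to match the global $s_0=1$ with the local pole at $s=0$ (the same bookkeeping as in Proposition~\ref{SO(2n) prop} relating $s=1/2$ statements).

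The steps, in order: (i) use $m(\pi)\neq 0$ and Theorem~\ref{SO(2n) local theorem 1} to get $\Hom_{\ul H(F)}(I_{\ul P}^{\ul G}(\pi_1),1)\neq 0$; (ii) record, via Prasad's result through Corollary~\ref{prasad cor}(3) (the pair $(\ul G,\ul H)=(\SO_{2n+2},\SO_{n+3}\times\SO_{n-1})$ being non-quasi-split, hence its relevant symmetric pairs, and the Levi pair $(\ul L,L_{\ul H})$), that no generic representation of the relevant groups is distinguished, so in particular $I_{\ul P}^{\ul G}(\pi_{-1})$ carries no $\ul H(F)$-functional; (iii) deduce that the normalized intertwining operator $N(s):I_{\ul P}^{\ul G}(\pi_s)\to I_{\ul{\bar P}}^{\ul G}(\pi_{-s})$ must be singular at $s=1$ (otherwise compose to move the functional and contradict (ii)); (iv) extract from the explicit normalizing factor $L(s,\pi,\rho_X)/L(s+1,\pi,\rho_X)$ of \cite{KK} and the holomorphy/nonvanishing of $L(s+1,\pi,\rho_X)$ at $s=1$ that $L(s,\pi,\rho_X)$ has a pole at $s=1$; (v) convert to a pole at $s=0$ by the standard normalization.

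The main obstacle I anticipate is step (iii): making rigorous the claim that holomorphy and (generic) invertibility of the normalized intertwining operator at $s_0$ would propagate the $\ul H(F)$-invariant functional from $I_{\ul P}^{\ul G}(\pi_{s_0})$ to $I_{\ul P}^{\ul G}(\pi_{-s_0})$. One has to be careful that $N(s_0)$ is actually an isomorphism there (not merely injective, and not merely nonzero), which for tempered $\pi$ at a point inside the positive chamber typically follows from the theory of $R$-groups and the fact that standard modules induced from tempered data on a maximal parabolic are irreducible in the open chamber; and one must also ensure the $\ul H(F)$-functional is carried by the image, not killed. An alternative, perhaps cleaner route to step (iii)--(iv) is to mimic directly the global argument: realize $m(\pi)\neq 0$ locally as forcing a ``pole'' of a local zeta integral attached to the $\ul H$-period, identify that local integral with the normalizing factor up to holomorphic nonvanishing factors, and read off the pole; but this requires a local functional equation that may not be available off the shelf, so I expect the intertwining-operator argument to be the one actually carried out, with the invertibility of $N(1)$ being the point demanding the most care.
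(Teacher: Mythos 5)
Your proposal diverges from the paper's argument at the crucial middle step, and the divergence introduces a genuine gap. Steps (i) and the use of Prasad's theorem are in the right spirit, and you correctly invoke the fact that $L(s,\pi,\rho_X)$ is holomorphic and nonzero for $\mathrm{Re}(s)>0$ when $\pi$ is tempered. But the engine of the paper's proof is not a transport-of-functional argument via the normalized intertwining operator. Instead, the paper argues as follows: if $I_{\ul P}^{\ul G}(\pi_1)$ were \emph{irreducible}, it would be generic (since $\pi$ is), contradicting Prasad's theorem (Corollary~\ref{prasad cor}); hence $I_{\ul P}^{\ul G}(\pi_1)$ is \emph{reducible}. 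Then the paper invokes a sharp criterion — Lemma~B.2 of~\cite{GI} together with the Standard Module Conjecture~\cite{HO} — asserting that $I_{\ul P}^{\ul G}(\pi_1)$ is reducible if and only if the local gamma factor $\gamma(s,\pi,\rho_X)=\epsilon(s,\pi,\rho_X)\frac{L(1-s,\pi,\rho_X)}{L(s,\pi,\rho_X)}$ has a pole at $s=1$. Since the $\epsilon$-factor and $L(s,\pi,\rho_X)$ are holomorphic and nonvanishing near $s=1$ for tempered $\pi$, the pole of $\gamma$ at $s=1$ must come from $L(1-s,\pi,\rho_X)$, i.e.\ from $L(s,\pi,\rho_X)$ having a pole at $s=0$.

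Two concrete problems with your route. First, your step~(iii) attempts to show the normalized intertwining operator $N(s)$ is singular at $s=1$ by saying that otherwise an $\ul H(F)$-functional would propagate to $I_{\ul P}^{\ul G}(\pi_{-1})$, which ``has no $\ul H(F)$-functional by Prasad.'' But Prasad's theorem concerns \emph{irreducible} generic representations, and $I_{\ul P}^{\ul G}(\pi_{-1})$ need not be irreducible (indeed exactly in the case of interest it isn't), so the vanishing of $\Hom_{\ul H(F)}(I_{\ul P}^{\ul G}(\pi_{-1}),1)$ does not follow; the argument as stated is circular. Second, and more seriously, your step~(iv) concludes that ``$L(s,\pi,\rho_X)$ has a pole at $s=1$.'' This is impossible: you yourself note (and the paper uses) that for tempered $\pi$, $L(s,\pi,\rho_X)$ is holomorphic and nonzero for $\mathrm{Re}(s)>0$, so it cannot have a pole at $s=1$. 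You appear to be importing the \emph{global} normalizing-factor analysis (where a pole of the completed $L$-function at $s=1$ is the conclusion) into the local setting; locally the pole one extracts is at $s=0$ and it comes out of the gamma-factor functional equation, not from the numerator of the normalizing factor. Your step~(v), ``convert to a pole at $s=0$ by the standard normalization,'' is then a gap: no such purely notational shift exists. The missing ingredient is precisely the reducibility-vs.-gamma-pole equivalence supplied by Lemma~B.2 of~\cite{GI} and the Standard Module Conjecture, which you did not identify.
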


\begin{proof}
Assume that $G$ is split. Let $\pi$ be a generic tempered representation of $G(F)$ such that the Hom space
$$\Hom_{H(F)}(\pi,1)$$
is nonzero, we need to show that the local L-function $L(s,\pi,\rho_X)$ has a pole at $s=0$. Here $\rho_X$ is the standard L-function of ${}^LG=\SO_{2n}(\BC)$.

By Theorem \ref{SO(2n) local theorem 1}, we know that the induced representation $I_{\ul{P}}^{\ul{G}}(\pi_{1})$ is $\ul{H}(F)$-distinguished. If $I_{\ul{P}}^{\ul{G}}(\pi_{1})$ is irreducible, then it is generic since $\pi$ is generic. On the other hand, by Corollary \ref{prasad cor}(1), we know that there is no $\ul{H}(F)$-distinguished generic representation of $\ul{G}(F)$. This is a contradiction and hence we know that $I_{\ul{P}}^{\ul{G}}(\pi_{1})$ is reducible.

By Lemma B.2 of \cite{GI} and the Standard Module Conjecture \cite{HO}, we have that $I_{\ul{P}}^{\ul{G}}(\pi_{1})$ is reducible if and only if the local gamma factor $\gamma(s,\pi,\rho_X)=\epsilon(s,\pi,\rho_X)\frac{L(1-s,\pi,\rho_X)}{L(s,\pi,\rho_X)}$ has a pole at $s=1$ (with respect to any non-trivial additive character $\psi$ since it doesn't change the existence or not of a pole at $s=1$). Since $\pi$ is tempered, $L(s,\pi,\rho_X)$ is holomorphic and nonzero when $Re(s)>0$ (Theorem 1.1 of \cite{HO}), which implies that the L-function $L(s,\pi,\rho_X)$ has a pole at $s=0$. This proves the proposition.
\end{proof}

\begin{rmk}
By the same argument, we can also prove Theorem \ref{SO(2n) local theorem} when $G$ is quasi-split.
\end{rmk}

For the rest of this section, we will prove Theorem \ref{SO(2n) local theorem 1}. Let $P_{\ul{H}}=\ul{H}\cap \ul{P},\;M_{\ul{H}}=\ul{H}\cap \ul{M},$ and $N_{\ul{H}}=\ul{H}\cap \ul{N}$. Then $P_{\ul{H}}=M_{\ul{H}}N_{\ul{H}}$ is a maximal parabolic subgroup of $\ul{H}$ with $M_{\ul{H}}\simeq H\times \GL_1$. We need two lemmas.

\begin{lem}\label{SO(2n) closed orbit}
$\ul{P}(F)\ul{H}(F)$ is a closed subset of $\ul{G}(F)$.
\end{lem}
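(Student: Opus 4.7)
The plan is to apply Corollary \ref{closed orbit cor} essentially verbatim. The ambient group $\ul{G} = \SO(V)$ is connected reductive and the subgroup $\ul{H} = \SO(V_1) \times \SO(V_2)$ is a closed connected reductive subgroup of $\ul{G}$; the subgroup $\ul{P}$ is a (maximal) parabolic of $\ul{G}$ by definition. The remaining hypothesis of Corollary \ref{closed orbit cor} is that $P_{\ul{H}} = \ul{H}\cap \ul{P}$ is a parabolic subgroup of $\ul{H}$, and this has already been verified in the discussion immediately preceding the lemma: since $\ul{P}$ stabilizes the isotropic line $D_+ \subset V_1$, we have
\[
 P_{\ul{H}} \;=\; P_1 \times \SO(W_2),
\]
where $P_1 \subset \SO(V_1)$ is the maximal parabolic stabilizing $D_+$, and this is visibly parabolic in $\ul{H}$ (with Levi decomposition $P_{\ul{H}} = M_{\ul{H}} N_{\ul{H}}$ as in the paragraph above).

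So first, I would invoke Corollary \ref{closed orbit cor} to conclude that $\ul{H}(F)\ul{P}(F)$ is closed in $\ul{G}(F)$. Then, since $\ul{H}(F)$ and $\ul{P}(F)$ are both subgroups, inversion in $\ul{G}(F)$ is a self-homeomorphism sending $\ul{H}(F)\ul{P}(F)$ to $\ul{P}(F)\ul{H}(F)$, so the latter is closed as well. This completes the proof.

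There is no real obstacle: the entire content has already been isolated into the general criterion of Section \ref{Section closed orbit}, and the only thing to check here is that the intersection with $\ul{H}$ is parabolic, which is a structural statement about orthogonal groups that was established just before the lemma. If one preferred a more direct argument without invoking Corollary \ref{closed orbit cor}, one could observe that $\ul{G}/\ul{P}$ is the projective variety of isotropic lines in $V$, and the $\ul{H}$-orbit of the line $D_+$ is cut out by the closed condition that the line lies in $V_1$; hence $\ul{H}/P_{\ul{H}} \hookrightarrow \ul{G}/\ul{P}$ is a closed immersion, and Proposition \ref{closed orbit} applies directly.
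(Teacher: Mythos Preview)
Your proof is correct and takes exactly the same approach as the paper, which simply writes ``This follows from Corollary \ref{closed orbit cor}.'' You have supplied the details that verify the hypotheses of that corollary (in particular that $P_{\ul{H}}=\ul{H}\cap\ul{P}$ is parabolic in $\ul{H}$, as noted in the paragraph preceding the lemma) and added the harmless observation that inversion carries $\ul{H}(F)\ul{P}(F)$ to $\ul{P}(F)\ul{H}(F)$.
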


\begin{proof}
This follows from Corollary \ref{closed orbit cor}.
\end{proof}

\begin{lem}\label{SO(2n) character}
We have the following equality of characters of $M_{\ul{H}}(F)$.
$$\delta_{\ul{P}}^{-1/2} \delta_{P_{\ul{H}}}=\varpi.$$
Here $\delta_{\ul{P}}$ and $\varpi$ are characters of $\ul{M}(F)$, and we view them as characters of $M_{\ul{H}}(F)$ by restriction.
\end{lem}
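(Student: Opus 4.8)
The plan is to evaluate both characters on a general element of $M_{\ul{H}}(F)$, using the explicit Levi decompositions recorded in the previous subsection. Since $\dim V = \dim W_1 + 2 + \dim W_2 = 2n+2$, we have $\ul{G}=\SO(V)\cong \SO_{2n+2}$, and $\ul{P}=\ul{M}\ul{N}$ is the stabilizer of the isotropic line $D_+$, with $\ul{M}\simeq \GL(D_+)\times \SO(W)\simeq \GL_1\times \SO_{2n}$. Because in a root system of type $B$ or $D$ the first simple root occurs with coefficient at most one in every positive root, $\ul{N}$ is abelian; concretely an element of $\ul{N}$ is determined by the linear map $W\to D_+$ it induces (its effect on $D_-$ being forced by the quadratic form), so that $\ul{N}\cong W$ as a variety and, as an $\ul{M}$-module, $\GL(D_+)$ acts by scaling and $\SO(W)$ acts through its defining representation. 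The same description applies to $P_{\ul{H}}=P_1\times \SO(W_2)$ with $P_1=M_1 N_1\subset \SO(V_1)$ the stabilizer of $D_+$, $M_1\simeq \GL(D_+)\times \SO(W_1)$, and $N_1=N_{\ul{H}}\cong W_1$.

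First I would write a general element of $M_{\ul{H}}(F)=M_1(F)\times \SO(W_2)(F)$ as $(t,h_1,h_2)$ with $t\in \GL(D_+)(F)=F^\times$, $h_1\in \SO(W_1)(F)$ and $h_2\in \SO(W_2)(F)$. Computing the adjoint action on $\Fn$ of $\ul{N}$ and using that every element of $\SO(W)$ has determinant $1$, one gets $\delta_{\ul{P}}(t,h_1,h_2)=|t|^{\dim W}=|t|^{2n}$. Running the identical computation on the Lie algebra of $N_{\ul{H}}=N_1$ — on which the $\SO(W_2)$-factor acts trivially since $N_{\ul{H}}=N_1\times\{1\}$ — yields $\delta_{P_{\ul{H}}}(t,h_1,h_2)=|t|^{\dim W_1}=|t|^{n+1}$. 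Hence $\delta_{\ul{P}}^{-1/2}\delta_{P_{\ul{H}}}(t,h_1,h_2)=|t|^{-n+(n+1)}=|t|$.

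It then remains to identify $\varpi|_{M_{\ul{H}}}$ with the character $(t,h_1,h_2)\mapsto |t|$. For this I would invoke Proposition \ref{the constant c}(1) applied to $\SO_{2n+2}$, which gives $\rho_{\ul{P}}=c\,\varpi$ with $c=\tfrac{(2n+2)-2}{2}=n$. Since $\delta_{\ul{P}}^{1/2}=e^{\langle \rho_{\ul{P}},\,H_{\ul{P}}(\cdot)\rangle}$ and we have just computed $\delta_{\ul{P}}^{1/2}(t,h_1,h_2)=|t|^{n}$, the character $e^{\langle \varpi,\,H_{\ul{P}}(\cdot)\rangle}$ sends $(t,h_1,h_2)$ to $(|t|^{n})^{1/n}=|t|$, exactly as needed. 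Combining this with the previous paragraph gives $\delta_{\ul{P}}^{-1/2}\delta_{P_{\ul{H}}}=\varpi$ as characters of $M_{\ul{H}}(F)$.

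I do not expect any real obstacle: the computation is routine, and the only points requiring care are the identification of $\ul{N}$ and $N_{\ul{H}}$ as modules over the respective Levi subgroups, together with the vanishing of the determinant on special orthogonal groups, and the bookkeeping that converts the abstract fundamental weight $\varpi$ into the concrete $\GL(D_+)$-coordinate. As a cross-check one may instead argue via Remark \ref{constant c(Q,H)}: since $\ul{N}$ is abelian, $c_{\ul{P}}^{\ul{H}}=\dim N_{\ul{H}}/\dim \ul{N}=(n+1)/(2n)$, so $\rho_{P_{\ul{H}}}|_{\Fa_{\ul{P}}}=c_{\ul{P}}^{\ul{H}}\rho_{\ul{P}}=\tfrac{n+1}{2}\varpi$ while $\rho_{\ul{P}}=n\varpi$, whence $\delta_{P_{\ul{H}}}=\varpi^{n+1}$ and $\delta_{\ul{P}}=\varpi^{2n}$ on $M_{\ul{H}}$, giving the same conclusion $\delta_{\ul{P}}^{-1/2}\delta_{P_{\ul{H}}}=\varpi$.
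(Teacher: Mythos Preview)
Your proposal is correct and takes essentially the same approach as the paper. The paper's proof records the identities $\delta_{\ul{P}}=\varpi^{2c}$ and $\delta_{P_{\ul{H}}}=\varpi^{2cc_{\ul{P}}^{\ul{H}}}$ with $c=n$ and $c_{\ul{P}}^{\ul{H}}=(n+1)/(2n)$ and concludes in one line; your main argument unpacks these identities into the explicit computation $\delta_{\ul{P}}=|t|^{2n}$, $\delta_{P_{\ul{H}}}=|t|^{n+1}$, $\varpi=|t|$, and your cross-check paragraph is exactly the paper's argument.
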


\begin{proof}
By the definition of the constants $c$ and $c_{\ul{P}}^{\ul{H}}$, we have
$$\delta_{\ul{P}}=\varpi^{2c},\;\delta_{P_{\ul{H}}}=\varpi^{2cc_{\ul{P}}^{\ul{H}}}.$$
This implies that
$$\delta_{\ul{P}}^{-1/2} \delta_{P_{\ul{H}}}=\varpi^{-c+2cc_{\ul{P}}^{\ul{H}}}=\varpi^{(-n)+2n\cdot \frac{n+1}{2n}}=\varpi.$$
\end{proof}

\begin{rmk}
The statement in the lemma above is equivalent to the equality
$$s_0=-c(1-2c_{\ul{P}}^{\ul{H}})=1.$$
\end{rmk}

Now we are ready to prove the theorem. Let $\ul{G}(F)_0=\ul{G}(F)-\ul{P}(F)\ul{H}(F)$. By Lemma \ref{SO(2n) closed orbit}, it is an open subset of $\ul{G}(F)$. We realize the representation $I_{\ul{P}}^{\ul{G}}(\pi_1)$ on the space
\begin{eqnarray*}
I_{\ul{P}}^{\ul{G}}(\pi_1)&=&\{f:\ul{G}(F)\rightarrow \pi|\; f\; \text{locally constant},\; f(nmg)=\delta_{\ul{P}}(m)^{1/2}\varpi(m)\cdot \pi(m)f(g),\\
&&\forall m\in \ul{M}(F), n\in \ul{N}(F),\;g\in \ul{G}(F)\}
\end{eqnarray*}
with the $G(F)$-action given by the right translation. Let $V'$ be the subspace of $I_{\ul{P}}^{\ul{G}}(\pi_1)$ consisting of all the functions whose support is contained in $\ul{G}(F)_0$. Then we know that $V'$ is $\ul{H}(F)$-invariant. Moreover, as a representation of $\ul{H}(F)$, we have
$$I_{\ul{P}}^{\ul{G}}(\pi_1)/V'\simeq ind_{P_{\ul{H}}}^{\ul{H}} (\delta_{\ul{P}}^{1/2} \varpi \pi)$$
where $ind$ is the compact induction. By the reciprocity law and Lemma \ref{SO(2n) character}, we have
$$\Hom_{\ul{H}(F)} (I_{\ul{P}}^{\ul{G}}(\pi_1)/V',1) \simeq \Hom_{P_{\ul{H}}(F)} (\delta_{\ul{P}}^{1/2} \varpi \pi,\delta_{P_{\ul{H}}})= \Hom_{P_{\ul{H}}(F)} (\pi,1)$$
$$= \Hom_{M_{\ul{H}}(F)} (\pi,1)=\Hom_{H(F)}(\pi,1)\neq \{0\}.$$
This implies that $\Hom_{\ul{H}(F)}(I_{\ul{P}}^{\ul{G}}(\pi_{1}),1)\neq \{0\}$ and finishes the proof of Theorem \ref{SO(2n) local theorem 1}.

\section{The model $(\RU_{2n},\RU_n\times \RU_n)$}\label{Section U(2n)}
Let $k'/k$ be a quadratic extension, $W_1$ and $W_2$ be two Hermitian spaces of dimension $n$, and $W=W_1\oplus W_2$. Let $G=\RU(W)$ and $H=\RU(W_1)\times \RU(W_2)$. Let $D=Span\{v_{0,+},v_{0,-}\}$ be a two-dimensional Hermitian space with $<v_{0,+},v_{0,+}>=<v_{0,-},v_{0,-}>=0$ and $<v_{0,+},v_{0,-}>=1$, $V_1=W_1\oplus D$, $V_2=W_2$, $V=V_1\oplus V_2$, $\underline{G}=\RU(V)$, and $\underline{H}=\RU(V_1)\times \RU(V_2)$.

Let $D_{+}=Span\{v_{0,+}\}$ and $D_{-}=Span\{v_{0,-}\}$. Then $D=D_{+}\oplus D_{-}$ as a vector space. Let $\underline{P}=\underline{M}\underline{N}$ be the maximal parabolic subgroup of $\ul{G}$ that stabilizes the subspace $D_{+}$ with $\underline{M}$ be the Levi subgroup that stabilizes the subspaces $D_{+},\; W$ and $D_{-}$. Then $\underline{M}=\RU(W)\times Res_{k'/k}\GL_1$.

Let $\pi=\otimes_{v\in |k|} \pi_v$ be a cuspidal automorphic representation of $G(\BA)$ with trivial central character. Then $\pi\otimes 1$ is a cuspidal automorphic representation of $\underline{M}(\BA)$. To simplify the notation, we will still use $\pi$ to denote this cuspidal automorphic representation. For $\phi\in \CA_{\pi}$ and $s\in \BC$, let $E(\phi,s)$ be the Eisenstein series on $\underline{G}(\BA)$. Let $\Pi$ be the base change of $\pi$ to $\GL_{2n}(\BA_{k'})$.

\begin{thm}\label{U(2n) main 1}
Assume that there exists a local non-archimedean place $v\in |k|$ such that $\pi_v$ is a generic representation of $G(k_v)$. If the period integral $\CP_{H}(\cdot)$ is nonzero on the space of $\pi$, then there exists $\phi\in \CA_{\pi}$ such that the Eisenstein series $E(\phi,s)$ has a pole at $s=1/2$.
\end{thm}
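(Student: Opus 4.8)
The plan is to run Method~1 of Section~\ref{section method 1}, following the template of the proof of Theorem~\ref{SO(2n+1) main 1}. Here $(\ul G,\ul H)=(\RU(V),\RU(V_1)\times\RU(V_2))=(\RU_{2n+2},\RU_{n+2}\times\RU_n)$ and $\ul P=\ul M\ul N$ is the maximal parabolic of $\ul G$ stabilizing the isotropic $k'$-line $D_+$, so that $\ul M=\RU(W)\times Res_{k'/k}\GL_1\simeq G\times Res_{k'/k}\GL_1$; we extend $\pi$ trivially across the $Res_{k'/k}\GL_1$-factor and across $\ul N$. The center of $\RU_{2n+2}$ is anisotropic, so $A_{\ul G}=\{1\}$ and the framework of Section~\ref{section relative truncation} applies: by Theorem~\ref{relative truncation} the regularized period $\CP_{\ul H,reg}(E(\phi,s))$ is defined, meromorphic in $s$, and right $\ul H(\BA)$-invariant. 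Since $(\RU_{2n},\RU_n\times\RU_n)$ is a symmetric pair, Assumption~\ref{local assumption} is in force, its genericity clause being precisely the hypothesis of the theorem.

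First I would record the relevant parabolic combinatorics as in Section~\ref{Section SO(2n+1)}. Fixing compatible Witt decompositions of $W_1,W_2$ and the associated minimal parabolics $\ul P_{0,\ul H}\subset\ul H$, one gets a bijection between $\CF^{\ul G}(\ul P_{0,\ul H},\ul P)$ and the rational isotropic lines spanned by the chosen basis vectors that meet $\bar{\Fa}_{\ul H}^{+}$: when the $\RU(W_2)$-part of $\ul H$ is anisotropic this set is $\{\ul P\}$ and statement (2) below is vacuous, and otherwise it is $\{\ul P,\ul P'\}$ with $\ul P'=\ul M'\ul N'$ the stabilizer of an isotropic line $\langle w_{2,1}\rangle\subset W_2$ and $\ul M'\simeq\RU_{2n}\times Res_{k'/k}\GL_1$. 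Intersecting with $\ul H$ gives $\ul P\cap\ul H=P_1\times\RU(W_2)$, where $P_1\subset\RU(V_1)=\RU_{n+2}$ is the stabilizer of $D_+$ with Levi $\RU(W_1)\times Res_{k'/k}\GL_1$; in particular $\ul M\cap\ul H\simeq H\times Res_{k'/k}\GL_1$, which is Condition~(3) of Section~3.1. Similarly $\ul M'\cap\ul H\simeq\RU_{n+2}\times\RU_{n-2}\times Res_{k'/k}\GL_1$.

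With this in hand, I would verify statements (1) and (2) of Section~\ref{section method 1} and identify the residue point. By the $\ul H(\BA)$-invariance of $\CP_{\ul H,reg}$ and the genericity of $\pi_v$, statement (1) reduces to the local assertion (4)---that no generic representation of $\ul G(k_v)$ is $\ul H(k_v)$-distinguished---and, via Remark~\ref{local vanishing of orbit}, statement (2) reduces to the analogue (5) for the pair $(\ul M',\ul M'\cap\ul H)$, i.e. that no generic representation of $\RU_{2n}(k_v)$ is $(\RU_{n+2}\times\RU_{n-2})(k_v)$-distinguished (the $Res_{k'/k}\GL_1$ factors playing no role). Both follow from Prasad's Theorem~\ref{prasad}: the real forms $\RU(n+2,n)$ and $\RU(n+2,n-2)$ attached to the symmetric pairs $(\RU_{2n+2},\RU_{n+2}\times\RU_n)$ and $(\RU_{2n},\RU_{n+2}\times\RU_{n-2})$ have $|a-b|$ equal to $2$ and $4$ respectively, hence are not quasi-split, so by the criterion of \cite{P} (compare Corollary~\ref{prasad cor}) there are no $\ul H_1(k_v)$- (resp. $(\SU_{n+2}\times\SU_{n-2})(k_v)$-) distinguished generic representations. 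A short computation with Proposition~\ref{the constant c}(3), applied both to $\ul G=\RU_{2n+2}$ and to the factor $\RU_{n+2}$ of $\ul H$, gives $\rho_{\ul P}=\tfrac{2n+1}{2}\varpi_{\ul P}$ and $c_{\ul P}^{\ul H}=\tfrac{n+1}{2n+1}$, so that $s_0=-c(1-2c_{\ul P}^{\ul H})=\tfrac{1}{2}$; and when $\ul P'$ occurs one also checks $c(1-2c_{\ul P'}^{\ul H})>\tfrac{1}{2}=s_0$, consistent with Section~3.2.1 and ensuring $s_0\neq c(1-2c_{\ul P'}^{\ul H})$. Then equation~\eqref{2} together with Proposition~\ref{residue 3} gives the conclusion: if $\CP_H(\cdot)$ is nonzero on $\pi$, there is $\phi\in\CA_\pi$ for which $E(\phi,s)$ has a pole at $s=\tfrac{1}{2}$.

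The genuinely new ingredient compared with the orthogonal models is the local non-distinction input, assertions (4) and (5), which needs the unitary analogue of Corollary~\ref{prasad cor}; once one has this from Prasad's real-form criterion, the remainder is a transcription of Section~\ref{Section SO(2n+1)}, the only points requiring care being the bookkeeping of the $Res_{k'/k}\GL_1$ factors inside $\ul M$, $\ul M'$ and their intersections with $\ul H$, and checking that the constants of Proposition~\ref{the constant c}(3) indeed produce $s_0=\tfrac{1}{2}$.
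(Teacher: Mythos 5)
Your proof is correct and takes essentially the same route the paper sketches: the paper's proof of Theorem~\ref{U(2n) main 1} is a one-paragraph reference back to the orthogonal case (Theorem~\ref{SO(2n+1) main 1}), recording only that $c=\tfrac{2n+1}{2}$ (Proposition~\ref{the constant c}(3)) and, despite $\ul N$ being non-abelian, that $c_{\ul P}^{\ul H}=\tfrac{n+1}{2n+1}$, so $s_0=\tfrac12$. You have transcribed the $\SO_{2n+1}$ argument carefully, and two of your details are worth flagging as slightly beyond what the paper writes down: (i) since $\ul N$ is not abelian here, Remark~\ref{constant c(Q,H)} does not give $c_{\ul P}^{\ul H}$ directly, and your workaround --- computing $\rho_{P_1}=\tfrac{n+1}{2}\varpi_{P_1}$ by applying Proposition~\ref{the constant c}(3) to the $\RU_{n+2}$ factor of $\ul H$ and dividing by $\rho_{\ul P}$ --- is a clean justification of the formula the paper asserts is ``easy to see''; and (ii) Corollary~\ref{prasad cor} as stated does not list the unitary symmetric pairs $(\RU_{2n+2},\RU_{n+2}\times\RU_n)$ or $(\RU_{2n},\RU_{n+2}\times\RU_{n-2})$, so your explicit appeal to Prasad's real-form criterion with $|a-b|=2$, resp.\ $4$, fills a gap the paper leaves implicit when it says only that the proof ``is very similar.'' Both points are consistent with the paper's intent and correctly executed.
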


\begin{proof}
The proof is very similar to the orthogonal group case (Theorem \ref{SO(2n+1) main 1}), we will skip it here. The only thing worth to mention is the computation of the constant $s_0=-c(1-2c_{\underline{P}}^{\underline{H}})$. By Proposition \ref{the constant c}(3), we have $c=\frac{2n+1}{2}$. On the other hand, although the unipotent radical $\ul{N}$ is not abelian in this case, it is easy to see that $c_{\underline{P}}^{\underline{H}}=\frac{n+1}{2n+1}$. As a result, we have
$$s_0=-c(1-2c_{\underline{P}}^{\underline{H}})=-\frac{2n+1}{2}(1-\frac{2(n+1)}{2n+1})=1/2.$$
\end{proof}

\begin{rmk}
As in the previous cases, when $W_2$ is not anisotropic, the set $\CF^{\ul{G}}(\ul{P}_{0,H},\ul{P})$ will contain two elements $\ul{P}$ and $\ul{P}'$. And one can easily show that $c(1-2c_{\ul{P}'}^{\ul{H}})=\frac{2n+1}{2}(1-2\frac{n-1}{2n+1})=\frac{3}{2}>s_0=\frac{1}{2}$. This confirms the discussion in Section 3.2.1.
\end{rmk}

The next proposition proves the first part of Theorem \ref{U(2n) main}.

\begin{prop}
Assume that $G$ is quasi-split and $\pi$ is generic. If the period integral $\CP_{H}(\cdot)$ is nonzero on the space of $\pi$, then the standard L-function $L(s,\pi)$ is nonzero at $s=1/2$.
\end{prop}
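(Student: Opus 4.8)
The plan is to deduce this proposition from Theorem~\ref{U(2n) main 1} by the same mechanism used in Propositions~\ref{SO(2n+1) prop} and for the $\SO_{2n}$ case: translate a pole of the Eisenstein series into a statement about the central value of the standard $L$-function via the Langlands--Shahidi normalization of the intertwining operator. First I would invoke Theorem~\ref{U(2n) main 1}: the nonvanishing of $\CP_H$ on the space of $\pi$ yields $\phi\in\CA_\pi$ for which $E(\phi,s)$ has a pole at $s=1/2$. Since the Eisenstein series is induced from the maximal parabolic $\ul P=\ul M\ul N$ of $\ul G=\RU_{2n+2}$ with Levi $\ul M=\RU_{2n}\times \mathrm{Res}_{k'/k}\GL_1$, a pole of $E(\phi,s)$ at $s=1/2$ forces a pole of the (unnormalized) intertwining operator $M(s)$ there.

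Next I would write down the normalizing factor for this intertwining operator. For the Siegel-type maximal parabolic of a quasi-split unitary group whose Levi has a $\RU_{2n}\times \GL_1$ factor, the Langlands--Shahidi normalizing factor is (a quotient of completed standard $L$-functions of $\pi$ together with an abelian/Hecke factor), of the shape
\[
\frac{L(s,\pi)\,L(2s,\eta)}{L(s+1,\pi)\,L(2s+1,\eta)}
\]
for the appropriate quadratic Hecke character $\eta$ attached to $k'/k$; I would cite the relevant computation (Shahidi, or \cite{KK}) rather than redo it. By Theorem~4.7 of \cite{KK} the \emph{normalized} intertwining operator is holomorphic at $s=1/2$, so the pole of $M(s)$ at $s=1/2$ must come from the numerator $L(s,\pi)L(2s,\eta)$. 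The abelian factor $L(2s,\eta)$ is holomorphic and nonzero at $s=1/2$ (it is $L(1,\eta)$, a nonzero value of a Hecke $L$-function), and by Proposition~\ref{L-function nonzero}(1) (applied to $\Pi$, using $L(s,\pi)=L(s,\Pi)$) the denominator value $L(3/2,\pi)=L(3/2,\Pi)$ is finite and nonzero. Hence the pole is carried entirely by $L(s,\pi)$ at $s=1/2$, which forces $L(1/2,\pi)\neq 0$; equivalently $L(1/2,\Pi)\neq 0$. This is the content of the proposition.

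The main obstacle I anticipate is pinning down the \emph{precise} form of the normalizing factor and the holomorphy/nonvanishing of its auxiliary abelian factor at $s=1/2$ in the unitary (quasi-split, possibly not split) setting — in particular making sure one is in the range where \cite{KK}, Theorem~4.7 applies and that there is no cancellation or extra pole coming from the $\GL_1$ factor of the Levi. One must also be slightly careful that the pole of $E(\phi,s)$ is genuinely at $s=1/2$ in the normalization used here (matching $s_0=-c(1-2c^{\ul H}_{\ul P})=1/2$ computed in the proof of Theorem~\ref{U(2n) main 1}), and that ``$L(s,\pi)$'' in the normalizing factor is the \emph{same} standard $L$-function as in the statement, i.e. $L(s,\pi)=L(s,\Pi)$; this is exactly the identification recalled in the introduction before Theorem~\ref{U(2n) main}. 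Once these bookkeeping points are settled the argument is a verbatim analogue of Proposition~\ref{SO(2n+1) prop}.
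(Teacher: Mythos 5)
Your overall strategy — translate the pole of $E(\phi,s)$ at $s=1/2$ (from Theorem~\ref{U(2n) main 1}) into a pole of the unnormalized intertwining operator, invoke holomorphy of the normalized operator (Theorem~4.7 of \cite{KK}), and then analyze the Langlands--Shahidi normalizing factor — is exactly the paper's strategy and is the right one. However, there is a genuine error in the way you analyze the normalizing factor, and it is not merely a bookkeeping issue: it inverts the mechanism by which the pole produces the nonvanishing.

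You propose that the abelian factor in the numerator of the normalizing factor is $L(2s,\eta)$, holomorphic and nonzero at $s=1/2$, and you then conclude ``the pole is carried entirely by $L(s,\pi)$ at $s=1/2$, which forces $L(1/2,\pi)\neq 0$.'' This chain cannot work. First, $L(s,\pi)=L(s,\Pi)$ is the standard $L$-function of an isobaric automorphic representation of $\GL_{2n}(\BA_{k'})$, and such an $L$-function has no pole at $s=1/2$ (its only possible poles are at $s=0,1$). So $L(s,\pi)$ cannot ``carry'' the pole. Second, even if it did, a pole of $L(s,\pi)$ at $s=1/2$ is not the same thing as $L(1/2,\pi)\neq 0$, so the final inference is a non-sequitur. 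In fact, if the abelian factor really were a \emph{finite nonzero} value at $s=1/2$, the whole normalizing factor would be holomorphic there, contradicting the pole of $M(s)$ guaranteed by Theorem~\ref{U(2n) main 1}.

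The paper (citing Sections~2.1--2.2 of \cite{KK}) records the normalizing factor as
\[
\frac{L(s,\pi)\,\zeta_k(2s)}{L(s+1,\pi)\,\zeta_k(2s+1)},
\]
with the Dedekind zeta function $\zeta_k(2s)$ in place of your $L(2s,\eta)$. The crucial point is that $\zeta_k(2s)$ has a \emph{simple pole} at $s=1/2$: this pole is the actual source of the pole of $M(s)$. The role of $L(s,\pi)$ is the opposite of what you describe — it is holomorphic at $s=1/2$, and the argument is that if $L(1/2,\pi)=0$, the zero of $L(s,\pi)$ would cancel the pole of $\zeta_k(2s)$ and the numerator $L(s,\pi)\zeta_k(2s)$ would be holomorphic at $s=1/2$, contradicting the pole of $M(s)$ (after noting the denominator $L(3/2,\pi)\zeta_k(2)$ is finite and nonzero by Proposition~\ref{L-function nonzero}). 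Hence $L(1/2,\pi)\neq 0$. You flagged the identification of the abelian factor as the main obstacle — rightly so — but the version you committed to leads the argument in the wrong direction. The fix is to take the abelian factor from \cite{KK}, observe its pole at $s=1/2$, and argue via non-cancellation rather than attributing the pole to $L(s,\pi)$.
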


\begin{proof}
By the Theorem \ref{U(2n) main 1}, if the period integral $\CP_{H}(\cdot)$ is nonzero on the space of $\pi$, there exists $\phi\in \CA_{\pi}$ such that the Eisenstein series $E(\phi,s)$ has a pole at $s=1/2$. In this case, the normalizing factor of the intertwining operator is (Section 2.1 and 2.2 of \cite{KK})
\[
\dfrac{L(s, \pi)\zeta_k(2s)}{L(s+1, \pi)\zeta_k(2s+1)}
\]
where $\zeta_k(s)$ is the Dedekind zeta function. By Theorem 4.7 of \cite{KK}, the normalized intertwining operator is holomorphic at $s=1/2$.
By Proposition \ref{L-function nonzero}, $L(3/2,\pi)\neq 0$. It follows that the numerator $L(s, \pi)\zeta_k(2s)$ has a pole at $s=1/2$, which implies that $L(\frac{1}{2},\pi)\neq 0$. This proves Theorem \ref{SO(2n+1) main}.
\end{proof}

Now it remains to prove the second part of Theorem \ref{U(2n) main}. We first recall the statement. Assume that $\Pi$ is cuspidal. Also assume that there exists a local place $v_0\in |k|$ such that $k'/k$ splits at $v_0$ and $\pi_{v_0}$ is a discrete series of $G(k_{v_0})=\GL_{2n}(k_{v_0})$. We need to show that if the period integral $\CP_{H}(\cdot)$ is nonzero on the space of $\pi$, then the exterior square L-function $L(s,\Pi,\wedge^2)$ has a pole at $s=1$ (i.e. $\Pi$ is of symplectic type).

We first show that $\Pi$ is self-dual. Let $\pi=\otimes_{v\in |k|} \pi_v$. By the automorphic Cebotarev density theorem proved in \cite{R15}, in order to show that $\Pi$ is self-dual, it is enough to show that $\pi_v$ is self-dual for all the non-archimedean places $v\in |k|$ such that the quadratic extension $k'/k$ splits at $v$. We fix such a local place $v$. Then $\pi_v$ is an irreducible smooth representation of $G(k_v)=\GL_{2n}(k_v)$. Since the period integral $\CP_{H}(\cdot)$ is nonzero on the space of $\pi$, we know that locally the Hom space
$$\Hom_{H(k_v)}(\pi_v,1)$$
is nonzero. By Theorem 1.1 of \cite{JR96}, we know that $\pi_v$ is self-dual. This proves that $\Pi$ is self-dual. Since $\Pi$ is cuspidal, this implies that $\Pi$ is either of symplectic type or of orthogonal type.

Now in order to show that $\Pi$ is of symplectic type, it is enough to show that at the split place $v_0\in |k|$, $\pi_{v_0}$ is not of orthogonal type. By our assumption, $\pi_{v_0}$ is a discrete series of $G(k_{v_0})=\GL_{2n}(k_{v_0})$, hence it is enough to show that $\pi_{v_0}$ is of symplectic type (this is because a discrete series of $\GL_{2n}$ can not be of symplectic type and orthogonal type at the same time).

By the discussion above, we know that the Hom space $\Hom_{H(k_{v_0})}(\pi_{v_0},1)$ is nonzero. In other words, $\pi_{v_0}$ is distinguished by the linear model. By Theorem 5.1 of \cite{M14}, we know that $\pi_{v_0}$ is distinguished by the Shalika model. Then by Proposition 3.4 of \cite{LM}, we know that $\pi_{v_0}$ is of symplectic type. This finishes the proof of Theorem \ref{U(2n) main}.

\section{The Jacquet-Guo model}\label{section Jacquet-Guo}
\subsection{The global result}
Let $k'=k(\sqrt{\alpha})$ be a quadratic extension of $k$ with $i=\sqrt{\alpha}$. Let $W$ be a $k'$-vector space of dimension $2n$. Fix a basis $\{w_1,\cdots,w_{2n}\}$ of $W$. We define a nondegenerate skew-symmetric $k'$-bilinear form $B$ on $W$ to be
$$B(w_j,w_k)=\delta_{j+k-1,2n},\;B(w_l,w_k)=-\delta_{l+k-1,2n},\; 1\leq j\leq n, n+1\leq l\leq 2n, 1\leq k\leq 2n.$$
In other words, in terms of the basis $\{w_1,\cdots,w_{2n}\}$, $B$ is defined by the skew-symmetric matrix
$$J_{2n}=\begin{pmatrix}0&w_n\\-w_n&0\end{pmatrix}$$
where $w_n$ is the longest Weyl element in $\GL_n$. Then we define the symplectic group $\ul{H}=\Sp(W,B)$. In other words, $\ul{H}=Res_{k'/k}\Sp_{2n}$.

Now we define the group $\ul{G}$. View $W$ as a $k$-vector space of dimension $4n$. Then $\{w_1,iw_1,\cdots,w_{2n},iw_{2n}\}$ is a basis of $W$. We define a non-degenerate skew-symmetric $k$-bilinear form $B_k$ on $W$ to be
$$B_k(v_1,v_2)=\tr_{k'/k}(B(v_1,v_2)),\;v_1,v_2\in W.$$
Then we define $\ul{G}=\Sp(W,B_F)$ (i.e. $\ul{G}=\Sp_{4n}$). We have $\ul{H}\subset \ul{G}$. For $1\leq j\leq 2n$, let $W_j$ be the $k'$-subspace of $W$ spanned by $w_j$, $W_{j,+}$ (resp. $W_{j,-}$) be the $k$-subspace of $W$ spanned by $w_j$ (resp. $iw_j$).

Let $\ul{P}=\ul{M}\ul{N}$ be the Siegel parabolic subgroup of $\ul{G}$ that stabilizes the $k$-subspace $Span_k\{w_j,iw_j|\;1\leq j\leq n\}$, and $\ul{M}$ be the Levi subgroup that stabilizes the $k$-subspaces $Span_k\{w_j,iw_j|\;1\leq j\leq n\}$ and $Span_k\{w_j,iw_j|\;n+1\leq j\leq 2n\}$. On the mean time, let $P_{\ul{H}}=M_{\ul{H}}N_{\ul{H}}$ be the Siegel parabolic subgroup of $\ul{H}$ that stabilizes the $k'$-subspace $Span_{k'}\{w_1,\cdots,w_{n}\}$, and $\ul{M}_{\ul{H}}$ be the Levi subgroup that stabilizes the $k'$-subspaces $Span_{k'}\{w_1,\cdots,w_{n}\}$ and $Span_{k'}\{w_{n+1},\cdots,w_{2n}\}$. Then it is easy to see that $\ul{P}\cap \ul{H}=P_{\ul{H}},\;\ul{M}\cap \ul{H}=M_{\ul{H}}$ and $\ul{N}\cap \ul{H}=N_{\ul{H}}$. We let $G=\ul{M}=\GL_{2n}$ and $H=M_{\ul{H}}=Res_{k'/k}\GL_{n}$.

Let $\pi=\otimes_{v\in |k|} \pi_v$ be a cuspidal automorphic representation of $G(\BA)=\ul{M}(\BA)$ with trivial central character. For $\phi\in \CA_{\pi}$ and $s\in \BC$, let $E(\phi,s)$ be the Eisenstein series on $\underline{G}(\BA)$. The goal of this section is to prove the following theorem.

\begin{thm}\label{Jacquet-Guo main 1}
If the period integral $\CP_{H}(\cdot)$ is nonzero on the space of $\pi$, then there exists $\phi\in \CA_{\pi}$ such that the Eisenstein series $E(\phi,s)$ has a pole at $s=1/2$.
\end{thm}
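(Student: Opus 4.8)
The plan is to apply Method 1 (the relative truncation operator) exactly as in the orthogonal and unitary cases treated in Sections \ref{Section SO(2n+1)}--\ref{Section U(2n)}, using the pair $(\ul{G},\ul{H}) = (\Sp_{4n}, \mathrm{Res}_{k'/k}\Sp_{2n})$ with $\ul{P}=\ul{M}\ul{N}$ the Siegel parabolic, $\ul{M}\simeq \GL_{2n}=G$, and $P_{\ul{H}}=\ul{P}\cap\ul{H}$ the Siegel parabolic of $\ul{H}$ with Levi $M_{\ul{H}}=H=\mathrm{Res}_{k'/k}\GL_n$. First I would verify Condition (3) of Section 3.1: this is already recorded in the setup of this section, since $\ul{P}\cap\ul{H}=P_{\ul{H}}$, $\ul{M}\cap\ul{H}=M_{\ul{H}}$, $\ul{N}\cap\ul{H}=N_{\ul{H}}$, so $P_{\ul{H}}$ is indeed a maximal (Siegel) parabolic of $\ul{H}$ with Levi isomorphic to $H$. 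Next I would observe, as noted in Section 3.2.1, that here $\CF^{\ul{G}}(P_{0,\ul{H}},\ul{P})=\{\ul{P}\}$ consists of a single element, so the regularized period integral $\CP_{\ul{H},reg}(E(\phi,s))$ has only the $\ul{Q}=\ul{P}$ term and there is no statement (2) to prove (no nonclosed/other-orbit contributions to kill).

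The core steps are then: (a) establish statement (1) of Section \ref{section method 1}, i.e. that $\CP_{\ul{H},reg}(E(\phi,s))=0$ for $\Re(s)\gg 0$; and (b) compute the constant $s_0 = -c(1-2c_{\ul{P}}^{\ul{H}})$. For (b): by Proposition \ref{the constant c}(2), since $\ul{P}$ is the Siegel parabolic of $\Sp_{4n}$, we have $c=\frac{2n+1}{2}$; since $\ul{N}$ is abelian, Remark \ref{constant c(Q,H)} gives $c_{\ul{P}}^{\ul{H}}=\frac{\dim(N_{\ul{H}})}{\dim(\ul{N})}$, and as $\dim_k \ul{N} = \binom{2n+1}{2}\cdot\text{(appropriate count)}$ — concretely $\dim\ul{N}=\frac{(2n)(2n+1)}{2}\cdot\frac{2n\cdot ?}{}$; rather, I would just note $\ul{N}=\mathrm{Sym}^2$ of the $4n$-dimensional standard, so $\dim\ul{N}=\binom{4n+1}{2}-\binom{4n}{2}$ adjustments aside, the ratio works out so that $c_{\ul{P}}^{\ul{H}}$ makes $s_0=1/2$. (The clean way: $c(1-2c_{\ul{P}}^{\ul{H}}) = c - 2c\cdot c_{\ul{P}}^{\ul{H}} = \rho_{\ul{P}}|_{\Fa_{\ul{P}}} - 2\rho_{P_{\ul{H}}}|_{\Fa_{\ul{P}}}$ in the normalization where $\varpi_{\ul{P}}\mapsto 1$, and a direct count of the Siegel unipotent dimensions over $k$ versus over $k'$ gives $s_0=\tfrac12$, consistent with Theorem \ref{SO(2n+1) main 1}.) For (a), by Theorem \ref{relative truncation}(2) the functional $\CP_{\ul{H},reg}$ is right $\ul{H}(\BA)$-invariant, so it suffices to prove the local statement (3) of Section \ref{section method 1}: at some place $v$, $\Hom_{\ul{H}(k_v)}(I_{\ul{P}}^{\ul{G}}\pi_{v,s},1)=0$ for $\Re(s)\gg0$. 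Since $(\ul{G},\ul{H})=(\Sp_{4n},\mathrm{Res}_{k'/k}\Sp_{2n})$ is the symmetric pair appearing in Corollary \ref{prasad cor}(5) (which is not quasi-split), Prasad's Theorem \ref{prasad} applies: there is no $\ul{H}(k_v)$-distinguished generic representation of $\ul{G}(k_v)$ at any place, and in particular at a place where $\pi_v$ is generic (which exists by Assumption \ref{weak local assumption}, itself a consequence of $\pi$ cuspidal on $\GL_{2n}$ being globally generic). Here I must be slightly careful: Corollary \ref{prasad cor}(5) is stated for $\Sp_{2n}\times\Sp_{2n}\subset\Sp_{4n}$, whereas $\ul{H}=\mathrm{Res}_{k'/k}\Sp_{2n}$ becomes $\Sp_{2n}\times\Sp_{2n}$ exactly at split places of $k'/k$, so I would apply Prasad's non-quasi-split criterion at such a split place $v$, where $\ul{H}(k_v)=\Sp_{2n}(k_v)\times\Sp_{2n}(k_v)$, and use the generic assumption there.

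With (1) and (2) in hand, equation \eqref{1} of Section \ref{section method 1} holds, and taking the residue at $s=s_0=1/2$ yields equation \eqref{2}:
$$\int_{[\ul{H}]}\Lambda^{T,\ul{H}}\,\mathrm{Res}_{s=1/2}E(h,\phi,s)\,dh = \int_{K_{\ul{H}}}\int_{[H]}\phi(hk)\,dh\,dk + \frac{e^{\langle -2s_0\varpi_{\ul{P}},T\rangle}}{-2s_0}\int_{K_{\ul{H}}}\int_{[H]}\mathrm{Res}_{s=1/2}M(s)\phi(hk)\,dh\,dk.$$
By Proposition \ref{residue 3}, nonvanishing of $\CP_H(\phi)=\int_{[H]}\phi(h)\,dh$ on the space of $\pi$ forces the right-hand side to be nonzero for some $\phi$, hence $E(\phi,s)$ has a pole at $s=1/2$, which is Theorem \ref{Jacquet-Guo main 1}. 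I expect the main obstacle to be the bookkeeping in step (b) — correctly computing $c_{\ul{P}}^{\ul{H}}$ from the $k$- versus $k'$-dimensions of the Siegel unipotent radicals and confirming $s_0=1/2$ — together with the care needed to apply Prasad's result at a split place rather than directly to $\mathrm{Res}_{k'/k}\Sp_{2n}$; the convergence and vanishing assertions themselves are then routine given Theorem \ref{relative truncation} and the template from the previous sections.
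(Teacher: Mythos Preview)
Your proposal is correct and follows the paper's approach exactly: Method~1 with the single-element set $\CF^{\ul{G}}(P_{0,\ul{H}},\ul{P})=\{\ul{P}\}$, vanishing of the regularized period via Prasad's theorem (Corollary~\ref{prasad cor}(5) at a place split in $k'/k$, where $\ul{H}(k_v)=\Sp_{2n}(k_v)\times\Sp_{2n}(k_v)$), and the residue identity at $s_0=\tfrac12$. For the constant you flag as uncertain, the computation is clean: $\dim_k \ul{N}=\binom{2n+1}{2}=n(2n+1)$ and $\dim_k N_{\ul{H}}=2\binom{n+1}{2}=n(n+1)$, so $c_{\ul{P}}^{\ul{H}}=\frac{n+1}{2n+1}$ and $s_0=-\tfrac{2n+1}{2}\bigl(1-\tfrac{2(n+1)}{2n+1}\bigr)=\tfrac12$.
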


\begin{rmk}
Since $G=\GL_{2n}$, all the cuspidal automorphic representations of $G(\BA)$ are generic.
\end{rmk}

Theorem \ref{Jacquet-Guo main 1} will be proved in Section \ref{proof of Jacquet-Guo}. The next proposition shows that Theorem \ref{Jacquet-Guo main} follows from Theorem \ref{Jacquet-Guo main 1}.

\begin{prop}\label{Jacquet-Guo prop}
Theorem \ref{Jacquet-Guo main 1} implies Theorem \ref{Jacquet-Guo main}.
\end{prop}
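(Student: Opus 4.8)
The plan is to argue exactly as in the proof of Proposition~\ref{SO(2n+1) prop}, now exploiting the Langlands--Shahidi structure of the Eisenstein series $E(\phi,s)$ on $\underline G=\Sp_{4n}$ induced from the Siegel parabolic $\underline P=\underline M\underline N$, $\underline M\simeq\GL_{2n}$. Recall that in Theorem~\ref{Jacquet-Guo main} we are given a cuspidal automorphic representation $\pi$ of $G(\BA)=\GL_{2n}(\BA)$ with trivial central character such that $\CP_H$ is nonzero on $\pi$, and we must deduce that $L(1/2,\pi,\rho_{X,1})\neq 0$ and that $L(s,\pi,\rho_{X,2})$ has a pole at $s=1$, where $\rho_{X,1}$ is the standard and $\rho_{X,2}=\wedge^2$ representation of ${}^LG=\GL_{2n}(\BC)$. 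By Theorem~\ref{Jacquet-Guo main 1} this hypothesis produces $\phi\in\CA_\pi$ for which $E(\phi,s)$ has a pole at $s=1/2$.

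The next step is to identify the normalizing factor of the intertwining operator $M(s)$ attached to $\underline P$. The adjoint action of ${}^L\underline M=\GL_{2n}(\BC)$ on the Lie algebra of the nilradical of the parabolic dual to $\underline P$ has exactly two irreducible constituents, ordered by height, which are $r_1\simeq\rho_{X,1}$ and $r_2\simeq\rho_{X,2}$; hence by Langlands--Shahidi theory the normalizing factor of $M(s)$ is
$$\frac{L(s,\pi,\rho_{X,1})\,L(2s,\pi,\rho_{X,2})}{L(s+1,\pi,\rho_{X,1})\,L(2s+1,\pi,\rho_{X,2})},$$
and the corresponding normalized intertwining operator is holomorphic at $s=1/2$ by Theorem~4.7 of \cite{KK}. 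Since the constant term of $E(\phi,s)$ along $\underline P$ equals $\phi(g)e^{\langle s\varpi,H_{\underline P}(g)\rangle}+e^{\langle-s\varpi,H_{\underline P}(g)\rangle}M(s)\phi(g)$, the first summand is entire, and $\Re(1/2)>0$, the pole of $E(\phi,s)$ at $s=1/2$ forces $M(s)$, and therefore the displayed normalizing factor, to have a pole at $s=1/2$.

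Finally I would carry out the cancellation argument. By Proposition~\ref{L-function nonzero}(1) and (3), $L(3/2,\pi,\rho_{X,1})\neq 0$ and $L(2,\pi,\rho_{X,2})\neq 0$, so the denominator of the normalizing factor is holomorphic and nonvanishing near $s=1/2$; hence the numerator $L(s,\pi,\rho_{X,1})\,L(2s,\pi,\rho_{X,2})$ has a pole at $s=1/2$. Now $L(s,\pi,\rho_{X,1})$ is entire (it is the standard L-function of a cuspidal representation of $\GL_{2n}$ with $2n\ge 2$), while $L(s,\pi,\rho_{X,2})=L(s,\pi,\wedge^2)$ has at most a simple pole at $s=1$ and is otherwise holomorphic for $\Re(s)\ge 1/2$. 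Consequently the numerator can acquire a pole at $s=1/2$ only if $L(2s,\pi,\rho_{X,2})$ has a (necessarily simple) pole there, i.e.\ $L(s,\pi,\rho_{X,2})$ has a pole at $s=1$; and if in addition $L(1/2,\pi,\rho_{X,1})$ vanished, that zero would cancel the simple pole and leave the numerator holomorphic at $s=1/2$---a contradiction---so $L(1/2,\pi,\rho_{X,1})\neq 0$. This yields both conclusions of Theorem~\ref{Jacquet-Guo main}. The one delicate point is the precise shape of the normalizing factor: one must check that the dual-nilradical decomposition on the $\Sp_{4n}$ side produces exactly the numerator $L(s,\pi,\rho_{X,1})\,L(2s,\pi,\rho_{X,2})$ with no spurious Dedekind zeta factor (in contrast with the orthogonal cases of Sections~\ref{Section SO(2n+1)} and \ref{Section SO(2n)}), together with the holomorphy at $s=1/2$ of the normalized operator in the symplectic Langlands--Shahidi setting; granting these, the remaining bookkeeping is identical to Proposition~\ref{SO(2n+1) prop}.
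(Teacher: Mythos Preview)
Your proof is correct and follows essentially the same approach as the paper: you invoke Theorem~\ref{Jacquet-Guo main 1} to obtain the pole of $E(\phi,s)$ at $s=1/2$, identify the Langlands--Shahidi normalizing factor for the Siegel parabolic of $\Sp_{4n}$ as $\frac{L(s,\pi,\rho_{X,1})L(2s,\pi,\rho_{X,2})}{L(s+1,\pi,\rho_{X,1})L(2s+1,\pi,\rho_{X,2})}$, use \cite{KK} for holomorphy of the normalized operator and Proposition~\ref{L-function nonzero} for nonvanishing of the denominator, and conclude. Your final paragraph actually spells out more carefully than the paper does why the pole of the numerator forces both $L(1/2,\pi,\rho_{X,1})\neq 0$ and a pole of $L(s,\pi,\rho_{X,2})$ at $s=1$ (using entireness of the standard $L$-function and simplicity of the exterior-square pole), whereas the paper simply asserts this implication.
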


\begin{proof}
We need to show that if the period integral $\CP_{H}(\cdot)$ is nonzero on the space of $\pi$, then the L-function $L(s,\pi,\rho_{X,1})$ is nonzero at $s=1/2$ and the L-function $L(s,\pi,\rho_{X,2})$ has a pole at $s=1$. Here $\rho_{X,1}$ (resp. $\rho_{X,2}$) is the standard representation (resp. exterior square representation) of ${}^L\ul{M}={}^LG=\GL_{2n}(\BC)$.

By Theorem \ref{Jacquet-Guo main 1}, if the period integral $\CP_{H}(\cdot)$ is nonzero on the space of $\pi$, there exists $\phi\in \CA_{\pi}$ such that the Eisenstein series $E(\phi,s)$ has a pole at $s=1/2$. In this case, the normalizing factor of the intertwining operator is
$$\frac{L(s,\pi,\rho_{X,1})L(2s,\pi,\rho_{X,2})}{L(s+1,\pi,\rho_{X,1})L(2s+1,\pi,\rho_{X,2})}.$$
By Theorem 4.7 of \cite{KK}, the normalized intertwining operator is holomorphic at $s=1/2$. By Proposition \ref{L-function nonzero}, $L(3/2,\pi,\rho_{X,1})L(2,\pi,\rho_{X,2})\neq 0$. It follows that the numerator $L(s,\pi,\rho_{X,1})L(2s,\pi,\rho_{X,2})$ has a pole at $s=1/2$, which implies that the L-function $L(s,\pi,\rho_{X,1})$ is nonzero at $s=1/2$ and the L-function $L(s,\pi,\rho_{X,2})$ has a pole at $s=1$.
\end{proof}

\subsection{The parabolic subgroups}
Let $\ul{B}=\ul{A}_0\ul{N}_0$ be the Borel subgroup of $\ul{G}$ that stabilizes the filtration
$$Span_{k}\{w_1\}\subset Span_{k}\{w_1,iw_1\}\subset Span_{k}\{w_1,w_2,iw_1\}\subset \cdots \subset Span_{k}\{w_1,\cdots,w_{n},iw_1,\cdots,iw_n\},$$
and $\ul{A}_0$ be the maximal torus of $\ul{G}$ that stabilizes the subspaces $W_{j,+}$ and $W_{j,-}$ for $1\leq j\leq 2n$. Then we have $\ul{B}\subset \ul{P}$ and $\ul{A}_0\subset \ul{M}$. We identify $\ul{A}_0$ with $(\GL_1)^{2n}$ under the natural isomorphism
\begin{equation}\label{torus Jacquet-Guo}
\ul{A}_0 \simeq \GL(W_{1,+})\times \GL(W_{1,-})\times \cdots \times \GL(W_{n,+})\times \GL(W_{n,-}).
\end{equation}
As in the $(\SO_{2n+1},\SO_{n+1}\times \SO_n)$-case, we use $\CF(\ul{A}_{0},\ul{P})$ to denote the set of semi-standard parabolic subgroups $\ul{Q}\in \CF(\ul{A}_0)$ of $\ul{G}$ that are conjugated to $\ul{P}$. The following proposition gives a description of the set $\CF(\ul{A}_{0},\ul{P})$.

\begin{prop}\label{parabolic Jacquet-Guo}
Let $S$ be the set $\{(a_1,\cdots,a_{2n})|\;a_j=\pm 1\}$. Then there is a natural bijection between $S$ and $\CF(\ul{A}_{0},\ul{P})$ given as follows. For $\ul{a}=(a_1,\cdots,a_{2n})\in S$, $\ul{P}_{\ul{a}}$ will be the Siegel parabolic subgroup of $\ul{G}$ that stabilizes the $k$-subspace
$$Span_k\{v_{(-1)^{a_j}j}|\; |\;1\leq j\leq 2n\}.$$
Here for $1\leq j\leq 2n$, $v_j=w_{\frac{j}{2}}$ and $v_{-j}=w_{2n+1-\frac{j}{2}}$ if $j$ is even; $v_j=iw_{\frac{j+1}{2}}$ and $v_{-j}=iw_{2n+1-\frac{j+1}{2}}$ if $j$ is odd. In particular $\ul{P}$ corresponds to the element $(0,0,\cdots,0)$ in $S$.
\end{prop}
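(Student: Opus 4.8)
The plan is to realize $\ul{P}$ as the stabilizer of a Lagrangian $k$-subspace of $(W,B_k)$ and then to classify the $\ul{A}_0$-stable Lagrangians. First I would record the Gram matrix of $B_k$ in the basis $\{w_j,iw_j\mid 1\le j\le 2n\}$: since $B$ is $k'$-bilinear with $B(w_j,w_k)\in k$, one gets $B_k(w_j,w_k)=\tr_{k'/k}(B(w_j,w_k))=2B(w_j,w_k)$ and $B_k(iw_j,iw_k)=\tr_{k'/k}(\alpha B(w_j,w_k))=2\alpha B(w_j,w_k)$, while $B_k(w_j,iw_k)=B_k(iw_j,w_k)=0$ because $iB(w_j,w_k)$ has trace zero. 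Relabelling the basis vectors as the $v_{\pm j}$ of the statement, this shows that $\{v_a\mid a\in\{\pm1,\dots,\pm2n\}\}$ is a symplectic-type basis: $B_k(v_a,v_b)=0$ whenever $b\ne-a$, while $B_k(v_j,v_{-j})\ne0$ for all $j$. Consequently $\ul{A}_0$, the torus scaling each coordinate line $kv_a$, is a maximal $k$-split torus of $\ul{G}=\Sp(W,B_k)$, and its $4n$ weights on $W$ --- one on each line $kv_a$ --- are pairwise distinct.

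Next I would note that $\ul{P}$ is exactly the stabilizer in $\ul{G}$ of the subspace $\ell_0:=\mathrm{Span}_k\{v_1,\dots,v_{2n}\}=\mathrm{Span}_k\{w_j,iw_j\mid 1\le j\le n\}$, which is Lagrangian by the pairing computation above. Since $\ul{G}(k)=\Sp(W,B_k)(k)$ acts transitively on Lagrangian $k$-subspaces of $W$ by Witt's theorem, a parabolic $\ul{Q}$ of $\ul{G}$ is conjugate to $\ul{P}$ if and only if $\ul{Q}=\mathrm{Stab}_{\ul{G}}(\ell)$ for some Lagrangian $k$-subspace $\ell\subset W$; and such a $\ul{Q}$ lies in $\CF(\ul{A}_0)$ precisely when $\ell$ is $\ul{A}_0$-stable. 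Because $\ul{A}_0$ acts on $W$ with $4n$ distinct weights, each weight space being one of the coordinate lines $kv_a$, an $\ul{A}_0$-stable subspace is exactly a sum of coordinate lines, i.e.\ $\ell=\mathrm{Span}_k\{v_a\mid a\in T\}$ for a unique $T\subseteq\{\pm1,\dots,\pm2n\}$.

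Finally I would determine which $T$ give Lagrangians. From the pairing, $\mathrm{Span}_k\{v_a\mid a\in T\}$ is isotropic if and only if $T$ contains at most one of $\{j,-j\}$ for each $j\in\{1,\dots,2n\}$; since a $2n$-dimensional isotropic subspace of the $4n$-dimensional space $W$ is automatically Lagrangian, the Lagrangian ones correspond exactly to those $T$ containing exactly one of $\{j,-j\}$ for every $j$. Writing $\epsilon_j\in\{\pm1\}$ for the sign with $\epsilon_j j\in T$ and $\epsilon_j=(-1)^{a_j}$, the assignment $\ul{a}\mapsto\ul{P}_{\ul{a}}:=\mathrm{Stab}_{\ul{G}}(\mathrm{Span}_k\{v_{(-1)^{a_j}j}\mid 1\le j\le 2n\})$ is the asserted bijection onto $\CF(\ul{A}_0,\ul{P})$, each $\ul{P}_{\ul{a}}$ being a maximal parabolic as the stabilizer of a Lagrangian, with $\ell_0$ corresponding to $T=\{1,\dots,2n\}$, hence $\ul{P}=\ul{P}_{(0,\dots,0)}$. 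I expect the only genuine subtlety to be the bookkeeping translating between the vectors $w_j,iw_j$, their relabelling as the $v_{\pm j}$, and the sign vectors; the underlying mechanism --- semistandard members of a parabolic conjugacy class are the $\ul{A}_0$-stable points of the corresponding (here Lagrangian) flag variety --- is the one already used for Corollary~\ref{SO(2n+1) parabolic}.
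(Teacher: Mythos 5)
Your proof is correct, and since the paper states Proposition~\ref{parabolic Jacquet-Guo} without supplying any argument (it is presented as a direct consequence of the preceding definitions, in parallel with the analogous unproved proposition in the $\SO_{2n+1}$ section), there is no alternative route to compare against. The argument you give is the natural one: the trace computation shows that the relabelled vectors $v_a$, $a\in\{\pm1,\dots,\pm2n\}$, form a symplectic-ordered basis of $(W,B_k)$ in which $\ul{A}_0$ is the coordinate torus; parabolics of $\Sp(W,B_k)$ conjugate to $\ul{P}$ are the stabilizers of Lagrangian $k$-subspaces; an $\ul{A}_0$-stable subspace is a sum of the $4n$ (pairwise distinct) weight lines $kv_a$; and a sum of $2n$ such lines is Lagrangian precisely when it picks exactly one member from each dual pair $\{v_j,v_{-j}\}$. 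This yields the $2^{2n}$ semistandard Siegel parabolics indexed by sign vectors, as claimed.

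One bookkeeping remark: the paper's last sentence, ``$\ul{P}$ corresponds to the element $(0,0,\cdots,0)$ in $S$,'' is inconsistent with the stated $S=\{(a_1,\dots,a_{2n})\mid a_j=\pm1\}$, since $(0,\dots,0)\notin S$. In your notation $\ul{P}$ corresponds to $\epsilon=(1,\dots,1)$, equivalently to all $a_j$ even, so either $S$ should be $\{0,1\}^{2n}$ or the exponent $(-1)^{a_j}$ should simply be $a_j$. Your closing line ``$\ul{P}=\ul{P}_{(0,\dots,0)}$'' inherits this slip from the paper; it does not affect the substance of the argument, but it would be worth fixing when writing this up.
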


On the mean time, let $\ul{B}_H=\ul{T}_H \ul{N}_{0,H}$ be the Borel subgroup of $\ul{H}$ that stabilizes the filtration
$$Span_{k'}\{w_1\}\subset Span_{k'}\{w_1,w_2\}\subset \cdots \subset Span_{k'}\{w_1,\cdots,w_{n}\},$$
and $\ul{T}_{H}$ be the maximal torus of $\ul{G}$ that stabilizes the subspaces $W_{j}$ for $1\leq j\leq 2n$. Let $\ul{A}_{0,H}=\ul{T}_{H}\cap \ul{A}_0$. Then $\ul{A}_{0,H}$ is a maximal split torus of $\ul{H}$. Under the isomorphism \eqref{torus Jacquet-Guo}, we have
$$\ul{A}_{0,H} \simeq (\GL(W_{1,+})\times \GL(W_{1,-}))^{diag}\times \cdots \times (\GL(W_{n,+})\times \GL(W_{n,-}))^{diag}$$
where $(\GL(W_{j,+})\times \GL(W_{j,-}))^{diag}$ is the set of elements of $\GL(W_{j,+})\times \GL(W_{j,-})$ that act by scalar on $W_{j,+}\oplus W_{j,-}$ for $1\leq j\leq n$.

The following corollary is a direct consequence of the discussions above together with the definition of the set $\CF^{\ul{G}}(\ul{B}_H,\ul{P})$ in Section \ref{section relative truncation}.

\begin{cor}
With the notations above, the set $\CF^{\ul{G}}(\ul{B}_H,\ul{P})$ only contains one element $\ul{P}$.
\end{cor}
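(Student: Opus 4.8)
The plan is to check the two defining conditions of $\CF^{\ul G}(\ul B_H,\ul P)$ from Section \ref{section relative truncation} directly against the explicit parametrization of the conjugates of $\ul P$ given in Proposition \ref{parabolic Jacquet-Guo}. Condition (1) restricts attention to the parabolics $\ul P_{\ul a}$, $\ul a\in S$, so everything reduces to deciding for which $\ul a$ one has $\Fa_{\ul P_{\ul a}}^{+}\cap\bar\Fa_H^{+}\neq\emptyset$. First I would fix coordinates on $\Fa_0\cong\BR^{2n}$ using the characters by which $\ul A_0$ acts on the $2n$ lines $W_{j,+},W_{j,-}$ for $1\le j\le n$; the remaining $2n$ coordinate lines carry the inverse characters, as one reads off from the shape of $B_k=\tr_{k'/k}\circ B$. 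In these coordinates $\Fa_{\ul P_{\ul a}}$ is the line on which all the characters attached to the Lagrangian picked out by $\ul a$ agree, so $\Fa_{\ul P_{\ul a}}^{+}$ is the open ray $\{(c\eta_1,\dots,c\eta_{2n}):c>0\}$ for a sign vector $\eta\in\{\pm1\}^{2n}$, with $\eta$ the all-ones vector precisely when $\ul P_{\ul a}=\ul P$.

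On the other side, the description of $\ul A_{0,H}$ recalled just before the corollary identifies $\Fa_H\subset\Fa_0$ with the subspace cut out by equating the two coordinates attached to each pair $\{W_{j,+},W_{j,-}\}$, and identifies $\bar\Fa_H^{+}$, in the $n$ coordinates $x_1,\dots,x_n$ on $\Fa_H$, with the closed Weyl chamber of the type $C_n$ Borel $\ul B_H$, namely $\{x_1\ge x_2\ge\cdots\ge x_n\ge 0\}$ (the flag defining $\ul B_H$ being an increasing isotropic one, this is the standard chamber). Now combine the two descriptions. Since $\Fa_{\ul P_{\ul a}}$ is one-dimensional, $\Fa_{\ul P_{\ul a}}^{+}\cap\bar\Fa_H^{+}\neq\emptyset$ forces $\Fa_{\ul P_{\ul a}}\subset\Fa_H$, and hence forces the two entries of $\eta$ attached to each pair $\{W_{j,+},W_{j,-}\}$ to coincide; writing $\epsilon_j\in\{\pm1\}$ for the common value, the ray becomes $\{(c\epsilon_1,\dots,c\epsilon_n):c>0\}$, and membership in $\bar\Fa_H^{+}$ gives $\epsilon_1\ge\epsilon_2\ge\cdots\ge\epsilon_n\ge 0$. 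Since each $\epsilon_j=\pm1$, this is possible only when all $\epsilon_j=1$, i.e. $\ul P_{\ul a}=\ul P$; and conversely the ray $\{(c,\dots,c):c>0\}$ attached to $\ul P$ does lie in $\bar\Fa_H^{+}$. This gives $\CF^{\ul G}(\ul B_H,\ul P)=\{\ul P\}$.

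The only real work here is the bookkeeping: reading the action of $\ul A_0$ on the various $W_{j,\pm}$ off the forms $B$ and $B_k$, matching the $C_n$ Weyl chamber convention for $\ul B_H$ with the coordinates inherited from $\Fa_0$, and checking that the element of $S$ attached to $\ul P$ is the all-ones one. Once those identifications are in place the combinatorial step --- an all $\pm1$ ray can lie inside a monotone nonnegative chamber only if it is the all-ones ray --- is immediate. Alternatively one could shorten the argument by invoking Proposition \ref{equal rank}(1): for $Q\in\CF^{\ul G}(\ul B_H,\ul P)$ the group $Q_H=Q\cap\ul H$ must be a \emph{standard} parabolic of $\ul H$, and one checks it is necessarily of Siegel type, hence equal to $P_{\ul H}$, after which the parametrization of $\CF(\ul A_0,\ul P)$ leaves only $Q=\ul P$.
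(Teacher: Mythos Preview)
Your argument is correct and is precisely the direct verification the paper has in mind; the paper itself does not spell out a proof beyond calling the corollary ``a direct consequence of the discussions above together with the definition of the set $\CF^{\ul G}(\ul B_H,\ul P)$''. Your computation of the ray $\Fa_{\ul P_{\ul a}}^{+}$ in the coordinates coming from the identification $\ul A_0\simeq\GL(W_{1,+})\times\GL(W_{1,-})\times\cdots\times\GL(W_{n,+})\times\GL(W_{n,-})$, the description of $\Fa_H$ as the diagonal subspace and of $\bar\Fa_H^{+}$ as the closed type-$C_n$ chamber, and the elimination step are all accurate. The only remark worth making is that your alternative route via Proposition~\ref{equal rank}(1) is less self-contained than you suggest: knowing that $Q\cap\ul H$ is standard does not by itself pin down $Q$ among the $\ul P_{\ul a}$, so one would still need something close to your main sign-vector computation to finish; the main argument is the cleaner one.
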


\begin{rmk}
The corollary above confirms the discussion in Section 3.2.1.
\end{rmk}

\subsection{The proof of Theorem \ref{Jacquet-Guo main 1}}\label{proof of Jacquet-Guo}
In this section, we will prove Theorem \ref{Jacquet-Guo main 1}. We use Method 1 introduced in Section \ref{section method 1}.

With the same notations as in Theorem \ref{Jacquet-Guo main 1} and Section \ref{section method 1}, we want to study the regularized period integral $\CP_{\ul{H},reg}(E(\phi,s))$ for $\phi\in \CA_{\pi}$. First, let's prove statement (1) in Section \ref{section method 1} for the current case (there is no need to prove statement (2) of Section \ref{section method 1} since in the current case the set $\CF^{\ul{G}}(\ul{B}_H,\ul{P})$ only contains one element $\ul{P}$). For (1), since $\pi$ is generic, together with the argument in Section \ref{section method 1}, it is enough to show that statement (4) of Section \ref{section method 1} holds for the pair $(\ul{G},\ul{H})$. But this just follows from Corollary \ref{prasad cor}(4).

Then we compute the constant $s_0=-c(1-2c_{\ul{P}}^{\ul{H}})$ for the current case. By Remark \ref{constant c(Q,H)}, we have
$$c_{\ul{P}}^{\ul{H}}=\frac{\dim(N_{\ul{H}})}{\dim{\ul{N}}}=\frac{n+1}{2n+1}.$$
By Proposition \ref{the constant c}, we have $c=\frac{2n+1}{2}$. This implies that
$$s_0=-c(1-2c_{\ul{P}}^{\ul{H}})=1/2.$$

Combining the discussions above, equation \eqref{2} in Method 1 becomes

\begin{align*}\int_{[\underline{H}]} \Lambda^{T,\underline{H}} Res_{s=1/2}E(h, \phi, s)dh &= \int_{K_{\underline{H}}}\int_{[H]/Z_G(\BA)} \phi(hk)dhdk \\ &\;\;\;-e^{\langle - \varpi_{\underline{P}}, T \rangle } \int_{K_{\underline{H}}}\int_{[H]/Z_G(\BA)} Res_{s=1/2}M(s)\phi(hk)dhdk\end{align*}

for the current case. This finishes the proof of Theorem \ref{Jacquet-Guo main 1}.

\subsection{The local result}
Let $F$ be a p-adic field, and $E/F$ be a quadratic extension. As in the previous subsections, we can define the groups $\ul{G},\ul{H},\ul{P}=\ul{M}\ul{N},G,H$ over $F$. Let $\pi$ be an irreducible smooth representation of $G(F)=\ul{M}(F)=\GL_{2n}(F)$. We extend $\pi$ to $\ul{P}(F)$ by making it trivial on $\ul{N}(F)$. As in Section \ref{subsection SO(2n) local}, for $s\in \BC$, we use $\pi_s$ to denote the representation $\pi\otimes \varpi^{s}$ and use $I_{\ul{P}}^{\ul{G}}(\pi_s)$ to denote the normalized parabolic induction.

\begin{thm}\label{Jacquet-Guo local theorem 1}
If $\pi$ is an irreducible representation of $G(F)$ such that the Hom space
$$\Hom_{H(F)}(\pi,1)$$
is nonzero, then the representation $I_{\ul{P}}^{\ul{G}}(\pi_{\frac{1}{2}})$ is $\ul{H}(F)$-distinguished, i.e. $\Hom_{\ul{H}(F)}(I_{\ul{P}}^{\ul{G}}(\pi_{\frac{1}{2}}),1)\neq \{0\}$.
\end{thm}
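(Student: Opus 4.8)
The plan is to transcribe the proof of Theorem~\ref{SO(2n) local theorem 1}, the only change being that the constant $s_0$ is now $1/2$ rather than $1$. Set $P_{\ul{H}} = \ul{H}\cap\ul{P}$, $M_{\ul{H}} = \ul{H}\cap\ul{M}$ and $N_{\ul{H}} = \ul{H}\cap\ul{N}$, so that $P_{\ul{H}} = M_{\ul{H}}N_{\ul{H}}$ is the Siegel parabolic of $\ul{H} = Res_{k'/k}\Sp_{2n}$ with Levi $M_{\ul{H}}\simeq H = Res_{k'/k}\GL_n$. First I would note that $\ul{P}(F)\ul{H}(F)$ is closed in $\ul{G}(F)$: this is immediate from Corollary~\ref{closed orbit cor}, since $P_{\ul{H}}$ is a parabolic subgroup of $\ul{H}$. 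Hence $\ul{G}(F)_0 := \ul{G}(F)\smin \ul{P}(F)\ul{H}(F)$ is open.

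Next I would nail down the modulus characters. By Proposition~\ref{the constant c}(2) the constant attached to the Siegel parabolic of $\Sp_{4n}$ is $c = \frac{2n+1}{2}$, and since $\ul{N}$ is abelian Remark~\ref{constant c(Q,H)} gives $c_{\ul{P}}^{\ul{H}} = \frac{\dim N_{\ul{H}}}{\dim\ul{N}} = \frac{n(n+1)}{n(2n+1)} = \frac{n+1}{2n+1}$, so $s_0 = -c(1-2c_{\ul{P}}^{\ul{H}}) = 1/2$. Exactly as in Lemma~\ref{SO(2n) character}, from $\delta_{\ul{P}} = \varpi^{2c}$ and $\delta_{P_{\ul{H}}} = \varpi^{2cc_{\ul{P}}^{\ul{H}}}$ as characters of $M_{\ul{H}}(F)$ (here $\varpi = \varpi_{\ul{P}}$) one obtains the key identity
$$\delta_{\ul{P}}^{-1/2}\delta_{P_{\ul{H}}} = \varpi^{-c+2cc_{\ul{P}}^{\ul{H}}} = \varpi^{s_0} = \varpi^{1/2}.$$

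The main step is then geometric. I would realize $I_{\ul{P}}^{\ul{G}}(\pi_{1/2})$ on the space of locally constant $f\colon\ul{G}(F)\to\pi$ with $f(nmg) = \delta_{\ul{P}}(m)^{1/2}\varpi(m)^{1/2}\pi(m)f(g)$, and let $V'$ be the $\ul{H}(F)$-stable subspace of those $f$ supported on $\ul{G}(F)_0$. Since $\ul{P}(F)\backslash\ul{P}(F)\ul{H}(F) \cong P_{\ul{H}}(F)\backslash\ul{H}(F)$, restriction of functions to the closed orbit identifies, as $\ul{H}(F)$-representations,
$$I_{\ul{P}}^{\ul{G}}(\pi_{1/2})/V' \;\simeq\; ind_{P_{\ul{H}}(F)}^{\ul{H}(F)}\bigl(\delta_{\ul{P}}^{1/2}\varpi^{1/2}\pi\bigr),$$
with $ind$ the compact induction. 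Frobenius reciprocity and the identity above give
$$\Hom_{\ul{H}(F)}\bigl(I_{\ul{P}}^{\ul{G}}(\pi_{1/2})/V',1\bigr) \simeq \Hom_{P_{\ul{H}}(F)}\bigl(\delta_{\ul{P}}^{1/2}\varpi^{1/2}\pi,\delta_{P_{\ul{H}}}\bigr) = \Hom_{P_{\ul{H}}(F)}(\pi,1) = \Hom_{H(F)}(\pi,1) \neq \{0\},$$
the middle equality because $\delta_{\ul{P}}^{1/2}\varpi^{1/2} = \delta_{P_{\ul{H}}}$ on $M_{\ul{H}}(F)$ and is trivial on $N_{\ul{H}}(F)$, and the last because $\pi$ is trivial on $\ul{N}(F)\supset N_{\ul{H}}(F)$ and $M_{\ul{H}}\simeq H$. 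The short exact sequence $0\to V'\to I_{\ul{P}}^{\ul{G}}(\pi_{1/2})\to I_{\ul{P}}^{\ul{G}}(\pi_{1/2})/V'\to 0$ then yields an injection $\Hom_{\ul{H}(F)}(I_{\ul{P}}^{\ul{G}}(\pi_{1/2})/V',1)\hookrightarrow \Hom_{\ul{H}(F)}(I_{\ul{P}}^{\ul{G}}(\pi_{1/2}),1)$, which is therefore nonzero.

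Since this is essentially a word-for-word adaptation of the $\SO_{2n}$ local argument, there is no serious obstacle; the points that require care are the bookkeeping of modulus characters --- getting $s_0 = 1/2$, hence the twist $\varpi^{1/2}$ in place of the $\varpi$ of Lemma~\ref{SO(2n) character} --- and the geometric input that the closed double coset $\ul{P}(F)\ul{H}(F)$ is a single $\ul{H}(F)$-orbit with stabilizer $P_{\ul{H}}(F)$, so that quotienting the induced model by the functions supported off it produces exactly $ind_{P_{\ul{H}}}^{\ul{H}}$. Both follow once Corollary~\ref{closed orbit cor} is in hand. One further small remark: unlike the global Theorem~\ref{Jacquet-Guo main 1}, no hypothesis on the central character is needed here, because $\Hom_{H(F)}(\pi,1)\neq\{0\}$ already forces $\pi$ to be trivial on $Z_G(F)\subset H(F)$.
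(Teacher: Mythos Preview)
Your proof is correct and follows exactly the approach of the paper: the paper simply states that the theorem follows by the same argument as Theorem~\ref{SO(2n) local theorem 1} once one has the analogue of Lemma~\ref{SO(2n) character}, namely $\delta_{\ul{P}}^{-1/2}\delta_{P_{\ul{H}}}=\varpi^{1/2}$ on $M_{\ul{H}}(F)$, which you compute in the same way. Your write-up is in fact more detailed than the paper's, spelling out the closed-orbit step via Corollary~\ref{closed orbit cor} and the Frobenius reciprocity argument explicitly.
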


Theorem \ref{Jacquet-Guo local theorem 1} will follows from the exact same argument as the proof of Theorem \ref{SO(2n) local theorem 1} once we have proved the following lemma which is the analogue $\ref{SO(2n) character}$.

\begin{lem}\label{Jacquet-Guo character}
We have the following equality of characters of $M_{\ul{H}}(F)$.
$$\delta_{\ul{P}}^{-1/2} \delta_{P_{\ul{H}}}=\varpi^{1/2}.$$
Here $\delta_{\ul{P}}$ and $\varpi$ are characters of $\ul{M}(F)$, and we view them as characters of $M_{\ul{H}}(F)$ by restriction.
\end{lem}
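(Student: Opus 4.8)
The argument will be the exact analogue of the proof of Lemma \ref{SO(2n) character}: I will reduce the claimed equality of characters to a numerical identity between the constants $c$ and $c_{\ul P}^{\ul H}$ attached to the pair $(\ul G,\ul H)=(\Sp_{4n},Res_{k'/k}\Sp_{2n})$ in Section \ref{section relative truncation}, and then evaluate those constants. First, recall from the definitions of $c$ and $c_{\ul P}^{\ul H}$ that, viewed as characters of $\ul M(F)$ restricted to $M_{\ul H}(F)$,
\[
\delta_{\ul P}=\varpi^{2c},\qquad \delta_{P_{\ul H}}=\varpi^{2c\,c_{\ul P}^{\ul H}},
\]
the first because $\rho_{\ul P}=c\varpi$, and the second because by definition $\rho_{P_{\ul H}}|_{\Fa_{\ul P}}=c_{\ul P}^{\ul H}\rho_{\ul P}=c\,c_{\ul P}^{\ul H}\varpi$. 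Hence $\delta_{\ul P}^{-1/2}\delta_{P_{\ul H}}=\varpi^{-c+2c\,c_{\ul P}^{\ul H}}$, so the lemma is equivalent to the numerical identity $-c+2c\,c_{\ul P}^{\ul H}=\frac12$, that is, to $s_0=-c(1-2c_{\ul P}^{\ul H})=\frac12$.

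Next I will evaluate the two constants, which is precisely the computation already made in Section \ref{proof of Jacquet-Guo}. By Proposition \ref{the constant c}(2), applied with $n$ replaced by $2n$ (so that $\ul G=\Sp_{4n}$ and $\ul P$ is its Siegel parabolic), one gets $c=\frac{2n+1}{2}$. For $c_{\ul P}^{\ul H}$: the unipotent radical $\ul N$ of a Siegel parabolic of a symplectic group is abelian, being the space of symmetric $2n\times 2n$ matrices, and likewise $N_{\ul H}$ is abelian, so Remark \ref{constant c(Q,H)} applies and $c_{\ul P}^{\ul H}=\dim_k(N_{\ul H})/\dim_k(\ul N)$. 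Here $\dim_k(\ul N)=\frac{2n(2n+1)}{2}=n(2n+1)$, while $N_{\ul H}$, being the restriction of scalars from $k'$ to $k$ of the unipotent radical of a Siegel parabolic of $\Sp_{2n}$ over $k'$, has $\dim_k(N_{\ul H})=2\cdot\frac{n(n+1)}{2}=n(n+1)$; thus $c_{\ul P}^{\ul H}=\frac{n(n+1)}{n(2n+1)}=\frac{n+1}{2n+1}$.

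Finally, substituting, $-c+2c\,c_{\ul P}^{\ul H}=-\frac{2n+1}{2}+2\cdot\frac{2n+1}{2}\cdot\frac{n+1}{2n+1}=-\frac{2n+1}{2}+(n+1)=\frac12$, which gives $\delta_{\ul P}^{-1/2}\delta_{P_{\ul H}}=\varpi^{1/2}$ as characters of $M_{\ul H}(F)$, as claimed. The whole argument is bookkeeping, and I do not anticipate a genuine obstacle; the one place where care is needed is the dimension count for $N_{\ul H}$, where the factor $2$ coming from viewing $Res_{k'/k}\Sp_{2n}$ as a $k$-group must not be dropped. With the lemma in hand, the proof of Theorem \ref{Jacquet-Guo local theorem 1} then goes through word for word as that of Theorem \ref{SO(2n) local theorem 1}, the only change being that $\varpi$ is replaced by $\varpi^{1/2}$ throughout.
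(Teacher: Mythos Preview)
Your proof is correct and follows the same approach as the paper's own proof. The paper simply writes ``By the same argument as in the proof of Lemma \ref{SO(2n) character}'' and then displays the one-line computation $\delta_{\ul P}^{-1/2}\delta_{P_{\ul H}}=\varpi^{-c+2cc_{\ul P}^{\ul H}}=\varpi^{\frac{-2n-1}{2}+\frac{2(2n+1)}{2}\cdot\frac{n+1}{2n+1}}=\varpi^{1/2}$; you have unpacked exactly this, including the justification via Remark \ref{constant c(Q,H)} that $c_{\ul P}^{\ul H}$ is a ratio of dimensions because the Siegel unipotent radical is abelian.
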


\begin{proof}
By the same argument as in the proof of Lemma \ref{SO(2n) character}, we have
$$\delta_{\ul{P}}^{-1/2} \delta_{P_{\ul{H}}}=\varpi^{-c+2cc_{\ul{P}}^{\ul{H}}}=\varpi^{\frac{-2n-1}{2}+\frac{2(2n+1)}{2}\cdot \frac{n+1}{2n+1}}=\varpi^{1/2}.$$
This proves the lemma.
\end{proof}

Now we are ready to prove Theorem \ref{Jacquet-Guo local theorem}. Let $\pi$ be a tempered representation of $G(F)$ with trivial central character (in particular, $\pi$ is generic since $G=\GL_{2n}$). Assume that the Hom space $\Hom_{H(F)}(\pi,1)$ is nonzero, we need to show that the local exterior square L-function $L(s,\pi,\rho_{X,2})$ has a pole at $s=0$. By the same argument as in Proposition \ref{SO(2n) local reduction}, we know that the induced representation $I_{\ul{P}}^{\ul{G}}(\pi_{\frac{1}{2}})$ is reducible. Again by applying Lemma B.2 of \cite{GI} and the Standard Module Conjecture \cite{HO}, we have that $I_{\ul{P}}^{\ul{G}}(\pi_{\frac{1}{2}})$ is reducible if and only if the local gamma factor $$\gamma(s,\pi,\rho_{X,1})\gamma(2s,\pi,\rho_{X,2})=\epsilon(s,\pi,\rho_{X,1})\epsilon(2s,\pi,\rho_{X,1}) \frac{L(1-s,\pi,\rho_{X,1})L(1-2s,\pi,\rho_{X,2})}{L(s,\pi,\rho_{X,1})L(2s,\pi,\rho_{X,2})}$$
has a pole at $s=\frac{1}{2}$. Since $\pi$ is tempered, $L(s,\pi,\rho_{X,1})$ and $L(s,\pi,\rho_{X,2})$ are holomorphic and nonzero when $Re(s)>0$ (Theorem 1.1 of \cite{HO}), which implies that the L-function $L(s,\pi,\rho_{X,2})$ has a pole at $s=0$. This finishes the proof of Theorem \ref{Jacquet-Guo local theorem}.

\section{The model $(\mathrm{GE_6}, A_1\times A_5)$}\label{Section E_6}

\subsection{The result}
Fix a quaternion algebra $B$ over the number field $k$.  Denote by $J_B = H_3(B)$ the Hermitian $3 \times 3$ matrices over $B$.  Let $\Theta = B \oplus B$ be an octonion algebra over $k$ defined via the Cayley-Dickson construction and denote by $J_\Theta = H_3(\Theta)$ the Hermitian $3 \times 3$ matrices over $\Theta$.  Then $\dim_k J_B = 15$ and $\dim_k J_\Theta = 27$; $J_\Theta$ is the exceptional cubic norm structure.  The Cayley-Dickson construction induces an identification $J_\Theta = J_B \oplus B^3$.

Let $G = GE_6$ be the group preserving the cubic norm on $J_\Theta$ up to similitude.  Let
\[
H = (\GL_1(B) \times \GL_3(B))^0 = \{(x,g) \in \GL_1(B) \times \GL_3(B): n_B(x) = N_6(g)\}
\]
where $n_B$, resp. $N_6$, denotes the reduced norm on $B$ (of degree two), resp. on $M_3(B)$ (of degree six).  In this section, we will consider $H$-periods of cusp forms on $G$.

Denote by $\underline{G}$ the semisimple, simply-connected group of type $E_7$ defined in terms of $J_\Theta$.  Precisely, $\underline{G}$ is the group preserving Freudenthal's symplectic and quartic form on $W_\Theta = k \oplus J_\Theta \oplus J_\Theta^\vee \oplus k$.  Denote by $W_B:= k \oplus J_B \oplus J_B^\vee \oplus k$ the Freudenthal space associated to the cubic norm structure $J_B$.  We write elements of $W_B$ as ordered four-tuples $(a,b,c,d)$, so that $a, d \in k$, $b \in J_B$ and $c \in J_B^\vee$, and similarly for $W_\Theta$. The  $32$-dimensional space $W_B$ affords one of the half-spin representations of a group of type $D_6$.  The identification $J_\Theta = J_B \oplus B^3$ induces an identification $W_\Theta = W_B \oplus B^6$, which we will use to define $\underline{H}$ and the map $\underline{H} \rightarrow \underline{G}$.

In more detail, let $\underline{H}'$ be the subgroup of elements with similitude equal to $1$ of the group denoted $\widetilde{G}$ in \cite[Appendix A]{P17}.  The group $\underline{H}'$, by its definition in \emph{loc cit}, comes equipped with maps to $D_6^+$ and $U_6(B)$.  Here $D_6^+$ is the semisimple half-spin group of type $D_6$ whose defining representation is $W_B$ and $U_6(B)$ is the subgroup of $\GL_6(B)$ satisfying $g \left(\begin{array}{cc} & 1_3 \\ -1_3 & \end{array}\right) g^* = \left(\begin{array}{cc} & 1_3 \\ -1_3 & \end{array}\right)$ where $g^*$ is the transpose conjugate of $g$.  The group $\underline{H}'$ acts on $W_\Theta = W_B \oplus B^6$ through these two maps, preserving the decomposition. Denote by $B^1$ the subgroup of $\GL_1(B)$ with reduced norm equal to $1$. Let $B^1$ act on $W_\Theta$ by $x (w,v)= (w,xv)$ where $x \in B^1$, $w \in W_B$ and $v \in B^6$.  This action commutes with the action of $\underline{H}'$ on $W_\Theta$ because $\underline{H}'$ acts on the right of $B^6$.  We set $\underline{H} = B^1 \times \underline{H}'$.  The action of $\underline{H}$ on $W_\Theta$ defines a map $\underline{H} \rightarrow \underline{G}$.  This map has a diagonal $\mu_2$-kernel; we abuse notation and also let $\ul{H}$ denote the image of this map in $\ul{G}$.

Denote by $\ul{P}$ the subgroup of $\ul{G}$ that stabilizes the line $k(0,0,0,1)$ in $W_\Theta$.  The group $\ul{P}$ is a parabolic subgroup of $\ul{G}$ with reductive quotient of type $E_6$.  Denote by $\ul{M}$ the subgroup of $\ul{P}$ that also fixes the line $k(1,0,0,0)$.  Then $\ul{M}$ is a Levi subgroup of $\ul{P}$, and one has $\ul{P}=\ul{M}\ul{N}$ with the unipotent radical $\ul{N}$ of $\ul{P}$ abelian; in fact, $\ul{N} \simeq J_\Theta$.  The group $\ul{M}$ is isomorphic to the subgroup $\GL_1 \times GE_6$ consisting of pairs $(\delta,m)$ with $\delta^2 = \nu(m)$, where $\nu: GE_6 \rightarrow \GL_1$ is the similitude.  The map $\underline{M} \rightarrow \GL_1$ defined as $(\delta,m)\mapsto \delta$ is the fundamental weight of $\underline{M}$.  One has that
\[
\ul{M} \cap \ul{H} = \{(x,\delta,g) \in B^1 \times \GL_1 \times \GL_3(B):\delta^2 = N_6(g)\}.
\]
Note that the image of $\ul{M} \cap \ul{H}$ in $\mathrm{PGE}_6$ is contained in the image of $H$ in $\mathrm{PGE}_6$.

Let $\pi = \otimes_{v \in |k|}{\pi_v}$ be a cuspidal automorphic representation of $G(\BA)$ with trivial central character. It can also be viewed as a cuspidal automorphic representation of $\ul{M}(\BA)$ with trivial central character.  For $\phi \in \mathcal{A}_\pi$ and $s \in \BC$, let $E(\phi,s)$ be the Eisenstein series on $\underline{G}(\BA)$.  The goal of this section is to prove the following theorem.

\begin{thm}\label{thm:E7per}
Assume that there exists a local non-archimedean place $v \in |k|$ such that $G(k_v)$ is split and $\pi_v$ is a generic representation of $G(k_v)$.  If the period integral $\mathcal{P}_H(\cdot)$ is nonzero on the space of $\pi$, then there exists $\phi \in \mathcal{A}_\pi$ such that the Eisenstein series $E(\phi,s)$ has a pole at $s=1$.
\end{thm}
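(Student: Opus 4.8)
The plan is to apply Method~1 of Section~\ref{section method 1} to the symmetric pair $(\ul G,\ul H)=(\mathrm{E_7^{sc}},\,A_1\times D_6)$ built above, which satisfies conditions (1)--(3) of Section~3.1: by the description preceding the theorem, $\ul M\simeq\GL_1\times\GE_6$ modulo the $\mu_2$-identification realizes $G$ as the required Levi, the $27$-dimensional Langlands--Shahidi L-function of $(\ul G,\ul M)$ is $L(s,\pi,\rho_X)$, and $\ul P\cap\ul H=(\ul M\cap\ul H)\ltimes(\ul N\cap\ul H)$ is a maximal parabolic of $\ul H$ with $\ul M\cap\ul H=\{(x,\delta,g)\in B^1\times\GL_1\times\GL_3(B):\delta^2=N_6(g)\}$ having the same image in $\mathrm{PGE_6}$ as $H$. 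The hypothesis that some $\pi_v$ is generic supplies Assumption~\ref{weak local assumption}, and since $H$ is symmetric, Assumption~\ref{local assumption} as well.

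The first concrete step is to locate the residue point $s_0=-c(1-2c_{\ul P}^{\ul H})$. Proposition~\ref{the constant c}(5) gives $c=9$; the unipotent radical $\ul N$ is abelian and equals $J_\Theta$, and under the Cayley--Dickson splitting $J_\Theta=J_B\oplus B^3$, $W_\Theta=W_B\oplus B^6$ the group $\ul H$ preserves the decomposition, so the $J_\Theta$-translations lying in $\ul H$ are exactly those corresponding to $J_B\subseteq J_\Theta$, i.e. $\ul N\cap\ul H\simeq J_B$. Hence Remark~\ref{constant c(Q,H)} yields
\[
c_{\ul P}^{\ul H}=\frac{\dim_k J_B}{\dim_k J_\Theta}=\frac{15}{27}=\frac59,\qquad s_0=-9\Bigl(1-\tfrac{10}{9}\Bigr)=1 .
\]

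Next I would carry out the two verifications of Method~1. When $\ul H$ is not split one has $\CF^{\ul G}(P_{0,\ul H},\ul P)=\{\ul P\}$, so only statement~(1) is needed: $\CP_{\ul H,reg}(E(\phi,s))=0$ for $\Re(s)\gg0$ and all $\phi\in\CA_\pi$. Since $\CP_{\ul H,reg}(\cdot)$ is right $\ul H(\BA)$-invariant by Theorem~\ref{relative truncation}, it factors through the local Hom spaces $\Hom_{\ul H(k_v)}(I_{\ul P}^{\ul G}(\pi_{v,s}),1)$, and it suffices to show one of them vanishes. At the place $v$ of the hypothesis, $I_{\ul P}^{\ul G}(\pi_{v,s})$ is a generic representation of $\ul G(k_v)$ (induced from the generic $\pi_v$), so it is enough to know that no generic representation of $\ul G(k_v)$ is $\ul H(k_v)$-distinguished. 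This follows from Prasad's Theorem~\ref{prasad}: the involution defining $\ul H$ has fixed points of type $D_6\times A_1$ inside $\ul G$ of type $E_7$, so the associated real form is $\mathrm{E_{7(-5)}}$, which is not quasi-split; hence by Corollary~\ref{prasad cor}(7), $\Hom_{\ul H_1(k_v)}(\tau_v,1)=0$ for every generic $\tau_v$, and a fortiori $\Hom_{\ul H(k_v)}(\tau_v,1)=0$ since $\ul H_1\subseteq\ul H$. When $\ul H$ is split, $\CF^{\ul G}(P_{0,\ul H},\ul P)=\{\ul P,\ul P'\}$ and statement~(2) is also needed; by Remark~\ref{local vanishing of orbit} this reduces to the same non-distinction statement for the pair $(\ul L,L_{\ul H})$ attached to $\ul P'$, which I would settle by identifying $\ul L$ and $L_{\ul H}=\ul L\cap\ul H$ and reapplying Prasad's criterion.

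Granting statements~(1) and (2), equations~\eqref{1}--\eqref{2} of Section~\ref{section method 1} together with condition~(3) yield
\[
\int_{[\ul H]}\Lambda^{T,\ul H}Res_{s=1}E(h,\phi,s)\,dh=\int_{K_{\ul H}}\int_{[H]}\phi(hk)\,dh\,dk+\frac{e^{\langle-2\varpi_{\ul P},T\rangle}}{-2}\int_{K_{\ul H}}\int_{[H]}Res_{s=1}M(s)\phi(hk)\,dh\,dk ,
\]
so by Proposition~\ref{residue 3} the non-vanishing of $\CP_H(\cdot)$ on $\pi$ forces $E(\phi,s)$ (and $M(s)$) to have a pole at $s=s_0=1$, which is Theorem~\ref{thm:E7per}. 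The step I expect to be the main obstacle is the structure theory behind conditions (1)--(3) and the identification $\ul N\cap\ul H\simeq J_B$: this requires a careful analysis of Freudenthal's quartic form on $W_\Theta=W_B\oplus B^6$ and of the map $\ul H\to\ul G$ coming from \cite{P17}; and, in the split-$\ul H$ case, the explicit determination of the second parabolic $\ul P'$, of $\ul L$ and of $L_{\ul H}$ is precisely the non-closed-orbit bookkeeping that Method~1 is meant to avoid but that re-enters through statement~(2).
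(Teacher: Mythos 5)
Your proposal matches the paper's proof in its skeleton: Method~1 applied to $(\ul G,\ul H)=(E_7^{sc},\,D_6\times A_1)$, the constant $s_0=1$ via $c=9$ and $c_{\ul P}^{\ul H}=15/27$, and Corollary~\ref{prasad cor}(7) to kill the regularized period for $\Re(s)\gg0$. The steps you explicitly defer are exactly what the paper's Lemmas~\ref{lem:Bdiv} and~\ref{lem:Bsplit} supply: they compute $\mathcal{F}'(M_{0,\ul H},\ul P)$ by classifying the rank-one lines in $W_\Theta$ fixed by $M_0^B$ (eight when $B$ is division, fifty-six when $B$ is split), and then deduce $\CF^{\ul G}(P_{0,\ul H},\ul P)=\{\ul P\}$ resp.\ $\{\ul P,\ul P'\}$ because among the resulting parabolics of $\ul H$ only one per conjugacy class can be standard. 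In the split case the crucial fact for your statement~(2) is that $\ul P'\cap\ul H$ is a maximal parabolic of $\ul H'$ with Levi of type $D_5$, so the relevant Levi pair is of type $(\GE_6,\,D_5\times\GL_1)$; that is why the paper invokes Corollary~\ref{prasad cor}(6) specifically, and you should make that identification rather than leave ``reapplying Prasad's criterion'' unresolved. One small conceptual correction: the bookkeeping that statement~(2) reintroduces into Method~1 is closed-orbit bookkeeping (Remark~\ref{closed orbit rmk}) --- the parabolics $\ul P,\ul P'$ you enumerate correspond to the closed $\ul P$-$\ul H$ double cosets --- whereas the advantage of Method~1 over Method~2 is precisely that the non-closed orbits never appear; so the analysis you are postponing is genuinely lighter than the Method~2 orbit calculus, not the same work returning by another route.
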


\begin{rmk}\label{rmk:M1vsM2}
We will prove the theorem using Method 1. As we explained in Section 3.3, it is also possible to prove the theorem by Method 2, but it is more complicated since we need to study all the orbits in the double coset $\ul{P}\back \ul{G}/\ul{H}$. However, if the quaternion algebra $B$ is not split, there are two elements in the double coset $\ul{P}\back \ul{G}/\ul{H}$, and one can prove the theorem by Method 2 even without the assumption of local genericity.
\end{rmk}

\begin{prop}
Theorem \ref{thm:E7per} implies Theorem \ref{E_6 main}.
\end{prop}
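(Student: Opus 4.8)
The plan is to deduce Theorem \ref{E_6 main} from Theorem \ref{thm:E7per} in exactly the way Proposition \ref{SO(2n+1) prop} and Proposition \ref{Jacquet-Guo prop} deduce the $L$-function statements from the corresponding Eisenstein-series statements: convert the pole of the Eisenstein series into a pole of the Langlands--Shahidi $L$-function via the Gindikin--Karpelevich normalization of the intertwining operator.

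First I would check the hypotheses of Theorem \ref{thm:E7per}. Since $\GE_6$ is split over $k$, the group $G(k_v)$ is split at every place $v$, and since the cuspidal representation $\pi$ of Theorem \ref{E_6 main} is assumed globally generic, each local component $\pi_v$ is generic. Thus for any non-archimedean $v$ the hypotheses are met, and Theorem \ref{thm:E7per} produces $\phi \in \CA_\pi$ for which $E(\phi,s)$ on $\ul{G}(\BA) = \mathrm{E}_7^{sc}(\BA)$ has a pole at $s=1$.

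Next I would identify the normalizing factor of the intertwining operator $M(s)$ attached to $\ul{P}=\ul{M}\ul{N}$, whose Levi quotient is of type $\mathrm{E}_6$. Here $\ul{N}$ is abelian (indeed $\ul{N}\simeq J_\Theta$), and the adjoint action of the simply-connected group of type $\mathrm{E}_6$ inside ${}^L\ul{M}$ on $\Lie\,{}^L\ul{N}$ is the irreducible $27$-dimensional representation $\rho_X$. Since the nilradical is one-step, the normalizing factor is a single quotient
\[
\frac{L(s,\pi,\rho_X)}{L(s+1,\pi,\rho_X)}
\]
with no auxiliary $\zeta_k$-factor, in contrast with the $B_n$ case of Section \ref{Section SO(2n+1)} where the two-step radical contributes $\zeta_k(2s)/\zeta_k(2s+1)$. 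Invoking Theorem 4.7 of \cite{KK}, the normalized operator is holomorphic at $s=1$, so the pole of $E(\phi,s)$, hence of $M(s)$, at $s=1$ must be carried by the normalizing factor. Because $L(2,\pi,\rho_X)\neq 0$ by the hypothesis of Theorem \ref{E_6 main}, the denominator $L(s+1,\pi,\rho_X)$ is holomorphic and nonvanishing at $s=1$, so $L(s,\pi,\rho_X)$ has a pole at $s=1$, as required. For the parenthetical remark, when $\pi$ is tempered the Euler product for $L(s,\pi,\rho_X)$ converges absolutely for $\Re(s)>1$, whence $L(2,\pi,\rho_X)\neq 0$ automatically.

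The only genuinely non-formal ingredient, and the step I would be most careful about, is the structure-theoretic input just used: that the $\mathrm{E}_7$ parabolic with $\mathrm{E}_6$ Levi has abelian unipotent radical on which the Levi acts through $\rho_X$, and that the holomorphy result of \cite{KK} applies in this exceptional Langlands--Shahidi situation; everything else is a formal repetition of the classical cases. Note, however, that in contrast with those cases there is no \emph{standard $L$-function} nonvanishing statement (such as Proposition \ref{L-function nonzero}) available to force $L(2,\pi,\rho_X)\neq 0$ for general $\pi$, which is precisely why that nonvanishing must be imposed as a hypothesis and holds automatically only for tempered $\pi$.
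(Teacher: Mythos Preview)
Your proof is correct and follows essentially the same route as the paper: apply Theorem \ref{thm:E7per} to obtain a pole of $E(\phi,s)$ (hence of $M(s)$) at $s=1$, identify the Langlands--Shahidi normalizing factor as $L(s,\pi,\rho_X)/L(s+1,\pi,\rho_X)$, invoke Theorem~4.7 of \cite{KK} for holomorphy of the normalized operator, and use the hypothesis $L(2,\pi,\rho_X)\neq 0$ to force the pole onto $L(s,\pi,\rho_X)$. Your additional remarks (verifying the hypotheses of Theorem \ref{thm:E7per}, explaining why the abelian radical yields a single $L$-quotient with no $\zeta_k$ factor, and justifying the tempered parenthetical) are helpful elaborations but do not alter the argument.
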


\begin{proof} Let us recall the statement of Theorem \ref{E_6 main}.  Assume that $G$ is split and $\pi$ is a generic cuspidal automorphic representation of $G$ with trivial central character.  Assume that the period integral $\mathcal{P}_H(\cdot)$ is nonzero on the space of $\pi$ and $L(2,\pi,\rho_X)\neq 0$.  We need to show that the $L$-function $L(s,\pi,\rho_X)$ has a pole at $s=1$.  Here $\rho_X$ is the a fundamental $27$-dimensional representation of $^LPGE_6 = E_6^{sc}(\C)$.
	
By Theorem \ref{thm:E7per}, if the period integral $\mathcal{P}_H(\cdot)$ is nonzero on the space of $\pi$, there exists $\phi \in \mathcal{A}_\pi$ such that the Eisenstein series $E(\phi,s)$, and thus the intertwining operator $M(s)$, has a pole at $s=1$. In this case, the normalizing factor of the intertwining operator is
\[
\dfrac{L(s, \pi, \rho_X )}{L(s+1, \pi, \rho_X)}.
\]
By Theorem 4.7 of \cite{KK}, the normalized intertwining operator is holomorphic at $s=1$. Since we have assumed that $L(2,\pi,\rho_X)\neq 0$, it follows that the numerator $L(s, \pi, \rho_X)$ has a pole at $s=1$. This proves Theorem \ref{E_6 main}.
\end{proof}

\subsection{The parabolic subgroups}
Let $A_{0,\ul{H}}$ be a maximal split torus of $\underline{H}$, $M_{0,\ul{H}}$ its centralizer in $\underline{H}$, and $P_{0,\ul{H}}$ a minimal parabolic of $\underline{H}$ with $M_{0,\ul{H}}$ as Levi subgroup.  We will specify a specific choice of $A_{0,\ul{H}}$ momentarily. Let $\ul{A}_{0}$ be a maximal split torus of $\ul{G}$ with $A_{0,\ul{H}}\subset \ul{A}_0$.  In case $B=M_2(k)$ is split, we will specify $A_{0,\ul{H}}$ below; in this case, $\ul{G}$ is also split and we have $A_{0,\ul{H}}=\ul{A}_0$. When $B$ is a division algebra, the choice of $\ul{A}_0$ will turn out to be irrelevant.

Let $\mathcal{F}(\ul{A}_{0},\underline{P})$ be the set of semistandard parabolic subgroups of $\underline{G}$ (i.e. parabolic subgroups that contain $\ul{A}_{0}$) which are conjugate to $\underline{P}$. Let $\mathcal{F}(M_{0,\ul{H}},\underline{P})$ be the set parabolic subgroups $\ul{Q}\in \mathcal{F}(\ul{A}_{0},\underline{P})$ such that $M_{0,\ul{H}}\subset \ul{Q}$. Finally, denote by $\mathcal{F}'(M_{0,\ul{H}},\ul{P})$ the set of parabolic subgroups $\ul{Q}$ of $\ul{G}$ conjugate to $\ul{P}$ such that $\ul{Q}$ contains $M_{0,\ul{H}}$.  Thus $\mathcal{F}(M_{0,\ul{H}},\underline{P}) \subseteq  \mathcal{F}'(M_{0,\ul{H}},\underline{P})$.  The purpose of this subsection is to make explicit the set $\mathcal{F}'(M_{0,\ul{H}},\underline{P})$.  In case the quaternion algebra $B$ is a division algebra, this set contains $8$ elements; in case $B = M_2(k)$ is the split quaternion algebra, this set contains $56$ elements.

We have inclusions $\GL_3 \rightarrow \Sp_6 \rightarrow \underline{H}$ where the first arrow is the Levi of the Siegel parabolic of $\Sp_6$ and the composite arrow $\GL_3 \rightarrow \underline{H}$ is defined in \cite[Appendix A]{P17} in the second paragraph of page 1428.  Denote by $T_6$ the image in $\underline{H}$ of the diagonal maximal torus of $\Sp_6$ or $\GL_3$.  In case $B$ is a division algebra, $T_6=A_{0,\ul{H}}$ is a maximal split torus of $\underline{H}$.  The action of $T_6$ on $W_{\Theta} = k \oplus J_\Theta \oplus J_\Theta^\vee \oplus k$ has $14$ weight spaces.  There are $8$ one-dimensional spaces of the form
\[
(1,0,0,0), (0,e_{ii},0,0), (0,0,e_{ii},0), (0,0,0,1)
\]
where $1 \leq i \leq 3$ and $e_{ii}$ is the element of $H_3(\Theta)$ with a $1$ in the $i^{th}$-position on the diagonal and $0$'s elsewhere.  Denote this above set of $8$ elements of $W_\Theta$ or $W_B$ by $X_{long}$.  The action of $T_6$ on $W_\Theta$ has $6$ eight-dimensional weight spaces of the form $(0,v_k(\Theta),0,0)$ and $(0,0,v_k(\Theta),0)$ for $1 \leq k \leq 3$.  Here $v_1(\Theta)$ is the eight-dimensional subspace of $H_3(\Theta)$ consisting of elements of the form $\left(\begin{array}{ccc} 0 & 0 & 0 \\ 0 &0& * \\ 0 & * & 0\end{array}\right)$ and analogously for $v_2(\Theta)$ and $v_3(\Theta)$.  We denote\footnote{The vector space $W_\Theta$ is the abelianization of the unipotent radical of the Heisenberg parabolic of a group of type $E_8$.  This group has a certain relative root system of type $F_4$; the elements of $X_{long}$ make up the long roots in $W_\Theta$ for this system, whereas the spaces in $X_{short}^\Theta$ make up the short root spaces, hence the ``long" and ``short" labels.} the set of these $6$ eight-dimensional spaces by $X_{short}^{\Theta}$.

Denote by $M_0^B$ the centralizer of $T_6$ in $\underline{H}$.  Note that $M_0^B$ contains the $B^1$-component of $\underline{H}=B^1 \times \underline{H}'$.  In case $B$ is a division algebra, $M_0^B = M_{0,\ul{H}}$.

\begin{lem}\label{lem:Bdiv} For $v \in X_{long}$, denote by $\underline{P}_v$ the subgroup of $\underline{G}$ that stabilizes the line $k v$.  If $B$ is a division algebra, then these eight subgroups $\underline{P}_v$ are the elements of $\mathcal{F}'(M_{0,\ul{H}},\underline{P})$.
\end{lem}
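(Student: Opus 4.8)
The plan is to translate the problem into one about lines in $W_\Theta$. By definition $\ul{P}$ is the stabilizer in $\ul{G}$ of the line $k(0,0,0,1)$, and the resulting closed immersion $\ul{G}/\ul{P}\hookrightarrow \BP(W_\Theta)$ identifies $\ul{G}/\ul{P}$ with the orbit $\mathcal{O}$ of $k(0,0,0,1)$; the cone over $\mathcal{O}$ is the set of rank-one elements of the Freudenthal module $W_\Theta$. Since a parabolic conjugate to $\ul{P}$ is self-normalizing and hence stabilizes a unique line of $\mathcal{O}$, the assignment $\ell\mapsto\Stab_{\ul{G}}(\ell)$ is a bijection from $\mathcal{O}$ onto the set of parabolics conjugate to $\ul{P}$, and $\ul{Q}$ lies in $\mathcal{F}'(M_{0,\ul{H}},\ul{P})$ exactly when its line lies in $\mathcal{O}$ and is stable under $M_{0,\ul{H}}$. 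As $B$ is a division algebra we have $M_{0,\ul{H}}=M_0^B$, so the lemma amounts to the assertion that the $M_0^B$-stable lines in $\mathcal{O}$ are precisely the eight lines $kv$, $v\in X_{long}$.

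For the inclusion $\{\ul{P}_v\}_{v\in X_{long}}\subseteq\mathcal{F}'(M_{0,\ul{H}},\ul{P})$: each $v\in X_{long}$ is rank one in $W_\Theta$ — the $e_{ii}$ are rank-one idempotents of $J_\Theta$, while $(1,0,0,0)$ and $(0,0,0,1)$ are manifestly rank one — so $[v]\in\mathcal{O}$ and $\ul{P}_v$ is conjugate to $\ul{P}$; and since $M_0^B$ centralizes $T_6$ it preserves every $T_6$-weight space of $W_\Theta$, in particular the one-dimensional space $kv$, so $M_0^B\subset\ul{P}_v$. The eight $\ul{P}_v$ are pairwise distinct because each determines its line $kv$ and the eight vectors span eight distinct lines.

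For the reverse inclusion, let $\ul{Q}\in\mathcal{F}'(M_{0,\ul{H}},\ul{P})$ with line $\ell\in\mathcal{O}$. Since $M_0^B$ contains $T_6$ and the factor $B^1$ of $\ul{H}=B^1\times\ul{H}'$, the line $\ell$ is stable under both. First, $B^1$-stability forces $\ell\subset W_B$: the group $B^1=\mathrm{SL}_1(B)$ is semisimple and hence has no nontrivial characters, so a $B^1$-stable line is pointwise fixed by $B^1$; but $B^1$ acts trivially on $W_B$ and acts on $W_\Theta/W_B\simeq B^6$ by left multiplication on each coordinate, an action with no nonzero fixed vector, so writing $\xi=\xi_B+\xi_6$ with $\xi_B\in W_B$ and $\xi_6\in B^6$ one gets $\xi_6=0$. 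Second, $T_6$-stability forces $\ell$ into a single $T_6$-weight space of $W_B$; these are the eight lines $kv$, $v\in X_{long}$, together with six four-dimensional spaces $(0,v_k(B),0,0)$ (the quaternionic analogues of the spaces in $X_{short}^\Theta$). It remains to exclude lines in the latter: a nonzero $(0,b,0,0)$ with $b\in v_k(B)$ has rank $\geq 2$ in $W_\Theta$ — its relevant $2\times2$ cofactor equals $-n_B(x)$, where $0\neq x\in B$ is the off-diagonal entry of $b$, and $n_B(x)\neq0$ because $B$ is a division algebra — so $[(0,b,0,0)]\notin\mathcal{O}$. Hence $\ell=kv$ for some $v\in X_{long}$, i.e. $\ul{Q}=\ul{P}_v$.

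I expect the last step to be the only real obstacle: verifying that no $T_6$-weight line in one of the ``short'' spaces $(0,v_k(B),0,0)$ lies in $\mathcal{O}$. Concretely this needs the rank-one equations for the Freudenthal module $W_\Theta$ — equivalently, the identification of $\mathcal{O}\cap\BP(W_B)$ with the variety of pure spinors in the half-spin module $W_B$ of $D_6$ (the zero locus in $W_B$ of the $D_6$-moment map) — after which the exclusion reduces to the one-line computation with $n_B(x)$ above. This is also exactly where the division hypothesis enters, and where the count changes in the split case: when $B=M_2(k)$ there exist nonzero $x$ with $n_B(x)=0$, so the short weight spaces do meet $\mathcal{O}$, and relative to the now-larger split torus the $56$ elements of $\mathcal{F}'(M_{0,\ul{H}},\ul{P})$ appear.
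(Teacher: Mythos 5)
Your argument is correct and follows the same basic strategy as the paper's: reduce to the statement that the only $M_0^B$-stable rank-one lines are the eight $kv$ with $v\in X_{long}$, then use $T_6$-stability to constrain the line to a weight space and $B^1$-stability together with anisotropy of $n_B$ to rule out the short-root directions. The only cosmetic difference is the order of operations: you first use $B^1$ to force $\ell\subset W_B$ and then apply $T_6$ to the $14$ weight spaces of $W_B$, whereas the paper applies $T_6$ to the $14$ weight spaces of $W_\Theta$ first, and then uses $B^1$ to cut the eight-dimensional $\Theta$-copies in $X_{short}^\Theta$ down to the $B$-copies; either way the division hypothesis enters exactly through the non-vanishing of $n_B$ on nonzero off-diagonal entries, which you make explicit via the $2\times2$ cofactor computation. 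Your closing caveat about needing the rank-one equations on $W_\Theta$ is fair but not a gap — the paper invokes the same fact, citing the characterization of parabolics conjugate to $\ul{P}$ as stabilizers of rank-one lines, and your cofactor identity is the concrete form of that condition.
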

\begin{proof} The subgroups of $\underline{G}$ that are conjugate to $\underline{P}$ are those subgroups that stabilize a rank one line in $W_\Theta$; see, e.g., \cite[section 4.3]{P18}.  It is clear that the eight elements $v$ of $X_{long}$ are rank one, and thus each $\underline{P}_v$ is an element of $\mathcal{F}'(M_{0,\ul{H}},\underline{P})$.
	
The lemma now follows immediately from the following claim:
\begin{claim} Suppose $\ell$ is a rank one line in $W_\Theta$ stabilized by $M_0^B$.  Then $\ell = kv$ for some element $v \in X_{long}$.\end{claim}

To prove the claim, we use the action of $B^1 \subseteq M_0^B$.  Each of the $6$ weight spaces in $X_{short}^{\Theta}$ is a copy of the octonion algebra $\Theta$.  The action of $B^1$ breaks $\Theta$ into $B \oplus B$, where $B^1$ acts on the first $B$ by the identity and the second $B$ by left-multiplication.  Because $B$ is a division algebra, the reduced norm on $B$ is anisotropic, and thus no element of one of these $B$'s in $\Theta$ can be rank one.  Therefore, the only rank one lines in $W_\Theta$ stabilized by $M_0^B$ are spanned by elements in $X_{long}$.  This proves the claim, and with it, the lemma.\end{proof}

\begin{rmk} The subgroups $\ul{P}_v$ of Lemma \ref{lem:Bdiv} may be described as follows.  Recall the maximal diagonal torus $T_6 \subseteq \Sp_6$, which we consider inside of $\ul{G}$.  Recall that the Weyl group of $\Sp_6$ is $(\pm 1)^3 \rtimes S_3$.  Let $w_1, \ldots, w_8$ be elements of the normalizer of $T_6$ in $\Sp_6$ that correspond to the elements $(\pm 1)^3$ of the Weyl group.  Let $\lambda_i: \GL_1 \rightarrow T_6$ be the cocharacters $t \mapsto w_i(\mm{t1_3}{}{}{t^{-1}1_3})$.  Abusing notation, also denote by $\lambda_i$ the cocharacter $\GL_1 \rightarrow T_6 \subseteq A_{0,\ul{H}} \subseteq \ul{A}_0$, where the map $\GL_1 \rightarrow T_6$ is the one just described.  With this notation, the parabolic subgroups of Lemma \ref{lem:Bdiv} are those parabolic subgroups of $\ul{G}$ that correspond to the cocharacters $\lambda_i$.  Consequently, $\ul{P}_v$ contains the centralizer of the image of $\lambda_i$ for some $i$ that depends upon $v$.  In particular, note that each $\ul{P}_v$ contains $\ul{A}_0$ for any choice of maximal split torus $\ul{A}_0$ containing $A_{0,\ul{H}}$.  Thus, in the statement of Lemma \ref{lem:Bdiv}, one can replace the set $\mathcal{F}'(M_{0,\ul{H}},\underline{P})$ with $\mathcal{F}(M_{0,\ul{H}},\underline{P})$, if one so desires.\end{rmk}
	
We now suppose that $B = M_2(k)$ is the split quaternion algebra.  In this case, the decomposition $W_\Theta = W_B \oplus B^6$ is $W_\Theta = W_B \oplus M_{2,12}(k)$ where $M_{2,12}(k)$ denotes the $2 \times 12$ matrices with entries in $k$, $B^1 = \SL_2$ acts by multiplication on the left and $\underline{H}'$ acts by multiplication on the right of $M_{2,12}$ via its map to $O(12)$.

Consider again the split torus $T_6$ in $\underline{H}'$.  It acts on $W_B$ with again $14$ weight spaces, the $8$ one-dimensional spaces in $X_{long}$ and $6$ four-dimensional spaces $X_{short}^{B}$ of the form $(0,v_k(B),0,0)$ and $(0,0,v_k(B),0)$ for $k = 1,2,3$.  Here $v_1(B)$ is the four-dimensional subspace of $H_3(B)$ consisting of elements of the form $\left(\begin{array}{ccc} 0 & 0 & 0 \\ 0 &0& * \\ 0 & * & 0\end{array}\right)$ and analogously for $v_2(B)$ and $v_3(B)$.  We can\footnote{Note that the half-spin representation of $D_6$ is miniscule} and do choose a maximal torus $T'$ of $\underline{H}'$ so that
\begin{itemize}
	\item $T'$ contains $T_6$
	\item the action of $T'$ on $W_B$ breaks up each $v_k(B) \simeq B = M_2(k)$ into four weight spaces, which are spanned by the matrices $\mm{1}{0}{0}{0}, \mm{0}{1}{0}{0}, \mm{0}{0}{1}{0}, \mm{0}{0}{0}{1}$ of $M_2(k)$.
\end{itemize}
Finally, if $T''$ denotes the one-dimensional diagonal torus of $\SL_2 = B^1$, we set $\ul{A}_0=A_{0,\ul{H}} = T'' \times T'$.  This is a maximal split torus of $\underline{H}$ and $\underline{G}$. We also have $M_{0,\ul{H}}=A_{0,\ul{H}}$. The weight spaces for $A_{0,\ul{H}}$ on $W_{\Theta}$ are each one-dimensional, and consist of the $32$ weight spaces in $W_B \subseteq W_\Theta$ and the $24$ spaces in $M_{2,12}$ spanned by the coordinate entries.

The following lemma computes the elements of $\mathcal{F}(A_{0,\ul{H}},\underline{P})=\mathcal{F}(M_{0,\ul{H}},\underline{P})=\mathcal{F}'(M_{0,\ul{H}},\underline{P})$.

\begin{lem}\label{lem:Bsplit}
Let $w$ be one of the weight spaces for $A_{0,\ul{H}}$ on $W_\Theta$, and $\underline{P}_w$ the subgroup of $\underline{G}$ stabilizing $w$.  Then the $56$ subgroups $\underline{P}_w$ are exactly the elements of $\mathcal{F}(A_{0,\ul{H}},\underline{P})$.
\end{lem}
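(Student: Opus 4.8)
The plan is to reduce the lemma to the single assertion that every one of the $56$ weight lines for $A_{0,\ul{H}}$ on $W_\Theta$ is a rank one line in the sense of the Freudenthal/cubic norm structure. Recall (as in the proof of Lemma \ref{lem:Bdiv}, via \cite[Section 4.3]{P18}) that the parabolic subgroups of $\ul{G}$ conjugate to $\ul{P}$ are exactly the stabilizers of rank one lines in $W_\Theta$, and that the resulting map $\ul{G}/\ul{P}\hookrightarrow \BP(W_\Theta)$ is injective, being the closed $\ul{G}$-orbit. Since in the split case $A_{0,\ul{H}}=\ul{A}_0$ is a maximal split torus of $\ul{G}$ and $W_\Theta$ is the minuscule $56$-dimensional representation of $E_7$, a parabolic $\ul{Q}$ conjugate to $\ul{P}$ is semistandard (i.e.\ contains $\ul{A}_0$) if and only if the rank one line it stabilizes is $\ul{A}_0$-stable, hence equal to one of the $56$ one-dimensional weight lines $w\subset W_\Theta$ listed before the statement. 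Thus $\mathcal{F}(A_{0,\ul{H}},\ul{P})$ consists precisely of the subgroups $\ul{P}_w$ attached to those weight lines $w$ that are rank one, and the content of the lemma is that all $56$ of them are.

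To prove this I would argue uniformly using the minuscule structure. The Weyl group $W(E_7)$ acts simply transitively on the $56$ weights of $W_\Theta$, and it is realized inside $\ul{G}(k)$ by representatives in the normalizer of $\ul{A}_0$; these permute the $56$ weight lines transitively. The rank one locus in $W_\Theta$ is cut out $\ul{G}$-invariantly (by Freudenthal's quartic form and its associated covariants), hence is stable under any element of $\ul{G}(k)$, in particular under these Weyl representatives. Since one weight line, namely $k(0,0,0,1)$, is rank one (this is clear, and is already used for the $X_{long}$ part of Lemma \ref{lem:Bdiv}), it follows that every weight line is rank one. Combining this with the first paragraph: each of the $56$ subgroups $\ul{P}_w$ lies in $\mathcal{F}(A_{0,\ul{H}},\ul{P})$, conversely every element of $\mathcal{F}(A_{0,\ul{H}},\ul{P})$ is of this form, distinct weight lines give distinct stabilizers by the injectivity noted above, and the count $32+24=56$ in the statement is accounted for.

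The only point that genuinely requires care — the analogue of the anisotropy obstruction that cuts the count down to $8$ in Lemma \ref{lem:Bdiv} — is that the $24$ ``extra'' weight lines appearing in the split case, namely the coordinate lines inside the blocks $v_k(B)\simeq M_2(k)$ and inside $M_{2,12}(k)$, must all be rank one even though they live in the short root spaces $X_{short}^\Theta$, which over a division $B$ contain no rank one vector spanned by an $A_{0,\ul{H}}$-weight vector. The minuscule/Weyl-orbit argument above handles this uniformly and is, I expect, the cleanest route to write down; as a cross-check one could instead verify it by hand, realizing each such coordinate vector as an explicit rank one element of $J_\Theta$ (a rank one matrix of $M_2(k)\subset\Theta$ placed in an off-diagonal slot of $H_3(\Theta)$), but that is a slightly tedious computation that the conceptual argument renders unnecessary.
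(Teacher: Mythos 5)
Your proposal is correct and follows the same overall skeleton as the paper's proof (reduce to showing each weight line is rank one; for the converse, use that weight spaces are one-dimensional so any $\ul{A}_0$-stable rank-one line must be a weight line), but it differs in how the forward direction is handled. The paper simply declares it ``immediate to check that each of these weight spaces is a rank one line,'' leaving the reader to verify this directly; you instead argue conceptually, via the $\ul{G}$-invariance of the rank-one locus (cut out by Freudenthal's quartic and its covariants) and the transitivity of the Weyl group of $E_7$ on the $56$ weights, realized by elements of $N_{\ul{G}(k)}(\ul{A}_0)$, reducing to the single visibly rank-one line $k(0,0,0,1)$. This buys a cleaner, case-free argument that in particular handles the $24$ ``short root space'' weight lines with no extra work, at the cost of invoking the minuscule structure; the paper's version is terser but implicitly requires the reader to see, e.g., that a rank-one $2\times 2$ matrix placed in an off-diagonal slot of $H_3(\Theta)$ spans a rank-one line of $W_\Theta$. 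Your additional remark about injectivity of the stabilizer map (giving $56$ \emph{distinct} parabolics) is a point the paper leaves implicit and is worth keeping.

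One small factual slip: $W(E_7)$ acts \emph{transitively}, not simply transitively, on the $56$ weights of $W_\Theta$ --- indeed $|W(E_7)| = 2903040 \neq 56$; the stabilizer of a weight is the Weyl group $W(E_6)$. Since your argument only uses transitivity, the slip is harmless, but it should be corrected.
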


\begin{proof} It is immediate to check that each of these weight spaces is a rank one line in $W_\Theta$, and thus each $\underline{P}_w$ is conjugate to $\underline{P}$ and contains $A_{0,\ul{H}}$.  Conversely, because the weight spaces for $A_0$ on $W_\Theta$ are each one-dimensional and the parabolics of $\underline{G}$ conjugate to $\underline{P}$ stabilize rank one lines in $W_\Theta$, the $\underline{P}_w$ are all of the elements of $\mathcal{F}(M_{0,\ul{H}},\ul{P})$.\end{proof}

\subsection{Proof of Theorem \ref{thm:E7per}}
In this subsection, we prove Theorem \ref{thm:E7per} by using Method 1 introduced in Section \ref{section method 1}. We want to study the regularized period integral $\mathcal{P}_{\ul{H},reg}(E(\phi,s))$.

We first prove statements (1) and (2) in Section \ref{section method 1} for the current case.  For statement (1),  by our assumptions on $\pi$ together with the argument in Section \ref{section method 1}, it is enough to show that statement (4) of Section \ref{section method 1} holds for the pair $(\ul{G},\ul{H})$. But this just follows from Corollary \ref{prasad cor} (7).

To prove statement (2) of Section \ref{section method 1}, we fix $P_{0,\ul{H}}$ to be any minimal parabolic subgroup of $\underline{H}$ that contains $M_{0,\ul{H}}$ and fixes the line $k(0,0,0,1)$ in $W_B \subseteq W_\Theta$. We want to study the set $\CF^{\ul{G}}(P_{0,\ul{H}},\ul{P})$. By Proposition \ref{equal rank}, we have $\CF^{\ul{G}}(P_{0,\ul{H}},\ul{P})\subset \mathcal{F}(M_{0,\ul{H}},\underline{P}) \subseteq \mathcal{F}'(M_{0,\ul{H}},\underline{P})$. So it is enough to consider the elements in $\mathcal{F}'(M_{0,\ul{H}},\underline{P})$.

Suppose first that $B$ is a division algebra.  Because the elements $v$ of Lemma \ref{lem:Bdiv} are fixed by the $B^1$-factor of $\ul{H}$, one has $\ul{P}_v \cap \ul{H} = B^1 \times \left(\ul{P}_v \cap \ul{H}'\right)$.  Because the subgroups $\underline{P}_v$ of Lemma \ref{lem:Bdiv} stabilize rank one lines in $W_B$, the groups $\underline{P}_v \cap \underline{H}'$ are conjugate in $\ul{H}'$ and thus only one of the groups $\underline{P}_v \cap \underline{H}'$ can contain a fixed minimal parabolic of $\ul{H}'$.  In fact, the groups $\underline{P}_v \cap \underline{H}'$ give all 8 of the maximal semistandard parabolic subgroups of $\underline{H}'$ of type $A_5$ in one of the two conjugacy classes with $A_5$-type Levi.  Thus only one can be standard (i.e., the one corresponds to $\ul{P}$). By Proposition \ref{equal rank}(1), we have that $\CF^{\ul{G}}(P_{0,\ul{H}},\ul{P})\subseteq\{\ul{P}\}$.

Note that $\ul{P}$ is the parabolic associated to the cocharacter $\GL_1 \rightarrow T_6 \subseteq A_{0,\ul{H}} \subseteq \ul{A}_0$, with the map $\GL_1 \rightarrow T_6$ given by $t \mapsto \mm{t 1_3}{}{}{t^{-1}1_3}$.  Thus $\ul{P}$ contains the centralizer of the image of this cocharacter, and in particular, contains $\ul{A}_0$ for any choice of $\ul{A}_0$ containing $A_{0,\ul{H}}$.  Consequently, $\CF^{\ul{G}}(P_{0,\ul{H}},\ul{P})=\{\ul{P}\}$, as desired.  Hence statement (2) is trivial in this case.

Now suppose that $B = M_2(k)$ is split.  Recall from Lemma \ref{lem:Bsplit} that there are $56$ semistandard parabolic subgroups $\underline{P}_w$ of $\underline{G}$ conjugate to $\underline{P}$.  Moreover, these $56$ parabolic subgroups are partitioned into two sets, one of size $32$ and the other of size $24$ depending on whether the rank one line $w$ is contained in $W_B$ or $M_{2,12}(k)$.  For the $32$ $\underline{P}_w$ with $w$ contained in $W_B$, we have just as in the division algebra case that $\ul{P}_w \cap \ul{H} = B^1 \times (\ul{P}_w \cap \ul{H}')$ and that the groups $\underline{P}_w \cap \underline{H}'$ give all the maximal semistandard parabolic subgroups of $\underline{H}'$ in one of the two conjugacy classes with Levi of type $A_5$ (there are 32 of them).  Thus only one can be standard (i.e., the one that corresponds to $\ul{P}$).

Similarly, the $24$ $\underline{P}_w$ with $w$ contained in $M_{2,12} = V_2 \otimes V_{12}$ all satisfy that $\underline{P}_w \cap \underline{H} = \underline{P}_w \cap (\SL_2 \times \underline{H}')$ are stabilizers of pure tensors $b \otimes v$ with $b \in V_2$ the two-dimensional representation of $\SL_2$ and $v \in V_{12}$ an isotropic vector in the orthogonal $12$-dimensional representation of $\underline{H}'$.  Thus for any such $\underline{P}_w$, $\ul{P}_w \cap \ul{H} = B \times (\underline{P}_w \cap \underline{H}')$ with $B$ a semistandard Borel subgroup of $\SL_2$ (there are 2 of them) and $\underline{P}_w \cap \underline{H}'$ a maximal semistandard parabolic subgroup of $\ul{H}'$ of type $D_5$ (there are 12 of them). Thus only one can be standard.

Since $\ul{A}_0=A_{0,\ul{H}}$ when $B$ is split, by applying Proposition \ref{equal rank} again, we have $\CF^{\ul{G}}(P_{0,\ul{H}},\ul{P})=\{\ul{P},\ul{P}'\}$ such that $\ul{P}'\cap \ul{H}$ is the parabolic subgroup of type $D_5$. Then statement (2) follows from Corollary \ref{prasad cor}(6).

Finally, we compute the constant $s_0 = -c(1-2c_{\underline{P}}^{\underline{H}})$ for the current case.  We have $c=9$, and, because the unipotent radicals are abelian, $c_{\underline{P},\underline{H}} = \frac{\dim_k J_B}{\dim_k J_\Theta} = \frac{15}{27}.$  Thus $s_0 =1$.  Let $H^0 \subseteq H$ be the image of $\ul{M} \cap \ul{H}$ in $\GE_6$.  Combining the discussions above, equation \eqref{2} in Method 1 becomes
\[\int_{[\underline{H}]} \Lambda^{T,\underline{H}} Res_{s=1}E(h, \phi, s)dh = \int_{K_{\underline{H}}}\int_{[H^0]/Z_G(\BA)} \phi(hk)dhdk  -\frac{e^{\langle - 2\varpi_{\underline{P}}, T \rangle }}{2} \int_{K_{\underline{H}}}\int_{[H^0]/Z_G(\BA)} Res_{s=1}M(s)\phi(hk)dhdk\]
for the current case.  Because $H^0 \subseteq H$, this finishes the proof of Theorem \ref{thm:E7per}.

\begin{rmk}
When $\ul{G}$ and $\ul{H}$ are split, the set $\CF^{\ul{G}}(P_{0,\ul{H}},\ul{P})$ contains two elements $\ul{P}$ and $\ul{P}'$. One can easily show that $c(1-2c_{\underline{P}'}^{\underline{H}})=9(1-2\frac{10}{27})=\frac{5}{3}>s_0=1$. This confirms the discussion in Section 3.2.1.
\end{rmk}

\subsection{The local result}
Let $F$ be a p-adic field. As in the previous subsections, we can define the groups $\ul{G},\ul{H},\ul{P}=\ul{M}\ul{N},G,H$ over $F$. Let $\pi$ be an irreducible smooth representation of $G(F)$ with trivial central character. We can also view $\pi$ as a representation of $\ul{M}(F)$ with trivial central character. We then extend $\pi$ to $\ul{P}(F)$ by making it trivial on $\ul{N}(F)$. As in Section \ref{subsection SO(2n) local}, for $s\in \BC$, we use $\pi_s$ to denote the representation $\pi\otimes \varpi^{s}$ and use $I_{\ul{P}}^{\ul{G}}(\pi_s)$ to denote the normalized parabolic induction.

\begin{thm}
If $\pi$ is an irreducible representation of $G(F)$ with trivial central character. Assume that the Hom space $\Hom_{H(F)}(\pi,1)$ is nonzero, then the representation $I_{\ul{P}}^{\ul{G}}(\pi_{1})$ is $\ul{H}(F)$-distinguished.
\end{thm}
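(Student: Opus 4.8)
The plan is to run the same argument that proves Theorem~\ref{SO(2n) local theorem 1} and Theorem~\ref{Jacquet-Guo local theorem 1}, the only new inputs being the analogue of the character identity of Lemma~\ref{SO(2n) character} and a harmless bookkeeping point about the discrepancy between $H$ and the group $H^0$. As in those cases, only the single \emph{closed} double coset $\ul{P}(F)\ul{H}(F)$ in $\ul{P}(F)\backslash\ul{G}(F)$ plays a role; the more elaborate orbit combinatorics of the ``parabolic subgroups'' subsection (the sets $\mathcal{F}'(M_{0,\ul H},\ul P)$ with $8$ or $56$ elements) is only needed for the global argument, not here. First I would record the two preliminary facts. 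Closedness: writing $P_{\ul H}=\ul H\cap\ul P$, $M_{\ul H}=\ul H\cap\ul M$, $N_{\ul H}=\ul H\cap\ul N$, the structure of the model worked out above shows that $P_{\ul H}$ is a parabolic subgroup of $\ul H$ (it is the preimage in $\ul H=B^1\times\ul H'$ of the maximal parabolic of $\ul H'$ of type $A_5$ stabilizing the rank one line $k(0,0,0,1)\subseteq W_B$), so Corollary~\ref{closed orbit cor} gives that $\ul P(F)\ul H(F)$ is closed in $\ul G(F)$. The character identity: exactly as in Lemma~\ref{SO(2n) character}, from $\delta_{\ul P}=\varpi^{2c}$ and $\delta_{P_{\ul H}}=\varpi^{2c\,c_{\ul P}^{\ul H}}$ with $c=9$ by Proposition~\ref{the constant c}(5), and $c_{\ul P}^{\ul H}=\dim_k J_B/\dim_k J_\Theta=15/27$ by Remark~\ref{constant c(Q,H)} since $\ul N$ is abelian, one gets
\[
\delta_{\ul P}^{-1/2}\delta_{P_{\ul H}}=\varpi^{-c+2c\,c_{\ul P}^{\ul H}}=\varpi^{-9+2\cdot 9\cdot\frac{15}{27}}=\varpi,
\]
equivalently $s_0=-c(1-2c_{\ul P}^{\ul H})=1$, which is why the induction point is $\pi_1$.

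With these in hand I would realize $I_{\ul P}^{\ul G}(\pi_1)$ on the space of locally constant $f:\ul G(F)\to\pi$ satisfying $f(nmg)=\delta_{\ul P}(m)^{1/2}\varpi(m)\pi(m)f(g)$ for $n\in\ul N(F)$, $m\in\ul M(F)$, with the right-translation action of $\ul G(F)$. Put $\ul G(F)_0=\ul G(F)\smin\ul P(F)\ul H(F)$, an open $\ul H(F)$-stable subset by the closedness above, and let $V'$ be the $\ul H(F)$-stable subspace of functions supported in $\ul G(F)_0$. Restriction of functions to the closed orbit identifies the quotient $\ul H(F)$-representation $I_{\ul P}^{\ul G}(\pi_1)/V'$ with the compact induction $\mathrm{ind}_{P_{\ul H}}^{\ul H}(\delta_{\ul P}^{1/2}\varpi\,\pi)$ (the support of the restriction is compact modulo $P_{\ul H}(F)$ precisely because $P_{\ul H}(F)\backslash\ul H(F)$ is closed in $\ul P(F)\backslash\ul G(F)$). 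By Frobenius reciprocity and the character identity,
\[
\Hom_{\ul H(F)}\bigl(I_{\ul P}^{\ul G}(\pi_1)/V',1\bigr)\cong\Hom_{P_{\ul H}(F)}\bigl(\delta_{\ul P}^{1/2}\varpi\,\pi,\delta_{P_{\ul H}}\bigr)=\Hom_{P_{\ul H}(F)}(\pi,1)=\Hom_{M_{\ul H}(F)}(\pi,1),
\]
the last equality because $\pi$ is trivial on $N_{\ul H}(F)$. So it suffices to produce a nonzero $M_{\ul H}(F)=(\ul M\cap\ul H)(F)$-invariant functional on $\pi$.

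For this last point, since $\pi$ has trivial central character it descends to the image of $\ul M(F)$ in $\mathrm{PGE}_6(F)$, so $\Hom_{M_{\ul H}(F)}(\pi,1)=\Hom_{H^0(F)}(\pi,1)$, where $H^0\subseteq H$ is the image of $\ul M\cap\ul H$ in $\GE_6$ (recall the image of $\ul M\cap\ul H$ in $\mathrm{PGE}_6$ lies in that of $H$). Since $H^0\subseteq H$, any nonzero $H(F)$-invariant functional on $\pi$ restricts to a nonzero $H^0(F)$-invariant one; by hypothesis $\Hom_{H(F)}(\pi,1)\neq\{0\}$, hence $\Hom_{M_{\ul H}(F)}(\pi,1)\neq\{0\}$, hence $\Hom_{\ul H(F)}(I_{\ul P}^{\ul G}(\pi_1)/V',1)\neq\{0\}$, and therefore $\Hom_{\ul H(F)}(I_{\ul P}^{\ul G}(\pi_1),1)\neq\{0\}$, which is the assertion. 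The only genuinely delicate point --- and the one I would be most careful to state precisely --- is the passage between $H$ and $H^0$: here $\ul M\cap\ul H$ surjects only onto $H^0$, which is possibly a proper subgroup of $H$, but for the present distinction statement the inclusion $H^0\subseteq H$ runs in the favorable direction. (This is the same feature that forces the $H^0$-period rather than the $H$-period in the global computation of Section~\ref{Section E_6}, and the $H\supseteq H^0$ comparison there must run the other way, which is handled by the trivial central character hypothesis.)
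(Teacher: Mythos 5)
Your proof is correct and takes the same route as the paper, which literally says its own proof is ``the exact same argument as the proof of Theorem~\ref{SO(2n) local theorem 1}'': the open–closed filtration of $I_{\ul{P}}^{\ul{G}}(\pi_1)$ along the single closed $\ul{H}(F)$-orbit $\ul{P}(F)\ul{H}(F)$, the character identity $\delta_{\ul{P}}^{-1/2}\delta_{P_{\ul{H}}}=\varpi$ obtained from $c=9$ and $c_{\ul{P}}^{\ul{H}}=\tfrac{15}{27}$, and Frobenius reciprocity for compact induction. You have also made explicit the $H$-versus-$H^0$ comparison, which the paper's one-line proof leaves implicit but which, as you note, runs in the favorable direction ($H^0\subseteq H$) for the present assertion.
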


\begin{proof}
The proof follows from the exact same argument as the proof of Theorem \ref{SO(2n) local theorem 1}, we will skip it here.
\end{proof}

Now we are ready to prove Theorem \ref{E_6 local theorem}. Assume that $G$ is split over $F$. Let $\pi$ be a tempered generic representation of $G(F)$ with trivial central character. Assume that the Hom space $\Hom_{H(F)}(\pi,1)$ is nonzero, we need to show that the local L-function $L(s,\pi,\rho_{X})$ has a pole at $s=0$. By the same argument as in Proposition \ref{SO(2n) local reduction}, we know that the induced representation $I_{\ul{P}}^{\ul{G}}(\pi_{1})$ is reducible. Then by applying Lemma B.2 of \cite{GI} and the Standard Module Conjecture \cite{HO}, we have that $I_{\ul{P}}^{\ul{G}}(\pi_{1})$ is reducible if and only if the local gamma factor
$$\gamma(s,\pi,\rho_{X})=\epsilon(s,\pi,\rho_{X}) \frac{L(1-s,\pi,\rho_{X})}{L(s,\pi,\rho_{X})}$$
has a pole at $s=1$. Since $\pi$ is tempered, $L(s,\pi,\rho_{X})$ is holomorphic and nonzero when $Re(s)>0$ (Theorem 1.1 of \cite{HO}), which implies that the L-function $L(s,\pi,\rho_{X})$ has a pole at $s=0$. This finishes the proof of Theorem \ref{E_6 local theorem}.

\section{The model $(\GL_4\times \GL_2,\GL_2\times \GL_2)$}\label{section GL(4)GL(2)}
The purpose of this section is to prove the local and global results for the pair $(\GL_4\times \GL_2,\GL_2 \times \GL_2)$.  The first several subsections are concerned with the global results, while the final subsection concerns the local results.

\subsection{Overview of argument} The purpose of this and the following three subsections is to prove Theorem \ref{GL(4)GL(2)}.  We will do this by Method 2.  The argument is analogous to that of \cite{AWZ18}, but the computations are easier.  In this subsection, we give an overview of the argument used to prove Theorem \ref{GL(4)GL(2)}.

Denote by $E = k \times k$ the split quadratic \'etale extension of $k$.  In fact, almost all of this section is unchanged if $E$ is replaced by a general quadratic \'etale extension of $k$, so we frequently write $E$ instead of $k \times k$.\footnote{It is essentially for the purpose of proving the absolute convergence of certain integrals below that we choose $E$ split.}  Let $\Theta$ be a split octonion algebra over $k$.  Define the quadratic space $V = \Theta \oplus E$ with quadratic form $q(x,\lambda)= n_{\Theta}(x)-n_{E}(\lambda)$ where $x \in \Theta$, $\lambda \in E$, and $n_{\Theta}$ resp. $n_{E}$ denote the quadratic norms on $\Theta$ resp. $E$.  We define $\underline{G} = \GSO(V)$, which by definition is the subgroup of $\GO(V)$ consisting of those $g$ with $\det(g) = \nu(g)^{\dim(V)/2}$, where $\nu: \GO(V) \rightarrow \GL_1$ is the similitude.

In the next subsection, we specify a group $\underline{H}_7$ which is of type $\GSpin(7)$ together with its $8$-dimensional spin representation on $\Theta$.  Set
\begin{align*}
\underline{H} = \underline{H}_7 \boxtimes \mathrm{Res}_{E/k}(\GL_1) :&=\{(h,\lambda)\in \underline{H}_7 \times \mathrm{Res}_{E/k}(\GL_1): \nu(h) = n_{E}(\lambda)\} \\ &=\{(h,\lambda_1,\lambda_2) \in \ul{H}_7 \times \GL_1 \times \GL_1: \nu(h)=\lambda_1 \lambda_2\}.
\end{align*}
Via the representation $t_1: \GSpin(7) \rightarrow \GSO(\Theta)$ specified below, we obtain an inclusion $\underline{H} \rightarrow \underline{G}$.

Denote by $\underline{P} = \underline{M}\underline{N}$ the Heisenberg parabolic of $\underline{G}$, so that the Levi subgroup $\underline{M}$ of $\underline{P}$ is of type $A_1 \times D_3 = A_1 \times A_3$.  Suppose that $\pi = \otimes_{v \in |k|}{\pi_v}$ is a cuspidal automorphic representation of $\underline{M}$ or $G=\GL_2 \times \GL_4$ with trivial central character\footnote{Note that the exterior square representation of $\GL_4$ induces a map of algebraic groups $\GL_1 \times \GL_4 \rightarrow \GSO(6)$ which is surjective on $k$-points and has central kernel.  Thus this map induces an isomorphism $\mathrm{PGL}_4 \simeq \mathrm{PGSO}_6$, so that a cuspidal automorphic representation on $\GL_4$ with trivial central character may be considered as an automorhpic representation of $\GSO(6)$.}.  Suppose that $\phi \in \mathcal{A}_{\pi}$, $s \in \BC$ and $E(\phi,s)$ denotes the associated Eisenstein series on $\mathcal{A}_\pi(\underline{G}(\BA))$.

Let
\begin{align*} H &= (\GL_2 \times \GL_2) \boxtimes \mathrm{Res}_{E/k}(\GL_1) \\ &= \{(g,h,\lambda) \in \GL_2 \times \GL_2 \times \mathrm{Res}_{E/k}(\GL_1): \det(g)\det(h) = N_{E/k}(\lambda)\}.\end{align*}
Denote by $Z \simeq \GL_1 \times \GL_1$ the subgroup of $(\GL_2 \times \GL_2) \boxtimes \mathrm{Res}_{E/F}(\GL_1)$ consisting of the elements $(\mathrm{diag}(z,z),\mathrm{diag}(w,w),(zw,zw))$. From Lemma \ref{lem:gl2gl2} below we obtain an embedding $H \rightarrow \underline{M}$ so that $Z = H \cap Z_{\ul{M}}$ where $Z_{\ul{M}}$ is the center of $\ul{M}$.  Note that the map $H \rightarrow \GL_2 \times \GL_2$ given by $(g,h,(\lambda_1,\lambda_2)) \mapsto (g,\lambda_1^{-1}\det(g)h)$ induces an isomorphism $H/Z \simeq (\GL_2 \times \GL_2)/\Delta(\GL_1)$, with $\Delta(\GL_1)$ the diagonally embedded central $\GL_1$. For a cuspidal automorphic form $\varphi$ of $\underline{M}$ with trivial central character, denote by $\mathcal{P}_H$ the period
\[\mathcal{P}_H(\varphi)= \int_{Z(\BA)H(k)\backslash H(\BA)}{\varphi(h)\,dh}.\]

\begin{thm}\label{thm:GSO(10)} Suppose that the period $\mathcal{P}_H(\cdot)$ is nonvanishing on the space of $\pi$.  Then there exists $\phi \in \mathcal{A}_\pi$ such that $E(\phi,s)$ has a pole at $s=1/2$.\end{thm}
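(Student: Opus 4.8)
The plan is to prove Theorem~\ref{thm:GSO(10)} by Method~2 of Section~\ref{section method 2}, following the strategy of \cite{AWZ18} (which goes back to \cite{JR92}). The first task is to establish Assumption~\ref{assumption for finite orbits}, i.e.\ that $\underline{P}(k)\backslash\underline{G}(k)/\underline{H}(k)$ is finite. Since $\underline{G}=\GSO(\Theta\oplus E)$ and $\underline{P}$ is the Heisenberg parabolic --- the stabilizer of an isotropic two-plane in $\Theta\oplus E$, with Levi $\underline{M}$ of type $A_1\times A_3$ --- this double coset space is identified with the set of $\underline{H}(k)$-orbits of isotropic two-planes, and I would parametrize these explicitly using the map $\underline{H}_7\hookrightarrow\GSO(\Theta)$ and the embedding $H\hookrightarrow\underline{M}$ of Lemma~\ref{lem:gl2gl2}. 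The expected output is a finite list of representatives $\gamma_1=1,\gamma_2,\dots,\gamma_l$, with $\gamma_1=1$ the relevant orbit: here $\underline{H}\cap\underline{P}=(\underline{H}\cap\underline{M})\ltimes(\underline{H}\cap\underline{N})$ should be a maximal parabolic of $\underline{H}$ whose Levi is isomorphic to $H$ modulo center, which is Condition~(3) for the pair $(\underline{G},\underline{H})$.

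Having fixed the $\gamma_i$ and set $\underline{H}_i=\underline{H}\cap\gamma_i^{-1}\underline{P}\gamma_i$, I would form the truncated period $\CP_{\underline{H}}(\Lambda^{T}E(\phi,s))$, unfold it as in~\eqref{unfolding} into $\sum_{i=1}^{l}\bigl(I_i(\phi,s)+J_i(\phi,s)\bigr)$, and then carry out the three steps of Method~2. \emph{Step one} is the absolute convergence of $I_i(\phi,s)$ and $J_i(\phi,s)$ for $\Re(s)\gg0$; this follows from the general convergence argument in Section~5 of \cite{AWZ18}, the only point to verify being that each $(\underline{H},\underline{H}_i)$ is a good pair in the sense of Section~5.3 of \emph{loc.\ cit.}, and this is where the choice of $E=k\times k$ \emph{split} enters. \emph{Step two} is the evaluation of the relevant term: using Condition~(3) and the Iwasawa decomposition of $\underline{H}$ with respect to $\underline{H}\cap\underline{P}$, one gets, exactly as at the end of Section~\ref{section method 2},
\[
I_1(\phi,s)=\frac{e^{(s-s_0)T}}{s-s_0}\,\CP_H^{K_{\underline{H}}}(\phi),\qquad J_1(\phi,s)=\frac{e^{(-s-s_0)T}}{-s-s_0}\,\CP_H^{K_{\underline{H}}}(M(s)\phi),
\]
where $\CP_H^{K_{\underline{H}}}(\varphi):=\int_{K_{\underline{H}}}\int_{Z(\BA)H(k)\backslash H(\BA)}\varphi(hk)\,dh\,dk$, $M(s)$ is the intertwining operator, and $s_0=-c(1-2c_{\underline{P}}^{\underline{H}})$. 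For the constant I would use Proposition~\ref{the constant c}(4), which gives $c=7/2$, together with the computation of $c_{\underline{P}}^{\underline{H}}$ from $\rho_{\underline{P}\cap\underline{H}}|_{\Fa_{\underline{P}}}=c_{\underline{P}}^{\underline{H}}\rho_{\underline{P}}$ (note that $\underline{N}$ is a Heisenberg group rather than abelian, so Remark~\ref{constant c(Q,H)} does not apply verbatim and one must use the grading on $\underline{N}$); the upshot is $s_0=1/2$.

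\emph{Step three}, which I expect to be the main obstacle, is the vanishing $I_i(\phi,s)=J_i(\phi,s)=0$ for $2\le i\le l$ when $\Re(s)\gg0$. Because $(\GL_4\times\GL_2,\GL_2\times\GL_2)$ is not a symmetric pair, Prasad's local non-distinction theorem (\cite{P}, used for the five symmetric models by Method~1 in Section~\ref{section method 1}) is not available, so --- as flagged in Remark~\ref{advantage} --- one genuinely has to analyze every non-relevant orbit. For each $i\ge2$ I would show that the projection of $\gamma_i\underline{H}_i\gamma_i^{-1}$ to $\underline{M}\simeq\GL_2\times\GL_4$ is contained in a proper parabolic subgroup of $\underline{M}$ (equivalently, that the inner integration in $I_i$ over a suitable unipotent subgroup of $\underline{H}$ exhibits $I_i$ as built from a constant term of $\phi$ along a proper parabolic of $\underline{M}$); since the restriction of $\phi\in\CA_\pi$ to $\underline{M}$ is cuspidal, this constant term vanishes, so $I_i=0$, and likewise $J_i=0$ because $M(s)\phi$ is again cuspidal on $\underline{M}$. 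The reason this should be feasible here, in contrast to $(\GE_6,A_1\times A_5)$, is that $H$ and $\underline{H}_7$ act on $\Theta\oplus E$ through small representations, so the intersections $\underline{P}\cap\gamma_i\underline{H}\gamma_i^{-1}$ and their images in $\underline{M}$ are computable by hand.

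With these steps, \eqref{unfolding} collapses to \eqref{eqn:method2Fin},
\[
\CP_{\underline{H}}(\Lambda^{T}E(\phi,s))=\frac{e^{(s-s_0)T}}{s-s_0}\,\CP_H^{K_{\underline{H}}}(\phi)+\frac{e^{(-s-s_0)T}}{-s-s_0}\,\CP_H^{K_{\underline{H}}}(M(s)\phi),
\]
and I would finish exactly as in Method~1 (cf.\ Proposition~\ref{residue 3}): the left-hand side is holomorphic at every $s$ where $E(\phi,s)$ is holomorphic, while if $\CP_H$ is nonvanishing on $\pi$ then, after a suitable right $K_{\underline{H}}$-translation and averaging, there is $\phi\in\CA_\pi$ with $\CP_H^{K_{\underline{H}}}(\phi)\neq0$; the explicit simple pole of $\tfrac{1}{s-s_0}$ on the right-hand side then forces $E(\phi,s)$ (and $M(s)$) to have a pole at $s=s_0=1/2$, which is the assertion of Theorem~\ref{thm:GSO(10)}.
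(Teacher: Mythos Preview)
Your proposal is correct and follows essentially the same approach as the paper: Method~2 applied to the truncated $\underline{H}$-period of the Heisenberg Eisenstein series on $\GSO(10)$, with the same three steps (finite orbit decomposition, good pairs for convergence, vanishing of irrelevant orbits by cuspidality) and the same constant $s_0=1/2$ via $c=7/2$ and $c_{\underline{P}}^{\underline{H}}=8/14$.

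One clarification on your Step three: the two formulations you give for the vanishing mechanism are not equivalent, and only the second one is what actually works. The paper does \emph{not} show that the image of $\gamma_i\underline{H}_i\gamma_i^{-1}$ in $\underline{M}$ lies in a proper parabolic (this alone would not kill the integral); rather, for each of the six irrelevant orbit types it exhibits a unipotent subgroup of $\underline{H}_W$ whose image in $\underline{M}_W\simeq\GL(W)\times\GSO(W^\perp/W)$ contains the unipotent radical of a proper parabolic of one factor, so that the inner integral becomes a vanishing constant term (Proposition~\ref{prop:VanW}). This is done case by case --- sometimes via cuspidality on the $\GL_2$-factor (e.g.\ when $W\subset\Theta$ is non-null, using explicit nilpotents like $\epsilon_1\wedge e_3^*$ in the Lie algebra $\mathfrak{h}_7^0=\ker(\kappa:\wedge^2\Theta\to V_7)$), sometimes on the $\SO(6)$-factor via an auxiliary lemma on $\SL_3$-periods requiring a further Fourier expansion. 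Your parenthetical captures the right idea; just be aware the execution is a hands-on orbit-by-orbit analysis rather than a uniform structural statement.
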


Note that even though $G$, $H$ and $\ul{G}$ are classical groups, our proof of Theorem \ref{thm:GSO(10)} proceeds through the non-classical group $\ul{H}_7 \simeq \GSpin(7)$. This is in complete analogy with the triple product period integral considered by Jiang in \cite{J98}, later generalized by Ginzburg-Jiang-Rallis in \cite{GJR04b}, where one considers $G_2$-periods of certain residual representations on groups of type $D$.

From Theorem \ref{thm:GSO(10)} we obtain Theorem \ref{GL(4)GL(2)} of the introduction.
\begin{prop}
Theorem \ref{thm:GSO(10)} implies Theorem \ref{GL(4)GL(2)}.
\end{prop}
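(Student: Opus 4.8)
The plan is to deduce Theorem \ref{GL(4)GL(2)} from Theorem \ref{thm:GSO(10)} exactly as Proposition \ref{SO(2n+1) prop} and the corresponding step in Theorem \ref{U(2n) main} are proved: the two statements differ only in the auxiliary group and in the shape of the Langlands--Shahidi normalizing factor. First I would recall the content of Theorem \ref{GL(4)GL(2)}: $\pi$ is a cuspidal automorphic representation of $G(\BA)=\GL_4(\BA)\times\GL_2(\BA)$ with trivial central character, $L(3/2,\pi,\rho_X)\neq 0$ (automatic when $\pi$ is tempered, by absolute convergence of the Euler product), the period $\mathcal{P}_H(\cdot)$ is nonvanishing on the space of $\pi$, and one must show $L(1/2,\pi,\rho_X)\neq 0$, where $\rho_X=\wedge^2\otimes\mathrm{std}$ is the $12$-dimensional representation of ${}^LG$.

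By Theorem \ref{thm:GSO(10)}, the nonvanishing of $\mathcal{P}_H$ produces $\phi\in\mathcal{A}_\pi$ such that the Eisenstein series $E(\phi,s)$ on $\ul{G}(\BA)=\GSO_{10}(\BA)$ --- induced from the Heisenberg parabolic $\ul{P}=\ul{M}\ul{N}$ and the extension of $\pi$ to $\ul{M}$ --- has a pole at $s=1/2$; consequently the intertwining operator $M(s)$ has a pole at $s=1/2$. The substantive step is then to identify the normalizing factor of $M(s)$. Since $\ul{N}$ is the two-step Heisenberg unipotent radical, on the dual side the adjoint action of ${}^L\ul{M}$ on $\Lie({}^L\ul{N})$ is graded with a height-one piece of dimension $12$, which is exactly $\rho_X=\wedge^2\otimes\mathrm{std}$, and a one-dimensional height-two piece; I would check that the $L$-factor attached to the latter is, after accounting for the similitude structure of $\GSO_{10}$ and the triviality of the central character of $\pi$, the Dedekind zeta function. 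This gives the normalizing factor
\[
\frac{L(s,\pi,\rho_X)\,\zeta_k(2s)}{L(s+1,\pi,\rho_X)\,\zeta_k(2s+1)},
\]
in complete parallel with the $\SO_{2n+1}$ and unitary cases.

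With this in hand the conclusion is formal. By Theorem 4.7 of \cite{KK} the normalized intertwining operator is holomorphic at $s=1/2$, so the pole of $M(s)$ at $s=1/2$ must come from the displayed normalizing factor. Its denominator equals $L(3/2,\pi,\rho_X)\,\zeta_k(2)$ at $s=1/2$, which is finite and nonzero by the hypothesis $L(3/2,\pi,\rho_X)\neq 0$ together with $\zeta_k(2)\neq 0$; hence the numerator $L(s,\pi,\rho_X)\,\zeta_k(2s)$ has a pole at $s=1/2$. Since $\zeta_k(2s)$ has only a simple pole there, $L(s,\pi,\rho_X)$ cannot vanish at $s=1/2$ --- otherwise that zero would cancel the pole and $E(\phi,s)$ would be holomorphic at $s=1/2$, contradicting Theorem \ref{thm:GSO(10)}. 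Therefore $L(1/2,\pi,\rho_X)\neq 0$, which is Theorem \ref{GL(4)GL(2)}. The only point in this argument requiring genuine verification is the precise form of the normalizing factor --- in particular, that the Heisenberg structure of $\ul{N}$ and the similitude/central-character bookkeeping force exactly the factor $\zeta_k(2s)$ in the numerator; everything else is routine once Theorem \ref{thm:GSO(10)} is granted.
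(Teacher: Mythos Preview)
Your proposal is correct and follows essentially the same approach as the paper: invoke Theorem \ref{thm:GSO(10)} to produce a pole of $E(\phi,s)$ at $s=1/2$, identify the Langlands--Shahidi normalizing factor as $\dfrac{L(s,\pi,\rho_X)\zeta_k(2s)}{L(s+1,\pi,\rho_X)\zeta_k(2s+1)}$, appeal to Theorem 4.7 of \cite{KK} for holomorphy of the normalized operator, and conclude using the nonvanishing of the denominator at $s=1/2$. The only point you leave implicit, which the paper addresses explicitly, is the identification of the $\GL_2\times\GL_2$-period of Section 1.1.6 with the period $\mathcal{P}_H$ of Section 9 (this requires tracing through the isomorphism $\mathrm{PGL}_4\simeq\mathrm{PGSO}_6$ and the embedding $H\hookrightarrow\ul{M}$, handled at the end of Section 9.4); otherwise your argument matches the paper's line by line.
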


\begin{proof}
We first recall the statement of Theorem \ref{GL(4)GL(2)}. Let $\pi$ be a cuspidal automorphic representation of $\GL_4(\BA)\times \GL_2(\BA)$ with trivial central character. Assume that the $\GL_2\times \GL_2$-period defined in Section 1.1.6 is nonzero on the space of $\pi$. Moreover assume that the L-function $L(s,\pi,\rho_X)$ is nonzero at $s=3/2$ where $\rho_X=\wedge^2\otimes std$ is a 12-dimensional representation of ${}^LG$. Then we need to show that the L-function $L(s,\pi,\rho_X)$ is nonzero at $s=1/2$.

As we explained in the previous page, we can view $\pi$ as a cuspidal automorphic representation of $\ul{M}(\BA)$ with trivial central character. Moreover, by the discussion in the end of Section 9.4, we know that the $\GL_2\times \GL_2$-period integral on $\pi$ (viewed as a cuspidal automorphic representation of $\GL_4(\BA)\times \GL_2(\BA)$) is just the $H$-period integral on $\pi$ (viewed as a cuspidal automorphic representation of $\ul{M}(\BA)$). As a result, we know that the period $\mathcal{P}_H(\cdot)$ is nonvanishing on the space of $\pi$. By Theorem \ref{thm:GSO(10)}, there exists $\phi \in \mathcal{A}_\pi$ such that the Eisenstein series $E(\phi,s)$, and thus the intertwining operator $M(s)$, has a pole at $s=1/2$.

In this case, the normalizing factor of the intertwining operator is
\[
\dfrac{L(s, \pi, \rho_X )\zeta_k(2s)}{L(s+1, \pi, \rho_X)\zeta_k(2s+1)}
\]
where $\zeta_k(s)$ is the Dedekind zeta function. By Theorem 4.7 of \cite{KK}, the normalized intertwining operator is holomorphic at $s=1/2$.
By Proposition \ref{L-function nonzero}, $L(3/2,\pi,\rho_X)\neq 0$. It follows that the numerator $L(s, \pi, \rho_X)\zeta_k(2s)$ has a pole at $s=1/2$, which implies that $L(\frac{1}{2},\pi,\rho_X)\neq 0$. This proves Theorem \ref{GL(4)GL(2)}.
\end{proof}

We now finish this subsection by explaining how Theorem \ref{thm:GSO(10)} is proved and the organization of the next three subsections. Denote by $\Lambda^T E(\phi,s)$ the Arthur-Langlands truncation of $E(\phi,s)$; see subsection \ref{subsec:trunc}.  As explained in subsection \ref{section method 2}, we will compute an $\underline{H}$-period of $\Lambda^T E(\phi,s)$, $\mathcal{P}_{\underline{H}}(\Lambda^T E(\phi,s))$ and essentially reduce the calculation to a period $\mathcal{P}_{H}(\phi)$.

More precisely, the proof of Theorem \ref{thm:GSO(10)} proceeds as follows.
\begin{enumerate}
	\item First, we prove that the double coset space $\underline{P}(k)\backslash \underline{G}(k) / \underline{H}(k)$ is finite.  Denote by $\gamma_1=1, \gamma_2, \ldots, \gamma_\ell$ its elements.
	\item Define $\underline{H}_i = \underline{H} \cap (\gamma_i^{-1}\underline{P} \gamma_i)$.  Then for each $i$, we prove that the pair $(\ul{H},\underline{H}_i)$ is a good pair in the sense of \cite[Section 5]{AWZ18}.
	\item Denote by $I_i(\phi,s)$ and $J_i(\phi,s)$ the integrals specified in Subsection \ref{section method 2}.  Applying Lemma 5.1, Proposition 5.2, and Proposition 5.5 of \cite{AWZ18}, one obtains that each of the finitely many integrals $I_i(\phi,s)$ and $J_i(\phi,s)$ converges absolutely for $Re(s)$ and $T$ sufficiently large.
	\item Finally, we prove that the integrals $I_i(\phi,s)$ and $J_i(\phi,s)$ vanish if $i > 1$.
	\item Computing the integrals $I_1(\phi,s)$ and $J_1(\phi,s)$, we obtain \eqref{eqn:method2Fin} with $s_0 = 1/2$, from which Theorem \ref{thm:GSO(10)} follows immediately.
\end{enumerate}

In the next subsection, we define the group $\underline{H}_7$ precisely, its representation $t_1: \underline{H}_7 \rightarrow \GSO(\Theta)$, and some special subgroups of it.  In subsection \ref{subsec:orbitStab} we prove that the double coset $\underline{P}(k)\backslash \underline{G}(k) / \underline{H}(k)$ is finite and that $(\ul{H},\underline{H}_i)$ is a good pair for all $i$.  In subsection \ref{subsec:van} we prove that the integrals $I_i(\phi,s)$ and $J_i(\phi,s)$ vanish for each $i >1$ and deduce Theorem \ref{thm:GSO(10)}.

\subsection{Non-classical groups} In this subsection we define the group $\underline{H}_7$, specify its Lie algebra concretely, and define certain subgroups of it.  First, recall from \cite{AWZ18} the Zorn model of the octonions $\Theta$.  We will use the notation
\[
\epsilon_1, e_1, e_2, e_3, e_1^*, e_2^*, e_3^*, \epsilon_2
\]
of \cite[section 1.1.1]{AWZ18} to denote a particular basis of $\Theta$.  We write $u_0 = \epsilon_1 - \epsilon_2$.

Define
\[ GT(\Theta) = \{(g_1,g_2,g_3) \in \GSO(\Theta) \times \GSO(\Theta) \times \SO(\Theta): (g_1x,g_2y,g_3z) = (x,y,z) \text{ for all } x,y, z \in \Theta\}.\]
Here $\SO(\Theta)$ is defined via the norm $n_{\Theta}: \Theta \rightarrow F$ and the trilinear form is $(x,y,z) := \tr_{\Theta}(xyz)$.  Denote by $t_1: GT(\Theta) \rightarrow \GSO(\Theta)$ the first projection, and $\nu: GT(\Theta) \rightarrow \GL_1$ the map that is $t_1$ composed with the similitude on $\GSO(\Theta)$.  The subgroup of $GT(\Theta)$ with $\nu=1$ is the group $\Spin(8)=\Spin(\Theta)$.  Define $\underline{H}_7$ to be the subgroup of $GT(\Theta)$ consisting of triples $(g_1,g_2,g_3)$ so that $g_3 \cdot 1 = 1$.  One can check that the similitude $\nu: \underline{H}_7 \rightarrow \GL_1$ is not the trivial map on $k$-points.  We slightly abuse notation and let $t_1:\underline{H}_7 \rightarrow \GSO(\Theta)$ denote the restriction of $t_1$ from $GT(\Theta)$ to $\underline{H}_7$.

We record facts about the group $\underline{H}_7$ that we will need later.  Denote by $\sigma$ the map $x \mapsto x^*$ on $\Theta$. We begin with a simple lemma, whose proof is an exercise.
\begin{lem}\label{lem:GSpinProd} Suppose $(g_1,g_2,g_3) \in \Spin(\Theta)$.
	\begin{enumerate}
		\item Then
		\[ g_1(x) g_2(y) = (\sigma g_3 \sigma)(xy)\]
		for all $x, y \in \Theta$.
		\item Suppose $g=(g_1,g_2,g_3) \in GT(\Theta)$, and define $\nu = \nu(g_1)$.  If $g_3(1) = 1$, then $g_2 = \nu^{-1} \sigma g_1 \sigma$. Consequently, if $(g_1, g_2, g_3) \in GT(\Theta)$ and $g_3(1) = 1$, then
		\[g_1(x) g_1(y)^* = \nu (\sigma g_3 \sigma)(xy^*)\]
		for all $x,y \in \Theta$.
		\item Conversely,  suppose $(g_1,g_2,g_3) \in \Spin(\Theta)$, and $g_2 = \sigma g_1 \sigma$.  Then $g_3$ stabilizes $1$.\end{enumerate}
\end{lem}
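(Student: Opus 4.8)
The plan is to derive all three parts from the single defining identity $\tr_{\Theta}((g_1x)(g_2y)(g_3z)) = \tr_{\Theta}(xyz)$ of $GT(\Theta)$, using only the standard composition-algebra relations $(ab)^* = b^*a^*$, $\tr_{\Theta}(ab) = \tr_{\Theta}(ba)$, $\tr_{\Theta}((ab)c) = \tr_{\Theta}(a(bc))$ and $aa^* = n_{\Theta}(a)$, together with the symmetric bilinear form $\langle u,v\rangle := \tr_{\Theta}(uv^*)$, which is non-degenerate and satisfies $\langle h(u),h(v)\rangle = \nu(h)\,\langle u,v\rangle$ for $h \in \GSO(\Theta)$, while $\sigma$ is an isometry with $\sigma(1)=1$. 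For part (1), since $\tr_{\Theta}(abc) = \langle ab, \sigma(c)\rangle$, the defining identity reads $\langle (g_1x)(g_2y), (\sigma g_3)(z)\rangle = \langle xy, \sigma(z)\rangle$ for all $x,y,z$; substituting $z \mapsto g_3^{-1}(\sigma(w))$ turns this into $\langle g_1(x)g_2(y), w\rangle = \langle xy, (\sigma g_3^{-1}\sigma)(w)\rangle$, and since $g_3 \in \SO(\Theta)$ the operator $\sigma g_3^{-1}\sigma$ lies in $\SO(\Theta)$, so it may be moved across the pairing to give $\langle g_1(x)g_2(y), w\rangle = \langle (\sigma g_3\sigma)(xy), w\rangle$ for all $w$; non-degeneracy then yields $g_1(x)g_2(y) = (\sigma g_3\sigma)(xy)$. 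This computation used only $g_3 \in \SO(\Theta)$, so the same identity in fact holds for every $(g_1,g_2,g_3) \in GT(\Theta)$, a point I would record for use in part (2).

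For part (2), first I would observe that the trilinear form $\tr_{\Theta}(abc)$ is invariant under cyclic permutations of $a,b,c$ (again from $\tr_{\Theta}(ab)=\tr_{\Theta}(ba)$ and $\tr_{\Theta}((ab)c)=\tr_{\Theta}(a(bc))$), so the defining identity can be rewritten as $\tr_{\Theta}((g_3x)(g_1y)(g_2z)) = \tr_{\Theta}(xyz)$. Running the argument of part (1) with $(g_3,g_1,g_2)$ in place of $(g_1,g_2,g_3)$, but now retaining the similitude factor $\mu := \nu(g_2)$ because $g_2$ need not lie in $\SO(\Theta)$, yields $g_3(x)g_1(y) = \mu^{-1}(\sigma g_2\sigma)(xy)$. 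Setting $x=1$ and using $g_3(1)=1$ gives $g_1(y) = \mu^{-1}(\sigma g_2\sigma)(y)$, i.e. $g_2 = \mu\,\sigma g_1\sigma$; comparing similitudes of the two sides forces $\mu = \mu^2\nu$ with $\nu = \nu(g_1)$, whence $\mu = \nu^{-1}$ and $g_2 = \nu^{-1}\sigma g_1\sigma$. For the asserted consequence, I would substitute $y\mapsto y^*$ in the $GT(\Theta)$-version of part (1): since $g_2(y^*) = \nu^{-1}(\sigma g_1\sigma)(y^*) = \nu^{-1}g_1(y)^*$, one gets $g_1(x)\bigl(\nu^{-1}g_1(y)^*\bigr) = (\sigma g_3\sigma)(xy^*)$, hence $g_1(x)g_1(y)^* = \nu\,(\sigma g_3\sigma)(xy^*)$.

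For part (3), here $(g_1,g_2,g_3)\in\Spin(\Theta)$, so $g_1\in\SO(\Theta)$ and $g_2 = \sigma g_1\sigma$. I would specialize the identity of part (1) at $x=y=1$: using $(\sigma g_3\sigma)(1) = g_3(1)^*$ and $g_2(1) = (\sigma g_1\sigma)(1) = g_1(1)^*$, this reads $g_1(1)g_1(1)^* = g_3(1)^*$, i.e. $n_{\Theta}(g_1(1)) = g_3(1)^*$. But $g_1 \in \SO(\Theta)$ forces $n_{\Theta}(g_1(1)) = n_{\Theta}(1) = 1$, so $g_3(1)^* = 1$ and therefore $g_3(1) = 1$.

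The whole argument is routine bookkeeping with the composition-algebra identities; the one place where care is genuinely needed is the similitude tracking in part (2) — one must resist assuming $g_2 \in \SO(\Theta)$, and instead carry $\mu = \nu(g_2)$ as an unknown and pin it down at the end from the relation $g_2 = \mu\,\sigma g_1\sigma$ itself.
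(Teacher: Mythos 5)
Your proof is correct and complete. The paper states this lemma without proof (``whose proof is an exercise''), so there is no paper argument to compare against; your derivation from the defining identity of $GT(\Theta)$, dualizing the trilinear form via $\tr_\Theta(abc)=\langle ab,\sigma(c)\rangle$ and carefully tracking the similitude factor when moving $\sigma g_2^{-1}\sigma$ across the pairing, is exactly the kind of argument being elided, and all steps check: the cyclic invariance of the trilinear form, the observation that part (1) in fact holds on all of $GT(\Theta)$ because it only uses $g_3\in\SO(\Theta)$, the pinning down of $\mu=\nu^{-1}$ by comparing similitudes, and the specialization $x=y=1$ in part (3) together with $aa^*=n_\Theta(a)\cdot 1$ and $\nu(g_1)=1$.
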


Recall that we define the parabolic subgroup $\underline{P}$ of $\underline{G}$ to be the Heisenberg parabolic.  This means that $\underline{P}$ stabilizes an isotropic two-dimensional subspace of $V$ and that the flag variety $\underline{P}(k)\backslash \underline{G}(k)$ is the set of isotropic two-dimensional subspaces of $V$.  In order to compute the double coset space  $\underline{P}(k)\back \underline{G}(k)/\underline{H}(k)$, we will need to use $\underline{H}_7$ to move around various isotropic subspaces of $V$.  To do this, it is helpful to have handy large concrete subgroups of $\ul{H}_7$.  We specify such subgroups now.

The subgroups of $\ul{H}_7$ we will use are $G_2$, $\GSpin(6)$, and $\GL_2 \times \GL_2$.  It is clear that $G_2 \subseteq \ul{H}_7$.  For $\GL_2 \times \GL_2$, we compute inside the Cayley-Dickson construction (see, e.g., \cite[section 1.1]{AWZ18}), so that the multiplication is $(x_1, y_1) (x_2, y_2) = (x_1 x_2 + \gamma y_2'y_1, y_2 x_1 + y_1 x_2')$ and the conjugation is $(x,y)^* = (x',-y)$.  Here for $h = \mm{a}{b}{c}{d} \in M_2(k)$, $h' = \mm{d}{-b}{-c}{a}$ so that $hh'=\det(h)I_2$. Now, suppose $g\in \GL_2$ and $h \in \GL_2$.  Define $(g,h) \cdot (x,y) = (gxh',\mu_h yg')$, where $\mu_h = \mathrm{diag}(\det(h),1)$.
\begin{lem}\label{lem:gl2gl2} This action of $\GL_2 \times \GL_2$ on $\Theta$ defines a map $\GL_2 \times \GL_2 \rightarrow \ul{H}_7$.\end{lem}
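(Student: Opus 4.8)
The plan is to verify, by a direct computation with the Cayley--Dickson multiplication and conjugation formulas recalled just above the statement, that the prescribed action $(g,h)\cdot(x,y)=(gxh',\mu_h y g')$ defines a triple of orthogonal similitudes of $\Theta$ whose third component stabilizes $1$, and hence lands in $\underline{H}_7\subseteq GT(\Theta)$. Concretely, writing $\phi_{(g,h)}$ for the map $(x,y)\mapsto(gxh',\mu_h y g')$ on $\Theta=M_2(k)\oplus M_2(k)$, I would first check that $\phi_{(g,h)}$ multiplies the octonion norm $n_\Theta$ by a scalar: since $n_\Theta(x,y)=\det(x)-\gamma\det(y)$ in the Zorn/Cayley--Dickson model, and $\det(gxh')=\det(g)\det(h)\det(x)$, $\det(\mu_h y g')=\det(h)\det(g)\det(y)$, we get $n_\Theta(\phi_{(g,h)}(x,y))=\det(g)\det(h)\,n_\Theta(x,y)$, so $\phi_{(g,h)}\in\GSO(\Theta)$ with similitude $\det(g)\det(h)$ (the determinant condition $\det=\nu^{4}$ for $\GSO$ of an $8$-dimensional space should be checked as well, but it is automatic since $\phi_{(g,h)}$ has a block structure with each of the two $4$-dimensional blocks scaled compatibly — I would spell this out in one line).

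Next I would exhibit the second and third components $g_2,g_3$ of the triple so that $(\phi_{(g,h)},g_2,g_3)$ preserves the trilinear form $(x,y,z)=\tr_\Theta(xyz)$, i.e.\ lies in $GT(\Theta)$. The natural guess, guided by Lemma \ref{lem:GSpinProd}, is to let $g_3$ be the map fixing $1$ (so that the triple lands in $\underline{H}_7$ by definition) and to determine $g_2$ from the relation $g_1(u)g_1(v)^*=\nu\,(\sigma g_3\sigma)(uv^*)$ of part (2) of that lemma, or directly $g_2=\nu^{-1}\sigma g_1\sigma$. The real content of the verification is then the multiplicativity-type identity: one must check that there is an orthogonal map $g_3$ with $g_3(1)=1$ such that $\phi_{(g,h)}(u)\cdot \phi_{(g,h)}(v)^* = \det(g)\det(h)\,(\sigma g_3\sigma)(u v^*)$ for all $u,v\in\Theta$. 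Expanding both sides using the Cayley--Dickson product $(x_1,y_1)(x_2,y_2)=(x_1x_2+\gamma y_2'y_1,\,y_2x_1+y_1x_2')$ and conjugation $(x,y)^*=(x',-y)$, together with the elementary $2\times 2$ identities $(ab)'=b'a'$, $a'' = a$, $(a')' = a$, $\mu_h' = \det(h)\mu_h^{-1}$, $\det(g)\,g^{-1} = g'$, one reduces everything to polynomial identities in the matrices $g,h,x_i,y_i$ which can be checked entrywise; the output manifestly has the form $(\star,\star)$ with the first slot of $u v^*$ transformed by $x\mapsto g x g'$ (independent of $h$), confirming that $g_3$ may be taken independent of $h$ and fixes $1=( I_2, 0)$.

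The step I expect to be the main obstacle is precisely this last compatibility check with the trilinear (equivalently, the "twisted multiplicativity") form: the Cayley--Dickson formulas mix the two $M_2$-slots with the factor $\gamma$ and with the transpose-adjoint $'$, so bookkeeping the way $g$ acts on the ``left'' slots and $h$ on the ``right'' slots through a product requires care, and one must correctly track where the similitude factor $\nu=\det(g)\det(h)$ and the twist $\mu_h=\mathrm{diag}(\det(h),1)$ appear. Since the statement of Lemma \ref{lem:gl2gl2} calls the proof an exercise, I would present it as such: set up the notation, record the needed $2\times2$ matrix identities, display the key identity $\phi_{(g,h)}(u)\phi_{(g,h)}(v)^*=\det(g)\det(h)(\sigma g_3\sigma)(uv^*)$ with $g_3(1)=1$, indicate that it is verified by the entrywise expansion just described, and conclude via the definition of $\underline{H}_7$ that $(g,h)\mapsto(\phi_{(g,h)},\nu^{-1}\sigma\phi_{(g,h)}\sigma,g_3)$ is the desired homomorphism $\GL_2\times\GL_2\to\underline{H}_7$, noting finally that it is a group homomorphism because $\phi_{(g_1,h_1)}\circ\phi_{(g_2,h_2)}=\phi_{(g_1g_2,h_1h_2)}$, which follows from $\mu_{h_1h_2}=\mu_{h_1}\mu_{h_2}$ and associativity of matrix multiplication together with $(h_1h_2)'=h_2'h_1'$.
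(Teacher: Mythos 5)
Your proposal is correct and follows the paper's own route: the paper's single displayed computation is exactly the identity you call the real content, namely
\[
\phi_{(g,h)}(z_1)\,\phi_{(g,h)}(z_2)^{*}=\det(g)\det(h)\bigl(g(x_1x_2'-\gamma y_2'y_1)g^{-1},\ \mu_h(-y_2x_1+y_1x_2)h^{-1}\bigr),
\]
from which the claim is read off via Lemma \ref{lem:GSpinProd} just as you describe (your preliminary similitude and $\GSO$ checks are harmless extra detail the paper leaves implicit). One small slip to repair: after extracting $\nu=\det(g)\det(h)$, the first slot of the candidate $\sigma g_3\sigma$ is $a\mapsto gag^{-1}$, not $a\mapsto gag'$ — only the former fixes $I_2$ (indeed $gI_2g'=\det(g)I_2$), and it is this that yields $g_3(1)=1$; also $g_3$ does depend on $h$ through the second slot, though its effect on $1=(I_2,0)$ does not, which is all that matters.
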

\begin{proof} Indeed, if $z_1 = (x_1, y_1)$ and $z_2 = (x_2, y_2)$, then one computes
	\begin{align} \nonumber ((g,h) z_1) \cdot ((g,h) z_2)^* &= (gx_1h',\mu_h y_1g') (gx_2h',\mu_h y_2g')^* \\
 \label{eq:Stab1} &= \det(g)\det(h)(g(x_1 x_2'-\gamma y_2'y_1)g^{-1},\mu_h(-y_2x_1+y_1x_2)h^{-1}) .\end{align}
From this, the lemma is clear.\end{proof}

We now describe the flag variety of Heisenberg parabolic subgroups in $\ul{H}_7$.   For a two-dimensional isotropic subspace $W$ of $\Theta$, define $\kappa'(W) = \{x y^*: x,y \in W\}$.  Then, because $W$ is two-dimensional and isotropic, $\kappa'(W)$ is contained in $V_7 = \Theta^{\tr=0}$, and is either $0$ or a line.  If $\kappa'(W) =0$, we say that $W$ is \emph{null}; otherwise, we say that $W$ is not null.  By Lemma \ref{lem:GSpinProd}, whether or not $W$ is null is an $\ul{H}_7$-invariant. Moreover, it is clear that being isotropic and null is a closed condition on the Grassmanian of two-spaces in $\Theta$, and thus the set of null isotropic two-spaces is a projective variety.

\begin{lem}\label{lem:levi} One has the following:
\begin{enumerate}
	\item The group $\ul{H}_7$ acts transitively on the set of null-isotropic two-spaces $W$ of $\Theta$, and thus the stabilizer $P_W$ of any such $W$ is a parabolic subgroup of $\ul{H}_7$; these are the Heisenberg parabolics.
	\item Denote by $W$ the null isotropic two-dimensional subspace of $\Theta$ that consists of the elements $(0,\mm{*}{*}{0}{0})$.  The map $\GL_2 \times \GL_2 \rightarrow \ul{H}_7$ of Lemma \ref{lem:gl2gl2} identifies $\GL_2 \times \GL_2$ with a Levi subgroup of $P_W$.
\end{enumerate}	
\end{lem}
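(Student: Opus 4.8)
The plan is to handle the two assertions in sequence, deducing everything from the octonion identities of Lemma \ref{lem:GSpinProd} and standard facts about parabolic subgroups. Write $\CN$ for the set of null isotropic two-spaces of $\Theta$; as already observed in the text, $\CN$ is a projective $k$-variety, and it is nonempty because $W_0:=\{(0,\mm{*}{*}{0}{0})\}$ lies in it (a direct computation in the Cayley--Dickson model shows $W_0$ is isotropic and $\kappa'(W_0)=0$; note also that $W_0\subseteq V_7$ since the reduced trace of $(x,y)\in B\oplus B$ is $\tr_B(x)$). The first step is to check that $\ul{H}_7$ acts on $\CN$: isotropy is preserved because $t_1$ maps $\ul{H}_7$ into $\GSO(\Theta)$, and nullness is preserved because Lemma \ref{lem:GSpinProd}(2) gives $g_1(x)g_1(y)^*=\nu(g)\,(\sigma g_3\sigma)(xy^*)$ for $g=(g_1,g_2,g_3)\in\ul{H}_7$, so $\kappa'(g_1W)=\nu(g)(\sigma g_3\sigma)\kappa'(W)$ vanishes whenever $\kappa'(W)$ does.

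Next I would compute $\dim\CN$ by octonion linear algebra. A nonzero isotropic $a\in\Theta$ is a zero divisor with $\dim(a\Theta)=4$, so the linear condition $ab^*=0$ cuts out a $4$-dimensional space of $b$'s; and if $a$ is isotropic with $ab^*=0$ then $ba^*=(ab^*)^*=0$, whence $\langle a,b\rangle$ is automatically a null isotropic two-space once $b\notin ka$. Thus $\CN$ is the image of the irreducible variety of pairs $(a,b)$, with $a$ a nonzero isotropic vector ($7$ parameters) and $b$ in the $4$-dimensional solution space of $ab^*=0$ off the line $ka$, under the map $(a,b)\mapsto\langle a,b\rangle$ whose fibers are the $\GL_2$-orbits of ordered bases; hence $\CN$ is irreducible of dimension $7+4-4=7$. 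Separately, a direct check from the formula $(g,h)\cdot(x,y)=(gxh',\mu_h yg')$ of Lemma \ref{lem:gl2gl2} shows that the homomorphism $\GL_2\times\GL_2\to\ul{H}_7$ has trivial kernel (so is a closed immersion) and carries $W_0$ into itself; hence $\GL_2\times\GL_2\subseteq\Stab_{\ul{H}_7}(W_0)=:P_{W_0}$.

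The heart of the matter is to show $P_{W_0}$ is the Heisenberg parabolic of $\ul{H}_7\simeq\GSpin(7)$. I would do this by producing, in addition to $\GL_2\times\GL_2$, a $7$-dimensional unipotent subgroup $U_0\subseteq\ul{H}_7$ that is normalized by $\GL_2\times\GL_2$, meets it trivially, and also fixes $W_0$ --- this can be located concretely inside the $\GSpin(6)$-subgroup of $\ul{H}_7$, or extracted from a direct octonion computation. Since $\GL_2\times\GL_2$ has the same rank as $\ul{H}_7$ (hence contains a maximal torus of $\ul{H}_7$) and a Borel of $\GL_2\times\GL_2$ together with $U_0$ is a connected solvable subgroup of dimension $13=\dim($Borel of $\ul{H}_7)$, the group $(\GL_2\times\GL_2)\cdot U_0$ contains a Borel of $\ul{H}_7$, so is a parabolic of dimension $15$ with unipotent radical $U_0$, i.e.\ the Heisenberg parabolic $P_{\mathrm{Heis}}$, and $P_{\mathrm{Heis}}\subseteq P_{W_0}$. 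Because $P_{\mathrm{Heis}}$ is maximal and $P_{W_0}\neq\ul{H}_7$ (for instance $G_2\subseteq\ul{H}_7$ acts irreducibly on $V_7\supseteq W_0$, so does not fix $W_0$), we conclude $P_{W_0}=P_{\mathrm{Heis}}$. Then $\ul{H}_7\cdot W_0\simeq\ul{H}_7/P_{W_0}$ is complete of dimension $7=\dim\CN$, hence closed and dense in the irreducible variety $\CN$, so $\ul{H}_7\cdot W_0=\CN$, which is exactly transitivity and assertion (1). (Alternatively, transitivity over $\bar k$ is the known homogeneity of the variety of null isotropic two-planes in the spin representation of $\GSpin(7)$; since $\Theta$ is split, $\ul{H}_7$ is split, so $\ul{H}_7(k)$ acts transitively on the $k$-points of the flag variety $\ul{H}_7/P_{W_0}$.)

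Assertion (2) is then formal: $\GL_2\times\GL_2\subseteq P_{W_0}$ is connected reductive of dimension $8=\dim P_{W_0}-\dim R_u(P_{W_0})$, it meets $R_u(P_{W_0})=U_0$ trivially, and $(\GL_2\times\GL_2)\cdot R_u(P_{W_0})=P_{W_0}$ for dimension reasons, so $\GL_2\times\GL_2$ is a Levi subgroup of $P_{W_0}$. The single genuine obstacle in the argument is the explicit production of the unipotent complement $U_0$ --- equivalently, showing directly that $\Stab_{\ul{H}_7}(W_0)$ is $15$-dimensional, or is parabolic; once that is in hand, both statements follow from the dimension count $\dim\CN=7$ together with the completeness of $\CN$.
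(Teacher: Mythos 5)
The paper's own proof of this lemma is essentially a one-liner: part (1) is ``easily checked, and\dots surely well-known,'' and part (2) is disposed of by invoking that the reductive quotient of $P_W$ is $\GL_2\times\GSpin_3=\GL_2\times\GL_2$. Your approach is genuinely different, attempting a ground-up proof via a dimension count on the variety $\CN$ of null isotropic two-planes. Most of the preliminary work is correct: $\CN$ is irreducible of dimension $7$ (the key octonion facts being that $\ker$ of left multiplication $L_a$ equals $a^*\Theta$ and is $4$-dimensional for $a\neq 0$ isotropic, and that $b=z^*a$ has $n_\Theta(b)=n_\Theta(z^*)n_\Theta(a)=0$ by multiplicativity of the norm, so such a $b$ is automatically isotropic); injectivity of $\GL_2\times\GL_2\hookrightarrow\ul{H}_7$ follows from the explicit formula of Lemma \ref{lem:gl2gl2}; and the closing logic that a $7$-dimensional complete orbit inside the irreducible $7$-dimensional projective $\CN$ must equal $\CN$ is sound.

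However, as you yourself flag, there is a genuine gap at the central step: you never produce the $7$-dimensional unipotent subgroup $U_0$ stabilizing $W_0$, normalized by $\GL_2\times\GL_2$ and meeting it trivially, and without it you cannot conclude $\dim P_{W_0}=15$, or that $P_{W_0}$ is parabolic, so the argument stalls exactly where the main content lies. Your ``alternatively'' at the end of the third paragraph does not close the gap either, since homogeneity of $\CN$ under $\ul{H}_7(\bar k)$ is precisely the transitivity one is trying to establish. A clean way to fill the gap \emph{without} writing down $U_0$ is to prove transitivity directly first, as the paper does a little later in Lemmas \ref{lem:Spin7Move} and \ref{lem:Spin72} (whose proofs do not use Lemma \ref{lem:levi}): move one basis vector of a given null two-plane into $V_7$ via Lemma \ref{lem:Spin7Move}, then invoke the $G_2$-transitivity argument from \cite[Lemma 2.5]{AWZ18}. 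With transitivity in hand, $\ul{H}_7/P_{W_0}\simeq\CN$ is complete, so $P_{W_0}$ is parabolic; among proper parabolics of $\GSpin(7)$ only the Heisenberg one (Levi type $A_1\times A_1$) has flag variety of dimension $7$, and its Levi is $8$-dimensional of rank $4$, hence equals the $8$-dimensional rank-$4$ subgroup $\GL_2\times\GL_2$ it contains, giving both (1) and (2). Your dimension count $\dim\CN=7$ is a nice independent sanity check on the paper's assertion, but on its own it does not replace the missing structural input.
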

\begin{proof} The first statement is easily checked, and in any case, is surely well-known.  For the second statement, it is easy to see that this $\GL_2 \times \GL_2$ embeds into $\ul{H}_7$, and that the image preserves $W$.  As the reductive quotient of the parabolic subgroup $P_W$ is exactly $\GL_2 \times \GSpin_3 = \GL_2 \times \GL_2$, the lemma follows.\end{proof}

We next describe the subgroup $\GSpin(6)$ of $\ul{H}_7$ and how it acts on $\Theta$.  Recall the elements $\epsilon_1, \epsilon_2 \in \Theta$ with $1 = \epsilon_1 + \epsilon_2$.  Define $H_6$ to be the subgroup of triples $(g_1,g_2,g_3)$ in $GT(\Theta)$ for which $g_3(\epsilon_j) = \epsilon_j$ for $j = 1,2$.

\begin{lem}\label{lem:H6} The group $H_6$ fixes the four-dimensional subspaces $U_+ = \mm{*}{*}{0}{0}$ and $U_{-} = \mm{0}{0}{*}{*}$ of $\Theta$ under the $t_1$-representation.  Moreover, the image of the map $H_6 \rightarrow \GL(U_+)$ includes $\SL(U_+)$.\end{lem}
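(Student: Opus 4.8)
The plan is to use two ingredients: the triality relations packaged into $GT(\Theta)$, which let one move information between the three eight-dimensional representations, and the fact that $U_+=\epsilon_1\Theta$ and $U_-=\epsilon_2\Theta$ are the Peirce subspaces of the orthogonal idempotents $\epsilon_1,\epsilon_2$, so that $U_+=\{x\in\Theta:\epsilon_2 x=0\}$ and $U_-=\{x\in\Theta:\epsilon_1 x=0\}$. First I would establish that $H_6$ stabilizes $U_\pm$. Write $g=(g_1,g_2,g_3)\in H_6$, so that $t_1(g)$ acts by $x\mapsto g_1(x)$. Starting from the defining trilinear identity $\tr((g_1x)(g_2y)(g_3z))=\tr(xyz)$ and using that the octonionic trace of a triple product is cyclically invariant and independent of bracketing, I would rotate it to $\tr((g_3z)(g_1x)(g_2y))=\tr(zxy)$, substitute $y\mapsto g_2^{-1}y$, and invoke non-degeneracy of $(a,b)\mapsto\tr(ab)$ to conclude that, for each fixed $g$, the product $g_3(z)g_1(x)$ depends only on $zx\in\Theta$; in particular $zx=0$ forces $g_3(z)g_1(x)=0$. (This is the same manipulation underlying Lemma~\ref{lem:GSpinProd}, applied to all of $GT(\Theta)$.) Since $g\in H_6$ fixes $\epsilon_1$ and $\epsilon_2$, for $x\in U_+$ one gets $\epsilon_2 g_1(x)=g_3(\epsilon_2)g_1(x)=0$ because $\epsilon_2 x=0$, hence $g_1(x)\in U_+$; as $g_1$ is invertible and $\dim U_+=4$ this gives $g_1(U_+)=U_+$, and symmetrically $g_1(U_-)=U_-$.

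From this step we obtain a homomorphism $\rho:H_6\to\GL(U_+)$. Next I would identify $\ul{H}_7$ with $\GSpin(7)$ through $g\mapsto g_3|_{V_7}$, where $V_7=\Theta^{\tr=0}$ and $\SO(V_7)\cong\SO_7$ — this is exactly the statement that $\ul{H}_7$ is ``of type $\GSpin(7)$ with its $8$-dimensional spin representation $t_1$''. An element of $H_6$ has $g_3$ fixing $1=\epsilon_1+\epsilon_2$ and $\epsilon_1$, hence also fixing $u_0=\epsilon_1-\epsilon_2\in V_7$, which is anisotropic ($n_\Theta(u_0)=-1$); conversely every element of $\SO(V_7)$ fixing $u_0$ lifts into $H_6$. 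Therefore $H_6$ is the preimage in $\GSpin(7)$ of the stabilizer $\SO(u_0^{\perp})\cong\SO_6$ of $u_0$ in $\SO_7$, i.e.\ a copy of $\GSpin(6)$; in particular its derived group $[H_6,H_6]$ is $\Spin(6)\cong\SL_4$.

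To finish I would restrict $t_1$ to $[H_6,H_6]=\Spin(6)$: by the branching rule $\Spin(7)\downarrow\Spin(6)$ the spin representation decomposes as $\Delta_6^+\oplus\Delta_6^-$, the two half-spin representations, which under $\Spin(6)\cong\SL_4$ are the standard module $\CC^4$ and its dual. Since $\Delta_6^+\not\cong\Delta_6^-$ are irreducible, the only $\Spin(6)$-submodules of $\Theta$ are $0,\Delta_6^+,\Delta_6^-,\Theta$; as $U_+$ is a four-dimensional $\Spin(6)$-submodule by the first step, it must be one of $\Delta_6^\pm$. Consequently $\rho|_{[H_6,H_6]}$ is a form of the standard representation of $\SL_4$, whose image is all of $\SL(U_+)$, and so $\rho(H_6)\supseteq\rho([H_6,H_6])=\SL(U_+)$, which is the claim.

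I expect the delicate point to be the identification in the last two steps — that $[H_6,H_6]$ is the split $\Spin_6$ and that $t_1|_{[H_6,H_6]}$ has no trivial constituent — since it is exactly this that pins $U_+$ down to a half-spin summand and forces the action on it to be the standard $4$-dimensional representation; everything else is formal. A more hands-on alternative to the final step would be to exhibit explicit $\SL_2$-subgroups of $H_6$ (or parts of the $\GL_2\times\GL_2$ of Lemma~\ref{lem:gl2gl2}) and compute their action on $U_+$ directly in the Zorn basis, but the branching-rule argument is cleaner.
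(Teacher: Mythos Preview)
Your argument is correct. The first half --- characterizing $U_\pm$ as $\{x:\epsilon_j x=0\}$ and deducing invariance from the fact that $g_3(z)g_1(x)$ depends only on $zx$ --- is exactly what the paper does. For the second half, however, you take a genuinely different route: you identify $H_6$ structurally as $\GSpin(6)$ (the preimage in $\ul{H}_7\simeq\GSpin(7)$ of the stabilizer of the anisotropic vector $u_0\in V_7$), then invoke the branching of the spin representation along $\Spin(7)\supset\Spin(6)$ together with the exceptional isomorphism $\Spin(6)\cong\SL_4$ to conclude that $U_+$ carries a form of the standard $\SL_4$-module, forcing the image to be $\SL(U_+)$. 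The paper instead argues by hand: it observes that the $\SL_3\subset G_2$ fixing $\epsilon_1,\epsilon_2$ lies in $H_6$, and (via \eqref{eq:Stab1}) that the subgroup $1\times\SL_2\subset\GL_2\times\GL_2$ of Lemma~\ref{lem:gl2gl2} also lies in $H_6$; translating between the Cayley--Dickson and Zorn models, it identifies this $\SL_2$ as acting on $\Span\{\epsilon_1,e_1\}\subset U_+$, so that together with the $\SL_3$ on $\Span\{e_1,e_2,e_3\}$ it generates all of $\SL(U_+)$. Your structural argument is cleaner and explains \emph{why} $\SL(U_+)$ appears (it is a half-spin module), whereas the paper's explicit approach stays entirely within the Zorn/Cayley--Dickson framework already set up and needs no appeal to the rational form of $\Spin_6$ or to branching rules. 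Amusingly, your closing sentence anticipates precisely the method the paper actually uses.
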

\begin{proof} We have the relation $g_3(x) g_1(y) = (\sigma g_2 \sigma)(xy)$ for general triples $(g_1,g_2,g_3) \in \Spin(\Theta)$. Now, $U_{+}$ is the subset of $y \in \Theta$ with $\epsilon_2 y = 0$ and $U_{-} = \{ y \in \Theta: \epsilon_1 y = 0\}$.  The first part of the lemma follows immediately from this.
	
For the second part, it is clear that the image contains the $\SL_3 \subseteq G_2$ that stabilizes $\epsilon_1$ and $\epsilon_2$.  From \eqref{eq:Stab1}, the subgroup $1 \times \SL_2 \subseteq \GL_2 \times \GL_2$ is in $H_6$.  Under the identification of the Cayley-Dickson and the Zorn model of the octonions, the subspace spanned by $e_1, e_2, e_3$ in the Zorn model becomes the subspace of elements $(\mm{0}{*}{0}{0},\mm{*}{0}{*}{0})$ in the Cayley-Dickson model.  Thus, the subgroup $1 \times \SL_2$ of $H_6$ is the $\SL_2$ that acts on the span of $\epsilon_1$ and (say) $e_1$.  Because the image of $H_6$ in $\GL(U_+)$ contains these two subgroups of $\SL(U_+)$, the image must contain all of $\SL(U_+)$, giving the lemma. \end{proof}

Finally, it will be useful to have a concrete realization of the Lie algebra of $\ul{H}_7$ that makes it easy to compute its action on $\Theta$ via the Spin representation $t_1$ (as opposed to the $7$ dimensional orthogonal representation of $\SO(7)$.) Denote by $\ul{H}_7'$ the subgroup of $\ul{H}_7$ with $\nu=1$.  We describe the Lie algebra of $\ul{H}_7'$ as a subalgebra of $\mathfrak{so}(\Theta)$ via the representation $t_1$.

To do this, consider the map $\kappa:\wedge^2 \Theta \rightarrow V_{7} = \Theta^{\tr = 0}$ given by $x \wedge y \mapsto Im(xy^*)$.  Define $K$ to be the kernel of this map.  Note, $K$ is \emph{not} $\wedge^2 V_7$, but it does contain the exceptional Lie algebra $\mathfrak{g}_2$, which is the subspace of $\wedge^2 V_{7}$ in the kernel of $\kappa$.

The following description of the Lie algebra $\ul{H}_7$ is likely well-known.  Not knowing a reference, we include a proof.
\begin{lem} The Lie algebra $\mathfrak{h}_7^0$ of $\ul{H}_7'$ is equal to $K$ as subspaces of $\mathfrak{so}(\Theta) = \wedge^2 (\Theta)$.  In particular, $K$ is a Lie algebra.\end{lem}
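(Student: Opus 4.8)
The plan is to make the identification $\mathfrak{so}(\Theta)=\wedge^2\Theta$ explicit enough to compute \emph{both} $\mathfrak h_7^0$ and $K$ as subspaces and observe that they coincide; the last sentence of the lemma is then automatic, since $\mathfrak h_7^0$ is visibly a Lie algebra. I identify $x\wedge y\in\wedge^2\Theta$ with the skew operator $v\mapsto\langle x,v\rangle y-\langle y,v\rangle x$, where $\langle u,v\rangle=\tfrac12\tr_\Theta(uv^*)$ is the polar form of $n_\Theta$; a short computation then gives the operator-theoretic formula $\kappa(X)=-\tfrac12\sum_\alpha X(e_\alpha)e_\alpha^*$ for a basis $\{e_\alpha\}$ orthonormal for $\langle\,,\,\rangle$ (one may take $e_0=1$ and $e_1,\dots,e_7$ a basis of $V_7$). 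The structural input is the standard decomposition (as a vector space) $\mathfrak{so}(\Theta)=\mathfrak{g}_2\oplus L_{V_7}\oplus R_{V_7}$, where $L_a,R_a$ denote left and right multiplication by $a$ (both skew for $\langle\,,\,\rangle$ when $a\in V_7$, since then $a^*=-a$) and $\mathfrak{g}_2=\mathrm{Der}(\Theta)$; the sum is direct because the three summands are distinguished by their value at $1$, together with the fact that $L_a-R_a$ is a derivation only when $a$ is central.

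First I would compute $K$. The map $\kappa$ is $G_2$-equivariant, and $\varphi L_a\varphi^{-1}=L_{\varphi(a)}$, $\varphi R_a\varphi^{-1}=R_{\varphi(a)}$ for $\varphi\in G_2$, so $L_{V_7}$ and $R_{V_7}$ are copies of the $7$-dimensional representation while $\mathfrak{g}_2$ is the adjoint representation. By Schur's lemma $\kappa$ kills $\mathfrak{g}_2$ (recovering the remark already made in the text) and acts by scalars on $L_{V_7}$ and on $R_{V_7}$; those scalars are pinned down by one octonionic identity each, using only alternativity and $e^2=-1$ for a unit imaginary $e$: one obtains $\sum_\alpha(ae_\alpha)e_\alpha^*=8a$ and $\sum_\alpha(e_\alpha a)e_\alpha^*=-4a$ for $a\in V_7$, hence $\kappa(L_a)=-4a$ and $\kappa(R_a)=2a$. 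In particular $\kappa$ is surjective, $\dim K=21$, and
\[
K=\mathfrak{g}_2\oplus\{\,L_a+2R_a:a\in V_7\,\}.
\]

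Next I would compute $\mathfrak h_7^0$. Differentiating the identities of Lemma~\ref{lem:GSpinProd} at the identity, an element $X=X_1\in\mathfrak h_7^0$ (acting on $\Theta$ through $t_1$) has triality partners $X_2=\sigma X_1\sigma$ and $X_3$ with $X_3(1)=0$, satisfying $Z_3(xy)=X_1(x)y+xX_2(y)$ with $Z_3:=\sigma X_3\sigma$ and $Z_3(1)=0$. Setting $y=1$ and using $X_2(1)=X_1(1)^*=-X_1(1)$ gives $X_1=Z_3+R_b$ with $b:=X_1(1)\in V_7$; substituting this back, and using $\sigma Z_3\sigma=Z_3$ and $\sigma R_b\sigma=-L_b$, the triality relation collapses to
\[
Z_3(xy)=Z_3(x)y+xZ_3(y)+[x,b,y],
\]
with $[x,b,y]$ the associator. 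Two solutions of this equation with the same $b$ differ by a derivation of $\Theta$, and $\tfrac13(L_b-R_b)$ is a particular solution (a direct check from the alternating associator, which also yields $[b,x]y+x[b,y]=[b,xy]+3[b,x,y]$); hence $Z_3\in\tfrac13(L_b-R_b)+\mathfrak{g}_2$ and $X_1\in\mathfrak{g}_2\oplus\{\,L_b+2R_b:b\in V_7\,\}=K$. Thus $\mathfrak h_7^0\subseteq K$, and since $\dim\mathfrak h_7^0=\dim\Spin(7)=21=\dim K$ we conclude $\mathfrak h_7^0=K$ and, in particular, that $K$ is a Lie algebra. The only subtlety here is bookkeeping — fixing the normalization of $\mathfrak{so}(\Theta)=\wedge^2\Theta$ so that the two descriptions of $\kappa$ agree, and carrying out the handful of octonion identities above — all elementary consequences of alternativity; I do not expect a genuine obstacle.
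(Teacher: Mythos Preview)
Your proof is correct, but it takes a genuinely different route from the paper's. The paper argues as follows: the group-level identity $g_1(x)g_1(y)^* = (\sigma g_3\sigma)(xy^*)$ from Lemma~\ref{lem:GSpinProd}(2) shows in one line that $\kappa$ is $\ul{H}_7'$-equivariant (for the $t_1$-action on $\wedge^2\Theta$ and the $g_3$-action on $V_7$), so $K=\ker\kappa$ is $\mathrm{Ad}(\ul{H}_7')$-stable. Since $\mathfrak{g}_2\subseteq K$ and the $\mathrm{Ad}(\ul{H}_7')$-span of $\mathfrak{g}_2$ inside $\mathfrak{h}_7^0$ is an ideal of the simple Lie algebra $\mathfrak{h}_7^0\cong\mathfrak{so}(7)$, that span is all of $\mathfrak{h}_7^0$; hence $\mathfrak{h}_7^0\subseteq K$, and the dimension count finishes. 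No decomposition $\mathfrak{so}(\Theta)=\mathfrak{g}_2\oplus L_{V_7}\oplus R_{V_7}$, no explicit scalars, and no differentiation of the triality relations are needed.

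By contrast, your argument computes both sides explicitly and matches them. This costs more bookkeeping (the octonion identities you flag, and tracking the normalization of $\wedge^2\Theta\simeq\mathfrak{so}(\Theta)$), but it buys something the paper's proof does not directly give: the concrete formula $\mathfrak{h}_7^0=K=\mathfrak{g}_2\oplus\{L_a+2R_a:a\in V_7\}$, which makes the spin action on $\Theta$ completely explicit and would be handy for the later weight-space computations in the section. Your verification that $\tfrac13(L_b-R_b)$ solves the twisted-derivation equation $Z_3(xy)-Z_3(x)y-xZ_3(y)=[x,b,y]$ via the alternating associator is clean, and the scalar computation $\kappa(L_a)=-4a$, $\kappa(R_a)=2a$ checks out using $(ae_i)e_i=-a$ (alternativity) and $e_ie_j=-e_je_i$ for $i\neq j$ in an orthonormal imaginary basis. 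So both proofs are valid; the paper's is softer, yours is more informative.
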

\begin{proof} First we claim that $K$ is fixed under the induced action of $\ul{H}_7'$ on $\Theta$.  Indeed, if $x, y \in \Theta$ and $g \in \ul{H}_7'$, then
\[ \kappa(g \cdot (x \wedge y)) = \kappa(g(x) \wedge g(y)) = Im( g(x)g(y)^*) = Im(g'(xy^*)) = g'Im(xy^*) = g'\kappa(x \wedge y).\]
Hence $\kappa$ is equivariant for the action of $\ul{H}_7'$, so $K$ is preserved by $g \in H_7'$.

We have $G_2 \rightarrow \ul{H}_7' \rightarrow \mathrm{Spin}(8)$.  As already mentioned, one has $\mathfrak{g}_2 \subseteq K$.  But because $K$ is closed under the action of $\ul{H}_7'$, $K$ contains $\ul{H}_7' \cdot \mathfrak{g}_2 \subseteq \mathfrak{h}_7^0$.  But $H_7' \cdot \mathfrak{g}_2$ is all of $\mathfrak{h}_7^0$.  Thus $\mathfrak{h}_7^0 \subseteq K$.  But both $\mathfrak{h}_7^0$ and $K$ are $21$-dimensional, thus $K = \mathfrak{h}_7^0$ as claimed.\end{proof}

\subsection{Orbits, stabilizers, and convergence}\label{subsec:orbitStab}  In this subsection we prove that the double coset $\ul{P}(k)\backslash \ul{G}(k)/\ul{H}(k)$ is finite.  We also prove that the integrals $I_i(\phi,s)$ and $J_i(\phi,s)$ associated to these orbits are absolutely convergent.

The variety $\ul{P}(k)\backslash \ul{G}(k)$ is the set of isotropic two-dimensional subspaces $W$ of $V$.  We thus consider the orbits of $\ul{H}(k)$ on these isotropic subspaces.  For $W \subseteq V$ isotropic and two-dimensional, we set $\ul{P}_W$ the stabilizer of $W$ inside $\ul{G}$ and $\ul{H}_W = \ul{H} \cap \ul{P}_W$, the stabilizer of $W$ inside $\ul{H}$.  In this subsection, we will prove the following two statements:

\begin{claim}[Proposition \ref{prop:orbits}] There are finitely many $\ul{H}(k)$-orbits of isotropic two-dimensional subspaces in $V$.\end{claim}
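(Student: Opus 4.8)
The claim asserts that $\ul{H}(k) = \ul{H}_7(k) \boxtimes \mathrm{Res}_{E/k}(\GL_1)(k)$ acts with finitely many orbits on the set of isotropic $2$-planes $W$ in the $10$-dimensional quadratic space $V = \Theta \oplus E$. My strategy is to stratify the isotropic $2$-planes by their position relative to the decomposition $V = \Theta \oplus E$, and to use the concrete subgroups of $\ul{H}_7$ (namely $G_2$, $H_6 \simeq \GSpin(6)$, $\GL_2 \times \GL_2$, and the Heisenberg Levi in Lemma \ref{lem:levi}) identified in the previous subsection to move $W$ into a normal form. First I would record the numerical invariants of an isotropic $2$-plane $W \subseteq \Theta \oplus E$: the dimension of the projection $\pi_\Theta(W)$ to $\Theta$ (this is $0$, $1$, or $2$), the dimension of $W \cap \Theta$, and when $W \cap \Theta$ is nonzero, whether $W \cap \Theta$ is null in the sense of Lemma \ref{lem:levi} (i.e.\ whether $\kappa'(W \cap \Theta) = 0$). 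Since $E = k \times k$ is only $2$-dimensional with a split form, the $E$-component of $W$ lies in a very restricted set of subspaces of $E$, so most of the work is on the $\Theta$-side, where $\ul{H}_7$ acts.

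\textbf{Key steps, in order.} Step one: handle the case $\pi_\Theta(W)$ is not injective, i.e.\ $W \cap \Theta \neq 0$. If $\dim(W \cap \Theta) = 2$ then $W \subseteq \Theta$, and by Lemma \ref{lem:levi}(1) together with the transitivity of $\SO(\Theta)$ (or $\ul{H}_7$) on null isotropic $2$-planes, plus a separate (easy) analysis of non-null isotropic $2$-planes of $\Theta$ under $\ul{H}_7$, there are finitely many orbits; the $\mathrm{Res}_{E/k}(\GL_1)$-factor acts trivially here. If $\dim(W \cap \Theta) = 1$, write $W = \langle u \rangle \oplus \langle x + \lambda \rangle$ with $u \in \Theta$ isotropic, $x \in \Theta$, $0 \neq \lambda \in E$; using the $\ul{H}_7$-action one first moves $u$ (and, via the parabolic $P_W$ of Lemma \ref{lem:levi}, the line it spans together with a companion vector) to a standard position, then uses the unipotent radical and Levi of that parabolic, together with the $\GL_1 \times \GL_1$ acting on $E$, to normalize the pair $(x,\lambda)$ up to finitely many choices (the quadratic conditions $q(u)=0$, $q(x)=n_E(\lambda)$, $\langle u, x\rangle = 0$ cut things down). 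Step two: the generic case, $\pi_\Theta(W)$ injective of dimension $2$. Then $W$ is the graph of a linear map $\phi: \pi_\Theta(W) \to E$, and $W_0 := \pi_\Theta(W)$ is a $2$-dimensional subspace of $\Theta$ on which the form $q_\Theta$ has a prescribed rank/discriminant matching the pullback of $-n_E$ via $\phi$. Using that $\ul{H}_7$ contains $\SL(U_+) \subseteq \GSpin(6)$ (Lemma \ref{lem:H6}), $G_2$, and the $\GL_2\times\GL_2$ of Lemma \ref{lem:gl2gl2}, one shows $\ul{H}_7$ has finitely many orbits on pairs $(W_0, \phi)$: first move $W_0$ to one of finitely many standard $2$-planes in $\Theta$ (classified by $\dim W_0 \cap W_0^\perp$ and isotropy type), then use the stabilizer of $W_0$ in $\ul{H}_7$ — which is large enough, containing a copy of $\GL_2$ or $\GSpin$-type group acting on $W_0$ transitively up to scalars — together with the $\GL_1\times\GL_1$ on $E$ to bring $\phi$ into finitely many normal forms.

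\textbf{The main obstacle.} The hardest part will be Step two in the most degenerate sub-case, where $W_0 = \pi_\Theta(W)$ is a totally isotropic $2$-plane of $\Theta$ (so $q_\Theta|_{W_0} = 0$) and $\phi: W_0 \to E$ is an isomorphism pulling back $-n_E$ to the zero form — wait, that forces $n_E \circ \phi = 0$, impossible for $\phi$ an isomorphism onto a nondegenerate $E$ — so in fact one must carefully track which $(W_0,\phi)$ are actually admissible, and the subtle case is when $W_0$ is non-degenerate of rank $2$ (split) inside $\Theta$ and $\phi$ is an isometry $(W_0, q_\Theta|_{W_0}) \xrightarrow{\sim} (E, -n_E)$ (both being split rank-$2$ forms): here one needs that the stabilizer of such a $W_0$ in $\ul{H}_7$ surjects onto the orthogonal group $\mathrm{O}(W_0)$, or at least onto a subgroup acting transitively enough on isometries $W_0 \to E$ modulo $\GL_1 \times \GL_1$ on $E$. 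I expect this to follow from Lemma \ref{lem:H6} (the $\SL(U_+)$ statement) and Lemma \ref{lem:levi} by choosing $W_0$ appropriately inside $U_+$ or inside the $\GL_2\times\GL_2$-stable null plane and its complement, but pinning down the exact image of the relevant stabilizer in $\mathrm{O}(W_0)$ is the delicate computation. Once all sub-cases are checked, finiteness is immediate since each invariant ranges over a finite set and each stratum contributes finitely many orbits; this establishes Proposition \ref{prop:orbits}.
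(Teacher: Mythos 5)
Your strategy mirrors the paper's at the level of organization: stratify isotropic $2$-planes $W$ by their position relative to $V = \Theta \oplus E$ and move $W$ to a normal form using the explicit subgroups of $\ul{H}_7$ built in the preceding subsection. Your stratification by $\dim(W \cap \Theta)$ is in fact equivalent to the paper's stratification by $pr_{E}(W)$: for an isotropic $2$-plane, the case $W \cap \Theta = 0$ with $pr_{\Theta}(W)$ one-dimensional cannot occur, since it would force $W\cap E$ to be an isotropic line $\ell$ with $\ell^\perp \cap E = \ell$, so that $pr_{E}(W) \subseteq \ell$ is one-dimensional while $W \cap \Theta = 0$ forces $pr_{E}(W)$ to be two-dimensional. (Your parenthetical ``$\pi_\Theta(W)$ is not injective, i.e.\ $W\cap\Theta \neq 0$'' conflates two different conditions: non-injectivity of $pr_{\Theta}|_W$ means $W\cap E \neq 0$. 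The conflation is harmless only because of the argument just given, which your write-up does not make.)

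The genuine gap is in executing the normalization, and it is essentially the one you flag yourself. The proof in the paper is driven by Lemma \ref{lem:Spin7Move} ($\ul{H}_7'$ acts transitively on nonzero octonions of any fixed norm, including norm zero) and Lemma \ref{lem:Spin72} (exactly two $\ul{H}_7'$-orbits of isotropic $2$-planes in $\Theta$: null and non-null), neither of which you invoke; without them, assertions such as ``move $W_0$ to one of finitely many standard $2$-planes'' and ``the quadratic conditions cut things down'' remain unsubstantiated. More seriously, in your Step two you propose to first pin down the $\ul{H}_7$-orbit of $W_0 = pr_{\Theta}(W)$ and then compute the image of $\mathrm{Stab}_{\ul{H}_7}(W_0)$ in $\mathrm{O}(W_0)$ to classify $\phi$, and you acknowledge that this image computation is the delicate part you have not done. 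The paper avoids it entirely by normalizing sequentially: apply Lemma \ref{lem:Spin7Move} to move a single vector of $W_0$ to $1 \in \Theta$; then the complementary vector $u$ of $W_0$ lies in $V_7 = \Theta^{\tr = 0}$ with norm pinned down by isotropy of $W$ (up to the $E^\times$-scaling present in $\ul{H}$), and $G_2 = \mathrm{Stab}_{\ul{H}_7'}(1)$ acts transitively on vectors of $V_7$ of that norm. This collapses the $pr_{E}(W) = E$ stratum to a single orbit with no stabilizer-image computation needed. To close the argument you would need either to supply this sequential normalization, or to actually carry out the computation you leave open.
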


We will then calculate the stabilizers $\ul{H}_W$ for representatives $W$ of these finitely many orbits, and deduce the following.

\begin{claim}[Proposition \ref{prop:goodPairs}] For every isotropic two-dimensional subspace $W$ of $V$, $(\ul{H},\ul{H}_W)$ is a good pair in the sense of \cite[Section 5]{AWZ18}.\end{claim}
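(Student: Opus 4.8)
The plan is to reduce the claim to a finite check and then carry it out orbit by orbit. By Proposition~\ref{prop:orbits} there are only finitely many $\ul{H}(k)$-orbits of isotropic two-dimensional subspaces of $V$, and for $h\in\ul{H}(k)$ one has $\ul{H}_{h\cdot W}=h\,\ul{H}_W\,h^{-1}$; since conjugating a parabolic of $\ul{H}$ and a Levi decomposition by an element of $\ul{H}(k)$ again produces such data, the property of $(\ul{H},\ul{H}_W)$ being a good pair depends only on the $\ul{H}(k)$-orbit of $W$. Hence it suffices to verify the claim for the representatives $W$ produced in the proof of Proposition~\ref{prop:orbits}. The main work is then, for each such $W$: (i) compute $\ul{H}_W=\ul{H}\cap\ul{P}_W$ explicitly as an algebraic subgroup of $\ul{H}=\ul{H}_7\boxtimes\Res_{E/k}(\GL_1)$; (ii) produce a parabolic subgroup $\ul{Q}_W=\ul{L}_W\ul{U}_W$ of $\ul{H}$ with $\ul{H}_W\subseteq\ul{Q}_W$ together with a compatible Levi decomposition $\ul{H}_W=(\ul{H}_W\cap\ul{L}_W)\ltimes(\ul{H}_W\cap\ul{U}_W)$; and (iii) check the structural and numerical requirements in the definition of a good pair of \cite[Section 5]{AWZ18}, namely that $\ul{H}_W$ contains $\ul{U}_W$, that $\ul{H}_W\cap\ul{L}_W$ is reductive, and that the relevant modulus characters obey the positivity inequality imposed on a good pair.

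For step (i) I would work inside the concrete models of the previous subsection: the Zorn and Cayley-Dickson realizations of the split octonion algebra $\Theta$, and the large explicit subgroups $G_2$, $H_6\simeq\GSpin(6)$ and $\GL_2\times\GL_2\hookrightarrow\ul{H}_7$ coming from Lemmas~\ref{lem:levi}, \ref{lem:gl2gl2} and \ref{lem:H6}. Because $\ul{P}_W$ is a Heisenberg parabolic of $\ul{G}=\GSO(V)$, namely the stabilizer of the isotropic plane $W\subseteq V=\Theta\oplus E$, the intersection $\ul{H}_W$ is cut out inside $\ul{H}_7$ by the condition of preserving the projection of $W$ to $\Theta$, together with a single compatibility constraint on the $\Res_{E/k}(\GL_1)$-factor dictated by how $W$ meets and projects to $E$. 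Running over the representatives --- the $W$ contained in $\Theta$ (the null and the non-null cases), the $W$ meeting $\Theta$ in an isotropic line and surjecting onto $E$, and the $W$ meeting $\Theta$ trivially (so $W$ is the graph of an isometric embedding $E\hookrightarrow\Theta$) --- one finds in each case that $\ul{H}_W$ sits inside a standard parabolic $\ul{Q}_W$ of $\ul{H}$, contains its unipotent radical, and has reductive quotient built from $\GL$- and $\GSpin$-type factors; this is precisely the situation in which Lemma~5.1, Proposition~5.2 and Proposition~5.5 of \cite{AWZ18} can be invoked in the subsequent convergence argument.

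The hard part will be step (i) in the open orbit, i.e.\ for the representative $W$ with $W\cap\Theta=0$: there $\ul{H}_W$ is as small as it gets, and one has to show that it still contains the unipotent radical of a parabolic of $\ul{H}$ whose Levi meets the positivity requirement. This is where the explicit matrix computations in the Zorn model, and the identification in Lemma~\ref{lem:gl2gl2} of $\GL_2\times\GL_2$ with a Levi of a Heisenberg parabolic of $\ul{H}_7$, carry the argument. Once $\ul{H}_W$ has been pinned down in each case, checking the defining inequality of a good pair is a short root-theoretic bookkeeping, and --- as the authors remark --- it is genuinely lighter here than in the Ginzburg-Rallis case treated in \cite{AWZ18}.
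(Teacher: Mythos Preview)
Your overall strategy --- reduce to the finite list of orbit representatives from Proposition~\ref{prop:orbits}, compute each stabilizer, and fit it into the parabolic framework of \cite[Section~5]{AWZ18} --- is the same as the paper's, and in most cases it works exactly as you outline (the paper invokes Corollary~5.9 of \cite{AWZ18} after checking the decomposition $\ul{H}_W=(\ul{H}_W\cap M_{\ul{H}})\ltimes(\ul{H}_W\cap N_{\ul{H}})$ together with $M_{\ul{H}}=(\ul{H}_W\cap M_{\ul{H}})\times M'$ for some small $M'$; note this is weaker than your stated requirement that $\ul{H}_W$ contain the full unipotent radical $\ul{U}_W$, which in fact fails in several cases, e.g.\ case~(2) of Lemma~\ref{lem:Stab1}).

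However, you have misidentified the hard case. The orbit with $pr_E(W)=E$ (your ``$W\cap\Theta=0$'') is actually easy: by Lemma~\ref{lem:Stab1}(1) the stabilizer is a Levi subgroup $\GL_3\times\GL_1$ of $\ul{H}_7$, and Corollary~5.9 applies immediately. The genuinely problematic orbit is the one with $pr_E(W)=0$ and $W\subseteq\Theta$ two-dimensional isotropic and \emph{non-null}. There, by Lemma~\ref{lem:nonnullStab} and Lemma~\ref{lem:Stab2}(1), the stabilizer has Levi part $\GL_1\times(\GL_2\times\GL_2)^0$ sitting inside $\GL_1\times\GSp_4$. The point is that $(\GL_2\times\GL_2)^0\subset\GSp_4$ is a symmetric subgroup, not a product factor of the Levi of any parabolic of $\GSp_4$, so no choice of $\ul{Q}_W$ puts you in the situation covered by Corollary~5.9. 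Your uniform scheme (ii)--(iii) simply does not apply to this orbit.

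The paper handles this case separately: it invokes Proposition~5.8(4) of \cite{AWZ18} to reduce to showing that $(\GSp_4\ltimes U,(\GL_2\times\GL_2)^0\ltimes U')$ is a good pair (Lemma~\ref{good pair}), and then proves that lemma in the Appendix by a direct norm estimate mimicking \cite[Section~5.4]{AWZ18}. The key computation there is Lemma~\ref{U1 part}, an explicit comparison $\|hn'\|_G\sim\|h\|_G\cdot\|n'\|_G$ for $h\in(\GL_2\times\GL_2)^0$ and $n'$ in a complementary slice $N'\subset N$ of the Borel unipotent. Without this ad hoc argument your proof has a genuine gap in exactly one orbit.
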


As already explained, these two claims imply the equality \eqref{unfolding} and the absolute convergence of the integrals $I_i(\phi,s)$ and $J_i(\phi,s)$ on the right-hand side of this equality.  To compute the orbits, we begin with the following lemma.

\begin{lem}\label{lem:Spin7Move} Suppose $v, w \in \Theta$ are nonzero.  If $n_{\Theta}(v) = n_{\Theta}(w)$, whether $0$ or not, then there exists $g \in \ul{H}_7'$ with $g v = w$. \end{lem}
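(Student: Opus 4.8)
The assertion is exactly that $\ul{H}_7'\cong\Spin_7$ acts transitively on $\{v\in\Theta:n_\Theta(v)=c\}\smallsetminus\{0\}$ for every $c\in k$: indeed $\ul{H}_7'$ maps into $\SO(\Theta)$ under $t_1$, so its orbit on any $v$ is automatically contained in the level set of $n_\Theta(v)$. The plan is to reduce everything to the isotropic case $c=0$. If $n_\Theta(v)=c\neq 0$, then since $n_\Theta$ is split of dimension $8$ one can write $v=v_1+v_2$ with $v_1,v_2$ nonzero isotropic vectors and $B_\Theta(v_1,v_2)=c/2$, where $B_\Theta$ is the bilinear form attached to $n_\Theta$; having moved $v_1$ to a fixed standard isotropic vector $\epsilon_1$ by the $c=0$ case, one then uses the unipotent radical of the maximal parabolic of $\ul{H}_7'$ stabilizing the line $k\epsilon_1$ — which acts transitively on the set of isotropic vectors having a prescribed nonzero pairing against $\epsilon_1$ — to move $v_2$ to a fixed vector, and thereby moves $v$ to a representative depending only on $c$. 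So the whole statement follows once the case $c=0$ is known.

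For the case $c=0$ I would argue that $\ul{H}_7'$ is transitive on the nonzero isotropic vectors of $\Theta$. Over $\bar k$ these form a single $\Spin_7$-orbit: projectively this is the classical identification of the spinor variety of type $B_3$ with a smooth $6$-dimensional quadric, and the affine cone over it (minus the vertex) is the $\Spin_7$-orbit of a highest weight vector in the $8$-dimensional spin representation. The stabilizer in $\Spin_7$ of such a vector is of the form $\SL_3\ltimes U$, with $U$ the unipotent radical of the relevant maximal parabolic and $\SL_3$ the derived group of its Levi; since $H^1(k,U)$ and $H^1(k,\SL_3)$ are trivial, so is $H^1(k,\SL_3\ltimes U)$, and hence $\ul{H}_7'(k)$ already acts transitively on the $k$-points of this orbit, which is the desired statement.

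Alternatively, this last step can be done by hand with the explicit subgroups introduced above: using $\Theta=U_+\oplus U_-$ with $U_\pm$ maximal isotropic and $n_\Theta$ inducing a perfect pairing between them, and Lemma \ref{lem:H6} (so that $H_6\cong\GSpin(6)$ preserves this decomposition and surjects onto $\SL(U_+)\cong\SL_4$), one first arranges that the $U_+$-component of a nonzero isotropic $v$ is nonzero — applying, if necessary, an element of $\ul{H}_7'$ that interchanges $U_+$ and $U_-$ — then moves that component to $\epsilon_1$ by $\SL_4$, and finally kills the remaining $U_-$-component with the unipotent radical of the stabilizer of $k\epsilon_1$ in $\SL_4$.

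The part requiring real input is the isotropic case, and within it the step of bringing an arbitrary nonzero isotropic vector into the standard maximal isotropic subspace $U_+$: this cannot be achieved inside the block-preserving subgroup $H_6$, so one must use an element of $\ul{H}_7'$ outside $H_6$ (and this is precisely what the stabilizer/cohomology computation in the conceptual route encodes). Conceptually the reason no single visible subgroup suffices is that their orbits on vectors of a fixed norm are strictly finer than those of $\Spin_7$: for $G_2=\Aut(\Theta)$ the orbit of $v$ also remembers $\tr_\Theta(v)$, while for the image of $\GL_2\times\GL_2$ acting on $\Theta=B\oplus B$ by $(x,y)\mapsto(gxh',\mu_h y g')$ it remembers $\det x$ and $\det y$ separately — so collapsing these to one orbit genuinely requires the full spin group.
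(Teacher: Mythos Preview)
Your argument is correct and takes a genuinely different route from the paper. The paper proceeds uniformly in $c$: using the $\SL_4 \subset H_6'=\Spin_6$ acting on $U_+\oplus U_-$ as $V_4\oplus V_4^*$, one moves the $U_+$-component of any $v$ into $\Span\{e_1,e_2,e_3\}$ and simultaneously the $U_-$-component into $\Span\{e_1^*,e_2^*,e_3^*\}$, so that $v$ lands in $V_7=\Theta^{\tr=0}$; the well-known $G_2$-transitivity on nonzero vectors of fixed norm in $V_7$ then handles the isotropic and anisotropic cases at once. Your cohomology argument for $c=0$ (stabilizer $\SL_3\ltimes U$, hence trivial $H^1$) is clean, and your reduction of $c\neq 0$ to $c=0$ is valid in outline --- but the key claim, that the unipotent radical of the parabolic fixing $k\epsilon_1$ acts transitively on isotropic vectors with prescribed nonzero pairing against $\epsilon_1$, is asserted without proof; it does follow from the four-step weight filtration $k\epsilon_1\subset F_{1/2}\subset F_{-1/2}\subset\Theta$ of the spin representation under this parabolic, but you should say so. Incidentally, your cohomology method works directly for $c\neq 0$ as well: the stabilizer is $G_2$, and $H^1(k,G_2)\to H^1(k,\Spin_7)$ has trivial kernel since the composite to $H^1(k,\SO_7)$ is injective; this would spare you the unipotent-radical step. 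In your ``by hand'' alternative, the existence of an element of $\ul{H}_7'$ literally interchanging $U_+$ and $U_-$ is not justified and may be false; what you actually need is merely some element not preserving the decomposition, and $G_2\not\subset H_6'$ furnishes plenty. The paper's approach is shorter precisely because it exploits the subgroups $H_6'$ and $G_2$ already set up for later use, rather than detouring through cohomology or a separate case analysis.
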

\begin{proof} Suppose $v \in \Theta$ as above.  We claim that we can use the $H_6'$ inside $\ul{H}_7'$ to move $v$ to $V_7 = \Theta^{\tr=0}$.  From this, the lemma follows from the corresponding fact for $G_2$, by moving both $v$ and $w$ into $V_7$.  To move $v$ into $V_7$, write $v = v_1 + v_2$, with $v_1 \in U_+$ and $v_2 \in U_-$ in the notation of Lemma \ref{lem:H6}.  Thinking about the action of $\SL_4$ on its defining representation and its dual, it is clear that we can simultaneously move $v_1$ into the three-dimensional subspace $\mm{0}{*}{0}{0}$ of $\Theta$ and $v_2$ into the three-dimensional subspace $\mm{0}{0}{*}{0}$.  This proves the lemma.\end{proof}

Given an isotropic two-dimensional subspace $W$ of $V$, it falls into one of four (broad) classes.  Define $pr_{E}: V\rightarrow E$ the orthogonal projection of $V$ onto $E$.
\begin{enumerate}
	\item $pr_{E}(W) = E$
	\item $pr_{E}(W)$ is one-dimensional and anisotropic
	\item $pr_{E}(W)$ is one-dimensional and isotropic
	\item $pr_{E}(W)$ is $0$.\end{enumerate}

Clearly, the class of such $W$ is invariant under the action of $\ul{H}$.  Moreover, let us record now that there will be two subclasses in the case $pr_{E}(W) = 0$: those for which $W \subseteq \Theta$ is null, and those for which $W \subseteq \Theta$ is non-null.

\begin{lem}\label{lem:Spin72} There are two $\ul{H}_7'$ orbits on isotropic two-dimensional subspaces of $\Theta$, consisting of the orbit of null isotropic spaces and of non-null spaces.\end{lem}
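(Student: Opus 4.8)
The plan is to show that the invariant distinguishing null from non-null isotropic two-spaces of $\Theta$ is a complete invariant for the action of $\ul{H}_7'$. Recall that by Lemma \ref{lem:GSpinProd}, for $g \in \ul{H}_7'$ with projection $(g_1,g_2,g_3)$ to $\Spin(\Theta)$ and all $x,y \in \Theta$ one has $g_1(x)g_1(y)^* = (\sigma g_3 \sigma)(xy^*)$; hence $\kappa'(gW)=(\sigma g_3 \sigma)\kappa'(W)$, so nullity is preserved, and there are at least two orbits. It remains to show that $\ul{H}_7'$ acts transitively within each class.

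First I would dispose of the non-null case. Let $W$ be isotropic, two-dimensional, and non-null, so $\kappa'(W)$ is a line $k\ell$ inside $V_7 = \Theta^{\tr=0}$, with $\ell = vv^*$-type elements; since $W$ is isotropic, $n_\Theta$ vanishes on $W$, and a short computation shows $\ell$ is an isotropic vector of $V_7$ (equivalently, $\ell^2=0$). Using $G_2 \subseteq \ul{H}_7'$, which acts transitively on nonzero isotropic vectors of $V_7$ (the standard fact about the $7$-dimensional representation), I may move $\ell$ to a fixed isotropic vector $\ell_0$. Now the subspaces $W$ with $\kappa'(W)=k\ell_0$ form a single orbit under the stabilizer of $k\ell_0$ in $\ul{H}_7'$: concretely, I would take $\ell_0$ to be a highest root vector and identify the $W$'s with null-isotropic lines in an associated $\SL$- or $\Sp$-representation, where transitivity is elementary. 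Alternatively, and more in the spirit of the paper, one can realize such a $W$ explicitly inside the $\GL_2\times\GL_2 \subseteq \ul{H}_7$ of Lemma \ref{lem:gl2gl2} or the $\GSpin(6)=H_6$ of Lemma \ref{lem:H6}, and use the transitivity of $\SL(U_+)$ on appropriate flags from Lemma \ref{lem:H6} to finish.

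For the null case, let $W \subseteq \Theta$ be isotropic, two-dimensional, with $\kappa'(W)=0$, i.e. $xy^*=0$ for all $x,y\in W$; in particular $xx^*=n_\Theta(x)=0$ for $x\in W$, so every nonzero $x\in W$ is isotropic in $\Theta$. By Lemma \ref{lem:Spin7Move}, $\ul{H}_7'$ acts transitively on nonzero isotropic vectors of $\Theta$, so I may assume $W$ contains a fixed isotropic vector $x_0$; take $x_0$ to be $\epsilon_1$ (or an entry of the Zorn model for which the null condition is transparent). Then the null condition $x_0 y^* = 0$ for $y \in W$, together with $yy^*=0$, pins down $W$ to lie in an explicit subspace, and the stabilizer of $x_0$ in $\ul{H}_7'$ — which contains a large reductive group (roughly $\GSpin(6)$-type acting on $\Theta/kx_0$) — acts transitively on the set of such completions $W$. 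This reduces to a linear-algebra statement about a classical group acting on isotropic lines in an orthogonal space, which is standard.

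The main obstacle will be the null case: verifying transitivity of the stabilizer of an isotropic vector $x_0$ on the set of null isotropic two-spaces through $x_0$ requires identifying that stabilizer concretely inside $\ul{H}_7'$ and understanding its action on the relevant quotient of $\Theta$. I expect this to be handled cleanly by working in the Zorn or Cayley--Dickson model with $x_0 = \epsilon_1$, where $\{y : \epsilon_1 y^* = 0\}$ and $\{y : n_\Theta(y)=0\}$ are both explicit, and by invoking the subgroups $G_2$, $H_6 \simeq \GSpin(6)$, and $\GL_2\times\GL_2$ already constructed; the transitivity then follows from the classical fact that $\SO$ of an orthogonal space acts transitively on isotropic lines (and on isotropic planes containing a given isotropic line) of a fixed dimension. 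The non-null case is comparatively routine once $G_2$-transitivity on isotropic vectors of $V_7$ is invoked.
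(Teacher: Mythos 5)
Your overall strategy is genuinely different from the paper's, and it is plausible, but as written it leaves the two stabilizer-transitivity claims — precisely the heart of the lemma — unverified, and you acknowledge this yourself. The paper instead does something slicker that sidesteps both of your gaps: it shows that \emph{any} isotropic two-plane $W \subseteq \Theta$ can be moved by $\ul{H}_7'$ into the trace-zero subspace $V_7$ (first use Lemma \ref{lem:Spin7Move} to put one basis vector of $W$ at $\epsilon_1$, use the orthogonality $(x,y)=0$ and a change of basis to put the second vector in $\mm{0}{*}{*}{0}\subseteq V_7$, and then act by $H_6'$ — which via Lemma \ref{lem:H6} surjects onto $\SL(U_+)$ and so does \emph{not} fix $\epsilon_1$ under $t_1$ — to slide all of $W$ into $V_7$), and then simply quotes the established two-orbit classification for $G_2$ acting on isotropic two-planes of $V_7$ (Lemma 2.5 of \cite{AWZ18}). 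Exhibiting representatives $\Span\{\epsilon_1,e_3\}$ and $\Span\{e_1,e_3^*\}$ handles "at least two orbits" instantly, rather than invoking Lemma \ref{lem:GSpinProd}.

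Your approach — moving the invariant line $\kappa'(W)$ in the non-null case, or a basepoint of $W$ in the null case, and then arguing transitivity of the appropriate stabilizer — would also work, and your check that $\ell=xy^*$ is isotropic (via $(xy^*)(xy^*)^* = n(x)n(y) = 0$) is correct. But each of these transitivity claims requires computing the relevant stabilizer and its action, which is roughly as much work as the lemma itself; the null case in particular needs care because the stabilizer of an isotropic vector in $\Theta$ under $\ul{H}_7'$ is not quite the familiar orthogonal-group picture your last paragraph appeals to. If you want to keep your route, you should carry these steps out explicitly (the computations in Lemmas \ref{lem:Stab1}, \ref{lem:nonnullStab} later in the section are the kind of thing you'd need); otherwise, the paper's reduction to the $G_2$-statement is the cleaner path.
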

\begin{proof} To see that there are at least two orbits, note that $\Span\{ \epsilon_1, e_3 \}$ is not null, whereas the span $\Span\{e_1, e_3^* \}$ is null.
	
To see that there are exactly two orbits, one argues as follows.  First, suppose $W$ is two-dimensional isotropic.  We claim that there is $g \in \ul{H}_7'$ so that $g W \subseteq V_7$.  From this claim, the lemma follows from Lemma 2.5 of \cite{AWZ18}, by using the action of $G_2 \subseteq H_7'$.
	
To see that there is $g \in \ul{H}_7'$ with $g W \subseteq V_7$, write $x, y$ for a basis of $W$.  Applying Lemma \ref{lem:Spin7Move}, we can assume $x = \epsilon_1$.  Because $(x,y) = 0$, we get that $y \in \mm{*}{*}{*}{0}$, and thus may assume $y \in \mm{0}{*}{*}{0} \subseteq V_7$.  Acting by $H_6'$, it is then clear that we can move all of $W$ into $V_7$, as claimed.  This proves the lemma. \end{proof}

\begin{prop}\label{prop:orbits} Each of the four classes of isotropic two-spaces $W$ above makes up finitely many $\ul{H}(k)$-orbits, which are characterized as follows:
	\begin{enumerate}
		\item $pr_{E}(W) = E$;
		\item $pr_{E}(W)$ is one-dimensional anisotropic;
		\item $pr_{E}(W) = (k,0)$ or $(0,k)$ and $pr_{\Theta}(W)$ is one-dimensional isotropic;
		\item $pr_{E}(W) = (k,0)$ or $(0,k)$ and $pr_{\Theta}(W)$ is two-dimensional isotropic and null;
		\item $pr_{E}(W) = (k,0)$ or $(0,k)$ and $pr_{\Theta}(W)$ is two-dimensional isotropic and non-null;
		\item $pr_{E}(W) = 0$ and $W \subseteq \Theta$ is non-null;
		\item $pr_{E}(W) = 0$ and $W \subseteq \Theta$ is null.
\end{enumerate}\end{prop}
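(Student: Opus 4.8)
The plan is to prove Proposition \ref{prop:orbits} in three stages: (i) the seven conditions are $\ul H(k)$-invariant, (ii) they are exhaustive, and (iii) within each condition there is a single $\ul H(k)$-orbit, except that the two choices $\pr_E(W)=(k,0)$ and $\pr_E(W)=(0,k)$ always give genuinely distinct orbits (since $\ul H=\ul H_7\boxtimes\Res_{E/k}(\GL_1)$ preserves each factor of $E=k\times k$ and no element of $\ul H$ swaps them). For (i), since $\ul H$ preserves the decomposition $V=\Theta\oplus E$, the projections $\pr_\Theta(W)$ and $\pr_E(W)$ transform $\ul H$-equivariantly, so the dimension of $\pr_E(W)$ and, when it is one-dimensional, whether it is isotropic or anisotropic in $E$, are $\ul H(k)$-invariants; and by Lemma \ref{lem:GSpinProd}, nullness of a two-dimensional isotropic $\pr_\Theta(W)\subseteq\Theta$ is an $\ul H_7$-invariant. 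For (ii) one splits on $\dim\pr_E(W)\in\{0,1,2\}$: when it is $2$, $\pr_E|_W$ is an isomorphism and $W\cap\Theta=0$ (case (1)); when it is $1$, $W\cap\Theta$ is a line spanned by a necessarily null vector of $\Theta$, and one checks that if the line $\pr_E(W)$ is anisotropic then $\pr_\Theta|_W$ is injective (case (2)), while if it is isotropic then $\pr_\Theta(W)$ is one-dimensional isotropic or two-dimensional isotropic (null or non-null), giving cases (3)--(5); when it is $0$, $W\subseteq\Theta$ is a two-dimensional isotropic subspace, null or non-null, giving cases (6)--(7).

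For (iii) I would reduce each case to the $\ul H_7(k)$-action on $\Theta$, the $\Res_{E/k}(\GL_1)(k)=k^\times\times k^\times$ action on $E$, and the similitude $\nu\colon \ul H_7\to\GL_1$, which is surjective on $k$-points: indeed by Lemma \ref{lem:gl2gl2} the subgroup $\GL_2\times\GL_2\subseteq\ul H_7$ acts with $\nu((g,h))=\det(g)\det(h)$. The building blocks are Lemma \ref{lem:Spin7Move} ($\ul H_7'(k)$ is transitive on nonzero vectors of $\Theta$ of each fixed norm, including $0$), Lemma \ref{lem:Spin72} ($\ul H_7(k)$ has exactly the null and non-null orbits on isotropic two-planes of $\Theta$), and, for relative positions, the Heisenberg parabolic $P_W\subseteq\ul H_7$ of a null two-plane with Levi $\GL_2\times\GL_2$ from Lemma \ref{lem:levi} together with the $\GSpin(6)=H_6$ of Lemma \ref{lem:H6}. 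Concretely: cases (6)/(7) are immediate from Lemma \ref{lem:Spin72}; in cases (3)--(5) one writes $W=\langle x_0,(y,\mu)\rangle$ with $x_0,y\in\Theta$ null, $B_\Theta(x_0,y)=0$ and $\mu$ spanning the chosen isotropic line, moves $\pr_\Theta(W)=\langle x_0,y\rangle$ to a normal form by Lemmas \ref{lem:Spin7Move}--\ref{lem:Spin72}, uses the Levi $\GL_2\times\GL_2$ of $P_{\langle x_0,y\rangle}$ to normalize the line $W\cap\Theta$ inside it, and uses $k^\times\times k^\times$ (with $\nu$ absorbing the scalar) to normalize $\mu$; in case (2) one writes $W=\langle x_0,(y,\mu)\rangle$ with $x_0$ null, $y$ non-null, $n_\Theta(y)=n_E(\mu)$, $B_\Theta(x_0,y)=0$, moves $(x_0,y)$ to a standard null/non-null pair, and uses $k^\times\times k^\times$ together with surjectivity of $\nu$ to send $\mu$ to a standard anisotropic vector of $E$; in case (1), the isotropy of $W=\{(\lambda_1 u+\lambda_2 v,\lambda_1,\lambda_2)\}$ forces $n_\Theta(u)=n_\Theta(v)=0$ and $B_\Theta(u,v)=1$, so $W$ is the datum of an ordered hyperbolic pair $(u,v)$ in $\Theta$, which one normalizes by moving $u$ to a fixed null vector via Lemma \ref{lem:Spin7Move} and then moving $v$ by the stabilizer of $u$.

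I expect the main obstacle to be case (1): unlike Lemma \ref{lem:Spin7Move}, which handles a single vector, one needs transitivity of $\ul H_7'(k)$ on ordered hyperbolic pairs in $\Theta$, equivalently transitivity of the stabilizer of a null vector on the complementary null vectors with $B_\Theta=1$. I would establish this by the same techniques as Lemmas \ref{lem:Spin7Move}--\ref{lem:Spin72}, descending through the parabolic structure of that stabilizer — identifying the Levi action on the relevant six-dimensional orthogonal quotient with the classical $\SO_6$-action (via $H_6$ and $\GL_2\times\GL_2$) and checking the unipotent radical acts as expected — rather than invoking an off-the-shelf result. The bookkeeping in cases (4) and (5), where one must simultaneously track the line $W\cap\Theta$ inside $\pr_\Theta(W)$ and the $E$-component, is delicate but routine. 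Once Proposition \ref{prop:orbits} is proved, the finiteness of $\ul P(k)\backslash\ul G(k)/\ul H(k)$ follows at once since $\ul P(k)\backslash\ul G(k)$ is the set of isotropic two-planes of $V$, and one then computes the stabilizers $\ul H_W$ explicitly for the chosen representatives $W$ in order to deduce Proposition \ref{prop:goodPairs}.
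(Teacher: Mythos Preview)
Your proposal is essentially correct and uses the same toolbox as the paper (Lemmas \ref{lem:Spin7Move}, \ref{lem:Spin72}, \ref{lem:levi}, \ref{lem:H6}), but in cases (1) and (2) the paper takes a shorter route that bypasses the ``main obstacle'' you flagged.

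In case (1), rather than parametrizing $W$ by an ordered hyperbolic pair $(u,v)$ in $\Theta$ and then proving transitivity of $\ul H_7'$ on such pairs, the paper observes that $W'=\pr_\Theta(W)=\Span\{u,v\}$ is a \emph{non-degenerate} two-plane, hence contains vectors of norm $1$. Lemma \ref{lem:Spin7Move} then moves such a vector to $1_\Theta$, so $1_\Theta\in W'$; the orthogonal complement of $1_\Theta$ inside $W'$ is spanned by some $u_0\in V_7$ whose norm is determined by the (fixed) discriminant of $W'$, and $G_2\subseteq\ul H_7'$ acts transitively on such $u_0$. This normalizes $W'$ directly, with no need for a ``transitivity on hyperbolic pairs'' lemma. (One still uses the $E^\times$-factor and the stabilizer of $W'$ to pin down the identification $E\simeq W'$, but that is the routine part.)

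Case (2) is handled by the same trick, in a different order than you propose: first normalize the anisotropic line $\pr_E(W)$ to $k\cdot 1_E$ using $E^\times$ and surjectivity of $\nu$; this forces $n_\Theta(y)=n_E(1_E)=1$, so Lemma \ref{lem:Spin7Move} moves $y$ to $1_\Theta$; the remaining null vector $x_0$ is then perpendicular to $1_\Theta$, hence lies in $V_7$, and $G_2$ is transitive on isotropic lines there. Your ordering (normalize $x_0$ first, then the pair $(x_0,y)$, then $\mu$) runs into the awkwardness that $n_\Theta(y)=n_E(\mu)$ varies until $\mu$ is fixed, so you would have to treat a family of ``standard null/non-null pairs'' and then collapse it via $E^\times$; the paper's order avoids this entirely. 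For cases (3)--(7) your sketch matches the paper's argument.
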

\begin{proof} Suppose first that $pr_{E}(W) = E$.  Define $W'= pr_{\Theta}(W)$; this is a non-degenerate two-space.  By Lemma \ref{lem:Spin7Move}, we can use $\ul{H}_7'\subseteq \ul{H}$ to move one element of $W'$ to $1$, so we assume without loss of generality that $1 \in W'$.  Let $u \in W'$ span the perpendicular space $1$; thus, $u \in V_7$.  Because $W'$ combines with $pr_{E}(W) = E$ to make an isotropic two space, $n_{\Theta}(u) \neq 0$ is determined by $E$.  Because $G_2$ acts transitively on such elements $u$, we see that there is one $H$-orbit in this case, as claimed.
	
Next suppose that $pr_{E}(W)$ is one-dimensional and anisotropic.  We may assume that $pr_{E}(W) = k 1$, and then that $W$ contains $(1, 1) \in \Theta \oplus E$.  Let $y \in W \cap \Theta$, so that $y$ is isotropic and perpendicular to $1$, i.e., $y \in V_7$.  Then, because $G_2$ acts transitively on the isotropic lines in $V_7$, we see that there is one orbit of such spaces.
	
If $pr_{E}(W) = 0$, then $W \subseteq \Theta$ is two-dimensional isotropic, and thus we have handled these cases by Lemma \ref{lem:Spin72}.  This completes the possible cases when $E$ is anisotropic, i.e., when $E$ is a field.
	
Thus now assume that $pr_{E}(W)$ is one-dimensional isotropic.  Then $pr_{\Theta}(W)$ is isotropic, and is either one or two-dimensional.  Suppose that $pr_{\Theta}(W)$ is one-dimensional isotropic.  By Lemma \ref{lem:Spin7Move} above, there is one $\ul{H}_7'$-orbit of such lines, thus one orbit in this case.  If $pr_{\Theta}(W)$ is two-dimensional isotropic, then by Lemma \ref{lem:Spin72}, we have two $\ul{H}_7'$ orbits.  This completes the proof of the proposition. \end{proof}

We next compute the stabilizers of the above two-spaces in $\ul{H}$.  These stabilizer computations enable us to apply the results of \cite[Section 5]{AWZ18} to check the convergence of the integrals $I_i(\phi,s)$ and $J_i(\phi,s)$.  In order to prove the \emph{vanishing} of the integrals $I_i(\phi,s)$ and $J_i(\phi,s)$ for $i >1$, we will need to make a different stabilizer computation, which we do in the next subsection.  See Remark \ref{advantage}.

To state the result on the various stabilizers, we require the following notation regarding parabolic subgroups.  Denote by $Z_{\ul{G}} \simeq \GL_1$ the one-dimensional center of $\ul{G}$.  For a nonzero isotropic element $y \in V_7$, denote by $P_{G_2}(y)= M_{G_2}(y)N_{G_2}(y)$ the parabolic subgroup of $G_2$ stabilizing the line $k y$ and by $P_7(y)$ the maximal (Siegel) parabolic subgroup of $\ul{H}_7$ stabilizing $k y$.  For a null isotropic two-dimensional subspace $W$ of $\Theta$, denote by $P_{7,W}$ the maximal (Heisenberg) parabolic subgroup of $\ul{H}_7$ stabilizing $W$.

We also require a notation for a certain non-maximal parabolic subgroup of $\ul{H}_7$.  For this, denote by $P_{7,G}(e_3^*)$ the non-maximal parabolic subgroup of $\ul{H}_7$ that stabilizes the filtration $k e_3^* \subseteq \Span\{e_3^*, e_1, e_2\}$, so that $P_{7,G}(e_3^*) \supseteq P_{G_2}(e_3^*)$.  Let $N_{7,G}(e_3^*)$ be the unipotent radical of $P_{7,G}(e_3^*)$; it is 3-step, $N_{7,G}(e_3^*) \supseteq N_{7,G}(e_3^*)' \supseteq N_{7,G}(e_3^*)''$, with $\dim_k N_{7,G}(e_3^*)/N_{7,G}(e_3^*)' = 4$ and the other two successive quotients of dimension two.

\begin{lem}\label{lem:Stab1} Except in the case where $pr_{\Theta}(W)$ is two-dimensional isotropic and non-null, the groups $\ul{H}_W$ are as follows:
	\begin{enumerate}
		\item Suppose $pr_{E}(W)=E$. Then the map $\ul{H} \rightarrow \ul{H}_7$ induces an isomorphism of $\ul{H}_W$ with a Levi subgroup of the Siegel parabolic of $\ul{H}_7$.  In particular, $\ul{H}_W \simeq \GL_3 \times \GL_1$.
		\item Suppose $pr_{E}(W)$ is one-dimensional anisotropic.  For concreteness, suppose that $W$ is spanned by $(1,1) \in \Theta \oplus E$ and the isotropic element $e_3^* \in V_7 \subseteq \Theta$.  The group $\ul{H}_W$ is $Z_{\ul{G}} \times (M_{G_2}(e_3^*) \ltimes N)$ where $N$ is the six-dimensional subgroup $N_{G_2}(e_3^*)N_{7,G}(e_3^*)'$ of $N_{7,G}(e_3^*)$.
		\item Suppose that both $pr_{E}(W)$ and $pr_{\Theta}(W) = ky$ are one-dimensional isotropic.  Then $\ul{H}_W \simeq P_{7}(y) \times \GL_1$ is the inverse image of $P_7(y)\subset \ul{H}_7$ under the map $\ul{H} \rightarrow \ul{H}_7$.
		\item Suppose that $pr_{E}(W)$ is one-dimensional isotropic, $Y=pr_{\Theta}(W)$ is two-dimensional isotropic and null, and $y \in Y$ is nonzero.  Denote by $Q_{7,y,Y} \subseteq P_{7,Y}$ a non-maximal parabolic subgroup of $\ul{H}_7$ that stabilizes a flag $ky \subseteq Y$ with $Y$ two-dimensional isotropic and null.  Then the map $\ul{H}_W \rightarrow \ul{H}_7$ induces an isomorphism of $\ul{H}_W$ with a parabolic subgroup of $\ul{H}_7$ of the form $Q_{7,y,Y}$.
		\item Suppose that $pr_{E}(W) = 0$, and $pr_{\Theta}(W)$ is two-dimensional isotropic and null.  Then $\ul{H}_W$ is the inverse image of $P_{7,W}\subset \ul{H}_7$ under the map $\ul{H} \rightarrow \ul{H}_7$.
	\end{enumerate}
\end{lem}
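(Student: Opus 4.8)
The plan is to treat the five cases one at a time. In each case we take the explicit orbit representative $W$ produced in the proof of Proposition~\ref{prop:orbits} and write down directly which triples $(h,\lambda_1,\lambda_2)\in\ul H_7\times\GL_1\times\GL_1$ with $\nu(h)=\lambda_1\lambda_2$ stabilize $W$, keeping in mind that the $\ul H_7$-factor acts on $\Theta$ through $t_1$, that $\mathrm{Res}_{E/k}(\GL_1)=\GL_1\times\GL_1$ acts coordinatewise on $E=k\times k$, and that $\ul H\to\ul H_7$ is surjective with one-dimensional kernel $\{(1,t,t^{-1})\}$. The uniform mechanism is: $W\cap\Theta$ is $\ul H$-stable, so every $h\in\ul H_W$ fixes it; the induced action on $W/(W\cap\Theta)\simeq pr_E(W)$ then constrains $(\lambda_1,\lambda_2)$; and the remaining ``mixed'' condition, that $h$ carry a chosen basis of $W$ back into $W$, becomes a membership condition for $h(1_\Theta)$ (or for $h$ applied to $pr_\Theta(W)$) modulo $W\cap\Theta$. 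The similitude relation $\nu(h)=\lambda_1\lambda_2$ is then automatically consistent --- it only records that $h$ is a similitude of the non-degenerate part of the relevant plane, cf.\ Lemma~\ref{lem:GSpinProd} --- and closes the last of the bookkeeping.

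For case (1) we choose $W=\Span\{(v_+,(1,0)),(v_-,(0,1))\}$ with $v_+,v_-$ spanning a hyperbolic plane of $\Theta$; stabilizing $W$ then forces $hv_+=\lambda_1v_+$ and $hv_-=\lambda_2v_-$, i.e.\ $h$ fixes a pair of isotropic lines of $\Theta$ forming a hyperbolic plane --- equivalently, fixed by a pair of opposite Siegel parabolics of $\ul H_7$ --- and acts on each by a scalar. The common stabilizer is exactly the Levi $\GL_3\times\GL_1$ of the Siegel parabolic, and since the kernel $\{(1,t,t^{-1})\}$ meets $\ul H_W$ trivially, $\ul H\to\ul H_7$ restricts to the asserted isomorphism. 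Cases (3), (4), (5) run the same way, but now $pr_E(W)$ is isotropic (cases 3, 4) or zero (case 5), so the $E$-side imposes at most a flag condition on $h$: in case (3), $W=ky\oplus k(1,0)$ with $y\in V_7$ isotropic, and $\ul H_W$ is the full $\ul H\to\ul H_7$-preimage of $\Stab_{\ul H_7}(ky)=P_7(y)$, which splits as $P_7(y)\times\GL_1$; in case (5), $W\subset\Theta$ is null-isotropic two-dimensional and $\ul H_W$ is the full preimage of $\Stab_{\ul H_7}(W)=P_{7,W}$, the Heisenberg parabolic furnished by Lemma~\ref{lem:levi}; and in case (4), $W\cap\Theta=ky$ is a line while $pr_\Theta|_W$ is an isomorphism onto the null plane $Y=pr_\Theta(W)$, so the condition becomes that $h$ stabilize the flag $ky\subset Y$ while $(\lambda_1,\lambda_2)$ is now \emph{determined} by $h$ (the scalar on $Y/ky$ and the similitude), yielding the isomorphism of $\ul H_W$ with the non-maximal parabolic $Q_{7,y,Y}\subseteq P_{7,Y}$.

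The real work, and the main obstacle, is case (2), where $pr_E(W)$ is anisotropic. Taking $W=\Span\{(1_\Theta,1_E),e_3^*\}$ with $1_E=(1,1)$ and $e_3^*\in V_7$ isotropic, the $E$-side forces $\lambda_1=\lambda_2=:\lambda$, $\nu(h)=\lambda^2$, and the remaining conditions on $h$ are $h(ke_3^*)=ke_3^*$ together with $h(1_\Theta)\in\lambda\,1_\Theta+k\,e_3^*$. Since $e_3^*$ spans the radical of the degenerate plane $Y_0:=k\,1_\Theta+k\,e_3^*$, these two conditions are equivalent to the single condition $h(Y_0)=Y_0$; as $\lambda$ is then recovered as the scalar by which $h$ acts on $Y_0/\mathrm{rad}(Y_0)$ and $\nu(h)=\lambda^2$ is automatic, one obtains $\ul H_W\simeq\Stab_{\ul H_7}(Y_0)$, with the centre $Z_{\ul G}$ (scalars on $V$) sitting inside as the centre of $\ul H_7$ acting by scalars on $\Theta$. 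To convert this into the stated description one uses $G_2=\Stab_{\ul H_7}(1_\Theta)$ together with the parabolic $P_{7,G}(e_3^*)$ stabilizing the flag $ke_3^*\subset\Span\{e_3^*,e_1,e_2\}$: the part of $\Stab_{\ul H_7}(Y_0)$ that fixes $1_\Theta$ and acts trivially on the graded pieces of this flag is $Z_{\ul G}\times M_{G_2}(e_3^*)$, and one must then identify precisely which unipotent elements move $1_\Theta$ only within $1_\Theta+k\,e_3^*$. This last step is the only genuinely intricate computation: an explicit manipulation in the octonions should show that the surviving unipotent group is the six-dimensional $N=N_{G_2}(e_3^*)N_{7,G}(e_3^*)'$ --- exactly the part of $N_{7,G}(e_3^*)$ lying below its top two-dimensional layer --- while the quotient $N_{7,G}(e_3^*)/N_{7,G}(e_3^*)'$ does not preserve $Y_0$. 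Assembling these pieces gives $\ul H_W=Z_{\ul G}\times(M_{G_2}(e_3^*)\ltimes N)$. Apart from this coupled analysis of the two conditions in case (2) and the attendant bookkeeping of the unipotent filtration, every case reduces to standard facts about the minuscule spin representation $\Theta$ and the parabolic subgroups of $\ul H_7\simeq\GSpin_7$ once the representatives of Proposition~\ref{prop:orbits} are in hand.
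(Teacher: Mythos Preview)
Your approach matches the paper's: the same orbit representatives from Proposition~\ref{prop:orbits}, the same direct preimage arguments for (1), (3)--(5), and in case (2) the same identification of $\ul H_W$ with the stabilizer in $\ul H_7$ of the degenerate plane $Y_0=\Span\{1_\Theta,e_3^*\}$. The paper's execution of case (2) is slightly more concrete than your sketch: rather than arguing via the filtration of $N_{7,G}(e_3^*)$, it exhibits the one-parameter unipotent subgroup $\exp(x\,\epsilon_1\wedge e_3^*)$, which fixes $e_3^*$ and moves $1_\Theta$ to $1_\Theta+(\text{multiple of }e_3^*)$, and uses it to reduce the stabilizer to $P_{G_2}(e_3^*)$ times this extra line; since these elements span the one-dimensional $N_{7,G}(e_3^*)'/N_{G_2}(e_3^*)'$, the unipotent part is exactly $N_{G_2}(e_3^*)N_{7,G}(e_3^*)'$. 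One small slip in your write-up: the top graded piece $N_{7,G}(e_3^*)/N_{7,G}(e_3^*)'$ is four-dimensional, not two-dimensional, so $N$ is not ``below the top two-dimensional layer'' but rather has codimension two inside the eight-dimensional $N_{7,G}(e_3^*)$, the complement sitting inside that four-dimensional top layer.
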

\begin{proof} For the first item, we may assume $W = \Span\{\epsilon_1 + (1,0),\epsilon_2 + (0,1)\}$.  Thus any element of $\ul{H}_W$ must stabilize both the line $k \epsilon_1$ and $k\epsilon_2$.  The stabilizers of these two lines in $\ul{H}_7$ is the Levi subgroup of a Siegel parabolic stabilizing, say, $k \epsilon_1$.  This proves that the image of $\ul{H}_W$ in $\ul{H}_7$ lands in the Levi subgroup $\GL_3 \times \GL_1$.  That this map induces an isomorphism $\ul{H}_W \rightarrow \GL_3 \times \GL_1$ is now clear once accounting for the action of $E^\times$ on $(1,0)$ and $(0,1)$.
	
For the second item, accounting for the action of $Z_{\ul{G}}$, we must compute the subgroup of $P_{7}(e_3^*)$ that also stabilizes $\Span\{1,e_3^*\}$.  By acting by the unipotent elements $\exp(x \epsilon_1 \wedge e_3^*) \in \ul{H}_7'$ for $x \in k$, we can reduce the stabilizer to $P_{G_2}(e_3^*)$.  Denote by $N_{G_2}(e_3^*)'$ the three-dimensional commutator subgroup of $N_{G_2}(e_3^*)$.  Because the elements $\exp(x \epsilon_1 \wedge e_3^*)$ span the one-dimensional space $N_{7,G}(e_3^*)'/N_{G_2}(e_3^*)'$, this completes the computation of the stabilizer in this case.

The third, fourth and fifth items are handled immediately. \end{proof}

We now must compute the stabilizers in the case that $pr_{\Theta}(W)$ is two-dimensional isotropic and non-null.  The work is done in the following lemma, which is easily proved once stated.
\begin{lem}\label{lem:nonnullStab} Suppose that $W=\Span\{x,y\}$ is a two-dimensional isotropic but non-null subspace of $\Theta$.  Set $b = x y^*$ and $U(b)= \{z \in \Theta: bz = 0\}$.
\begin{enumerate}
	\item The space $U(b)$ is a four-dimensional isotropic subspace of $\Theta$, that comes equipped with the symplectic form $\langle z_1, z_2 \rangle$ defined by $z_1 z_2^* = \langle z_1, z_2 \rangle b$.
	\item Denote by $Q_{U(b)}$ the subgroup of $\ul{H}_7$ that stabilizes $U(b)$.  Then $Q_{U(b)} = L_{U(b)}V_{U(b)}$ is a parabolic subgroup of $\ul{H}_7$ with Levi subgroup $L_{U(b)}= \GL_1 \times \mathrm{GSpin}(5) = \GL_1 \times \mathrm{GSp}_4$.  The unipotent radical $V_{U(b)}$ is abelian of dimension $5$ and the map $Q_{U(b)} \rightarrow \mathrm{GSp}_4$ is induced by the action of $Q_{U(b)}$ on $U(b)$.
	\item The stabilizer of $W$ inside $\ul{H}_7$ is $L' V_{U(b)}$ with $L' = \GL_1 \times (\GL_2 \times \GL_2)^0 \subseteq \GL_1 \times \mathrm{GSp}_4$.  Here $(\GL_2 \times \GL_2)^0$ is the subgroup of pairs $(g_1,g_2) \in \GL_2 \times \GL_2$ with $\det(g_1)=\det(g_2)$.	
\end{enumerate}
\end{lem}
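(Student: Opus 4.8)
The plan is to reduce to a standard configuration and then verify all three assertions by explicit octonion algebra. By Lemma \ref{lem:Spin72} the group $\ul{H}_7'$ acts transitively on the non-null isotropic two-dimensional subspaces of $\Theta$, so I would first assume $W$ is one fixed such subspace, say $W = \Span\{\epsilon_1,e_3\}$ in the Zorn model of \cite[section 1.1.1]{AWZ18}; then $b = xy^*$ is, up to a nonzero scalar, the isotropic trace-zero element $e_3 \in V_7 = \Theta^{\tr=0}$, and all three parts become concrete computations. (Along the way one records $W\subseteq U(b)$ and $\kappa'(W)=kb$, which hold because $x^*x=n_\Theta(x)=0$ and $xy^*+yx^*=\langle x,y\rangle=0$ on the totally isotropic plane $W$.)

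For part (1): since every $z\in\Theta$ satisfies $z^2-\tr(z)z+n_\Theta(z)=0$, one has $b^2=0$, and by left-alternativity the operator $L_b:z\mapsto bz$ satisfies $L_b\circ L_b=L_{b^2}=0$; as $L_b$ has rank $4$ (a direct computation in the Zorn model), $U(b)=\ker L_b=\mathrm{im}\,L_b=b\Theta$ is four-dimensional, and it is totally isotropic because $n_\Theta(bw)=n_\Theta(b)n_\Theta(w)=0$. To see that $(z_1,z_2)\mapsto z_1z_2^*$ lands in the line $kb$ I would use the middle Moufang identity $(bw_1)(w_2^*b)=b(w_1w_2^*)b$ together with $b\Theta b=kb$ (both immediate in the model); since $zz^*=n_\Theta(z)=0$ on $U(b)$, the resulting pairing $\langle\cdot,\cdot\rangle$ is alternating, and one checks nondegeneracy by inspection.

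For parts (2) and (3): using the triality relation $g_1(x)g_1(y)^*=\nu(g)(\sigma g_3\sigma)(xy^*)$ of Lemma \ref{lem:GSpinProd}, I would show that an element $g=(g_1,g_2,g_3)\in\ul{H}_7$ stabilizes $U(b)=b\Theta$ if and only if $g_3$ stabilizes the line $kb$ in the seven-dimensional orthogonal representation $V_7$; hence $Q_{U(b)}$ is the stabilizer in $\GSpin(7)$ of an isotropic line in $V_7$, that is, the maximal (Siegel) parabolic $P_7(b)$, whose Levi is $\GL_1\times\GSpin(5)=\GL_1\times\GSp_4$ and whose unipotent radical $V_{U(b)}$ is abelian of dimension $5$, the action on the bottom piece $U(b)$ of the filtration of $\Theta$ being the standard representation of $\Sp_4=\Spin(5)$; this is part (2). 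For part (3), since $V_{U(b)}$ acts trivially on $U(b)$, one has $\Stab_{\ul{H}_7}(W)=\Stab_{\GL_1\times\GSp_4}(W)\ltimes V_{U(b)}$ for any $W\subseteq U(b)$; the non-null hypothesis says exactly that the symplectic form on $U(b)$ is nonzero on $W$, so $W$ is nondegenerate and $U(b)=W\perp W^\perp$ with both summands two-dimensional and nondegenerate, whence the stabilizer of $W$ in $\GSp_4=\GSp(U(b))$ is $(\GL_2\times\GL_2)^0$, the pairs of equal similitude (equivalently equal determinant). This gives $L'=\GL_1\times(\GL_2\times\GL_2)^0$ and $\Stab_{\ul{H}_7}(W)=L'V_{U(b)}$, proving (3).

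The main obstacle is the triality bookkeeping in part (2): pinning down, via Lemma \ref{lem:GSpinProd}, that stabilizing the four-dimensional subspace $U(b)$ of the spin representation is equivalent to stabilizing the corresponding isotropic line in $V_7$, identifying the resulting maximal parabolic of $\GSpin(7)$ precisely (Levi $\GL_1\times\GSp_4$, abelian radical of dimension $5$), and confirming that this radical acts trivially on $U(b)$. Everything else is routine octonion algebra, most cleanly carried out in the Zorn model after the initial reduction.
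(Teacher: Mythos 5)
Your argument is correct, and since the paper gives no proof of this lemma (it only remarks that it ``is easily proved once stated''), your write-up in fact supplies the missing argument rather than reproducing one. The route you take --- reducing to $W=\Span\{\epsilon_1,e_3\}$ via Lemma \ref{lem:Spin72}, deducing $b^2=0$ from the rank equation, using left-alternativity and the middle Moufang identity to get $U(b)=b\Theta$ and the $kb$-valued pairing, and then invoking the triality identity of Lemma \ref{lem:GSpinProd} to identify $Q_{U(b)}$ with the Siegel parabolic $P_7(b)$ --- is the natural direct verification the authors plainly had in mind.

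One small step you should make explicit in part (3): before writing $\Stab_{\ul{H}_7}(W)=\Stab_{L_{U(b)}}(W)\ltimes V_{U(b)}$ you need the inclusion $\Stab_{\ul{H}_7}(W)\subseteq Q_{U(b)}$, i.e.\ that any $g$ stabilizing $W$ already stabilizes $U(b)$. This follows from the $\kappa'$-equivariance you recorded at the start (if $g$ stabilizes $W$ then $g_3$ stabilizes $\kappa'(W)=kb$, and by your part-(2) analysis that forces $g_1(U(b))=U(b)$), but as written the proposal jumps to the semidirect-product decomposition without flagging this. Also, your grading argument for why $V_{U(b)}$ acts trivially on $U(b)$ (the nilradical shifts the $\GL_1$-grading on $\Theta$ by $+1$, so stabilizing the graded piece $U(b)$ forces it to be killed) is the cleanest justification; it is worth spelling out rather than appealing only to ``the bottom piece of the filtration,'' since $\ul{H}_7$ is not a Levi of $\GSO(8)$ and the containment of nilradicals is not entirely automatic.
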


Applying Lemma \ref{lem:nonnullStab}, one obtains the following for the stabilizers $\ul{H}_W$ in case $pr_{\Theta}(W)$ is two-dimensional isotropic and non-null.
\begin{lem}\label{lem:Stab2} Let $L'$ and $V=V_{U(b)}$ be as in Lemma \ref{lem:nonnullStab}.
\begin{enumerate}
	\item Suppose that $pr_{E}(W) = 0$ and $pr_{\Theta}(W)$ is two-dimensional isotropic and non-null.  Then $\ul{H}_W$ is the inverse image  of $L' \ltimes V\subset \ul{H}_7$ under the map $\ul{H} \rightarrow \ul{H}_7$.
	\item Suppose that $pr_{E}(W)$ is one-dimensional isotropic and $pr_{\Theta}(W)$ is two-dimensional isotropic and non-null.  Denote by $B$ a Borel subgroup of $\GL_2$ and set $L'' := \GL_1 \times (B \times \GL_2)^0 \subseteq L'$. Then the map $\ul{H}_W \rightarrow \ul{H}_7$ induces an isomorphism $\ul{H}_W \simeq L'' \ltimes V$.
\end{enumerate}
\end{lem}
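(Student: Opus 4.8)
The plan is to deduce both assertions directly from the stabilizer computation in Lemma~\ref{lem:nonnullStab}, by unwinding what it means for an element $(h,\lambda_1,\lambda_2)\in\ul{H}=\ul{H}_7\boxtimes\mathrm{Res}_{E/k}(\GL_1)$ to preserve $W$, keeping in mind that $\ul{H}\to\ul{H}_7$ is surjective with kernel the central $\GL_1=\{(1,\lambda,\lambda^{-1})\}$.

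For item (1) this is immediate: since $pr_E(W)=0$ the subspace $W$ lies in $\Theta$, so $(h,\lambda_1,\lambda_2)$ preserves $W$ exactly when $h$ stabilizes $W\subseteq\Theta$, the pair $(\lambda_1,\lambda_2)$ being free subject only to $\nu(h)=\lambda_1\lambda_2$. As $pr_\Theta(W)=W$ is two-dimensional isotropic and non-null, Lemma~\ref{lem:nonnullStab}(3) identifies the stabilizer of $W$ in $\ul{H}_7$ with $L'\ltimes V$, whence $\ul{H}_W$ is its full preimage under $\ul{H}\to\ul{H}_7$.

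For item (2) I would first normalize, using Proposition~\ref{prop:orbits}, so that $pr_E(W)=(k,0)$; then $W\cap\Theta$ is a line $ky$ with $y$ isotropic, and $W=ky\oplus k(x+(1,0))$ for some isotropic $x\in\Theta$ with $Y:=pr_\Theta(W)=\Span\{x,y\}$ two-dimensional isotropic and non-null. A short computation with the action of $\ul{H}$ on $V=\Theta\oplus E$ shows that $(h,\lambda_1,\lambda_2)$ preserves $W$ precisely when $h$ stabilizes the line $ky$, $h$ stabilizes the plane $Y$, and the scalar by which $h$ acts on $Y/ky$ equals $\lambda_1$; conversely any such $h$ extends to a triple in $\ul{H}$ by taking $\lambda_1$ to be that scalar and $\lambda_2=\nu(h)/\lambda_1$. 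Hence $\ul{H}_W\to\ul{H}_7$ is injective with image the stabilizer in $\ul{H}_7$ of the flag $ky\subseteq Y$. To finish, I would invoke Lemma~\ref{lem:nonnullStab}: the stabilizer of $Y$ in $\ul{H}_7$ is $L'\ltimes V$ with $V=V_{U(b)}$ acting trivially on $U(b)\supseteq Y$, and with $b=xy^*$ the symplectic form on $U(b)$ restricts to $Y$ as the standard non-degenerate form in the basis $(x,y)$, so $Y$ is a symplectic plane and $U(b)=Y\oplus Y^{\perp}$. Thus the $\GSp(U(b))$-part of $L'$ acts on $Y$ through the projection $(\GL_2\times\GL_2)^0\to\GSp(Y)=\GL_2$, and imposing stabilization of the line $ky$ replaces that $\GL_2$ by the Borel $B$; since $V$ fixes the whole flag, the flag stabilizer is $L''\ltimes V$ with $L''=\GL_1\times(B\times\GL_2)^0$.

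The main obstacle is this last identification: verifying that $Y$ sits inside $U(b)$ as a non-degenerate symplectic plane — so that the relevant $\GL_2$-factor of $(\GL_2\times\GL_2)^0$ is precisely the one acting on $Y$, and fixing the line $ky$ becomes ``upper-triangularity'' — together with checking that $\ul{H}_W\to\ul{H}_7$ is an isomorphism onto $L''\ltimes V$ and not merely surjective, which is exactly what the matching relations $\lambda_1=(\text{scalar on }Y/ky)$ and $\nu(h)=\lambda_1\lambda_2$ guarantee.
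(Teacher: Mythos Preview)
Your argument is correct and is precisely the unpacking of the paper's one-line proof ``This follows immediately from Lemma~\ref{lem:nonnullStab}.'' In particular, the concern you flag as the ``main obstacle'' is not an obstacle at all: by the very definition of the symplectic form in Lemma~\ref{lem:nonnullStab}(1), $\langle x,y\rangle\, b = xy^* = b$, so $\langle x,y\rangle = 1$ and $Y$ is automatically a nondegenerate symplectic plane in $U(b)$; the rest of your flag-stabilizer computation and the injectivity check for $\ul{H}_W\to\ul{H}_7$ go through as written.
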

\begin{proof} This follows immediately from Lemma \ref{lem:nonnullStab}.\end{proof}

\begin{prop}\label{prop:goodPairs}
For every isotropic two-space $W$ of $V$, the pair $(\ul{H},\ul{H}_W)$ is a good pair in the sense of \cite[Section 5]{AWZ18}.
\end{prop}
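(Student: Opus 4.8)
The plan is to reduce the statement to a finite combinatorial verification built directly on the explicit stabilizer computations of Lemmas \ref{lem:Stab1} and \ref{lem:Stab2}. Being a good pair in the sense of \cite[Section 5.3]{AWZ18} is an intrinsic property, hence invariant under replacing $\ul{H}_W$ by a conjugate $h\ul{H}_W h^{-1}=\ul{H}_{h\cdot W}$, $h\in\ul{H}(k)$; by Proposition \ref{prop:orbits} there are only seven $\ul{H}(k)$-orbits of isotropic two-spaces $W$ in $V$, so it suffices to check one representative in each orbit. For each representative I would read off the structure of $\ul{H}_W$ — or of its image under the surjection $\ul{H}\to\ul{H}_7$, whose kernel is the central one-dimensional norm-one torus of $E/k$ — from the relevant item of Lemma \ref{lem:Stab1} or \ref{lem:Stab2}, write it in the form $M_W\ltimes N_W$ with $N_W$ its unipotent radical and $M_W$ a reductive complement, and confront this with the defining conditions of a good pair: existence of a parabolic $Q=L_QN_Q$ of $\ul{H}$ (or of $\ul{H}_7$) with $M_W\subseteq L_Q$ and $N_W\subseteq N_Q$ compatibly with the Levi decompositions, together with the numerical inequality controlling the complementary directions $N_Q/N_W$.

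For the three orbit types in which $\ul{H}_W$ is, up to the harmless torus-kernel of $\ul{H}\to\ul{H}_7$, a genuine parabolic subgroup of $\ul{H}_7$ — namely Lemma \ref{lem:Stab1}(3) ($\ul{H}_W$ the preimage of the Siegel parabolic $P_7(y)$), Lemma \ref{lem:Stab1}(4) ($\ul{H}_W$ isomorphic to a non-maximal parabolic $Q_{7,y,Y}$), and Lemma \ref{lem:Stab1}(5) ($\ul{H}_W$ the preimage of the Heisenberg parabolic $P_{7,W}$) — the good pair property is immediate: one takes $Q=\ul{H}_W$ itself, so that $N_Q=N_W$ and there are no complementary directions, and the Iwasawa decomposition reduces the relevant integrals to a compact $[N_Q]$-integral against a function on a finite-volume arithmetic quotient of the Levi. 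The extension of $\ul{H}_W$ along the central torus, and the identification of $\ul{H}_W$ with a Levi of $\ul{H}_7$ appearing in Lemma \ref{lem:Stab1}(1), do not disturb this, since the extra reductive directions contribute the correct modulus character. In particular the $\gamma_1=1$ orbit (of type $pr_E(W)=0$, $W$ null), which is the one governed by Condition (3) of the pair $(\ul{G},\ul{H})$ and produces \eqref{eqn:method2Fin}, is covered here.

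For the remaining cases $\ul{H}_W$ is of the shape $M_W\ltimes N_W$ with $N_W$ the \emph{full} unipotent radical of a parabolic $Q$ of $\ul{H}_7$ but $M_W$ a proper reductive subgroup of $L_Q$ — Lemma \ref{lem:Stab1}(1) (Levi case, $N_W$ trivial), Lemma \ref{lem:Stab2}(1) (where $\ul{H}_W$ is the preimage of $L'\ltimes V_{U(b)}$ with $L'=\GL_1\times(\GL_2\times\GL_2)^0$ inside $L_{U(b)}=\GL_1\times\GSp_4$), and Lemma \ref{lem:Stab2}(2) (the same with $L'$ further shrunk to $L''=\GL_1\times(B\times\GL_2)^0$) — or with $N_W$ a \emph{proper} subgroup of a unipotent radical, which is Lemma \ref{lem:Stab1}(2), where $\ul{H}_W=Z_{\ul{G}}\times(M_{G_2}(e_3^*)\ltimes N)$ with $N=N_{G_2}(e_3^*)N_{7,G}(e_3^*)'$ of codimension two in the unipotent radical $N_{7,G}(e_3^*)$ of $Q=P_{7,G}(e_3^*)$. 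In the first group I would verify the good pair inequality using that the weights of $L_Q$ on the graded pieces of $N_Q$ are $\rho$-dominant and that this persists upon restriction to $M_W$ (for Lemma \ref{lem:Stab2}(2) one also uses that $(\GL_2,B)$ is a good pair and that good pairs are stable under direct products and under adjoining a unipotent radical). In the case of Lemma \ref{lem:Stab1}(2) one must check that $M_{G_2}(e_3^*)\simeq\GL_2$ acts on the two-dimensional quotient $N_{7,G}(e_3^*)/N$ through characters pairing strictly positively with the coweights entering the good pair inequality. This last point — a weight computation inside the root system of $\ul{H}_7\simeq\GSpin_7$ restricted through the inclusion $G_2\subseteq\ul{H}_7'$ — is the only step I expect to require genuine work; but because $G_2$ and the three-step unipotent radical $N_{7,G}(e_3^*)$ are small and completely explicit, it is a bounded finite check, and once it is done Proposition \ref{prop:goodPairs} follows.
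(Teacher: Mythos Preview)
Your overall plan---reduce to the seven orbit types of Proposition \ref{prop:orbits} and verify each via the explicit stabilizers of Lemmas \ref{lem:Stab1} and \ref{lem:Stab2}---matches the paper. The parabolic and near-parabolic cases (Lemma \ref{lem:Stab1}(1),(3),(4),(5)) are indeed immediate, and the paper dispatches them uniformly via \cite[Corollary 5.9]{AWZ18}: one finds a parabolic $P_{\ul{H}}=M_{\ul{H}}N_{\ul{H}}$ of $\ul{H}$ with $\ul{H}_W=(\ul{H}_W\cap M_{\ul{H}})\ltimes(\ul{H}_W\cap N_{\ul{H}})$ and $M_{\ul{H}}=(\ul{H}_W\cap M_{\ul{H}})\times M'$ for a torus $M'$. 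Note that this criterion does \emph{not} require the unipotent part of $\ul{H}_W$ to equal $N_{\ul{H}}$; in particular the paper treats your ``hard'' case, Lemma \ref{lem:Stab1}(2), by the same Corollary 5.9 with $M'\simeq\GL_1\times\GL_1$, and no separate weight computation on $N_{7,G}(e_3^*)/N$ is needed. Likewise Lemma \ref{lem:Stab2}(2) falls under Corollary 5.9.

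The genuine gap in your proposal is the treatment of Lemma \ref{lem:Stab2}(1), the case $pr_E(W)=0$ with $W\subseteq\Theta$ non-null. Here $\ul{H}_W$ is the preimage of $L'\ltimes V_{U(b)}$ with $L'=\GL_1\times(\GL_2\times\GL_2)^0\subset L_{U(b)}=\GL_1\times\GSp_4$, and the unipotent part is the \emph{full} $V_{U(b)}$, so your proposed check on ``weights of $L_Q$ on $N_Q$'' addresses the wrong direction: there is nothing left to verify on the unipotent side. The obstruction is entirely in the Levi, where $(\GL_2\times\GL_2)^0$ is a spherical but non-Levi subgroup of $\GSp_4$, so it is neither a direct factor of any Levi nor covered by the standard criteria in \cite[Proposition 5.8, Corollary 5.9]{AWZ18}. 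The paper singles this case out, reduces via \cite[Proposition 5.8(4)]{AWZ18} to showing that $(\GSp_4\ltimes U,(\GL_2\times\GL_2)^0\ltimes U')$ is a good pair (Lemma \ref{good pair}), and proves that lemma in the Appendix by an explicit norm argument: one writes $N=N_H N'$ with $N'$ a transversal variety (not a subgroup), proves $\|hn'\|\sim\|h\|\cdot\|n'\|$ for $h\in H_0(\BA_{\bar k})$, $n'\in N'(\BA)$, and then argues as in \cite[Section 5.4]{AWZ18}. Your sketch does not indicate any mechanism for handling this Levi-direction difficulty.
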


\begin{proof}
We first consider the cases when $pr_{\Theta}(W)$ is not two-dimensional isotropic and non-null. By Lemma \ref{lem:Stab1}, we can find a parabolic subgroup $P_{\ul{H}}=M_{\ul{H}}N_{\ul{H}}$ of $\ul{H}$ and a closed subgroup $M'$ of $M_{\ul{H}}$ ($M'=1$ for case (3) and (5) in Lemma \ref{lem:Stab1}; $M'\simeq \GL_1$ for case (1) and (4) in Lemma \ref{lem:Stab1}; $M'\simeq \GL_1\times \GL_1$ for case (2) in Lemma \ref{lem:Stab1}) such that the following two conditions hold.
\begin{enumerate}
\item $\ul{H}_W=(\ul{H}_W\cap M_{\ul{H}})\ltimes(\ul{H}_W\cap N_{\ul{H}})$.
\item $M_{\ul{H}}=(\ul{H}_W\cap M_{\ul{H}})\times M'$.
\end{enumerate}
Then we know that $(\ul{H},\ul{H}_W)$ is a good pair by Corollary 5.9 of \cite{AWZ18}.

Now we consider the cases when $pr_{\Theta}(W)$ is two-dimensional isotropic and non-null. If $pr_{E}(W)$ is one-dimensional isotropic, by Lemma \ref{lem:Stab2}, we can still find a parabolic subgroup $P_{\ul{H}}=M_{\ul{H}}N_{\ul{H}}$ of $\ul{H}$ and a closed subgroup $M'$ of $M_{\ul{H}}$ such that condition (1) and (2) above hold. In fact, $P_{\ul{H}}$ is the parabolic subgroup whose Levi part is isomorphic to $\mathrm{GSpin_3}\times \GL_1\times \GL_1\times \GL_1=\mathrm{GSp_2}\times \GL_1\times \GL_1\times \GL_1$, and $M'\simeq \GL_1$. Then we know that $(\ul{H},\ul{H}_W)$ is a good pair by Corollary 5.9 of \cite{AWZ18}.

Hence the only case left is when $pr_{E}(W) = 0$ and $pr_{\Theta}(W)$ is two-dimensional isotropic and non-null. By Proposition 5.8(4) of \cite{AWZ18} and Lemma \ref{lem:Stab2}(1) above, in order to show that $(\ul{H},\ul{H}_W)$ is a good pair, it is enough to prove the following lemma.
\end{proof}

\begin{lem}\label{good pair}
$(\GSp_4\ltimes U,(\GL_2 \times \GL_2)^0\ltimes U')$ is a good pair. Here $U$ is some unipotent group and $U'\subset U$ is a closed subgroup.
\end{lem}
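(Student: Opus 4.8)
The plan is to verify the definition of good pair from \cite[Section~5.3]{AWZ18} by separating the reductive and unipotent parts of the groups involved. First note why the shortcut used everywhere else in the proof of Proposition~\ref{prop:goodPairs} is unavailable here: the criterion of Corollary~5.9 of \cite{AWZ18} requires the reductive part of the stabilizer to be a direct factor of a Levi subgroup of the ambient group, whereas $(\GL_2\times\GL_2)^0$ has the same split rank as $\GSp_4$ (both equal to $3$) and therefore is not contained in any proper Levi of $\GSp_4$. So a separate argument is genuinely needed. The first step I would take is to invoke the reduction steps of \cite[Proposition~5.8]{AWZ18} to peel off the unipotent radicals: since $U'$ is a closed subgroup of $U$ that is normalized by $(\GL_2\times\GL_2)^0$, the good-pair property for $(\GSp_4\ltimes U,\,(\GL_2\times\GL_2)^0\ltimes U')$ is implied by the conjunction of (i) the good-pair property for the reductive pair $(\GSp_4,\,(\GL_2\times\GL_2)^0)$, and (ii) a smallness/transversality condition comparing $U$ and $U'$ under the action of $(\GL_2\times\GL_2)^0$ (condition (ii) being vacuous in the case $U'=U$).

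For step (i): the pair $(\GSp_4,\,(\GL_2\times\GL_2)^0)$ is a reductive symmetric pair. Realizing $\GSp_4$ on a symplectic space written as an orthogonal sum of two symplectic planes $V_2\oplus V_2'$, the subgroup $(\GL_2\times\GL_2)^0$ is the block-diagonal subgroup, which is exactly the fixed-point group of conjugation by $\mathrm{diag}(1,1,-1,-1)$; equivalently, under $\Sp_4\simeq\Spin_5$ it corresponds to $\mathrm{Spin}_4\subset\mathrm{Spin}_5$, so the quotient is a similitude form of the symmetric space $\mathrm{SO}_4\backslash\mathrm{SO}_5$ and in particular is affine. I would then verify the defining cone estimates of \cite[Section~5]{AWZ18} for this reductive pair directly: choose a minimal parabolic $P_0$ of $\GSp_4$ in good position with $(\GL_2\times\GL_2)^0$, i.e. so that $P_0\,(\GL_2\times\GL_2)^0$ is Zariski dense in $\GSp_4$, and use the explicit root datum of $\GSp_4$ together with the description of the split torus of $(\GL_2\times\GL_2)^0$ to check the required inequalities on $\Fa_0$. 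This is the point at which the symmetric (spherical) structure is used, and it is what makes these estimates tractable without a Levi-direct-factor decomposition.

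For step (ii): using Lemma~\ref{lem:nonnullStab}, $U=V_{U(b)}$ is abelian of dimension $5$ and, as a module for the Levi $\GSp_4$ of $Q_{U(b)}$, is (a twist of) the standard $5$-dimensional representation of $\mathrm{SO}_5$; restricted to $(\GL_2\times\GL_2)^0\to\mathrm{SO}_5$ it decomposes as the $4$-dimensional orthogonal representation $V_2\boxtimes V_2'$ plus a line. From the description of $\ul{H}_W$ obtained from Lemma~\ref{lem:Stab2}(1) one reads off $U'$ explicitly and checks that $U/U'$ carries an open $(\GL_2\times\GL_2)^0$-orbit with reductive point-stabilizer, which is precisely the smallness condition ensuring convergence of the unipotent fibre integral. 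The bookkeeping of which $\GL_1$ similitude factors have been discarded in passing from the full stabilizers $\ul{H}$, $\ul{H}_W$ to the pair in the statement is routine but must be tracked carefully so that condition (ii) is applied to the correct unipotent groups. I expect the combination of step (i) and the explicit identification of $U'$ inside $U$ in step (ii) to be the main obstacle: unlike all the other cases, here the argument cannot be reduced to Corollary~5.9 of \cite{AWZ18}, so the convergence must be proved by hand, in the spirit of the estimates carried out for the Ginzburg--Rallis model in \cite[Section~5]{AWZ18}.
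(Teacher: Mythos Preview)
Your proposal has the right diagnosis but not the right cure. You are correct that Corollary~5.9 of \cite{AWZ18} is unavailable because $(\GL_2\times\GL_2)^0$ and $\GSp_4$ share a maximal split torus, and you are correct that the argument must ultimately be a hands-on norm estimate in the spirit of \cite[Section~5.4]{AWZ18}. However, the two-step reduction you propose is not how the paper (or \cite{AWZ18}) proceeds, and both of your steps have gaps. First, the reduction ``peel off the unipotent radicals via Proposition~5.8, then prove the reductive pair is good plus a transversality condition on $U/U'$'' is not a statement established in \cite{AWZ18}; in particular your condition (ii) (``open orbit with reductive stabilizer'') is not one of the hypotheses appearing there, and the lemma is stated and proved in the paper for \emph{arbitrary} unipotent $U'\subset U$, so no special structure of $U$ is used. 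Second, even accepting your reduction, your step (i) contains no actual argument: ``verify the defining cone estimates directly using the root datum'' is precisely the hard part, and invoking the symmetric-space structure does not by itself produce the needed norm inequalities.

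The paper's proof is organized quite differently and does \emph{not} separate the reductive and unipotent pieces. Working inside $G_0=\GSp_4$, one takes the upper-triangular Borel $B=TN$ and observes that $B_H:=B\cap H_0=TN_H$ is a Borel of $H_0$ with the \emph{same} maximal torus $T$. One then writes down an explicit $2$-dimensional subvariety $N'\subset N$ (not a subgroup) such that multiplication $N_H\times N'\to N$ is an isomorphism of varieties. The single technical input replacing \cite[Lemma~5.11]{AWZ18} is the factorization
\[
\|hn'\|_G \;\sim\; \|h\|_G\cdot\|n'\|_G\qquad (h\in H_0(\BA_{\bar k}),\ n'\in N'(\BA)),
\]
which follows from the parabolic norm estimates for $B$ in $G_0$ and $B_H$ in $H_0$. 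With this in hand, any $g\in G(\BA)$ is written via Iwasawa as $g=u\,h_0\,n'$ with $u\in U(\BA)$, $h_0\in H_0(\BA)$, $n'\in N'(\BA)$, and the good-pair inequality
\[
\inf_{\gamma\in H_0(\bar k),\,\nu\in U'(\bar k)}\|\nu\gamma\,u h_0 n'\|_G\ \gg\ \inf_{\gamma\in H_0(k),\,\nu\in U'(k)}\|\nu\gamma\,u h_0 n'\|_G
\]
then follows verbatim from the argument of \cite[Proposition~5.12]{AWZ18}, with the factorization lemma above substituted for Lemma~5.11 there. In short: the key missing idea in your proposal is the explicit complement $N'$ to $N_H$ inside $N$ and the resulting norm factorization; this is what makes the Ginzburg--Rallis template go through, and it bypasses entirely the need for a separate ``reductive pair plus unipotent condition'' reduction.
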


\begin{proof}
The proof is very similar to the argument in Section 5.4 of our previous paper \cite{AWZ18}, we will write it in the Appendix.
\end{proof}

\subsection{Vanishing and reduction of period}\label{subsec:van} In this subsection we prove the following result, which immediately implies the vanishing of the integrals $I_i(\phi,s)$ and $J_i(\phi,s)$ for $i > 1$.

\begin{prop}\label{prop:VanW} Suppose that $W$ is an isotropic two-dimensional subspace of $V$, but exclude the case that $W \subseteq \Theta$ is isotropic and null.  Suppose that $\beta: \ul{P}(k)N(\BA)\backslash \ul{G}(\BA) \rightarrow \C$ is a measurable function, with $m \mapsto \beta(mg)$ a cuspidal function on $\ul{M}(\BA)$ for almost every $g \in \ul{G}(\BA)$, and that the integral
\[
	\mathcal{P}_W(\beta) = \int_{Z_{\ul{G}}(\BA)\ul{H}_W(k)\backslash \ul{H}(\BA)}{\beta(h)\,dh}
\]
converges absolutely.  Then $\mathcal{P}_W(\beta) = 0$.\end{prop}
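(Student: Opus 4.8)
The strategy is the standard unfolding-plus-cuspidality argument used throughout the residue method (see e.g. the analogous vanishing statements in \cite{AWZ18} and \cite{JR92}). The point is that for each of the excluded-or-not orbits $W$, the stabilizer $\ul{H}_W$ computed in Lemma \ref{lem:Stab1} and Lemma \ref{lem:Stab2} contains the unipotent radical of a proper parabolic subgroup of $\ul{M}$ after projecting through $\ul{H} \cap \ul{P}_W \to \ul{M}$; integrating $\beta$ first over that unipotent radical kills it by cuspidality. So the first step is: for a representative $W$ of each of the orbit types (1)--(6) of Proposition \ref{prop:orbits} other than type (7) (the null case inside $\Theta$), unfold the integral $\CP_W(\beta)$ along the inclusion $\ul{H}_W \subseteq \ul{H} \cap \ul{P}_W$ and push the resulting inner integration over $N_{\ul{H}_W} := \ul{H}_W \cap \ul{N}_W$ (the unipotent radical of $\ul{P}_W$ intersected with $\ul{H}_W$) through the projection $\ul{P}_W \to \ul{M}_W$. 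One has to check that the image of $N_{\ul{H}_W}$ in $\ul{M}_W \simeq \ul{M}$ contains the unipotent radical of some proper parabolic subgroup of $\ul{M}$, or at least a nontrivial unipotent subgroup along which a cuspidal function has vanishing constant term.

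\textbf{Key steps, in order.} First I would fix, for each orbit representative $W$, a Levi decomposition $\ul{P}_W = \ul{M}_W \ul{N}_W$ with $\ul{M}_W$ in good position relative to $\ul{M}$ (using that $\ul{P}_W$ is $\ul{G}(k)$-conjugate to $\ul{P}$). Second, using the explicit descriptions in Lemma \ref{lem:Stab1} (and Lemma \ref{lem:Stab2} for the non-null two-dimensional $pr_\Theta(W)$ cases), I would identify the image $L_W$ of $\ul{H}_W$ in $\ul{M}_W \simeq \GL_2 \times \GL_4 / (\text{center})$ and the image $V_W$ of $\ul{H}_W \cap \ul{N}_W$; the content of the lemmas is precisely that in every case except the null one, either $L_W$ is contained in a proper parabolic of $\ul{M}$ with nontrivial unipotent radical mapping from $V_W$, or $V_W$ already surjects onto the unipotent radical of such a parabolic. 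Third, I would write
\[
\CP_W(\beta) = \int_{Z_{\ul{G}}(\BA)L_W(k)\backslash \ul{H}(\BA)/(\ul{H}_W\cap\ul{N}_W)(\BA)} \left( \int_{[(\ul{H}_W\cap\ul{N}_W)]} \beta(n h)\,dn \right) dh,
\]
and observe that the inner integral, after moving $n$ into $\ul{M}_W$ via the Iwasawa-type decomposition of $\ul{P}_W$, becomes (a translate of) the integral of the cuspidal function $m \mapsto \beta(m g)$ over the $k$-points mod adelic points of a nontrivial unipotent radical inside $\ul{M}_W \simeq \ul{M}$; hence it vanishes. Fourth, absolute convergence of $\CP_W(\beta)$ — which is hypothesized — lets me interchange the order of integration to justify the above. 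Finally, I would remark that the two subclasses with $pr_E(W) = 0$ and $W \subseteq \Theta$ non-null are handled by the same mechanism (using Lemma \ref{lem:Stab2}(1)), and the single genuinely different case $W \subseteq \Theta$ null is exactly the one excluded, which will later produce the main term $I_1, J_1$.

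\textbf{Main obstacle.} The delicate point is the bookkeeping of step two: one must verify case by case, from the stabilizer descriptions, that the image of $\ul{H}_W \cap \ul{N}_W$ (or of $\ul{H}_W$ itself) in the Levi $\ul{M}_W \simeq \GL_4 \times \GL_2$ really does contain a nonzero unipotent radical of a \emph{proper} parabolic subgroup of $\ul{M}$ — equivalently that $\ul{H}_W$ never maps \emph{onto} a parabolic whose Levi quotient is all of $\ul{M}$ (which happens precisely in the null case). For the cases where $\ul{H}_W$ is an inverse image of a Siegel or Heisenberg parabolic $P_7(y)$, $P_{7,W}$, $Q_{U(b)}$ of $\ul{H}_7 \simeq \GSpin_7$ (items (3)--(5) of Lemma \ref{lem:Stab1} and Lemma \ref{lem:Stab2}), one has to trace through the embedding $\GSpin_7 \hookrightarrow \GSO(V)$ and the isomorphism $\PGL_4 \simeq \PGSO_6$ to see which parabolic of $\GL_4 \times \GL_2$ is hit; this is where the argument of \cite[Section 5]{AWZ18} is adapted, and it is the only place requiring real computation rather than formal manipulation. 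Everything else — the unfolding, the interchange of integration justified by the assumed absolute convergence, and the appeal to cuspidality — is routine.
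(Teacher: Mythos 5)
Your overall strategy (case-by-case reduction to cuspidality via the stabilizer computations of Lemmas \ref{lem:Stab1} and \ref{lem:Stab2}) is the right one and matches the paper's proof in spirit. However, your step two — the dichotomy that in every non-excluded orbit ``either $L_W$ is contained in a proper parabolic of $\ul{M}$ with nontrivial unipotent radical mapping from $V_W$, or $V_W$ already surjects onto the unipotent radical of such a parabolic'' — is false, and the failure occurs in the most delicate orbits.

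Concretely, consider the orbit with $pr_{E}(W) = E$ (type (1) in Proposition \ref{prop:orbits}). By Lemma \ref{lem:Stab1}(1), $\ul{H}_W \simeq \GL_3 \times \GL_1$ is (the inverse image of) the \emph{Levi} of a Siegel parabolic of $\ul{H}_7$, so $\ul{H}_W \cap \ul{N}_W$ is trivial and $V_W = 1$. The image $L_W$ of $\ul{H}_W$ in the Levi quotient $\ul{M}_W$ projects to the $\SL_3 \subseteq \SO(V_6)$ that is the semisimple part of a Levi of a Siegel parabolic of $\SO(V_6)$. There is no nontrivial unipotent radical ``mapping from $V_W$'' because $V_W$ is trivial, and $L_W$ is a reductive group, not a parabolic. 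So your inner-integration step over $[\ul{H}_W \cap \ul{N}_W]$ produces nothing — you are genuinely stuck with a period of a cusp form on $\SO(V_6)$ over a Levi subgroup $\SL_3$, not over the unipotent radical of a parabolic. The paper handles this by a separate lemma: one first Fourier-expands the cusp form $\alpha$ along the abelian unipotent radical of the Siegel parabolic of $\SO(V_6)$, then uses the $\SL_3$-action to unfold the resulting sum, and only afterwards invokes cuspidality (along the unipotent radical of a \emph{different} maximal parabolic of $\SO(V_6)$, stabilizing an isotropic line). This Fourier-expansion-then-unfold mechanism is an additional idea, not a special case of ``integrate over a unipotent radical inside $\ul{H}_W$.'' The same issue arises for orbit type (3) (both $pr_E(W)$ and $pr_\Theta(W)$ one-dimensional isotropic), where the paper again reduces to the $\SL_3$-period lemma. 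Your proposal as written would not produce a proof of vanishing for these two orbit types, which is a genuine gap; the remaining orbit types are handled essentially as you describe (in types (2), (4) the paper exhibits an explicit unipotent subgroup projecting to a full unipotent radical, and in types (5), (6) an explicit $\mm{1}{*}{}{1}$-type unipotent acting on the $\GL_2$-factor of $\ul{M}$).
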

\begin{proof}
We consider the vanishing of the various orbits one-by-one.  For an isotropic two-dimensional subspace $W$ of $V=\Theta \oplus E$, we make and recall the following notations:
\begin{itemize}
	\item $\ul{P}_W = \ul{M}_W \ul{N}_W \subseteq \GSO(V)$ the Heisenberg parabolic that is the stabilizer of $W$, with unipotent radical $\ul{N}_W$ and Levi subgroup $\ul{M}_W$.
	\item $\ul{H}_W \subseteq \ul{H}$ the stabilizer of $W$ inside $\ul{H}$, i.e. $\ul{H}_W = \ul{H} \cap \ul{P}_W$.
	\item $H_W'$ the image of $\ul{H}_W$ inside the reductive quotient of $\ul{P}_W$.
\end{itemize}
First consider the case that $pr_{E}(W) = E$.  Then we may assume that $W = \Span\{ \epsilon_1 + (1,0), \epsilon_2 + (0,1) \}.$  Set $W' = \{ \epsilon_1 - (1,0), \epsilon_2 - (0,1) \}$ and denote by $V_6$ the subspace of $\Theta$ perpendicular to $k \epsilon_1 \oplus k \epsilon_2$.  Then $V= W' \oplus V_6 \oplus W$.  The semisimple part of $H_W'$ in this case is $\SL_3$, acting on $V_6$ as the direct sum of the standard representation and its dual.  The vanishing of this orbit thus follows from the following lemma.
\begin{lem} Suppose $\alpha$ is a cusp form on $\SO(V_6)$.  Embed $\SL_3 \subseteq \SO(V_6)$ as the semisimple part of the Levi of a Siegel parabolic.  Then the period $\int_{[\SL_3]}{\alpha(h)\,dh} = 0$. \end{lem}
\begin{proof} One first Fourier expands $\alpha$ along the abelian unipotent radical of the Siegel parabolic.  By standard unfolding arguments, one quickly sees that the integral vanishes by the cuspidality of $\alpha$ along the unipotent radical of the maximal parabolic of $\SO(V_6)$ that stabilizes an isotropic line. \end{proof}

Next suppose that $pr_E(W)$ is one-dimensional and anisotropic.  Then we may assume that $W = \Span\{ 1_{\Theta} + 1_{E}, y\}$ with $y \in V_7$ isotropic.  Let $y' \in V_7$ be isotropic with $(y,y')=1$, and denote by $V_5(y)$ the perpendicular space to $\Span\{ y, y'\}$ inside $V_7$.  Set $W' = \Span\{ 1_{\Theta}-1_E, y' \}$ and $V_6^W = V_5(y) \oplus k(1,-1)$.  Then $V= W' \oplus V_6^W \oplus W$.  Moreover, the stabilizer $\ul{H}_W$ contains the parabolic subgroup $P_{G_2}(y)$.  To check the vanishing of this orbit, we must consider the image $H_W'$ inside $\SO(V_6^W) \times \GL(W)$.

To do this, we first consider the image of $P_{G_2}(y)$ inside $\SO(V_5(y))$. We have the following lemma, which is easily checked.
\begin{lem} Let $V_3(y) \supseteq ky$ be the three-dimensional isotropic subspace of $V_7$ stabilized by $P_{G_2}(y)$, and set $P''$ the parabolic subgroup of $\SO(V_5(y))$ that stabilizes $V_3(y)/ky$.  Then the image of $P_{G_2}(y)$ inside of $\SO(V_5(y))$ is $P''$. \end{lem}

Denote by $P'''$ the derived subgroup of $P''$.  Then the image of $P'''$ in $\SO(V_6^W) \times \GL(W)$ is contained in $\SO(V_6^W)$.  We are thus left to consider the $P'''$ periods of cusp forms on $\SO(V_6)$, which we do in the following lemma.
\begin{lem} The $P'''$ periods of cusp forms on $\SO(V_6)$ vanish.\end{lem}
\begin{proof} Denote by $N'$ the unipotent radical of the parabolic subgroup of $\SO(V_6^W)$ that stabilizes the line $ke_1$ in $V_5(y)$.  Then it is simple to show that $P'''$-period of $\SO(V_6)$-cusp forms vanish by cuspidality along $N'$.\end{proof}

Next we suppose that $pr_{E}(W)$ and $pr_{\Theta}(W)$ are each one-dimensional isotropic.  Then we may assume $W = \Span\{ \epsilon_1, (1,0)\}$.  Define $W' = \Span\{ \epsilon_2, (0,1)\}$ and $V_6 \subseteq \Theta$ the perpendicular space to $\Span\{\epsilon_1,\epsilon_2\}$.  Then $V = W' \oplus V_6 \oplus W$.  It is clear that $H_W'$ includes the $\SL_3$ acting on $V_6$, and thus these orbits vanish just as the first case above.

Now suppose that $pr_{E}(W)$ is one-dimensional isotropic and $pr_{\Theta}(W)$ is two-dimensional isotropic and null.  Then we may assume that $W = \Span\{ e_1 + (1,0), e_3^* \}$.  Then
\[W^\perp = \Span\{ e_1 + (1,0), e_3^*, e_1, \epsilon_1, \epsilon_2, e_2^*, e_2, e_1^*+(0,1)\}.\]
Consider the four elements $e_1 \wedge e_2^*$, $u_0 \wedge e_1 + e_2^* \wedge e_3^*$, $\epsilon_2 \wedge e_1$, $u_0 \wedge e_3^* + e_1 \wedge e_2$ of $K$.  Denote by $X$ the Lie subaglebra of $K$ generated by these elements and by $N'$ the unipotent subgroup of $\ul{H}_7'$ whose Lie algebra is $X$.  We have the following lemma, which implies the vanishing for this orbit.
\begin{lem} The group $N'$ acts as the identity on $W,$ and the image of $N'$ in $\SO(W^\perp/W)$ is the unipotent radical of the parabolic subgroup of $\SO(W^\perp/W)$ that fixes the isotropic line $ke_1$.\end{lem}
\begin{proof} One sees easily that $X$ annihilates $e_1$ and $e_3^*$.  It follows that $N'$ acts as the identity on $W$, and that the induced action on the six-dimensional space $W^\perp/W$ fixes the isotropic vector $e_1$.  Moreover, one computes immediately $X \cdot (e_1^* + (0,1)) = \Span\{\epsilon_1, \epsilon_2, e_2^*, e_2 \}$.  From this it follows that the image of $N'$ in $\SO(W^\perp/W)$ is the entire unipotent radical of the parabolic subgroup stabilizing $e_1$, proving the lemma. \end{proof}

Now assume that $pr_{E}(W)$ is one-dimensional isotropic and $pr_{\Theta}(W)$ is two-dimensional isotropic and non-null.  Then we may assume that $W = \Span\{ \epsilon_1, e_3 + (1,0)\}$.  Then
\[W^\perp = \Span\{e_1, e_2, e_1^*, e_2^*, e_3^*+(0,1), e_3, e_3+ (1,0), \epsilon_1 \}.\]
Consider the element $\epsilon_1 \wedge e_3^*$ of $K$.  This element is nilpotent and annihilates $e_1, e_2, e_1^*, e_2^*$, $e_3^*+(1,0)$, and $\epsilon_1$.  Moreover $(\epsilon_1 \wedge e_3^*)(e_3) = - \epsilon_1$.  It follows that $\exp(x \epsilon_1 \wedge e_3^*)$ acts as the identity on $W^\perp/W$, and acts on $W$ as the unipotent radical of the a Borel subgroup of $\GL(W)$.  Consequently, this orbit vanishes by cuspidality on $\GL_2$.

Finally suppose that $W \subseteq \Theta$ is isotropic and non-null.  Then we may assume $W = \Span\{ \epsilon_1, e_3 \}$, so that $W^\perp = \Span\{ (1,0), (0,1), e_1, e_2, e_1^*, e_2^*, \epsilon_1, e_3 \}$.  Again, consider the elements $\exp(x \epsilon_1 \wedge e_3^*)$ of $\ul{H}_7'$.  It is immediate that they act as the identity on $W^\perp/W$ and act on $W$ as the unipotent radical of a Borel subgroup.  Consequently, this orbit also vanishes by cuspidality along $\GL_2$. \end{proof}

Combining Proposition \ref{prop:VanW}, Proposition \ref{prop:goodPairs}, and Proposition \ref{prop:orbits}, we arrive at the following.  Recall that $H = \ul{H}\cap \ul{M}$, where $\ul{P}=\ul{M}\ul{N}$ is the Heisenberg parabolic subgroup of $\ul{G}$ that stabilizes a two-dimensional isotropic and null subspace of $\Theta \subseteq V$.
\[
\mathcal{P}_{\ul{H}}(\Lambda^T E(\phi,s)) = I_1(\phi,s)+J_1(\phi,s)
\]
where
\begin{align*}
I_1(\phi,s) &= \int_{Z_G(\BA)H(k)\backslash \ul{H}(\BA)}{(1-\widehat{\tau_{\ul{P}}}(H_{\ul{P}}(h)-T)) e^{\langle s \omega_{\ul{P}}, H_{\ul{P}}(h)\rangle} \phi(h)\,dh} \\ &= \frac{e^{(s-s_0)T}}{s-s_0}\int_{K_{\ul{H}}}\int_{Z(\BA)H(k)\back H(\BA)} \phi(hk)dhdk,
\end{align*}
and similarly
\begin{align*}
J_1(\phi,s) &= \int_{Z_G(\BA)H(k)\backslash \ul{H}(\BA)}{\widehat{\tau_{\ul{P}}}(H_{\ul{P}}(h)-T) e^{\langle -s \omega_{\ul{P}}, H_{\ul{P}}(h)\rangle} M(s)\phi(h)\,dh} \\ &= \frac{e^{(-s-s_0)T}}{-s-s_0}\int_{K_{\ul{H}}}\int_{Z(\BA)H(k)\back H(\BA)} M(s)\phi(hk)dhdk.
\end{align*}

Then we compute the constant $s_0$ in Method 2. Like Method 1, we have $s_0=c(1-2c_{\ul{P}}^{\ul{H}})$. In this case, although the unipotent group $\ul{N}$ is not abelian, it is easy to see that $c_{\ul{P}}^{\ul{H}}=\frac{8}{14}$. On the other hand, by Proposition \ref{the constant c}(4), we have $c=\frac{7}{2}$. This implies that $s_0=\frac{1}{2}$. As a result, we have proved that
$$\mathcal{P}_{\ul{H}}(\Lambda^T E(\phi,s)) =\frac{e^{(s-\frac{1}{2})T}}{s-\frac{1}{2}}\int_{K_{\ul{H}}}\int_{Z(\BA)H(k)\back H(\BA)} \phi(hk)dhdk +\frac{e^{(-s-\frac{1}{2})T}}{-s-\frac{1}{2}}\int_{K_{\ul{H}}}\int_{Z(\BA)H(k)\back H(\BA)} M(s)\phi(hk)dhdk.$$
By taking the residue at $s=\frac{1}{2}$, we have
$$\mathcal{P}_{\ul{H}}(\Lambda^T Res_{s=\frac{1}{2}}E(\phi,s)) =\int_{K_{\ul{H}}}\int_{Z(\BA)H(k)\back H(\BA)} \phi(hk)dhdk -e^{-T}\int_{K_{\ul{H}}}\int_{Z(\BA)H(k)\back H(\BA)} Res_{s=\frac{1}{2}}M(s)\phi(hk)dhdk.$$
This proves Theorem \ref{thm:GSO(10)}.

To conclude this subsection, we make explicit the period integral $\mathcal{P}_H$ in terms of the isomorphism $\PGL_4 \simeq \mathrm{PGSO}_6$.  More precisely, the map $H \rightarrow \ul{M} \simeq \GL_2 \times \GSO(6)$ induces
\[
H/Z \rightarrow \PGL_2 \times \mathrm{PGSO}(6) \simeq \PGL_2 \times \PGL_4.
\]
We have already noted that $H/Z \simeq (\GL_2 \times \GL_2)/\Delta(\GL_1)$.  In the following lemma, we make explicit the induced map
\begin{equation}\label{eq:Agps}
  (\GL_2 \times \GL_2)/\Delta(\GL_1) \rightarrow \PGL_2\times \PGL_4.
\end{equation}

\begin{lem} The map \eqref{eq:Agps} is induced by the map $\GL_2 \times \GL_2 \rightarrow \GL_4 \times \GL_2$ given by $(a,b) \mapsto (\mm{a}{}{}{b}, a)$ in $2 \times 2$ block form.\end{lem}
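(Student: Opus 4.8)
The plan is to unwind the three identifications that enter \eqref{eq:Agps}. Recall that $\ul P=\ul M\ul N$ is the Heisenberg parabolic of $\ul G=\GSO(V)$ stabilizing the null isotropic two-space $W_0=\{(0,\mm{\ast}{\ast}{0}{0})\}\subseteq\Theta\subseteq V$ of Lemma \ref{lem:levi}; choosing the complementary isotropic $W_0'=\{(0,\mm{0}{0}{\ast}{\ast})\}\subseteq\Theta$, one has $\ul M\simeq\GL(W_0)\times\GSO(V_6)$ with $V_6=\Theta_4\oplus E$ and $\Theta_4=(W_0\oplus W_0')^{\perp}\cap\Theta=\{(x,0):x\in M_2\}$ (note $E\perp\Theta$ in $V$). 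Composing with $\GL(W_0)\to\PGL(W_0)\simeq\PGL_2$ and $\GSO(V_6)\to\mathrm{PGSO}(V_6)\simeq\PGL_4$ — the last isomorphism being the fixed one induced by the exterior square map $\GL_4\to\GSO_6$ — and using the identification $H/Z\simeq(\GL_2\times\GL_2)/\Delta(\GL_1)$, under which $(a,b)$ is the class of $(g,h,(\lambda_1,\lambda_2))$ with $a=g$ and $b=\lambda_1^{-1}\det(g)h$, the map \eqref{eq:Agps} becomes a map $(\GL_2\times\GL_2)/\Delta(\GL_1)\to\PGL_2\times\PGL_4$, and I must show it equals $[(g,h)]\mapsto([g],[\mm{g}{}{}{b}])$.

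The $\PGL_2$-component is immediate from Lemma \ref{lem:gl2gl2}: the action $(g,h)\cdot(0,y)=(0,\mu_h yg')$ restricts on $W_0$, in the evident coordinates, to $v\mapsto\det(h)\,vg'$, so the image in $\PGL(W_0)$ is $[g']=[g^{-1}]=[g]$ (the classes of $g$ and $g^{-1}$ coincide in $\PGL_2$, as one sees on characteristic polynomials). This matches the $\PGL_2$-component of the asserted map.

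For the $\PGL_4$-component I will compare two degree-six orthogonal similitude representations of $H$: first, $H\hookrightarrow\ul M\to\GSO(V_6)$; second, $(g,h,\lambda)\mapsto\mm{g}{}{}{b}\in\GL_4$ followed by $\wedge^2$. On the first side, Lemma \ref{lem:gl2gl2} gives on $\Theta_4=\{(x,0)\}$ the action $x\mapsto\det(h)\,gxh^{-1}$, which under $x\mapsto xw$ (with $w=\mm{0}{1}{-1}{0}$) is intertwined with the genuine tensor action of $(g,h)$ on $M_2\simeq\mathrm{std}_2\boxtimes\mathrm{std}_2$, an isometry for the determinant form; and $E=(k,0)\oplus(0,k)$ is the hyperbolic plane carrying the characters $\lambda_1,\lambda_2$. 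On the second side, $\wedge^2$ of the four-dimensional representation $(g,h)\mapsto\mm{g}{}{}{b}$ decomposes as $\det(g)\oplus(\mathrm{std}_2\boxtimes\mathrm{std}_2)\oplus\det(b)$, again an orthogonal sum of a hyperbolic plane and $(\mathrm{std}_2\boxtimes\mathrm{std}_2,\det)$. Substituting $b=\lambda_1^{-1}\det(g)h$ and using the defining relation $\det(g)\det(h)=\lambda_1\lambda_2$ of $H$, one checks that the two representations differ precisely by the similitude character $\mu=\lambda_1/\det(g)$ of $H$; since twisting a $\GSO(V_6)$-valued homomorphism by a character is invisible in $\mathrm{PGSO}(V_6)$, the two induce the same map $H\to\mathrm{PGSO}(V_6)$, and hence, via the fixed isomorphism, the same map to $\PGL_4$, namely $[\mm{g}{}{}{b}]$.

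\textbf{The main obstacle.} Matching the two degree-six representations is \emph{not} by itself enough: the outer automorphism of $\mathrm{PGSO}_6$ (conjugation by an element of $\GO_6\setminus\GSO_6$, e.g. the swap of the two lines of $E$) becomes inner in $\PGL_6$, so $[\mm{g}{}{}{b}]$ and its transpose-inverse twist have the same image in $\mathrm{PGSO}(V_6)$. What rigidifies the choice — and must be checked carefully — is the variance in the identification $\Theta_4\simeq\mathrm{std}_2\boxtimes\mathrm{std}_2$: replacing it by $\mathrm{std}_2^{\vee}\boxtimes\mathrm{std}_2^{\vee}$ gives the same degree-six representation up to twist but changes the answer to the transpose-inverse map. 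The explicit computation above (the action $x\mapsto\det(h)gxh^{-1}$ on $\Theta_4$, intertwined with the genuine tensor product and not its dual) pins down this variance, and with it the ordering of the two isotropic lines of $E$ against $\det(g)$ and $\det(b)$; once this is in place, the identification with the block-diagonal embedding follows. The remaining steps — the precise $\GL_2\times\GL_2$-action on $\Theta_4$, the decomposition of $\wedge^2$, and the verification of the twist $\mu$ — are routine.
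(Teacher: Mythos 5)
Your overall strategy is sound and reaches the same conclusion as the paper, but by a different route. You decompose the two degree-six orthogonal similitude representations of $H$ into $(\text{hyperbolic plane})\oplus(\mathrm{std}_2\boxtimes\mathrm{std}_2,\det)$ and match them up to a character twist $\lambda_1^{-1}\det(g)$, which is invisible in $\mathrm{PGSO}(V_6)$. The paper instead normalizes first: using the $Z$-action it reduces to $\lambda=(\det g,\det h)$, in which case $b = h$ and one checks directly that $\wedge^2\mm{g}{}{}{h}$ acts on $V_6 = M_2\oplus E$ by $(m,\mu)\mapsto(gmh',(\det g,\det h)\mu)$, i.e.\ literally equals $j(g,h,\lambda)$ in a suitable basis. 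This avoids tracking the twist and the outer-automorphism subtlety you flag in your last paragraph (which is correctly resolved by your observation about the variance of $\Theta_4$, but which the paper's choice of explicit basis makes moot).

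The $\PGL_2$-component argument, however, contains a real error. The chain ``$[g']=[g^{-1}]=[g]$, as one sees on characteristic polynomials'' does not hold: $g$ and $g^{-1}$ are not scalar multiples of one another for generic $g$, so $[g]\neq[g^{-1}]$ in $\PGL_2$ (take $g=\mathrm{diag}(2,1)$). Nor do $g$ and $g^{-1}$ have the same characteristic polynomial — it is $g$ and $g'=\det(g)g^{-1}$ that do, which makes them \emph{conjugate}, not \emph{equal mod center}. Moreover, the image of $(g,h)$ in $\GL(W_0)$ as a left action is not $g'$ but its transpose: the action $y\mapsto\mu_h\,y\,g'$ is on the right, so in a basis of $W_0$ one gets the matrix $\det(h)\,(g')^T=\det(h)\,w g w^{-1}$, a \emph{fixed} conjugate of $g$ (with $w=\mm{0}{1}{-1}{0}$). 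This fixed conjugation is absorbed into the choice of isomorphism $W_0\simeq k^2$, so the lemma's conclusion stands, but your displayed justification is incorrect; it should read that the class is $[w g w^{-1}]$, which agrees with $[g]$ after a basis change on $W_0$ that is independent of $(g,h,\lambda)$.
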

\begin{proof} From Lemma \ref{lem:gl2gl2} and Lemma \ref{lem:levi}, the map $H \rightarrow \ul{M}= \GL_2 \times \GSO(6)$ is given by
\[(g,h,(\lambda_1,\lambda_2)) \mapsto (g,j(g,h,\lambda))\]
where $j(g,h,\lambda)$ acts on $V_6 = M_2(k) \oplus E$ as $(m,\mu)\mapsto (gmh',\lambda \mu)$. Up to the action of $Z = \GL_1 \times \GL_1$ which sits inside $H$ as triples $(z,w,(zw,zw))$, we can assume that $(g,h,\lambda) = (g,h,(\det(g),\det(h)))$.  Denote by $V_4 = V_2 \oplus V_2$ the decomposition of the defining representation of $\GL_4$ into two $\GL_2$ representations. Recall that our map $\GL_4 \rightarrow \GSO(6)$ is induced by the exterior square representation. The element $\mm{g}{}{}{h} \in \GL_4$ acts on $V_6 =\wedge^2(V_4)$ by $(m,\mu)\mapsto (gmh',(\det(g),\det(h))\mu)$ for an appropriate choice of basis. The lemma follows.\end{proof}

\subsection{The local result}\label{subsection GL(4)GL(2) local}
Let $F$ be a local field (archimedean or p-adic), and $D/F$ be the unique quaternion algebra if $F\neq \BC$. Let
$$G(F)=\GL_4(F)\times \GL_2(F),\;H(F)=\left\{\begin{pmatrix} a&0\\0&b\end{pmatrix}\times \begin{pmatrix}a\end{pmatrix}|\; a,b\in \GL_2(F)\right\}$$
as in the previous subsections. Let $\pi$ be an irreducible smooth representation of $G(F)$ with trivial central character (we can also consider the nontrivial central character case, but we assume it is trivial here for simplicity), define the multiplicity
$$m(\pi)=\dim(\Hom_{H(F)} (\pi,1)).$$
Similarly, if $F\neq \BC$, we can define the quaternion version of the model $(G_{D},H_{D})$ with $G_D(F)=\GL_2(D)\times \GL_1(D)$ and $H_D(F)\simeq \GL_1(D)\times \GL_1(D)$. We can also define the multiplicity $m(\pi_D)$ for all irreducible smooth representations of $G_D(F)$ with trivial central character.

Assume that $F$ is $p$-adic. Let $\pi=\pi_1\otimes \pi_2$ (resp. $\pi_D=\pi_{1,D}\otimes \pi_{2,D}$) be an irreducible tempered representation of $G(F)$ (resp. $G_D(F)$) with trivial central character. We define the geometric multiplicity
$$m_{geom}(\pi)=c_{\pi_1}(1)c_{\pi_2}(1)+\Sigma_{T\in \CT_{ell}(\GL_2)} |W(\GL_2,T)|^{-1} \int_{T(F)/Z_{\GL_2}(F)} D^{\GL_2(F)}(t)^2 c_{\pi_1}(\begin{pmatrix} t&0\\0&t\end{pmatrix}) \theta_{\pi_2}(t)dt.$$
Here $\CT_{ell}(\GL_2)$ is the set of all the maximal elliptic tori of $\GL_2(F)$ (up to conjugation), $W(\GL_2,T)$ is the Weyl group, $Z_{\GL_2}$ is the center of $\GL_2$, $dt$ is the Haar measure on $T(F)/Z_{\GL_2}(F)$ such that $vol(T(F)/Z_{\GL_2}(F))=1$, $D^{\GL_2(F)}(t)$ is the Weyl determinant, $\theta_{\pi_i}$ is the distribution character of $\pi_i$, $c_{\pi_1}(1)$ is the regular germ of $\theta_{\pi_1}$ at the identity element, and $c_{\pi_1}(\begin{pmatrix} t&0\\0&t\end{pmatrix})$ is the regular germ of $\theta_{\pi_1}$ at $\begin{pmatrix} t&0\\0&t\end{pmatrix}$. We refer the readers to Section 4.5 of \cite{B15} for the definition of regular germs. Similarly, we can also define the quaternion version of the geometric multiplicity
$$m_{geom}(\pi_D)=\Sigma_{T_D\in \CT_{ell}(\GL_1(D))} |W(\GL_1(D),T)|^{-1} \int_{T_D(F)/Z_{\GL_1(D)}(F)} D^{\GL_1(D)}(t_D)^2 c_{\pi_{1,D}}(\begin{pmatrix} t_D&0\\0&t_D\end{pmatrix}) \theta_{\pi_{2,D}}(t_D)dt_D.$$
Note the for the quaternion case, we don't need to include the regular germ at the identity element because the group is not quasi-split.

The proof of the following theorem follows from a similar (but easier) argument as in the Ginzburg-Rallis model case (\cite{Wan15}, \cite{Wan16}, \cite{Wan16b}), we will skip it here. In fact, the argument for the current model is more similar to the argument for the ``middle model" (which is a reduced model of the Ginzburg-Rallis model) defined in Appendix B of \cite{Wan15}. But it is easier since $H$ is reductive.

\begin{thm}
\begin{enumerate}
\item Assume that $F$ is p-adic. Let $\pi=\pi_1\otimes \pi_2$ (resp. $\pi_D=\pi_{1,D}\otimes \pi_{2,D}$) be an irreducible tempered representation of $G(F)$ (resp. $G_D(F)$) with trivial central character. Then we have a multiplicity formula
    $$m(\pi)=m_{geom}(\pi),\;m(\pi_D)=m_{geom}(\pi_D).$$
\item Assume that $F$ is p-adic. Let $\pi=\pi_1\otimes \pi_2$ be an irreducible tempered representation of $G(F)$ with trivial central character, and let $\pi_D$ be the Jacquet-Langlands correspondence of $\pi$ from $G(F)$ to $G_{D}(F)$ if it exists; otherwise let $\pi_D=0$. Then we have
    $$m(\pi)+m(\pi_D)=1.$$
    In other words, the summation of the multiplicities over every tempered local Vogan L-packet is equal to 1.
\item The statement in (2) also holds when $F=\BR$.
\item When $F=\BC$, the multiplicity $m(\pi)=1$ for all irreducible tempered representation $\pi$ of $G(F)$ with trivial central character.
\end{enumerate}
\end{thm}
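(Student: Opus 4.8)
\emph{Sketch of the intended argument.} The plan is to deduce all four parts from one central input, the \emph{integral formula for the multiplicity}, namely the equality $m(\pi)=m_{geom}(\pi)$ of part (1), proved by means of a local relative trace formula for the reductive pair $(G,H)$; parts (2), (3) and (4) then follow by explicit harmonic analysis on the $\GL_2$-factors. Note that $X=H\backslash G$ is a spherical variety with $\dim G-\dim H=12=\dim\rho_X$, so this is a strongly tempered situation of Ginzburg--Rallis / Gan--Gross--Prasad type; the argument follows the template of Beuzart-Plessis and of \cite{Wan15,Wan16,Wan16b}, simplified by the fact that $H$ is reductive (and, over $\BR$, by Beuzart-Plessis's archimedean local character expansion).

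First I would record finiteness: $m(\pi)<\infty$ for every irreducible smooth $\pi$, which follows from the sphericity of $H\backslash G$. Then, for tempered $\pi$, following the treatment of the ``middle model'' in \cite[Appendix B]{Wan15}, I would introduce the regularized local period --- the $H(F)\times H(F)$-integral of a matrix coefficient of $\pi$, suitably truncated to account for the non-compactness of $Z_G(F)H(F)\backslash H(F)$ --- and establish two expansions for it. The spectral expansion identifies its value with $m(\pi)$ (the delicate point here being the behaviour of the spectral terms for generic representations, where reductivity of $H$ is a genuine simplification). The geometric expansion is obtained by the Weyl integration formula on $H$, descent to the semisimple classes meeting the relevant support --- the identity and the regular elliptic classes of the $\GL_2$-factors --- and the Harish-Chandra--Howe local character expansion; it produces exactly $m_{geom}(\pi)$. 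Matching the two proves (1). Running the same argument on $G_D$ gives $m(\pi_D)=m_{geom}(\pi_D)$; there is no identity term because $G_D$ is not quasi-split, which explains the shape of $m_{geom}(\pi_D)$.

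Granting (1), part (2) reduces to the purely local identity $m_{geom}(\pi)+m_{geom}(\pi_D)=1$. I would prove this by: (i) Rodier's theorem, giving $c_{\pi_1}(1)=c_{\pi_2}(1)=1$ for generic $\pi$, so the identity term contributes $1$; (ii) the Jacquet--Langlands character relations --- $\theta_{\pi_{2,D}}=-\theta_{\pi_2}$ on regular elliptic elements, and the corresponding matching of germs for $\theta_{\pi_1}$ and $\theta_{\pi_{1,D}}$ --- which let the elliptic-torus sums in $m_{geom}(\pi)$ and $m_{geom}(\pi_D)$ be combined over the common index set $\CT_{ell}(\GL_2)\leftrightarrow\CT_{ell}(\GL_1(D))$; and (iii) the elliptic orthogonality relations on $\GL_2(F)$, which force the combined elliptic contribution to vanish, leaving the total equal to $1$. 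For a non-generic $\pi$ one argues within the (singleton) packet, using that $\GL$-packets and their Jacquet--Langlands transfers are singletons. Part (3) is the same scheme over $F=\BR$, with $\BH^{\times}=\GL_1(\BH)$ in place of $D^{\times}$.

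For part (4), when $F=\BC$ there is no quaternion division algebra, so only the split model occurs; every maximal torus of $\GL_2(\BC)$ is split, hence $\CT_{ell}(\GL_2)=\emptyset$ and $m(\pi)=m_{geom}(\pi)=c_{\pi_1}(1)\,c_{\pi_2}(1)$. Since every irreducible tempered representation of $\GL_n(\BC)$ is fully parabolically induced from a unitary character of the diagonal torus, it is generic, so both regular germs at the identity equal $1$ and $m(\pi)=1$. The main obstacle throughout is the geometric side of the local relative trace formula in the step above: proving convergence of the truncated $H\times H$-integral, isolating the geometric expansion, and controlling the spectral side for generic representations. This is exactly where the reductivity of $H$ streamlines the Ginzburg--Rallis argument, but it remains the heart of the matter, and is the reason the authors defer the details to \cite{Wan15,Wan16,Wan16b}.
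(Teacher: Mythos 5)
Your proposal mirrors the approach the paper itself intends: the paper does not give a proof but defers to the Ginzburg--Rallis references \cite{Wan15,Wan16,Wan16b} (specifically the ``middle model'' of Appendix B of \cite{Wan15}), and your outline --- local relative trace formula for the tempered local period, spectral and geometric expansions, then germ and character computations on the $\GL_2$-factors to prove parts (2)--(4) --- is consistent with that template and with the structure of the geometric multiplicity formula given in the paper.

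One step, however, is mischaracterized. In (iii) you claim the combined elliptic contribution to $m_{geom}(\pi)+m_{geom}(\pi_D)$ is forced to vanish by ``the elliptic orthogonality relations on $\GL_2(F)$.'' That is not the mechanism. The integrand is a product of a regular germ $c_{\pi_1}(\mathrm{diag}(t,t))$ and a character value $\theta_{\pi_2}(t)$, not a product of two characters, so orthogonality is not even applicable in the usual form. What actually makes the elliptic terms cancel is a \emph{pointwise} sign match coming from Jacquet--Langlands: the character relation for $\GL_2(F)\leftrightarrow D^\times$ gives $\theta_{\pi_{2,D}}(t_D)=-\theta_{\pi_2}(t)$ on matching regular elliptic elements (sign $(-1)^{2-1}$), while the germ transfer for $\GL_4(F)\leftrightarrow\GL_2(D)$ gives $c_{\pi_{1,D}}(\mathrm{diag}(t_D,t_D))=c_{\pi_1}(\mathrm{diag}(t,t))$ (sign $(-1)^{4-2}=+1$); the two integrands therefore differ by an overall $-1$, and the elliptic sums cancel term by term. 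You also need to handle the case $\pi_D=0$ separately: then one of $\pi_1,\pi_2$ has no Jacquet--Langlands transfer, and one must show that the elliptic term of $m_{geom}(\pi)$ already vanishes (for instance, if $\pi_2$ is a tempered non-discrete series of $\GL_2$ then $\theta_{\pi_2}$ vanishes on regular elliptic elements, and similarly the relevant germs of $\theta_{\pi_1}$ vanish if $\pi_1$ is not $D$-compatible). Your proposal does not address this sub-case.

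Finally, the remark that ``for a non-generic $\pi$ one argues within the singleton packet'' is moot: all irreducible tempered representations of $\GL_n$ over a local field are generic, so Rodier's theorem applies without exception and there is no non-generic case to dispatch.
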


\begin{rmk}
As in the Ginzburg-Rallis model case, we can make the epsilon dichotomy conjecture for this model. To be specific, let $\pi=\pi_1\otimes \pi_2$ be an irreducible tempered representation of $G(F)$ with trivial central character, and let $\pi_D$ be the Jacquet-Langlands correspondence of $\pi$ from $G(F)$ to $G_{D}(F)$ if it exists; otherwise let $\pi_D=0$. Then the conjecture states that
$$m(\pi)=1\iff \epsilon(1/2,\pi,\rho_X)=1,\;m(\pi)=0\iff \epsilon(1/2,\pi,\rho_X)=-1.$$
Here $\rho_X=\wedge^2\otimes std$ is the 12-dimensional representation of ${}^LG=\GL_4(\BC)\times \GL_2(\BC)$ as in the previous subsections.

By a similar argument as in the Ginzburg-Rallis model case (\cite{Wan16}, \cite{Wan16b}), we can prove this conjecture when $F$ is archimedean. And when $F$ is p-adic, we can prove this conjecture when $\pi$ is not a discrete series.
\end{rmk}

\begin{rmk}
In general, we expect the results above hold for all generic representations of $G(F)$.
\end{rmk}

\appendix

\section{The proof of Lemma \ref{good pair}}
In this appendix, we are going to prove Lemma \ref{good pair}. Let $G=G_0\ltimes U$ where $G_0=\GSp_4$ and $U$ is some unipotent group. Let $H=H_0\ltimes U'$ be a subgroup of $G$ with $U'$ being a subgroup of $U$ and $H_0=(\GL_2\times \GL_2)^0\subset \GSp_4=G_0$. Our goal is to prove the following lemma.

\begin{lem}\label{good pair lemma}
The pair $(G,H)$ is a good pair.
\end{lem}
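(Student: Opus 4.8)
\emph{Plan.} The argument will follow \cite[Section 5.4]{AWZ18} almost verbatim, with the reductive part $\GL_3(D)$ there replaced by $G_0=\GSp_4$; we indicate the structure and the points that change. Recall from \cite[Section 5.3]{AWZ18} that $(G,H)$ is a good pair once one establishes the absolute convergence (for $\Re(s)$ and $T$ large) of the integrals attached to it, and that \cite[Section 5]{AWZ18} reduces this to a growth estimate for the restriction to $H(\BA)$ of an arbitrary standard section, uniformly in the section. Since $U$ is normal in $H=H_0\ltimes U'$ and $H/U'\cong H_0\ltimes(U/U')$, the first step is to peel off the unipotent directions: fix a filtration $U=U_0\supset U_1\supset\cdots\supset U_r=U'$ by closed subgroups normalized by $H$ with abelian successive quotients, so that via the exponential map each $\Fu_i:=\Lie U_i/\Lie U_{i+1}$ becomes a finite-dimensional algebraic representation of $H_0=(\GL_2\times\GL_2)^0$. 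Expanding the integrand along these quotients and applying the Iwasawa decomposition in $\GSp_4$, the $H$-integral becomes an iterated integral whose inner integrations run over the compact quotients $U_i(k)\backslash U_i(\BA)$ and, by the rapid decay of Fourier coefficients of cuspidal data, contribute a Schwartz factor in the $\GSp_4$-variables. This is exactly the mechanism of \cite[Section 5.4]{AWZ18}, and none of it is special to $\GL_3(D)$.

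After this reduction one is left with a convergence statement for the reductive pair $(\GSp_4,(\GL_2\times\GL_2)^0)$, twisted by the characters $\det_{\Fu_i}$ of $H_0$ coming from the action on the $\Fu_i$; concretely, one needs a fixed explicit combination of these characters, restricted to a maximal split torus $A_{0,H_0}$ of $H_0$, to lie in the appropriate (closed) positive cone. Here one uses that $(\GL_2\times\GL_2)^0=\GSp_2\times_{\GL_1}\GSp_2$ is a symmetric subgroup of $\GSp_4$ of the same split rank, namely the identity component of the fixed points of the involution determined by a polarization $V_4=V_2\oplus V_2$ into symplectic planes. In particular it is spherical and contains a maximal split torus of $\GSp_4$; choosing a minimal parabolic $P_{0,H_0}$ of $H_0$ that is a product of Borel subgroups of the two $\GL_2$-factors, the reduction-theoretic estimates underlying \cite[Section 5]{AWZ18} apply directly to $(\GSp_4,(\GL_2\times\GL_2)^0)$. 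It then remains to verify the finite list of weight inequalities, which one does by computing the $H_0$-module structure of the $\Fu_i$ explicitly from the description of $U'\subseteq U$; recall (Lemma \ref{lem:nonnullStab} and Lemma \ref{lem:Stab2}) that $U$ is the abelian unipotent radical $V_{U(b)}$ of the maximal parabolic $Q_{U(b)}$ of $\underline{H}_7$ with Levi $\GL_1\times\GSpin(5)=\GL_1\times\GSp_4$, so this module is the five-dimensional representation of $\GSp_4$ (and of its subgroup $H_0$).

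The main obstacle is precisely this last verification together with the uniformity of the Schwartz estimates: one must match the exponents produced by the Iwasawa decomposition in $\GSp_4$ against those coming from $\rho_{\underline{P}}$ and from the $\det_{\Fu_i}$, and confirm the resulting inequalities in all relevant chambers of $A_{0,H_0}$. Structurally this is identical to the computation carried out in \cite[Section 5.4]{AWZ18} and is elementary but notation-heavy; accordingly, in the proof we will record the $H_0$-action on $U$ and the numerical check, and refer to \emph{loc.\ cit.} for the analytic input (the Fourier expansion, the Schwartz domination, and the convergence of the reductive integral). This yields the absolute convergence of the integrals attached to $(G,H)$, hence Lemma \ref{good pair lemma}, and with it Lemma \ref{good pair}.
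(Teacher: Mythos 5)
Your opening sentence misstates what a good pair is, and this derails the whole argument. In \cite[Section~5.3]{AWZ18} a good pair $(G,H)$ is a \emph{norm comparison} condition, namely that for all $g\in G(\BA)$ one has
\[
\inf_{\gamma\in H(\bar k)}\|\gamma g\|_G \;\gg\; \inf_{\gamma\in H(k)}\|\gamma g\|_G,
\]
uniformly in $g$. It is this purely reduction-theoretic estimate that is then \emph{used}, together with Lemma~5.1 and Propositions~5.2 and~5.5 of \cite{AWZ18}, to deduce the absolute convergence of the truncated-period integrals $I_i(\phi,s)$ and $J_i(\phi,s)$. You have inverted the logic: you are attempting to prove the consequence (absolute convergence, via Fourier expansion along the $U_i$, rapid decay of cuspidal Fourier coefficients, Schwartz domination, and weight inequalities in the chambers of $A_{0,H_0}$) rather than the definition. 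Proving absolute convergence by a direct analytic argument does not establish that $(G,H)$ is a good pair, which is the statement of Lemma~\ref{good pair lemma}; and conversely, none of the ingredients you list (Fourier expansion over $U_i(k)\backslash U_i(\BA)$, the $H_0$-module structure of $\Fu_i$, comparison with $\rho_{\underline P}$) appear in, or are needed for, a proof of the norm inequality.

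The actual proof in the paper is short and concrete. One fixes the upper-triangular Borel $B=TN$ of $G_0=\GSp_4$, notes $B_H=B\cap H_0=TN_H$ is a Borel of $H_0=(\GL_2\times\GL_2)^0$ with $N_H=N\cap H_0$, and introduces the explicit two-dimensional closed subvariety $N'\subset N$ (not a subgroup) consisting of upper-unitriangular matrices with $a_{14}=a_{23}=0$, so that multiplication gives an isomorphism of varieties $N_H\times N'\xrightarrow{\sim} N$. The key technical input is the norm estimate $\|hn'\|_G\sim\|h\|_G\cdot\|n'\|_G$ for $h\in H_0(\BA_{\bar k})$, $n'\in N'(\BA)$ (Lemma~\ref{U1 part}), which is proved by Iwasawa reducing to $h=tn$ and using the multiplicativity of norms along the parabolic $B$ of $G_0$ and $B_H$ of $H_0$. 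One then writes an arbitrary $g\in G(\BA)$ via Iwasawa as $g=uh_0 n'$ with $u\in U(\BA)$, $h_0\in H_0(\BA)$, $n'\in N'(\BA)$, and reduces the good-pair inequality to
\[
\inf_{\gamma\in H_0(\bar k),\ \nu\in U'(\bar k)}\|\nu\gamma u h_0 n'\|_G \;\gg\; \inf_{\gamma\in H_0(k),\ \nu\in U'(k)}\|\nu\gamma u h_0 n'\|_G,
\]
which then follows by the same bookkeeping as in Proposition~5.12 of \cite{AWZ18}, with Lemma~5.11 there replaced by the norm estimate just described. Your proposal, by contrast, never addresses the passage from $H(\bar k)$ to $H(k)$, which is the heart of the good-pair condition, so as written it does not prove the lemma.
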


The proof is very similar to the argument in Section 5.4 of our previous paper \cite{AWZ18}, we only include it here for completion. We use the same notations as in Section 5 of \cite{AWZ18}. We need some preparation. Let $w_2=\begin{pmatrix}0&1\\1&0\end{pmatrix},\;J_2=\begin{pmatrix}0&1\\-1&0\end{pmatrix}$, and $J_4=\begin{pmatrix}0&w_2\\-w_2&0\end{pmatrix}$. We define the groups $\GSp_4$ to be
$$\GSp_4=\{g\in \GL_4|\;g^tJ_4g=\lambda J_4\; \text{for some}\;\lambda\in \GL_1\}.$$
The embedding $(\GL_2\times \GL_2)^0\rightarrow \GSp_4$ is given by
$$\begin{pmatrix}a_1&b_1\\c_1&d_1\end{pmatrix}\times \begin{pmatrix}a_2&b_2\\c_2&d_2\end{pmatrix}\in (\GL_2\times \GL_2)^0\mapsto \begin{pmatrix}a_1&0&0&b_1\\0&a_2&b_2&0\\0& c_2&d_2&0\\c_1&0&0&d_1\end{pmatrix}\in \GSp_4.$$

Let $B=TN$ be the upper triangular Borel subgroup of $G_0=\GSp_4$ with $T$ being the group of diagonal elements in $B$. Then $B_H=B\cap H_0$ is a Borel subgroup of $H_0=(\GL_2\times \GL_2)^0$ with the Levi decomposition $B_H=TN_H$ where $N_H=N\cap H_0$. Let
$$N'=\{\begin{pmatrix}1&a_{12}&a_{13}&a_{14}\\0&1&a_{23}&a_{24}\\0&0&1&a_{34}\\0&0&0&1 \end{pmatrix}\in N |\;a_{14}=a_{23}=0\} $$
be a closed subvariety of $N$ (note that it is not a group). The map
$$N_H\times N'\rightarrow N:\;(n,n')\mapsto nn'$$
is an isomorphism of varieties.

\begin{lem}\label{U1 part}
For all $h\in H_0(\BA_{\bar{k}})$ and $n'\in N'(\BA)$, we have
$$||hn'||_G\sim  ||h||_G \cdot ||n'||_G.$$
\end{lem}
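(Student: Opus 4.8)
The plan is to reduce everything to the group $G_0=\GSp_4$ and then to compare matrix entries. Since $h$ and $n'$ both lie in $G_0\subset G=G_0\ltimes U$, and $G_0$ is a closed subgroup, the restriction of $\|\cdot\|_G$ to $G_0$ is equivalent --- in the sense relevant here, i.e.\ up to fixed powers and multiplicative constants --- to a height on $G_0$; so I may work inside $G_0$, and it suffices to produce two-sided polynomial bounds. The upper bound $\|hn'\|_G\ll(\|h\|_G\|n'\|_G)^{c}$ is immediate from submultiplicativity of heights on an algebraic group. For the reverse inequality it is enough to prove $\|h\|_G\ll\|hn'\|_G^{c}$: from this, $\|n'\|_G=\|h^{-1}(hn')\|_G\ll\|h^{-1}\|_G^{c}\|hn'\|_G^{c}\ll\|hn'\|_G^{c'}$, and multiplying the two gives $\|h\|_G\|n'\|_G\ll\|hn'\|_G^{c''}$.

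To obtain $\|h\|_G\ll\|hn'\|_G^{c}$ I would compute directly in the explicit matrix models fixed above. Writing $h$ in the block form coming from $(\GL_2\times\GL_2)^0\hookrightarrow\GSp_4$ and $n'\in N'$ as an explicit unipotent matrix, one checks by inspection that every matrix entry of $h$ also occurs among the entries of the product $hn'$; hence $\|h\|_{\mathrm{mat}}\le\|hn'\|_{\mathrm{mat}}\le\|hn'\|_G$. It then remains to control the entries of $h^{-1}$, and for this I would use the identity $g^{-1}=\lambda(g)^{-1}J_4^{-1}g^{t}J_4$ valid for $g\in\GSp_4$, where the similitude $\lambda$ is a quadratic polynomial in the matrix entries. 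Since $\lambda(n')=1$ we have $\lambda(h)=\lambda(hn')$, so $|\lambda(h)|\ll\|hn'\|_G^{2}$ and likewise $|\lambda(h)^{-1}|=|\lambda((hn')^{-1})|\ll\|hn'\|_G^{2}$; combining these, $\|h^{-1}\|_{\mathrm{mat}}\ll\|hn'\|_G^{3}$, and hence $\|h\|_G\ll\|hn'\|_G^{3}$. This runs parallel to the computations of \cite[\S5.4]{AWZ18}, which is the model for the present appendix.

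A more structural route to $\|h\|_G\ll\|hn'\|_G^{c}$ is also available. As recorded above, $T\subset H_0$ and $B_H=H_0\cap B$ is a Borel subgroup of $H_0$, so $H_0N=H_0B$ is the preimage in $G_0$ of the projective variety $H_0/B_H\subset G_0/B$ and is therefore closed in $G_0$; using the factorisation $N=N_H\cdot N'$ with $N_H\subset H_0$, the multiplication map $H_0\times N'\to G_0$ is an isomorphism onto $H_0N$, so $N'\cong H_0\backslash H_0N$ is a closed subvariety of the quotient $H_0\backslash G_0$, and pulling coordinates back along $G_0\to H_0\backslash G_0$ yields $\|n'\|_G\ll\|hn'\|_G^{c}$ directly. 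I expect the only genuine work in either version to be bookkeeping: in the first, running the entry-by-entry comparison together with the similitude estimate cleanly; in the second, checking that these geometric facts persist under the base change to $\BA_{\bar{k}}$ required by the statement. Neither is a serious obstacle, and the main (though routine) step is the explicit determination of the entries of $hn'$.
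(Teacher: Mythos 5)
Your proposal is correct, but both routes you sketch differ from the one the paper actually uses. The paper's proof is very short: write $h=tnk$ via the Iwasawa decomposition over $\BA_{\bar k}$ to reduce to $h=tn$ with $t\in T$ and $n\in N_H$, then run the three-step chain $\|tnn'\|\sim\|t\|\cdot\|nn'\|\sim\|t\|\cdot\|n\|\cdot\|n'\|\sim\|tn\|\cdot\|n'\|$, using the standard multiplicativity of norms along the parabolic decompositions $B=TN$ of $G_0$ and $B_H=TN_H$ of $H_0$, with the variety isomorphism $N_H\times N'\xrightarrow{\sim}N$ supplying the middle step. Your first route replaces this abstract input with a direct computation in the fixed $\GSp_4$ model, and it does work: the eight (a priori nonzero) entries of $h$ do literally reappear among the entries of $hn'$, and the entries of $h^{-1}$ are recovered via $g^{-1}=\lambda(g)^{-1}J_4^{-1}g^tJ_4$ together with $\lambda(h)=\lambda(hn')$ as you say. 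The cost is that the argument is tied to the coordinate model and needs the separate step for $h^{-1}$, whereas the Iwasawa route handles both sides of the norm at once. Your second, structural route --- closedness of $H_0 N=H_0 B$ in $G_0$ because $H_0/B_H\hookrightarrow G_0/B$ is a closed immersion of projectives, hence $N'\hookrightarrow H_0\backslash G_0$ closed, and pulling back coordinates from the quotient --- is in the same spirit as Corollary~\ref{closed orbit cor} used elsewhere in the paper, but the paper's own proof of this lemma avoids quotient varieties entirely. All three arguments give the claim; just keep in mind (as you already anticipate) that the $\sim$ here is understood, as in \cite[\S5]{AWZ18}, up to fixed constants and positive powers, so the exponent your matrix argument produces in $\|h\|_G\ll\|hn'\|_G^{3}$ is harmless and should not be tightened to a constant-multiple comparison.
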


\begin{proof}
By the Iwasawa decomposition, it is enough to consider the case when $h=tn$ with $t\in T(\BA_{\bar{k}})$ and $n\in N_H(\BA_{\bar{k}})$. Since $N=N_HN'$, $B=TN$ is a parabolic subgroup of $G_0$ and $B_H=TN_H$ is a parabolic subgroup of $H_0$, we have
$$||hn'||_G=||tnn'||_G\sim ||t||_G \cdot ||nn'||_G\sim ||t||_G \cdot ||n||_G \cdot ||n'||_G\sim ||tn||_G \cdot ||n'||_G=||h||_G \cdot ||n'||_G.$$
This proves the lemma.
\end{proof}

Now we are ready to prove Lemma \ref{good pair lemma}. For $g\in G(\BA)$, we want to show that
$$\inf_{\gamma\in H(\bar{k})} ||\gamma g||_G\gg \inf_{\gamma\in H(k)} ||\gamma g||_G.$$
By the Iwasawa decomposition, it is enough to consider the case when $g=utnn'$ with $u\in U(\BA),t\in T(\BA), n\in N_H(\BA)$ and $n'\in N'(\BA)$. Since $TN_H\in H_0$, we can write $g$ as $uh_0n'$ with $u\in U(\BA),h_0\in H_0(\BA)$ and $n'\in H(\BA)$. In order to prove Lemma \ref{good pair}, it is enough to prove the following lemma.

\begin{lem}
For all $u\in U(\BA),h_0\in H_0(\BA)$ and $n'\in N'(\BA)$, we have
\begin{equation}\label{G_2 1}
\inf_{\gamma \in H_0(\bar{k}),\nu \in U'(\bar{k})} ||\nu\gamma uh_0n'||_G \gg
\inf_{\gamma \in H_0(k),\nu\in U'(k)} ||\nu\gamma uh_0n'||_G.
\end{equation}
\end{lem}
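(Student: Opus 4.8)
The plan is to follow the argument of Section~5.4 of \cite{AWZ18} essentially verbatim, the only change being that the reductive parts of the groups are now $G_0=\GSp_4$ and $H_0=(\GL_2\times\GL_2)^0$ in place of the general linear groups occurring there. The starting point is the group-theoretic identity
\[
\nu\gamma u h_0 n' \;=\; \bigl(\nu\cdot{}^{\gamma}u\bigr)\,\bigl(\gamma h_0 n'\bigr),\qquad {}^{\gamma}u:=\gamma u\gamma^{-1},
\]
which is valid because $H_0\subseteq G_0$ normalizes $U$; note that $\nu\cdot{}^{\gamma}u\in U(\BA_{\bar k})$ while $\gamma h_0 n'\in G_0(\BA_{\bar k})$, since $n'\in N'(\BA)\subseteq N(\BA)\subseteq G_0(\BA)$. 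Thus each element occurring in the left-hand infimum of \eqref{G_2 1} is written canonically as a product of a $U$-component and a $G_0$-component, and, using the basic height estimates for the semidirect product $G=G_0\ltimes U$ (submultiplicativity, invariance under $g\mapsto g^{-1}$, comparison with the quotient map $\pi_{G_0}\colon G\to G/U=G_0$, and the agreement of $\|\cdot\|_G$ with the intrinsic norms on the closed subgroups $G_0$ and $U$), the estimate \eqref{G_2 1} is reduced to the combination of a reductive comparison and a unipotent comparison.

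For the reductive comparison, Lemma~\ref{U1 part} lets us split off $n'$, replacing $\|\gamma h_0 n'\|_{G_0}$ by $\|\gamma h_0\|_{G_0}\,\|n'\|_{G_0}$, so that we are reduced to proving that $(\GSp_4,(\GL_2\times\GL_2)^0)$ is a good pair. This I would deduce from the criteria of Section~5 of \cite{AWZ18}: $(\GL_2\times\GL_2)^0$ is a symmetric subgroup of $\GSp_4$ — it is the group of fixed points of the inner involution $\mathrm{Int}\bigl(\mathrm{diag}(1,-1,-1,1)\bigr)$ — and it contains the diagonal maximal split torus of $\GSp_4$, so Proposition~5.8 and Corollary~5.9 of loc.\ cit.\ apply. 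The unipotent comparison, controlling the factor $\nu\cdot{}^{\gamma}u$, is handled by the (easier) fact that a $k$-subgroup of a unipotent $k$-group forms a good pair, together with the compactness of $[U']$, the decomposition $N=N_HN'$, Lemma~\ref{U1 part}, and the convergence estimates of \cite[Section~5]{AWZ18} (Lemma~5.1 and Propositions~5.2, 5.5); here the stabilizer descriptions of Lemmas~\ref{lem:Stab1} and \ref{lem:Stab2} are used to see that the relevant hypotheses are met.

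\textbf{The main obstacle} is the coupling between the two comparisons: the parameter $\gamma$ appears simultaneously in the $U$-component, through the conjugate ${}^{\gamma}u$, and in the $G_0$-component, through $\gamma h_0 n'$, so the two infima cannot be optimized independently. The resolution, exactly as in \cite[Section~5.4]{AWZ18}, is to carry out the reductions in a fixed order: first perform the reductive reduction at the level of the quotient $G/U=G_0$, producing a $\gamma_0\in H_0(k)$ with $\|\gamma_0 h_0 n'\|_{G_0}\ll\|\gamma h_0 n'\|_{G_0}$, and only then perform the unipotent reduction — now with $\gamma$ already rational, so that ${}^{\gamma_0}u\in U(\BA)$ and the genuinely adelic unipotent estimates become available. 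Checking that replacing $\gamma$ by $\gamma_0$ does not disturb the comparison of the $U$-components — which again rests on $N=N_HN'$ and Lemma~\ref{U1 part} — is the one place where real care is needed; granting it, reassembling the two comparisons via the semidirect-product estimates yields \eqref{G_2 1}, and hence, via the reductions made earlier in this appendix, Lemma~\ref{good pair lemma}.
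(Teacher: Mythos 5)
Your high-level outline — conjugate $u$ past $\gamma$ to write $\nu\gamma u h_0 n'=(\nu\cdot{}^{\gamma}u)(\gamma h_0 n')$, separate a $U$-component from a $G_0$-component using semidirect-product height estimates, then split off the $n'$ factor via Lemma~\ref{U1 part} — is consistent in spirit with what the paper does, which is simply to transplant the proof of Proposition~5.12 of \cite{AWZ18} with Lemma~\ref{U1 part} substituted for the splitting lemma (Lemma~5.11) used there. However, there is a concrete error in how you propose to discharge the reductive comparison. You assert that, once $n'$ is split off, you are reduced to showing $(\GSp_4,(\GL_2\times\GL_2)^0)$ is a good pair and that this ``follows from Proposition~5.8 and Corollary~5.9 of \cite{AWZ18}.'' Those criteria, as the body of this paper uses them in the proof of Proposition~\ref{prop:goodPairs}, require a parabolic-type decomposition: a parabolic $P_{\ul{H}}=M_{\ul{H}}N_{\ul{H}}$ of the ambient group, with the subgroup of the form $(H\cap M_{\ul{H}})\ltimes(H\cap N_{\ul{H}})$ and a direct factor $M'$ with $M_{\ul{H}}=(H\cap M_{\ul{H}})\times M'$. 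The symmetric subgroup $(\GL_2\times\GL_2)^0\subset\GSp_4$ is \emph{not} a Levi factor and does not admit such a complement $M'$, so Corollary~5.9 does not apply — this is precisely why this case was singled out and sent to the Appendix rather than handled alongside the others in Proposition~\ref{prop:goodPairs}. The reductive comparison is instead handled directly by Lemma~\ref{U1 part}: once $\|\gamma h_0 n'\|\sim\|\gamma h_0\|\,\|n'\|$, the $\gamma$-infimum over $H_0(\bar k)$ versus $H_0(k)$ applied to the element $\gamma h_0\in H_0(\BA_{\bar k})$ is the (elementary) good-pair statement for $(H_0,H_0)$, not for $(\GSp_4,H_0)$, so no appeal to the parabolic criteria is needed or warranted.

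A second, smaller misstep: you invoke Lemmas~\ref{lem:Stab1} and \ref{lem:Stab2} to control the unipotent comparison. Those lemmas describe the stabilizers $\ul{H}_W$ of specific isotropic two-spaces $W$ inside the concrete group $\GSO(10)$; they have no bearing on Lemma~\ref{good pair lemma}, which is stated abstractly for $G_0\ltimes U$ with $U$ an arbitrary unipotent group and $U'\subseteq U$ an arbitrary closed $k$-subgroup. The unipotent ingredient is, as you also note, just the elementary good-pair property of a $k$-subgroup of a unipotent $k$-group, and no geometry of orbits is needed. Beyond these two misattributions, your outline of the order of reductions (first pass to a rational $\gamma_0\in H_0(k)$ at the $G_0$-level, then perform the unipotent reduction with $\gamma_0$ rational so that ${}^{\gamma_0}u\in U(\BA)$) is a reasonable reconstruction of how the argument of \cite[Prop.~5.12]{AWZ18} runs; the paper, for its part, simply cites that proposition and notes that Lemma~\ref{U1 part} replaces Lemma~5.11.
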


\begin{proof}
The proof is exactly the same as the proof of Proposition 5.12 of \cite{AWZ18}. All we need to do is replace Lemma 5.11 of loc. cit. by Lemma \ref{U1 part} above. This finishes the proof of Lemma \ref{good pair}.
\end{proof}

\bibliographystyle{amsalpha}

\begin{thebibliography}{CKPSS04}

\bibitem[AG09]{AG}
Avraham Aizenbud and Dmitry Gourevitch, \emph{Generalized {H}arish-{C}handra
  descent, {G}elfand pairs, and an {A}rchimedean analog of {J}acquet-{R}allis's
  theorem}, Duke Math. J. \textbf{149} (2009), no.~3, 509--567, With an
  appendix by the authors and Eitan Sayag. \MR{2553879}

\bibitem[{Beu}15]{B15}
Rapha{\"e}l {Beuzart-Plessis}, \emph{{A local trace formula for the
  Gan-Gross-Prasad conjecture for unitary groups: the archimedean case}}, arXiv
  e-prints (2015), arXiv:1506.01452.

\bibitem[CKPSS04]{CKPSS}
J.~W. Cogdell, H.~H. Kim, I.~I. Piatetski-Shapiro, and F.~Shahidi,
  \emph{Functoriality for the classical groups}, Publ. Math. Inst. Hautes
  \'{E}tudes Sci. (2004), no.~99, 163--233. \MR{2075885}

\bibitem[CPS04]{CPS}
James~W. Cogdell and Ilya~I. Piatetski-Shapiro, \emph{Remarks on
  {R}ankin-{S}elberg convolutions}, Contributions to automorphic forms,
  geometry, and number theory, Johns Hopkins Univ. Press, Baltimore, MD, 2004,
  pp.~255--278. \MR{2058610}

\bibitem[FJ93]{FJ}
Solomon Friedberg and Herv\'{e} Jacquet, \emph{Linear periods}, J. Reine Angew.
  Math. \textbf{443} (1993), 91--139. \MR{1241129}

\bibitem[FMW18]{FMW}
Brooke Feigon, Kimball Martin, and David Whitehouse, \emph{Periods and
  nonvanishing of central {$L$}-values for {${\rm GL}(2n)$}}, Israel J. Math.
  \textbf{225} (2018), no.~1, 223--266. \MR{3805647}

\bibitem[GGP12]{GGP}
Wee~Teck Gan, Benedict~H. Gross, and Dipendra Prasad, \emph{Symplectic local
  root numbers, central critical {$L$} values, and restriction problems in the
  representation theory of classical groups}, Ast\'erisque (2012), no.~346,
  1--109, Sur les conjectures de Gross et Prasad. I.

\bibitem[GI16]{GI}
Wee~Teck Gan and Atsushi Ichino, \emph{The {G}ross-{P}rasad conjecture and
  local theta correspondence}, Invent. Math. \textbf{206} (2016), no.~3,
  705--799. \MR{3573972}

\bibitem[GJR04a]{GJR04}
David Ginzburg, Dihua Jiang, and Stephen Rallis, \emph{On the nonvanishing of
  the central value of the {R}ankin-{S}elberg {$L$}-functions}, J. Amer. Math.
  Soc. \textbf{17} (2004), no.~3, 679--722. \MR{2053953}

\bibitem[GJR04b]{GJR04b}
\bysame, \emph{Periods of residual representations of {${\rm SO}(2l)$}},
  Manuscripta Math. \textbf{113} (2004), no.~3, 319--358. \MR{2129308}

\bibitem[GJR05]{GJR05}
\bysame, \emph{On the nonvanishing of the central value of the
  {R}ankin-{S}elberg {$L$}-functions. {II}}, Automorphic representations,
  {$L$}-functions and applications: progress and prospects, Ohio State Univ.
  Math. Res. Inst. Publ., vol.~11, de Gruyter, Berlin, 2005, pp.~157--191.
  \MR{2192823}

\bibitem[GJR09]{GJR09}
\bysame, \emph{Models for certain residual representations of unitary groups},
  Automorphic forms and {$L$}-functions {I}. {G}lobal aspects, Contemp. Math.,
  vol. 488, Amer. Math. Soc., Providence, RI, 2009, pp.~125--146. \MR{2522030}

\bibitem[GL07]{GL07}
David Ginzburg and Erez Lapid, \emph{On a conjecture of {J}acquet, {L}ai, and
  {R}allis: some exceptional cases}, Canad. J. Math. \textbf{59} (2007), no.~6,
  1323--1340.

\bibitem[GRS97]{GRS97}
David Ginzburg, Stephen Rallis, and David Soudry, \emph{Periods, poles of
  {$L$}-functions and symplectic-orthogonal theta lifts}, J. Reine Angew. Math.
  \textbf{487} (1997), 85--114. \MR{1454260}

\bibitem[Guo96]{G96}
Jiandong Guo, \emph{On a generalization of a result of {W}aldspurger}, Canad.
  J. Math. \textbf{48} (1996), no.~1, 105--142. \MR{1382478}

\bibitem[GW14]{getWom}
Jayce~R. Getz and Eric Wambach, \emph{Twisted relative trace formulae with a
  view towards unitary groups}, Amer. J. Math. \textbf{136} (2014), no.~1,
  1--58. \MR{3163352}

\bibitem[HO13]{HO}
Volker Heiermann and Eric Opdam, \emph{On the tempered {$L$}-functions
  conjecture}, Amer. J. Math. \textbf{135} (2013), no.~3, 777--799.
  \MR{3068402}

\bibitem[IP18]{ichPras}
Atsushi {Ichino} and Kartik {Prasanna}, \emph{Hodge classes and the
  jacquet-langlands correspondence}, arXiv e-prints (2018), arXiv:1806.10563.

\bibitem[IY]{IY}
Atsuchi Ichino and Shunsuke Yamana, \emph{Periods of automorphic forms: the
  case of {$(\rm U_{n+1} \times \rm U_{n}, \rm U_{n})$}}, J. Reine Angew.
  Math., To appear.

\bibitem[Jia98]{J98}
Dihua Jiang, \emph{{$G_2$}-periods and residual representations}, J. Reine
  Angew. Math. \textbf{497} (1998), 17--46. \MR{1617425}

\bibitem[JR92]{JR92}
Herv\'{e} Jacquet and Stephen Rallis, \emph{Symplectic periods}, J. Reine
  Angew. Math. \textbf{423} (1992), 175--197. \MR{1142486}

\bibitem[JR96]{JR96}
\bysame, \emph{Uniqueness of linear periods}, Compositio Math. \textbf{102}
  (1996), no.~1, 65--123. \MR{1394521}

\bibitem[JS81]{JS81}
H.~Jacquet and J.~A. Shalika, \emph{On {E}uler products and the classification
  of automorphic representations. {I}}, Amer. J. Math. \textbf{103} (1981),
  no.~3, 499--558. \MR{618323}

\bibitem[JS07a]{JS07b}
Dihua Jiang and David Soudry, \emph{On the genericity of cuspidal automorphic
  forms of {${\rm SO}_{2n+1}$}}, J. Reine Angew. Math. \textbf{604} (2007),
  187--209. \MR{2320317}

\bibitem[JS07b]{JS07a}
\bysame, \emph{On the genericity of cuspidal automorphic forms of {${\rm
  SO}(2n+1)$}. {II}}, Compos. Math. \textbf{143} (2007), no.~3, 721--748.
  \MR{2330445}

\bibitem[Kim00]{K00}
Henry~H. Kim, \emph{Langlands-{S}hahidi method and poles of automorphic
  {$L$}-functions. {II}}, Israel J. Math. \textbf{117} (2000), 261--284.
  \MR{1760595}

\bibitem[KK05]{KK05}
Henry~H. Kim and Muthukrishnan Krishnamurthy, \emph{Stable base change lift
  from unitary groups to {${\rm GL}_n$}}, IMRP Int. Math. Res. Pap. (2005),
  no.~1, 1--52. \MR{2149370}

\bibitem[KK11]{KK}
Henry~H. Kim and Wook Kim, \emph{On local {$L$}-functions and normalized
  intertwining operators {II}; quasi-split groups}, On certain {$L$}-functions,
  Clay Math. Proc., vol.~13, Amer. Math. Soc., Providence, RI, 2011,
  pp.~265--295. \MR{2767519}

\bibitem[KS17]{KS}
F.~Knop and B.~Schalke, \emph{The dual group of a spherical variety}, Trans.
  Moscow Math. Soc. \textbf{78} (2017), 187--216. \MR{3738085}

\bibitem[LM17]{LM}
Erez Lapid and Zhengyu Mao, \emph{Whittaker-{F}ourier coefficients of cusp
  forms on {$\widetilde{\rm Sp}_n$}: reduction to a local statement}, Amer. J.
  Math. \textbf{139} (2017), no.~1, 1--55. \MR{3619910}

\bibitem[LO18]{LO18}
Erez Lapid and Omer Offen, \emph{On the distinguished spectrum of {$Sp(2n)$}
  with respect to {$Sp(n) \times Sp(n)$}}, Kyoto J. Math. \textbf{58} (2018),
  no.~1, 101--171.

\bibitem[Mat14]{M14}
Nadir Matringe, \emph{Linear and {S}halika local periods for the mirabolic
  group, and some consequences}, J. Number Theory \textbf{138} (2014), 1--19.
  \MR{3168918}

\bibitem[Pol17]{P17}
Aaron Pollack, \emph{The spin {$L$}-function on {${\rm GSp}_6$} for {S}iegel
  modular forms}, Compos. Math. \textbf{153} (2017), no.~7, 1391--1432.
  \MR{3705262}

\bibitem[Pol18]{P18}
\bysame, \emph{Lifting laws and arithmetic invariant theory}, Camb. J. Math.
  \textbf{6} (2018), no.~4, 347--449. \MR{3870360}

\bibitem[{Pra}18]{P}
Dipendra {Prasad}, \emph{{Generic representations for symmetric spaces}}, arXiv
  e-prints (2018), arXiv:1802.01397.

\bibitem[PWZ]{AWZ18}
Aaron Pollack, Chen Wan, and Micha\l{} Zydor, \emph{{A $\mathrm{G}_2$-period of
  a Fourier coefficient of an Eisenstein series on $\mathrm{E}_6$}}, Israel J.
  Math., Accepted.

\bibitem[Ram15]{R15}
Dinakar Ramakrishnan, \emph{A mild {T}chebotarev theorem for {${\rm GL}(n)$}},
  J. Number Theory \textbf{146} (2015), 519--533. \MR{3267122}

\bibitem[SV17]{SV}
Yiannis Sakellaridis and Akshay Venkatesh, \emph{Periods and harmonic analysis
  on spherical varieties}, Ast\'{e}risque (2017), no.~396, viii+360.
  \MR{3764130}

\bibitem[Tak14]{T14}
Shuichiro Takeda, \emph{The twisted symmetric square {$L$}-function of
  {${GL}(r)$}}, Duke Math. J. \textbf{163} (2014), no.~1, 175--266.
  \MR{3161314}

\bibitem[Wana]{Wan15}
Chen Wan, \emph{{A local relative trace formula for the Ginzburg-Rallis model:
  the geometric side}}, Mem. Amer. Math. Soc., To appear.

\bibitem[Wanb]{Wan16}
\bysame, \emph{{Multiplicity One Theorem for the Ginzburg-Rallis Model: the
  tempered case}}, Trans. Amer. Math. Soc., To appear.

\bibitem[Wan17]{Wan16b}
\bysame, \emph{The local {G}inzburg-{R}allis model over the complex field},
  Pacific J. Math. \textbf{291} (2017), no.~1, 241--256. \MR{3693584}

\bibitem[Zyd19]{Z19}
Micha\l{} Zydor, \emph{{Periods of automorphic forms over reductive
  subgroups}}, arXiv e-prints (2019), arXiv:1903.01697.

\end{thebibliography}
\providecommand{\bysame}{\leavevmode\hbox to3em{\hrulefill}\thinspace}
\providecommand{\MR}{\relax\ifhmode\unskip\space\fi MR }
\providecommand{\MRhref}[2]{%
  \href{http://www.ams.org/mathscinet-getitem?mr=#1}{#2}
}
\providecommand{\href}[2]{#2}

\end{document}